\theoremstyle{plain} 
\newcommand{\Rh}{\mathbf{R}\text{Hom}}
\newcommand{\Lt}{\otimes^\mathbf{L}}
\newcommand{\Th}{\text{Hom}^\bullet}
\newcommand{\Tt}{\otimes^\bullet}
\newcommand{\C}{\mathcal{C}}
\newcommand{\K}{\mathcal{H}}
\newcommand{\D}{\mathcal{D}}
\newcommand{\T}{\mathbf{T}}
\newcommand{\RH}{\mathbf{H}}
\newcommand{\cp}{\coprod_{i\in I}}
\newtheorem{teor}{Theorem}[section]
\newtheorem{defi}{Definition}
\newtheorem{lemma}[teor]{Lemma}
\newtheorem{prop}[teor]{Proposition}
\newtheorem{cor}[teor]{Corollary}
\newtheorem{rem}[teor]{Remark}
\newtheorem{rems}[teor]{Remarks}
\newtheorem{exem}[teor]{Example}
\newtheorem{exems}[teor]{Examples}
\newtheorem{ques}[teor]{Question}
\newtheorem{quess}[teor]{Questions}
\def\Ext {\mathop{\rm Ext}\nolimits}
\numberwithin{equation}{section}
\title{CLASSICAL DERIVED FUNCTORS AS FULLY FAITHFUL EMBEDDINGS % \\
}
\author{Pedro Nicol\'as and Manuel Saor\'in} % authors
\address{
\begin{flushleft}
        \hspace{0.3cm}  Departamento de Matem\'aticas\\
         \hspace{0.3cm}  Universidad de Murcia. Aptdo 4021\\
         \hspace{0.3cm}  30100 Espinardo, Murcia\\
         \hspace{0.3cm}  SPAIN\\
\end{flushleft}
}
\email{pedronz@um.es} 
\email{msaorinc@um.es}
\thanks{The final version of this paper will be submitted for publication elsewhere.}
\begin{document}

\maketitle

%%%%%%%%%%%%%%%%%% Abstract %%%%%%%%%%%%%%%%%

\begin{abstract}

Given associative unital algebras $A$ and $B$ and a complex
$T^\bullet$ of $B-A-$bi\-modules, we give necessary and sufficient
conditions for the total derived functors, $\Rh_A(T^\bullet
,?):\D(A)\longrightarrow\D(B)$ and $?\Lt_BT^\bullet
:\D(B)\longrightarrow\D(A)$, to be fully faithful. We also give
criteria for these functors to be one of the fully faithful functors
appearing in a recollement of derived categories. In the case when
$T^\bullet$ is just a $B-A-$bimodule, we connect the results with
(infinite dimensional) tilting theory and show that some  open
question on the fully faithfulness of $\Rh_A(T,?)$ is related to the
classical Wakamatsu tilting problem.

%%%%%%%%%%% Key Words %%%%%%%%%%%%%%%
 \bigskip

% {\it Key Words:} \quad Ring, Algebra, Representation.

% \medskip
{\it $2000$ Mathematics Subject Classification{\rm : 18Gxx, 16Wxx, 16Gxx}} 
% \quad Primary  16Gxx, 16Dxx;  Secondary 16Exx, 16Lxx.
%%%%%%%%%%%%%%%%%%%%%%%%%%%%%%%%%

\end{abstract}

%%%%%%%%%%%%%%%%%%%%%%%%%%  1  %%%%%%%%%%%%%%%%%%%%%%%%%%%%%%%%%%%%%%%%%%%%%%%%%%%%%%%%%%

\section{Introduction}
In October 2013, the second named author was invited to the
\emph{46th Japan Symposium on Ring and Representation Theory} and
the title of one of his talks was exactly the title of this paper,
which tries to be a much expanded version of that talk. Several
results that we will present here are particular cases of results
given in \cite{NS} in the language of dg categories and will be
published elsewhere. For different reasons, the language of dg
categories tends to be difficult to understand by people working
both in Ring Theory and Representation Theory,  and it is specially
so for beginners in the field. The main motivation of the present
work is to isolate the material of \cite{NS} which applies to
ordinary (always associative unital) algebras and rings, and use it
to go further in its applications in terms of recollement situations
that were only indirectly considered in \cite{NS}. We hope in this
way that the results that we present interest ring and
representation theorists. Only minor references to dg algebras will
be needed, but the bulk of the contents stays within the scope of
ordinary algebras and rings.

Apart from the extraordinary hospitality of the organizers, the most
captivating thing for the mentioned author was the very active,
lively and enthusiastic Japanese youth community in the field, who
presented their recent work,  sometimes impressive. This paper is
written  thinking mainly on them. Aimed at beginners, in the initial
sections of the paper we have tried to be as self-contained as
possible, referring to the written literature only for technical
definitions, some proofs and specific details.

All throughout the paper the term 'algebra' will denote an
associative unital algebra  over a ground commutative ring $k$,
fixed in the sequel. Unless otherwise stated, 'module' will mean
'right module' and the corresponding category of  modules over an
algebra $A$ will be denoted by $\text{Mod}-A$. Left $A$-modules will
be looked at as right modules over the opposite algebra $A^{op}$.
Then $\D(A)$ and $\D(A^{op})$ will denote the derived categories of
the categories of right and left $A$-modules, respectively. On what
concerns set-theoretical matters, unlike \cite{NS}, in this paper we
will avoid the universe axiom and, instead, we will distinguish
between 'sets' and '(proper) classes'. All families will be
set-indexed families and an expression of the sort 'it has
(co)products' will always mean 'it has set-indexed (co)products'.

By now, the following is a classical result due to successive
contributions by Happel , Rickard and Keller (see \cite{H1},
\cite{R}, \cite{R2} and \cite{Ke}). We refer the reader to sections
2 and 3 for the pertinent definitions.

\begin{teor} \label{teor.Happel-Rickard-Keller}
Let $A$ and $B$ be ordinary algebras and let $T^\bullet$ be a
complex of $B-A-$bimodules. The following assertions are equivalent:

\begin{enumerate}
\item The functor $?\Lt_BT^\bullet :\D(B)\longrightarrow\D(A)$ is an
equivalence of categories;
\item The functor $\Rh_A(T^\bullet ,?):\D(A)\longrightarrow\D(B)$ is
an equivalence of categories;
\item $T^\bullet_A$ is a classical tilting object of $\D(A)$ such
that the canonical algebra morphism
$B\longrightarrow\text{End}_{\D(A)}(T^\bullet )$ is an isomorphism.
\end{enumerate}
\end{teor}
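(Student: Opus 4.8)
The plan is to establish the cycle $(1)\Leftrightarrow(2)$, then $(1)\Rightarrow(3)$, and finally $(3)\Rightarrow(1)$.

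\emph{The equivalence $(1)\Leftrightarrow(2)$.} The functors $?\Lt_B T^\bullet$ and $\Rh_A(T^\bullet,?)$ form an adjoint pair, the first being left adjoint to the second (the derived tensor--hom adjunction recalled in the preliminary sections). A functor admitting a right adjoint is an equivalence if and only if that right adjoint is a quasi-inverse of it, and dually; hence $(1)$ and $(2)$ are equivalent, and when they hold the two functors are mutually quasi-inverse.

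\emph{$(1)\Rightarrow(3)$.} Put $F=?\Lt_B T^\bullet$, so that $F(B)=B\Lt_B T^\bullet\cong T^\bullet$ in $\D(A)$. In $\D(B)$ the regular module $B$ is a compact object (since $\Hom_{\D(B)}(B,?)\cong H^0(?)$ commutes with coproducts), it generates $\D(B)$ (if $H^n(Y)=0$ for all $n$ then $Y\cong 0$), and $\Hom_{\D(B)}(B,B[n])\cong H^n(B)=0$ for $n\neq 0$. Any equivalence preserves compactness, generation and this vanishing, so $T^\bullet=F(B)$ has all three properties in $\D(A)$; this says precisely that $T^\bullet_A$ is a classical tilting object of $\D(A)$. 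Finally $F$ induces a ring isomorphism $B\cong\End_{\D(B)}(B)\longrightarrow\End_{\D(A)}(T^\bullet)$, and a short diagram chase identifies it with the canonical morphism of $(3)$, because $F$ carries left multiplication by $b\in B$ on $B$ to left multiplication by $b$ on $T^\bullet$.

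\emph{$(3)\Rightarrow(1)$.} Keep $F=?\Lt_B T^\bullet$; it is triangulated and commutes with coproducts. First we show $F$ is fully faithful by d\'evissage. For each $n$ the natural map $\Hom_{\D(B)}(B,Y[n])\to\Hom_{\D(A)}(T^\bullet,F(Y)[n])$ is an isomorphism when $Y=B$: for $n\neq 0$ both groups vanish, the left one since $\Hom_{\D(B)}(B,B[n])\cong H^n(B)=0$ and the right one by the tilting hypothesis $\Hom_{\D(A)}(T^\bullet,T^\bullet[n])=0$; for $n=0$ it is the canonical isomorphism $B\cong\End_{\D(A)}(T^\bullet)$ of hypothesis $(3)$. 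The full subcategory of those $Y$ for which this map is an isomorphism for all $n$ is triangulated (five lemma on the long exact $\Hom$ sequences), closed under coproducts (because $B$ and $T^\bullet$ are compact and $F$ commutes with coproducts), and contains $B$; since $B$ generates $\D(B)$, it is all of $\D(B)$. Running the same d\'evissage in the first variable gives that $\Hom_{\D(B)}(X,Y[n])\to\Hom_{\D(A)}(F(X),F(Y)[n])$ is an isomorphism for all $X,Y,n$, i.e. $F$ is fully faithful. For density, note that the essential image of $F$ is a triangulated subcategory of $\D(A)$, closed under coproducts (as $F$ commutes with them), and containing the compact generator $T^\bullet=F(B)$; since $\D(A)$ is compactly generated, a localizing subcategory containing a generator is all of $\D(A)$ (Bousfield localization), so $F$ is dense, hence an equivalence.

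\emph{Main obstacle.} The triangulated bookkeeping --- adjunction, transport of structure, d\'evissage --- is routine. The genuine content, and the step I expect to require the most care, is the use of hypothesis $(3)$ in the $n=0$ case: identifying the map on endomorphism rings induced by $F$ with the canonical algebra morphism $B\to\End_{\D(A)}(T^\bullet)$, which is exactly the place where the two halves of ``classical tilting object'' and the isomorphism on $\End$ are combined. A secondary point to handle carefully is invoking the correct form of Bousfield localization in $\D(A)$ to pass from ``contains a compact generator'' to ``equals $\D(A)$''.
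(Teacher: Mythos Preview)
The paper does not actually prove Theorem~\ref{teor.Happel-Rickard-Keller}; it is stated in the introduction as a classical result with references to Happel, Rickard and Keller, and serves as motivation for the paper's own work. So there is no ``paper's proof'' to compare against directly.

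Your argument is correct and is essentially the standard proof. The equivalence $(1)\Leftrightarrow(2)$ via adjunction, the transport of structure in $(1)\Rightarrow(3)$, and the two-variable d\'evissage plus essential-image argument in $(3)\Rightarrow(1)$ are all sound. The two points you flag as needing care (identifying the induced map on endomorphism rings with the canonical one, and the density step) are routine once stated clearly; for density, the fact you need is precisely that $\D(A)=\text{Tria}_{\D(A)}(T^\bullet)$ for a compact generator, which is built into the definition of classical tilting object used in the paper.

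It is worth noting that the paper does contain a closely related argument: Theorem~\ref{teor.Keller-Rickard generalization} proves a generalization in which $T^\bullet$ is only assumed self-compact and exceptional, yielding an equivalence $\text{Tria}_{\D(A)}(T^\bullet)\cong\D(B)$. Specializing to the classical tilting case (where $\text{Tria}_{\D(A)}(T^\bullet)=\D(A)$) recovers $(3)\Rightarrow(2)$. That proof proceeds differently from yours: rather than a direct d\'evissage on Hom-sets, it passes through the endomorphism dg algebra $\hat{B}=\text{End}^\bullet_A(T^\bullet)$, uses truncation to relate $\hat{B}$ to the ordinary algebra $B$, and invokes a result from \cite{NS2}. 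Your approach is more elementary and self-contained for the classical statement; the paper's approach via dg algebras is what allows the generalization to the self-compact case.
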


It is natural to ask what should be the substitute of assertion (3) in
this theorem when, in assertion (1) (resp. assertion (2)), we only
require that $?\Lt_BT^\bullet$ (resp. $\Rh_A(T^\bullet ,?)$) be
fully faithful. That is the first goal of this paper. Namely, given
a complex $T^\bullet$ of $B-A-$bimodules, we want to give necessary
and sufficient conditions for $?\Lt_BT^\bullet$ and $\Rh_A(T^\bullet
,?)$ to be fully faithful functors.

On the other hand, a weaker condition than the one in the theorem
appears when the algebras $A$ and $B$ admit a recollement situation
$$\xymatrix{\D^{'} \ar@<0ex>[rr]|{\hspace{0.1 cm}i_{\ast}=i_{!}
\hspace{0.05 cm}}  &&  \D
\ar@<0ex>[rr]|{\hspace{0.1cm}j^{!}=j^{\ast}} \ar@<1ex>[ll]^{i^{!}}
\ar@<-1 ex>[ll]_{i^{\ast}}  && \D^{''} \ar@<1ex>[ll]^{j_{\ast}}
\ar@<-1 ex>[ll]_{j_{!}} }$$  as defined by Beilinson, Bernstein and
Deligne (\cite{BBD}), where either $\{\D ,\D'\}=\{\D(A) ,\D(B)\}$ or
$\{\D ,\D''\}=\{\D(A) ,\D(B)\}$. In these cases, the functors
$i_*=i_!$, $j_!$ and $j_*$ are also fully faithful. This motivates
the  second goal of the paper. We want to give necessary and
sufficient conditions for those recollements to exist, but imposing
the condition that some of the functors in the picture be either
$?\Lt_BT^\bullet$ or $\Rh_A(T^\bullet ,?)$.

Finally, the following are natural questions for which we want to
have an answer.

\begin{quess} \label{ques.recollement situations}
Let $T^\bullet$  be a complex  of $B-A-$bimodules.
\begin{enumerate}
\item Suppose that $\Rh_A(T^\bullet ,?):\D(A)\longrightarrow\D(B)$
is fully faithful.

\begin{enumerate}
\item Is there a recollement $$\xymatrix{\D(A) \ar@<0ex>[rr]|{\hspace{0.1
cm}i_{\ast}=i_{!} \hspace{0.05 cm}}  &&  \D(B)
\ar@<0ex>[rr]|{\hspace{0.1cm}j^{!}=j^{\ast}} \ar@<1ex>[ll]^{i^{!}}
\ar@<-1 ex>[ll]_{i^{\ast}}  && \D^{''} \ar@<1ex>[ll]^{j_{\ast}}
\ar@<-1 ex>[ll]_{j_{!}} },$$ with $i_*=\Rh_A(T^\bullet ,?)$, for
some triangulated category
 $\D''$?

 \item Is there a recollement $$\xymatrix{\D^{'} \ar@<0ex>[rr]|{\hspace{0.1
cm}i_{\ast}=i_{!} \hspace{0.05 cm}}  &&  \D(B)
\ar@<0ex>[rr]|{\hspace{0.1cm}j^{!}=j^{\ast}} \ar@<1ex>[ll]^{i^{!}}
\ar@<-1 ex>[ll]_{i^{\ast}}  && \D(A) \ar@<1ex>[ll]^{j_{\ast}}
\ar@<-1 ex>[ll]_{j_{!}} },$$  whith $j_*=\Rh_A(T^\bullet ,?)$, for
some triangulated category $\D'$?

\end{enumerate}

\item Suppose that $?\Lt_BT^\bullet :\D(B)\longrightarrow\D(A)$ is
fully faithful.

\begin{enumerate}
\item Is there a recollement  $$\xymatrix{\D(B) \ar@<0ex>[rr]|{\hspace{0.1
cm}i_{\ast}=i_{!} \hspace{0.05 cm}}  &&  \D(A)
\ar@<0ex>[rr]|{\hspace{0.1cm}j^{!}=j^{\ast}} \ar@<1ex>[ll]^{i^{!}}
\ar@<-1 ex>[ll]_{i^{\ast}}  && \D^{''} \ar@<1ex>[ll]^{j_{\ast}}
\ar@<-1 ex>[ll]_{j_{!}} },$$ with $i_*=?\Lt_BT^\bullet$, for some
triangulated category
 $\D''$?

 \item Is there a recollement $$\xymatrix{\D^{'} \ar@<0ex>[rr]|{\hspace{0.1
cm}i_{\ast}=i_{!} \hspace{0.05 cm}}  &&  \D(A)
\ar@<0ex>[rr]|{\hspace{0.1cm}j^{!}=j^{\ast}} \ar@<1ex>[ll]^{i^{!}}
\ar@<-1 ex>[ll]_{i^{\ast}}  && \D(B) \ar@<1ex>[ll]^{j_{\ast}}
\ar@<-1 ex>[ll]_{j_{!}} },$$  whith $j_!=?\Lt_BT^\bullet$, for some
triangulated category $\D'$?
\end{enumerate}

\end{enumerate}
\end{quess}

The organization of the paper goes as follows. In section 2 we give
the preliminary results on triangulated categories  and the
corresponding terminology used in the paper. This part has been
prepared as an introductory material for beginners and, hence, tends
to  be as self-contained as possible.

Section 3 is specifically dedicated to the derived functors of
$\text{Hom}$ and the tensor product, but, due to the requirements of
some later proofs in the paper, the development is made for derived
categories of bimodules. In this context the material seems to be
unavailable in the literature. Special care is put on describing the
behavior of these derived functors when passing from the derived
categoy of bimodules to derived category of modules on one side. In
the final part of the section, we give a brief introduction to dg
algebras and give a generalization of Rickard theorem, in the case
of the derived category of a  $k$-flat dg algebra (Theorem
\ref{teor.Keller-Rickard generalization}).

Section 4 contains the main results in the paper. We first show that
the compact objects in the derived category of an algebra are
precisely those for which the associated derived tensor product
preserves products (proposition \ref{prop.tensor by compacts
preserves products}). We then go towards the mentioned goals of the
paper. Proposition \ref{prop.fully faithful RHom} gives a criterion
for the fully faithfulness of $\Rh_A(T^\bullet ,?)$, while
proposition \ref{prop.fully faithful derived tensor} gives criteria
for the fully faithfulness of $?\Lt_BT^\bullet$.  In a parallel way,
in corollary \ref{cor.recollement D(A)=D(B)=D by RHom}, theorem
\ref{teor.recollement D=D(B)=D(A) with Rh}, theorem
\ref{teor.recollement D(B)=D(A)=D with Lt} and corollary
\ref{cor.Chen-Xi} we give criteria for the existence of the
recollements mentioned in questions \ref{ques.recollement
situations} (1.a, 2.a, 2.b and 1.b, respectively). As a confluent
point, when $T^\bullet_A$ is exceptional in $\D(A)$ and the algebra
morphism $B\longrightarrow\text{End}_{\D(A)}(T^\bullet )$ is an
isomorphism, we show in theorem \ref{teor.main one} that $T^\bullet$
defines a recollement as in  question \ref{ques.recollement
situations}(1.b) if, only if, it defines a recollement as in
question \ref{ques.recollement situations}(2.b) on the derived
categories of left modules, and this is turn equivalent to saying
that $A$ is in the thick subcategory of $\D(A)$ generated by
$T^\bullet_A$.   We end the section by giving counterexamples to all
questions \ref{ques.recollement situations} and by proposing some
other   questions which remain open.

In the final section 5, we explicitly re-state some of the results
of section 4 in the particular case that $T^\bullet =T$ is just a
$B-A-$bimodule. One of the questions asked in section 4 asks whether
$\Rh_A(T^\bullet,?):\D(A)\longrightarrow\D(B)$ preserves compact
objects, when it is fully faithful,  $T^\bullet_A$ is exceptional in
$\D(A)$ and $B$ is isomorphic to $\text{End}_{\D(A)}(T^\bullet )$.
We end the paper by showing that, when $T^\bullet =T$ is a bimodule,
this question is related to the classical Wakamatsu tilting problem.

Some of our results are connected to recent results of
Bazzoni-Mantese-Tonolo \cite{BMT}, Bazzoni-Pavarin \cite{BP},
Chen-Xi (\cite{CX1}, \cite{CX2}), Han \cite{Ha} and Yang \cite{Y}.
All throughout sections 4 and 5, we give remarks showing these
connections.

%%%%%%%%%%%%%%%%%%%%%%%  2   %%%%%%%%%%%%%%%%%%%%%%%%%%%%%%%%%%%%%%%%%%%%%%%%%%%%%%%%%%%%%%%%%%%%%%%%%%%%%%%

\section{Preliminaries on triangulated categories and derived functors}

The results of this section are well-known, but they sometimes
appear scattered in the literature and with different notation. We
give them here for the convenience of the reader and, also, as a way
of unifying the terminology that we shall use throughout the paper.
Most of the material is an adaptation of Verdier's work (see
\cite{V}), but we will  refer also to several texts like \cite{N},
\cite{W}, \cite{H}, \cite{Ke1}, \cite{Kr1}..., for specific results
and proofs. As mentioned before, we will work over a fixed ground
commutative ring $k$. Then the term 'category' will mean always
'$k$-category'. If $\mathcal{C}$ is such a category, then the set of
morphisms $\mathcal{C}(C,D)$ has a structure of $k$-module, for all
$C,D\in\mathcal{D}$, and compositions of morphisms are $k$-bilinear.
Unless otherwise stated, subcategories will be always full and
closed under taking isomorphic objects.

The reader is referred to \cite[Chapter 10]{W} for the explicit
definition \emph{triangulated category}, although some of its
notation is changed. If $\D$ is such a category, then the
\emph{shifting}, also called \emph{suspension (or translation)
functor} $\D\longrightarrow\D$ will be denoted here by $?[1]$ and a
triangle in $\D$ will be denoted by $X\longrightarrow
Y\longrightarrow Z\stackrel{+}{\longrightarrow}$, although we will
also write $X\longrightarrow Y\longrightarrow
Z\stackrel{h}{\longrightarrow}X[1]$ when the connecting morphism $h$
need be emphasized. Recall that $Z$ is determined by the morphism
$f:X\longrightarrow Y$ up to non-unique isomorphism. We will call
$Z$ the \emph{cone of $f$}. A functor $F:\D\longrightarrow\D'$
between triangulated categories will be called a \emph{triangulated
or triangle-preserving functor} when it takes triangles to
triangules.

\subsection{The triangulated structure of the stable category of a Frobenius exact category}

An exact category (in the sense of Quillen) is an additive category
$\mathcal{C}$, together with a class of short exact sequences,
called \emph{conflations} or \emph{admissible short exact
sequences}. If  $0\rightarrow
C\stackrel{f}{\longrightarrow}D\stackrel{g}{\longrightarrow}E\rightarrow
0$ is a conflation, then we say that $f$ is an \emph{inflation} or
\emph{admissible monomorphism} and that $g$ is a \emph{deflation} or
\emph{admissible epimorphism}. The class of conflations must satisfy
the following axioms \underline{and their duals} (see \cite{Ke1} and
\cite{B}):

\begin{enumerate}
\item[Ex0] The identity morphism $1_X$ is a deflation, for each
$X\in\text{Ob}(C)$;
\item[Ex1] The composition of two deflations is a deflation;
\item[Ex2] Pullbacks of deflations along any morphism exist and are deflations.
\end{enumerate}

Obviously the dual of an exact category is an exact category with
the 'same' class of conflations. An object $P$ of an exact category
is called \emph{projective} when the functor
$\mathcal{C}(P,?):\mathcal{C}\longrightarrow k-\text{Mod}$ takes
deflations to epimorphisms. The dual notion is that of
\emph{injective} object. We say that $\mathcal{C}$ \emph{has enough
projectives (resp. injectives)} when, for each object
$C\in\mathcal{C}$, there is a deflation $P\longrightarrow C$ (resp.
inflation $C\longrightarrow I$), where $P$ (resp. $I$) is a
projective (resp. injective) object. The exact category
$\mathcal{C}$ is said to be \emph{Frobenius exact} when it has
enough projectives and enough injectives and the classes of
projective and injective objects coincide.

Given any exact Frobenius category $\mathcal{C}$, we can form its
\emph{stable category}, denoted $\underline{\mathcal{C}}$. Its
objects are those of $\mathcal{C}$, but we have
$\underline{\C}(C,D)=\frac{\mathcal{C}(C,D)}{\mathcal{P}(C,D)}$,
where $\mathcal{P}(C,D)$ is the $k$-submodule of $\mathcal{C}(C,D)$
consisting of those morphisms which factor through some
projective(=injective) object. The new category comes with a
\emph{projection functor}
$p_\mathcal{C}:\C\longrightarrow\underline{\C}$, which has the
property that each functor $F:\C\longrightarrow\D$ which vanishes on
the projective (=injective) objects factors through $p_\C$ in a
unique way.

The so-called \emph{(first) syzygy functor} $\Omega
:\underline{C}\longrightarrow\underline{\mathcal{C}}$  assigns to
each object $C$ the kernel of any deflation (=admissible
epimorphism) $\epsilon :P\twoheadrightarrow C$ in $\mathcal{C}$,
where $P$ is a projective object. Up to isomorphism in
$\underline{\mathcal{C}}$, the object $\Omega (C)$ does not depend
on the projective object $P$ or the deflation $\epsilon$. Moreover,
$\Omega$ is an equivalence of categories and its quasi-inverse is
called the \emph{(first) cosyzygy functor}
$\Omega^{-1}:\underline{C}\longrightarrow\underline{C}$.

If $0\rightarrow
C\stackrel{f}{\longrightarrow}D\stackrel{g}{\longrightarrow}E\rightarrow
0$ a conflation in the Frobenius exact category $\mathcal{C}$, then
we have  the following commutative diagram, where the rows are
conflations and $I$ is a projective(=injective) object:

\[
\xymatrix{
0\ar[r] & C\ar[r]^{f}\ar@{=}[d] & D\ar[r]^{g}\ar@{.>}[d] & E\ar[r]\ar[d]^{h} & 0 \\
0\ar[r] & C\ar[r] & I\ar[r] & \Omega^{-1}C\ar[r] & 0 }
\]

The following result is fundamental (see \cite[Section I.2]{H}):

\begin{prop}[Happel] \label{prop.Happel}
If $\mathcal{C}$ is a Frobenius exact category, then its stable
category admits a structure of triangulated category, where:

\begin{enumerate}
\item the suspension functor is the first cosyzygy functor;
\item the distinguished triangles are those isomorphic in
$\underline{\mathcal{C}}$ to a sequence of morphisms

\begin{center}
$C\stackrel{f}{\longrightarrow}D\stackrel{g}{\longrightarrow}E\stackrel{h}{\longrightarrow}\Omega^{-1}(C)$
\end{center}
coming from a commutative diagram in $\mathcal{C}$ as above.
\end{enumerate}
\end{prop}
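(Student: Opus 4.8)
The plan is to verify Happel's theorem by constructing the required data and checking the triangulated category axioms (TR1)--(TR4) directly in the stable category $\underline{\mathcal{C}}$. First I would make precise the definition of the candidate distinguished triangles. Given a morphism $f\colon C\longrightarrow D$ in $\mathcal{C}$, I would embed $C$ into a projective(=injective) object $I$ via an inflation $\iota\colon C\longrightarrow I$ (enough injectives), form the pushout of $f$ along $\iota$, and read off a conflation $0\rightarrow D\rightarrow M(f)\rightarrow \Omega^{-1}(C)\rightarrow 0$ together with morphisms fitting into a diagram as displayed just before the statement; the sequence $C\stackrel{f}{\longrightarrow}D\longrightarrow M(f)\longrightarrow\Omega^{-1}(C)$ is the \emph{standard triangle} on $f$. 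A distinguished triangle is then anything isomorphic in $\underline{\mathcal{C}}$ to such a standard triangle. The first thing to check is that this class is well defined, i.e. independent (up to isomorphism in $\underline{\mathcal{C}}$) of the chosen inflation $\iota$; this uses that any two injective embeddings of $C$ differ by a map that becomes an isomorphism in $\underline{\mathcal{C}}$, together with the fact that $\Omega^{-1}$ is well defined on $\underline{\mathcal{C}}$, which was recalled above.

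Next I would verify the axioms in order. For (TR1): the identity triangle $C\stackrel{1}{\to}C\to 0\to\Omega^{-1}(C)$ is standard (take the pushout along $\iota$ itself), every morphism embeds into a standard triangle by construction, and closure under isomorphism is built into the definition. For (TR2) (rotation), the key computation is that rotating a standard triangle $C\to D\to M(f)\to\Omega^{-1}(C)$ one step gives something isomorphic in $\underline{\mathcal{C}}$ to the standard triangle on $D\to M(f)$; this is where one genuinely uses the pushout construction and the Frobenius hypothesis, comparing the cone of $D\to M(f)$ with $\Omega^{-1}(C)$ via the long exact-sequence bookkeeping coming from stacking conflations. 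The converse rotation follows formally once $\Omega$ and $\Omega^{-1}$ are quasi-inverse equivalences. For (TR3) (the morphism-of-triangles axiom), given a commutative square between the bases of two standard triangles, I would lift it to $\mathcal{C}$ using projectivity/injectivity of the intermediate objects $I$ and the universal property of the pushout to produce the required third map, noting it need only commute in $\underline{\mathcal{C}}$, which relaxes the lifting problem enough to make it solvable.

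The main obstacle is the octahedral axiom (TR4). Here I would take two composable morphisms $C\stackrel{f}{\to}D\stackrel{g}{\to}E$, choose compatible injective embeddings (one can arrange the embedding of $C$ to factor through that of $D$, or work with a single large injective receiving everything), form the three pushout squares giving the standard triangles on $f$, $g$, and $gf$, and then exhibit the connecting morphisms among the three cones $M(f)$, $M(g)$, $M(gf)$ and check that the resulting octahedron commutes in $\underline{\mathcal{C}}$ and that its remaining face is again a standard triangle. The bookkeeping is the $3\times 3$-type diagram of conflations obtained from the iterated pushouts, and the delicate point is that all commutativities need only hold modulo maps factoring through projectives; I expect this reconciliation of the set-theoretically rigid pushout diagrams with the quotient category $\underline{\mathcal{C}}$ to be the technically heaviest step. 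Throughout, I would lean on the observation already recorded in the excerpt that $p_\mathcal{C}\colon\mathcal{C}\to\underline{\mathcal{C}}$ kills exactly the maps through projectives and that $\Omega,\Omega^{-1}$ are mutually quasi-inverse equivalences, and I would cite \cite[Section I.2]{H} for the detailed diagram chases rather than reproducing them in full.
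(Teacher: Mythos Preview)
Your outline is a correct sketch of the standard argument, and it is essentially the approach carried out in \cite[Section I.2]{H}: build standard triangles via pushouts along injective hulls, check well-definedness in $\underline{\mathcal{C}}$, and then verify (TR1)--(TR4), with the octahedral axiom handled by a $3\times 3$ diagram of iterated pushouts. Note, however, that the paper itself gives no proof of this proposition at all; it simply records the statement as ``fundamental'' and refers the reader to \cite[Section I.2]{H}. So there is nothing in the paper to compare against beyond that citation, and your final remark that you would cite Happel for the detailed diagram chases is exactly what the paper does in toto.
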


\subsection{The Frobenius exact structure on the category of chain complexes. The homotopy category}

In the rest of this section  $\mathcal{A}$ will be an abelian
category. The \emph{graded category} associated to $\mathcal{A}$,
denoted $\mathcal{A}^{\mathbb{Z}}$ has as objects the
$\mathbb{Z}$-indexed families $X^\bullet:=(X^n)_{n\in\mathbb{Z}}$,
where $X_n\in\mathcal{A}$ for each $n\in\mathbb{Z}$. If $X^\bullet
,Y^\bullet\in\mathcal{A}^\mathbb{Z}$ then $\mathcal{A}^\mathbb{Z}$
consists of the families $f=(f^n)_{n\in\mathbb{Z}}$, where
$f^n\in\mathcal{A}(X^n,Y^n)$ for each $n\in\mathbb{Z}$. This
category is abelian and comes with a canonical self-equivalence
$?[1]:\mathcal{A}^\mathbb{Z}\longrightarrow\mathcal{A}^\mathbb{Z}$,
called the \emph{suspension} or \emph{(1-)shift},  given by the rule
$X^\bullet [1]^n=X^{n+1}$, for all $n\in\mathbb{Z}$. Keeping the
same class of objects, we can increase the class of morphisms and
form a new  category $\text{GR}\mathcal{A}$, where
$\text{GR}\mathcal{A}(X^\bullet ,Y^\bullet
)=\oplus_{n\in\mathbb{Z}}\mathcal{A}^\mathbb{Z}(X^\bullet,Y^\bullet
[n])$. This is obviously a graded category where a \emph{morphisms
of degree $n$} is just a morphism $X^\bullet\longrightarrow
Y^\bullet[n]$ in $\mathcal{A}^\mathbb{Z}$, and where $g\circ f$ is
the composition $X^\bullet\stackrel{f}{\longrightarrow}Y^\bullet
[m]\stackrel{g[m]}{\longrightarrow}Z[m+n]$ in
$\mathcal{A}^\mathbb{Z}$, whenever $f$ and $g$ are morphisms in
$GR\mathcal{A}$ of degrees $m$ and $n$.

A \emph{chain complex} of objects of $\mathcal{A}$ is a pair
$(X^\bullet,d)$ consisting of an object $X^\bullet$ of
$\text{GR}\mathcal{A}$ together with a morphism
$d:X^\bullet\longrightarrow X^\bullet$ in $GR\mathcal{A}$ of degree
$+1$ such that $d\circ d=0$. The \emph{category of chain complexes}
of objects of $\mathcal{A}$ will be denoted by
$\mathcal{C}(\mathcal{A})$. It has as objects the chain complexes of
objects of $\mathcal{A}$ and a morphism $f:X^\bullet\longrightarrow
Y^\bullet$, usually called a \emph{chain map},  will be just a
morphism of degree $0$ in $\text{GR}\mathcal{A}$ such that $f\circ
d=d\circ f$. The category $\mathcal{C}(\mathcal{A})$ is abelian,
with pointwise calculation of limits and colimits, and we have an
obvious forgetful functor
$\mathcal{C}(\mathcal{A})\longrightarrow\mathcal{A}^\mathbb{Z}$
which is faithful and dense, but not full. What is even more
important for us is that $\mathcal{C}(\mathcal{A})$ admits a
structure of Quillen exact category, usually called the
\emph{semi-split exact structure}. Recall that a short exact
sequence $0\rightarrow
X^\bullet\stackrel{f}{\longrightarrow}Y^\bullet\stackrel{g}{\longrightarrow}Z^\bullet\rightarrow
0$ in $\mathcal{C}(\mathcal{A})$ is called \emph{semi-split} when
its image by the forgetful functor
$\mathcal{C}(\mathcal{A})\longrightarrow\mathcal{A}^\mathbb{Z}$ is a
split exact sequence of $\mathcal{A}^\mathbb{Z}$. That is,  when the
sequence $0\rightarrow X^n\stackrel{f^n}{\longrightarrow}
Y^n\stackrel{g^n}{\longrightarrow}Z^n\rightarrow 0$ splits in
$\mathcal{A}$, for each $n\in\mathbf{Z}$.  For the semi-split exact
structure in $\mathcal{C}(\mathcal{A})$ the conflations  are
precisely the semi-split exact sequences. The suspension functor of
$\mathcal{A}^\mathbb{Z}$ induces a corresponding suspension functor
$?[1]:\mathcal{C}(\mathcal{A})\stackrel{\cong}{\longrightarrow}\mathcal{C}(\mathcal{A})$,
which is a self-equivalence and take $(X^\bullet
,d_X)\rightsquigarrow (X^\bullet [1],d_{X[1]}$, where $d_{X^\bullet
[1]}^n=-d_X^{n+1}$ for each $n\in\mathbb{Z}$.

It is well-known (see, e.g., \cite[Section I.3.2]{H}) that
$\mathcal{C}(\mathcal{A})$ is a \emph{Frobenius exact category} when
considered with this exact structure.  Its projective (=injective)
objects with respect to the conflations are precisely the
\emph{contractible complexes}. These are those complexes isomorphic
in $\mathcal{C}(\mathcal{A})$  to a coproduct of complexes of the
form $\text{C}_n(X):...0\longrightarrow
X\stackrel{1_X}{\longrightarrow}X\longrightarrow 0....$, where
$C_n(X)$ is concentrated in degrees $n-1,n$ for all $n\in\mathbb{Z}$
and all $X\in\mathcal{A}$. For each
$X^\bullet\in\mathcal{C}(\mathcal{A})$, we have a conflation

\begin{center}
$0\rightarrow X^\bullet
[-1]\longrightarrow\coprod_{n\in\mathbb{Z}}C_n(X_{n-1})\longrightarrow
X^\bullet\rightarrow 0$.
\end{center}
Then the syzygy functor of $\mathcal{C}(\mathcal{A})$ with respect
to the semi-split exact structure is identified with the inverse
$?[-1]:\mathcal{C}(\mathcal{A})\longrightarrow\mathcal{C}(\mathcal{A})$
of the suspension functor. The stable category of
$\mathcal{C}(\mathcal{A})$ has a very familiar description due to
the following result (see \cite[p. 28]{H}):

\begin{lemma} \label{lem.nullhomotopic=factors through contractible}
Let $(X^\bullet,d_X)$ and $(Y^\bullet ,d_Y)$ be chain complexes of
objects of $\mathcal{A}$ and let $f:X^\bullet\longrightarrow
Y^\bullet$ be a chain map. The following assertions are equivalent:

\begin{enumerate}
\item $f$ factors through a contractible complex;
\item $f$ is \emph{null-homotopic}, i.e., there exists a morphism  $\sigma :X^\bullet\longrightarrow
Y^\bullet$ of degree $-1$ in $\text{GR}\mathcal{A}$ such that
$f=\sigma\circ d_X+d_Y\circ\sigma$.
\end{enumerate}

\end{lemma}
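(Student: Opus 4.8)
The plan is to prove the equivalence of (1) and (2) in Lemma \ref{lem.nullhomotopic=factors through contractible} by a direct, two-way construction, exploiting the explicit description of contractible complexes as coproducts of the elementary complexes $C_n(X)$ together with the standard adjunction-type identity that maps out of (or into) the complexes $C_n(X)$ are controlled by a single morphism in $\mathcal{A}$. First I would settle the easy direction $(2)\Rightarrow(1)$. Given a null-homotopy $\sigma:X^\bullet\longrightarrow Y^\bullet$ of degree $-1$ with $f=\sigma\circ d_X+d_Y\circ\sigma$, I would factor $f$ through the canonical contractible complex attached to $X^\bullet$. Concretely, using the conflation
\begin{center}
$0\rightarrow X^\bullet [-1]\longrightarrow\coprod_{n\in\mathbb{Z}}C_n(X_{n-1})\longrightarrow X^\bullet\rightarrow 0$
\end{center}
mentioned just above the lemma, one has a split epimorphism (in $\mathcal{A}^\mathbb{Z}$, not in $\mathcal{C}(\mathcal{A})$) from the contractible complex $E^\bullet:=\coprod_{n\in\mathbb{Z}}C_n(X^{n-1})$ onto $X^\bullet$; choosing the obvious graded section $s$ of it, I would define a chain map $g:E^\bullet\longrightarrow Y^\bullet$ whose two components on $C_n(X^{n-1})$ (which lives in degrees $n-1$ and $n$) are built out of $\sigma^{n-1}$ and $f^{n-1}$, and then check that $g$ is genuinely a chain map precisely because $f-\sigma d_X-d_Y\sigma=0$, and that the composite $E^\bullet\twoheadrightarrow X^\bullet$ followed by a suitable lift of $f$ equals $f$. (Equivalently, one can factor through the canonical contractible complex built from $Y^\bullet$ in degrees receiving $Y$; either variant works, and I would present whichever makes the bookkeeping shortest.)

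For the converse $(1)\Rightarrow(2)$, suppose $f=\beta\circ\alpha$ with $\alpha:X^\bullet\longrightarrow P^\bullet$ and $\beta:P^\bullet\longrightarrow Y^\bullet$ chain maps and $P^\bullet$ contractible. By the description recalled before the lemma, $P^\bullet$ is isomorphic in $\mathcal{C}(\mathcal{A})$ to a coproduct of complexes $C_n(Z)$, and since a coproduct of null-homotopic maps is null-homotopic and null-homotopy is closed under pre- and post-composition with chain maps, it suffices to show that the identity map of each $C_n(Z)$ is null-homotopic; more economically, it suffices to exhibit a degree $-1$ graded morphism $h:P^\bullet\longrightarrow P^\bullet$ with $1_{P^\bullet}=h d_P+d_P h$, i.e. a contracting homotopy for $P^\bullet$ itself. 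For $C_n(Z)$ this contracting homotopy is literally $1_Z$ placed in the single nonzero slot, and one assembles these over the coproduct. Transporting through the isomorphism $P^\bullet\cong\coprod C_n(Z)$ gives a contracting homotopy $h$ for $P^\bullet$, and then $\sigma:=\beta\circ h\circ\alpha$ is a degree $-1$ graded map with
\begin{center}
$d_Y\sigma+\sigma d_X=d_Y\beta h\alpha+\beta h\alpha d_X=\beta d_P h\alpha+\beta h d_P\alpha=\beta(d_P h+h d_P)\alpha=\beta 1_{P^\bullet}\alpha=f$,
\end{center}
using only that $\alpha,\beta$ commute with the differentials. This is the null-homotopy we want.

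The only genuinely delicate point, and the one I would treat most carefully, is the interchange between a single big contractible complex and the coproduct decomposition $\coprod_n C_n(Z_n)$: one must make sure that "contractible" is being used in the sense fixed in the excerpt (isomorphic \emph{in} $\mathcal{C}(\mathcal{A})$ to such a coproduct), and that the construction of $h$ on a coproduct is legitimate, i.e. that a family of contracting homotopies $h_i$ on summands $C_{n_i}(Z_i)$ induces one on the coproduct. Since $\mathcal{C}(\mathcal{A})$ has pointwise (co)limits and the $h_i$ are morphisms of degree $-1$ in $\mathrm{GR}\mathcal{A}$, the universal property of the coproduct in each degree gives the induced $h$, and the identity $d_P h+h d_P=1$ is checked componentwise on each summand; there is no convergence or size issue because we only ever evaluate one summand at a time. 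Everything else is bookkeeping with the elementary complexes $C_n(-)$ and the two structural conflations already displayed in the excerpt, so I would keep those computations to a minimum and simply indicate the degreewise formulas.
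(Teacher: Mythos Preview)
The paper does not actually prove this lemma; it only refers the reader to \cite[p.~28]{H}. So your proposal must stand on its own.

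Your argument for $(1)\Rightarrow(2)$ is correct and clean: a contractible $P^\bullet\cong\coprod_n C_n(Z_n)$ carries a contracting homotopy $h$ assembled from the obvious one on each summand, and $\sigma:=\beta\circ h\circ\alpha$ satisfies $d_Y\sigma+\sigma d_X=\beta(d_Ph+hd_P)\alpha=\beta\alpha=f$.

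For $(2)\Rightarrow(1)$, however, there is a directional slip. The conflation $0\to X^\bullet[-1]\to E^\bullet\to X^\bullet\to 0$ you invoke supplies a chain \emph{epimorphism} $\pi:E^\bullet\twoheadrightarrow X^\bullet$, not a chain map from $X^\bullet$ into a contractible complex. The graded section $s:X^\bullet\to E^\bullet$ is not a morphism in $\mathcal{C}(\mathcal{A})$, so an equality $f=g\circ s$ is not a factorization through a contractible object of $\mathcal{C}(\mathcal{A})$; and your phrase ``the composite $E^\bullet\to X^\bullet$ followed by a suitable lift of $f$'' yields a map $E^\bullet\to Y^\bullet$, not a factorization of $f$. (Note also that on $C_n(X^{n-1})$, whose two nonzero terms are $X^{n-1}$ in degrees $n-1$ and $n$, the ingredients $f^{n-1}:X^{n-1}\to Y^{n-1}$ and $\sigma^{n-1}:X^{n-1}\to Y^{n-2}$ do not even produce a map into $Y^n$, so your $g$ does not typecheck.) What is needed is the \emph{inflation} into a contractible complex, i.e.\ the conflation $0\to X^\bullet\to F^\bullet\to X^\bullet[1]\to 0$ (e.g.\ $F^\bullet=E^\bullet[1]$): with the chain monomorphism $\iota:X^\bullet\hookrightarrow F^\bullet$ in hand one builds $g:F^\bullet\to Y^\bullet$ degreewise from $f$ and $\sigma$, checks that $g$ is a chain map precisely because $f=\sigma d_X+d_Y\sigma$, and gets $g\circ\iota=f$. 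Your parenthetical alternative---factoring through the contractible complex attached to $Y^\bullet$ via its deflation onto $Y^\bullet$, lifting $f$ by $x\mapsto(\sigma(x),f(x))$ componentwise---is also a correct route and in fact the most direct one. Either repair is elementary, but as written the arrows in your primary construction point the wrong way.
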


As a consequence of the previous result, the morphisms in
$\mathcal{C}(\mathcal{A})$ which factor through a projective
(=injective) object, with respect to the semi-split exact structure,
are precisely the null-homotopic chain maps. As a consequence the
associated stable category $\underline{\mathcal{C}(\mathcal{A})}$ is
the \emph{homotopy category} of $\mathcal{A}$, denoted
$\mathcal{H}(\mathcal{A})$ in the sequel. We then get:

\begin{cor} \label{cor.triang. structure of homotopy category}
The homotopy category $\mathcal{H}(\mathcal{A})$ has a structure of
triangulated category such that

\begin{enumerate}
\item the suspension functor
$?[1]:\mathcal{H}(\mathcal{A})\longrightarrow\mathcal{H}(\mathcal{A})$
is induced from the suspension functor of
$\mathcal{C}(\mathcal{A})$;
\item each distinguished triangle in $\mathcal{H}(\mathcal{A})$
comes from a semi-split exact sequence $0\rightarrow
X^\bullet\stackrel{f}{\longrightarrow}Y^\bullet\stackrel{g}{\longrightarrow}Z^\bullet\rightarrow
0$ in $\mathcal{C}(\mathcal{A})$ in the form described in
proposition \ref{prop.Happel}.
\end{enumerate}
\end{cor}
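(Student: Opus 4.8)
The plan is to deduce this directly from Happel's Proposition \ref{prop.Happel}, once we record that all the hypotheses and identifications it needs are already in place. By the discussion preceding the statement, $\mathcal{C}(\mathcal{A})$ with the semi-split exact structure is a Frobenius exact category whose projective($=$injective) objects are the contractible complexes; by Lemma \ref{lem.nullhomotopic=factors through contractible} the chain maps factoring through a contractible complex are exactly the null-homotopic ones; hence the stable category $\underline{\mathcal{C}(\mathcal{A})}$ is, by its very definition, the homotopy category $\mathcal{H}(\mathcal{A})$. Proposition \ref{prop.Happel} then equips $\mathcal{H}(\mathcal{A})$ with a triangulated structure, and it only remains to identify its ingredients with those in the statement.

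For assertion (1), I would use that the suspension functor furnished by Proposition \ref{prop.Happel} is the first cosyzygy functor $\Omega^{-1}$. Applying $\Omega$ to the canonical conflation $0\to X^\bullet[-1]\to\coprod_{n\in\mathbb{Z}}C_n(X^{n-1})\to X^\bullet\to 0$ recalled above, whose middle term is contractible, one sees that the syzygy functor $\Omega$ of $\mathcal{C}(\mathcal{A})$ is naturally isomorphic, as a functor on $\mathcal{H}(\mathcal{A})$, to the one induced by $?[-1]:\mathcal{C}(\mathcal{A})\longrightarrow\mathcal{C}(\mathcal{A})$; here one uses that $?[-1]$, being an exact self-equivalence preserving the semi-split exact structure, sends contractible complexes to contractible complexes and hence descends to the stable category. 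Passing to quasi-inverses, $\Omega^{-1}$ is then naturally isomorphic to the functor induced by $?[1]$, which is exactly assertion (1).

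For assertion (2), I would simply transcribe the description of distinguished triangles given in Proposition \ref{prop.Happel}: there a triangle is one isomorphic in the stable category to a sequence $C\xrightarrow{f}D\xrightarrow{g}E\xrightarrow{h}\Omega^{-1}(C)$ arising from a commutative diagram with exact rows whose bottom row has an injective middle term. Since in $\mathcal{C}(\mathcal{A})$ the conflations are precisely the semi-split exact sequences, this is exactly the recipe that, from a semi-split short exact sequence $0\to X^\bullet\to Y^\bullet\to Z^\bullet\to 0$, produces a connecting morphism $Z^\bullet\longrightarrow X^\bullet[1]$ in $\mathcal{H}(\mathcal{A})$ and declares $X^\bullet\to Y^\bullet\to Z^\bullet\to X^\bullet[1]$ to be distinguished. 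Together with the identification of $\Omega^{-1}$ with $?[1]$ from the previous paragraph, this yields assertion (2).

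The only point requiring a little care — and the one I expect to be the main (minor) obstacle — is the naturality in the identification $\Omega\cong(?[-1])$ on $\mathcal{H}(\mathcal{A})$: one must check that the isomorphisms coming from different choices of conflations coincide in the stable category, i.e.\ differ by null-homotopic maps. This is the standard verification that the syzygy functor is well defined up to canonical isomorphism, specialized to the canonical conflations above, and presents no genuine difficulty.
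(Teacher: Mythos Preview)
Your proposal is correct and follows exactly the approach the paper intends: the corollary is stated without proof, as an immediate consequence of Proposition~\ref{prop.Happel} together with the paragraph preceding it (which identifies $\underline{\mathcal{C}(\mathcal{A})}=\mathcal{H}(\mathcal{A})$ via Lemma~\ref{lem.nullhomotopic=factors through contractible} and notes that the syzygy functor is $?[-1]$). You have simply unpacked these identifications explicitly, which is entirely in line with the paper's treatment.
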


Recall that if $(\mathcal{D},?[1])$ is a triangulated category and
$\mathcal{A}$ is an abelian category, then a \emph{cohomological
functor} $H:\mathcal{D}\longrightarrow\mathcal{A}$ is an additive
functor such that if

\begin{center}
$X\stackrel{f}{\longrightarrow}Y\stackrel{g}{\longrightarrow}Z\stackrel{h}{\longrightarrow}X[1]$
\end{center}
is any triangle in $\mathcal{D}$ and one puts $H^k=H\circ (?[k])$,
for each $k\in\mathbb{Z}$, then we get a long exact sequence

\begin{center}
$...H^{k-1}(Z)\stackrel{H^{k-1}(h)}{\longrightarrow}H^k(X)\stackrel{H^k(f)}{\longrightarrow}H^k(Y)\stackrel{H^k(g)}{\longrightarrow}H^k(Z)\stackrel{H^k(h)}{\longrightarrow}
H^{k+1}(X)\stackrel{H^{k+1}(f)}{\longrightarrow}H^{k+1}(Y)...$
\end{center}

Recall that if $X^\bullet$ is a chain complex and $k\in\mathbb{Z}$,
then the \emph{$k$-th object of homology of $X$} is $H^k(X^\bullet
)=\frac{\text{Ker}(d^k)}{\text{Im}(d^{k-1})}$, where
$d:X^\bullet\longrightarrow X^\bullet$ is the differential. The
assignment $X^\bullet\rightsquigarrow H^k(X^\bullet )$ is the
definition on objects of a functor
$H^k:\mathcal{C}(\mathcal{A})\longrightarrow\mathcal{A}$. The
following is well-known:

\begin{cor} \label{cor.0-homology functor}
Let $\mathcal{A}$ be any abelian  category,
$p:\mathcal{C}(\mathcal{A})\longrightarrow\mathcal{H}(\mathcal{A})$
the projection functor and $k\in\mathbb{Z}$ be an integer. The
functor $H^k:\mathcal{C}(\mathcal{A})\longrightarrow\mathcal{A}$
vanishes on null-homotopic chain maps and there is a unique
$k$-linear functor
$\bar{H}^k:\mathcal{H}(\mathcal{A})\longrightarrow\mathcal{A}$ such
that $\bar{H}^k\circ p=H^k$. Moreover, $\bar{H}^k$ is a
cohomological functor.
\end{cor}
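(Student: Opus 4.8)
The plan is to split the statement into two independent claims: first, that $H^k$ kills null-homotopic chain maps, so that it factors through $p$; and second, that the induced functor $\bar H^k$ is cohomological.

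For the first part, I would show directly that if $f:X^\bullet\longrightarrow Y^\bullet$ is null-homotopic then $H^k(f)=0$. By Lemma \ref{lem.nullhomotopic=factors through contractible}, write $f=\sigma\circ d_X+d_Y\circ\sigma$ for some degree $-1$ morphism $\sigma$. Evaluating in degree $k$, one has $f^k=\sigma^{k+1}d_X^k+d_Y^{k-1}\sigma^k$. Then for $x\in\Ker(d_X^k)$ one computes $f^k(x)=d_Y^{k-1}(\sigma^k(x))\in\Im(d_Y^{k-1})$, so $f^k$ sends cycles to boundaries and hence induces the zero map on $H^k$. This shows $H^k$ vanishes on the null-homotopic chain maps, which by Lemma \ref{lem.nullhomotopic=factors through contractible} are exactly the morphisms factoring through a projective(=injective) object of the semi-split exact structure. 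By the universal property of the projection functor $p_\C$ recalled just after Proposition \ref{prop.Happel} (every functor vanishing on projectives factors through $p$ in a unique way), there is a unique $k$-linear functor $\bar H^k:\K(\mathcal{A})\longrightarrow\mathcal{A}$ with $\bar H^k\circ p=H^k$. One should also note that $H^k$ is clearly $k$-linear and additive, and the additivity of $\bar H^k$ follows from that of $H^k$ and the fact that $p$ is full and additive.

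For the second part, I would use Corollary \ref{cor.triang. structure of homotopy category}: every distinguished triangle in $\K(\mathcal{A})$ is, up to isomorphism, the image under $p$ of one built (as in Proposition \ref{prop.Happel}) from a semi-split short exact sequence $0\rightarrow X^\bullet\stackrel{f}{\longrightarrow}Y^\bullet\stackrel{g}{\longrightarrow}Z^\bullet\rightarrow 0$ in $\C(\mathcal{A})$. Because a semi-split exact sequence is, in each degree $n$, a split (hence exact) sequence in $\mathcal{A}$, it is in particular an honest short exact sequence of complexes; the classical snake-lemma argument then yields the long exact homology sequence $\cdots\rightarrow H^{k}(X^\bullet)\rightarrow H^{k}(Y^\bullet)\rightarrow H^{k}(Z^\bullet)\stackrel{\delta}{\rightarrow}H^{k+1}(X^\bullet)\rightarrow\cdots$, and one checks that the connecting morphism $\delta$ agrees with $H^k$ applied to the connecting morphism $h:Z^\bullet\longrightarrow X^\bullet[1]$ of the triangle (this is exactly the morphism $h$ produced in the diagram preceding Proposition \ref{prop.Happel}, specialized to $\C(\mathcal{A})$ with $I=\coprod_n C_n(X_{n-1})$). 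Since $\bar H^k\circ p=H^k$ and $H^k(X^\bullet[1])=H^{k+1}(X^\bullet)$, rewriting this long exact sequence in terms of $\bar H^k$ and the shift gives precisely the defining exact sequence of a cohomological functor. Finally, since every triangle in $\K(\mathcal{A})$ is isomorphic to one of this form and $\bar H^k$ is a functor, the long exact sequence holds for all triangles.

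The only mildly delicate point is the identification of the homological connecting map $\delta$ with $\bar H^k(h)$, i.e., checking that the abstract connecting morphism coming from Happel's triangulated structure on a Frobenius stable category really computes the usual snake-lemma boundary on homology. This is where one must unwind the construction in the commutative diagram preceding Proposition \ref{prop.Happel}: chasing an element of $\Ker(d_Z^k)$ through a degree-$k$ splitting of $g$, applying $d_Y$, pulling back along $f$, and comparing with the explicit form of $h$ read off from the diagram with $I=\coprod_n C_n(X_{n-1})$. I expect this diagram chase to be the main (though entirely routine) obstacle; everything else is formal and follows from results already established in the excerpt.
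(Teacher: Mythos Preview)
The paper states this corollary without proof, prefacing it only with ``The following is well-known''. Your proposal is correct and follows exactly the standard argument one would give here: direct verification that a homotopy kills the induced map on homology, factorization via the universal property of the stable category, and the long exact homology sequence attached to a semi-split short exact sequence. Your identification of the one genuinely non-formal step --- matching the snake-lemma connecting map with $\bar H^k(h)$ for the $h$ built in Happel's construction --- is accurate, and the diagram chase you sketch is the right way to handle it.
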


\begin{rem}
We will forget the overlining of $H$ and will still denote by $H^k$
the functor $\bar{H}^k:\K(\mathcal{A})\longrightarrow\mathcal{A}$.
\end{rem}

\subsection{Localization of triangulated categories}

As in the previous sections, the symbol $\mathcal{A}$ will denote a
fixed abelian category. We will use the term \emph{big category} to
denote a concept
 defined as an usual category, but where we do not require that the
 morphsims between two objects form a set.  The following is well-known
 (see \cite[Chapter 1]{GZ}):

 \begin{prop} \label{prop.Gabriel-Zisman}
 Given a category $\mathcal{C}$ and a class $\mathcal{S}$ of
 morphisms in $\mathcal{C}$,  there is a big category
 $\mathcal{C}[\mathcal{S}^{-1}]$, together with a dense functor
 $q:\mathcal{C}\longrightarrow\mathcal{C}[\mathcal{S}^{-1}]$
 satisfying the following properties:

 \begin{enumerate}
 \item $q(s)$ is an isomorphism,  for each $s\in\mathcal{S}$;
 \item if $F:\mathcal{C}\longrightarrow\mathcal{D}$ is any functor
 between categories such that $F(s)$ is an isomorphism for each
 $s\in\mathcal{S}$, then there is a unique functor
 $\bar{F}:\mathcal{C}[\mathcal{S}^{-1}]\longrightarrow\mathcal{D}$ such that $\bar{F}\circ
 q=F$.
 \end{enumerate}
 \end{prop}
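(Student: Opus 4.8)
The statement to prove is the Gabriel--Zisman localization (Proposition \ref{prop.Gabriel-Zisman}): existence of a big category $\mathcal{C}[\mathcal{S}^{-1}]$ with a functor $q$ that inverts $\mathcal{S}$ and is universal among such functors. The plan is to construct $\mathcal{C}[\mathcal{S}^{-1}]$ explicitly as a category of \emph{formal zigzag paths} and then verify the universal property directly.

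First I would fix the object class of $\mathcal{C}[\mathcal{S}^{-1}]$ to be exactly $\text{Ob}(\mathcal{C})$, and $q$ to be the identity on objects. Then, for objects $X,Y$, I would consider finite zigzag diagrams in $\mathcal{C}$ of the form
\[
X = Z_0 \longleftrightarrow Z_1 \longleftrightarrow \cdots \longleftrightarrow Z_n = Y,
\]
where each segment is either an arbitrary morphism of $\mathcal{C}$ pointing forward, or a morphism of $\mathcal{S}$ pointing backward (to be formally inverted). On the set of such zigzags I would impose the equivalence relation generated by: (i) composing two consecutive forward arrows into their composite in $\mathcal{C}$; (ii) deleting a backward-then-forward or forward-then-backward pair $Z \xleftarrow{s} Z' \xrightarrow{s} Z$ with $s\in\mathcal{S}$ (and its mirror); (iii) inserting/removing identity arrows; (iv) and the compatibility moves that let one slide a forward arrow past a commuting square. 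The hom-\emph{class} $\mathcal{C}[\mathcal{S}^{-1}](X,Y)$ is then the class of equivalence classes of zigzags from $X$ to $Y$; composition is concatenation of zigzags, which is visibly associative and unital with the empty zigzag as identity. The functor $q$ sends a morphism $f:X\to Y$ to the one-segment forward zigzag; by construction $q(s)$ for $s\in\mathcal{S}$ has an inverse, namely the one-segment backward zigzag, so (1) holds. This is where one must be a little careful about set theory: the collection of finite zigzags between two fixed objects need not form a set (there is no bound on the length $n$ nor on the intermediate objects), which is precisely why the target is only a \emph{big} category and not a category in the sense used elsewhere in the paper; I would flag this explicitly rather than try to cut it down.

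For the universal property (2), given $F:\mathcal{C}\to\mathcal{D}$ with $F(s)$ invertible for all $s\in\mathcal{S}$, I would define $\bar F$ to be $F$ on objects, and on a zigzag I would send each forward segment $g$ to $F(g)$, each backward segment $s$ to $F(s)^{-1}$, and then compose in $\mathcal{D}$. One checks this is constant on equivalence classes: moves (i)--(iii) are immediate from functoriality of $F$ and from $F(s)F(s)^{-1}=\text{id}=F(s)^{-1}F(s)$, and move (iv) follows because $F$ preserves commuting squares. This $\bar F$ is a functor, satisfies $\bar F\circ q=F$ by the one-segment case, and is forced on objects and on all zigzags by the requirement $\bar F\circ q=F$ together with the fact that $\bar F$ must be a functor sending $q(s)$ to $F(s)$ (hence $q(s)^{-1}$ to $F(s)^{-1}$) and every zigzag is a composite of $q(g)$'s and $q(s)^{-1}$'s; this gives uniqueness. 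Density of $q$ is automatic since $q$ is the identity on objects and thus bijective (hence essentially surjective) on objects.

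The main obstacle is not any single hard argument but rather the bookkeeping of the equivalence relation in step (iv): one must list enough elementary moves that the relation is genuinely a congruence (compatible with concatenation on both sides) and rich enough that $\bar F$ is well-defined, yet not so rich that it collapses more than intended. The clean way to organize this, which I would follow, is to define composition and the relation simultaneously so that $\mathcal{C}[\mathcal{S}^{-1}]$ is presented as a quotient of the free category on the quiver with $\mathcal{C}$-arrows plus one reversed arrow $\bar s$ per $s\in\mathcal{S}$, modulo the relations coming from composition in $\mathcal{C}$ and from $s\bar s=\text{id}$, $\bar s s=\text{id}$; a quotient of a (big) category by a congruence is again a (big) category, and its universal property among functors killing the relations is standard and yields exactly (1) and (2). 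I would refer to \cite[Chapter 1]{GZ} for the verification of the congruence axioms, since the paper only needs the statement, and present the construction above as a sketch.
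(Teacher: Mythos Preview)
Your proposal is correct and matches the paper's approach: the paper gives no proof of its own for this proposition, simply citing \cite[Chapter 1]{GZ}, and your sketch is precisely the standard Gabriel--Zisman construction from that reference (free category on the quiver of $\mathcal{C}$-arrows together with formal inverses $\bar s$ for $s\in\mathcal{S}$, modulo the composition relations of $\mathcal{C}$ and $s\bar s=\mathrm{id}$, $\bar s s=\mathrm{id}$). One small comment: your move (iv), ``sliding a forward arrow past a commuting square'', is unnecessary and a bit out of place here---it evokes the calculus-of-fractions setting rather than the general localization---but since you immediately pass to the clean free-category-modulo-relations formulation, this wobble is harmless and your verification of the universal property goes through as written.
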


\begin{defi} \label{def.localization of a category}
$\mathcal{C}[\mathcal{S}^{-1}]$ is called the \emph{localization of
$\mathcal{C}$ with respect to $\mathcal{S}$}.
\end{defi}

\begin{rems} \label{rem.big category-enough for usual category}
\begin{enumerate}
\item The pair $(\mathcal{C}[\mathcal{S}^{-1}],q)$ is
uniquely determined up to equivalence.
\item A sufficient condition to guarantee that
$\mathcal{C}[\mathcal{S}^{-1}]$ is an usual category is that the
functor $q:\mathcal{C}\longrightarrow\mathcal{C}[\mathcal{S}^{-1}]$
has a left or a right adjoint. If, say,
$R:\mathcal{C}[\mathcal{S}^{-1}]\longrightarrow\mathcal{C}$ is right
adjoint to $q$, then we have a bijection
$\mathcal{C}[\mathcal{S}^{-1}](q(X),q(Y))\cong\mathcal{C}(X,Rq(Y))$,
for all $X,Y\in\mathcal{C}$. This proves that the morphisms between
two objects of $\mathcal{C}[\mathcal{S}^{-1}]$ form a set since $q$
is dense. A dual argument works in case $q$ has a left adjoint.
\end{enumerate}
\end{rems}

The explicit definition of $\mathcal{C}[\mathcal{S}^{-1}]$ was given
in \cite[Chapter 1]{GZ}, but the morphisms in this category are
intractable in general. Then Gabriel and Zisman introduced some
condition on $\mathcal{S}$ which makes much more tractable the
morphisms in $\mathcal{C}[\mathcal{S}^{-1}]$.

\begin{defi} \label{def.calculus of fractions}
We shall say that $\mathcal{S}$ \emph{admits a calculus of left
fractions} when it satisfies the following conditions:

\begin{enumerate}
\item $1_X\in\mathcal{S}$, for all $X\in\mathcal{C}$;
\item for each diagram
$X'\stackrel{f'}{\longrightarrow}Y'\stackrel{s'}{\longleftarrow}Y$
in $\mathcal{C}$, with $s'\in\mathcal{S}$, there exists a diagram
$X'\stackrel{s}{\longleftarrow}X\stackrel{f}{\longrightarrow}Y$ such
that $s\in\mathcal{S}$ and $f'\circ s=s'\circ f$.
\item if $f,g:X\longrightarrow Y$ are morphisms in $\mathcal{C}$ and
there exists $t\in\mathcal{S}$ such that $t\circ f=t\circ g$, then
there exists $s\in\mathcal{S}$ such that $f\circ s=g\circ s$.
\end{enumerate}
We say that $\mathcal{S}$ \emph{admits a calculus of right
fractions} when it satisfies the duals of properties 1-3 above.
Finally, we will say that $\mathcal{S}$ \emph{admits a calculus of
fractions}, or that $\mathcal{S}$ is a \emph{multiplicative system
of morphisms}, when it admits both a calculus of left fractions and
a calculus of right fractions.
\end{defi}

When $\mathcal{S}$ admits a calculus of left fractions, the
morphisms in $\mathcal{C}[\mathcal{S}^{-1}]$ have a more tractable
form. Indeed, if
$X,Y\in\text{Ob}(C)=\text{Ob}(\mathcal{C}[\mathcal{S}^{-1}])$, then
$\mathcal{C}[\mathcal{S}^{-1}](X,Y)$ consists of the formal left
fractions $s^{-1}f$. Such a formal left fraction is the equivalence
class of the pair $(s,f)$ with respect to some equivalence relation
defined in the class of diagrams
$X\stackrel{s}{\longleftarrow}X'\stackrel{f}{\longrightarrow}Y$,
with $s\in\mathcal{S}$.  We refer the reader to \cite[Chapter 1]{GZ}
for the precise definition of the equivalence relation and the
composition of morphisms in $\mathcal{C}[\mathcal{S}^{-1}]$.

The process of localizing a category with respect to a class of
morphism was developed in the context of triangulated categories by
Verdier (see \cite[Section II.2]{V}).

\begin{defi} \label{def.multipl.system compatible with triang.}
Let $\mathcal{D}$ be a triangulated category. A multiplicative
system $\mathcal{S}$ in $\mathcal{D}$ is said to be \emph{compatible
with the triangulation} when the following properties hold:

\begin{enumerate}
\item if $s:X\longrightarrow Y$ is a morphism in $\mathcal{S}$ and
$n\in\mathbb{Z}$, then $s[n]:X[n]\longrightarrow Y[n]$ is in
$\mathcal{S}$;
\item each commutative diagram in $\mathcal{D}$

\[
\xymatrix{
X\ar[r]^{f}\ar[d]^{s} & Y\ar[r]^{g}\ar[d]^{s'}&Z\ar[r]^{h}\ar[d]^{s''}&X[1]\ar[d]^{s[1]} \\
X'\ar[r]^{f'}&Y'\ar[r]^{g'}&Z'\ar[r]^{h'}& X'[1] }
\]
\noindent where the rows are triangles and where
$s,s'\in\mathcal{S}$,  can be completed commutatively by an arrow
$s''$ which is in $\mathcal{S}$.
\end{enumerate}
\end{defi}

As we can expect, one gets:

\begin{prop}[Verdier, Th\'eor\`eme 2.2.6] \label{prop.Verdier}
Let $\mathcal{D}$ be a triangulated category and $\mathcal{S}$ be a
multiplicative system compatible with the triangulation in
$\mathcal{D}$. There exists a unique structure of triangulated
category in $\mathcal{D}[\mathcal{S}^{-1}]$ such that the canonical
functor $q:\mathcal{D}\longrightarrow\mathcal{D}[\mathcal{S}^{-1}]$
is triangulated. Moreover, if $\mathcal{A}$ is an abelian category
and $H:\mathcal{D}\longrightarrow\mathcal{A}$ is a cohomological
functor such that $H(s)$ is an isomorphism, for each
$s\in\mathcal{S}$, then there is a unique cohomological funtor
$\tilde{H}:\mathcal{D}[\mathcal{S}^{-1}]\longrightarrow\mathcal{A}$
such that $\tilde{H}\circ q=H$.
\end{prop}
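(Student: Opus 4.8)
The plan is to transport the triangulated structure of $\mathcal{D}$ along the localization functor $q\colon\mathcal{D}\to\mathcal{D}[\mathcal{S}^{-1}]$ supplied by Proposition~\ref{prop.Gabriel-Zisman}. The technical backbone is a reduction lemma that I would prove first: since $\mathcal{S}$ admits a calculus of fractions, every morphism of $\mathcal{D}[\mathcal{S}^{-1}]$ is a formal fraction $s^{-1}f$ as recalled above, with $q(s)$ invertible; hence, by replacing sources and targets by isomorphic objects via elements of $\mathcal{S}$, any finite diagram of morphisms of $\mathcal{D}[\mathcal{S}^{-1}]$ is isomorphic, in $\mathcal{D}[\mathcal{S}^{-1}]$, to the $q$-image of a diagram in $\mathcal{D}$. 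The Ore-type conditions~(2) and~(3) of Definition~\ref{def.calculus of fractions} are exactly what allow one to bring several fractions over a common denominator in this normalization. I would also observe that, by condition~(1) of Definition~\ref{def.multipl.system compatible with triang.}, the shift $?[1]$ and its inverse send $\mathcal{S}$ into $\mathcal{S}$, so by the universal property of $q$ they descend to mutually inverse autoequivalences of $\mathcal{D}[\mathcal{S}^{-1}]$ compatible with $q$; this is the suspension functor of the localized category.

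Next I would declare the distinguished triangles of $\mathcal{D}[\mathcal{S}^{-1}]$ to be those sequences isomorphic in $\mathcal{D}[\mathcal{S}^{-1}]$ to $q(X)\to q(Y)\to q(Z)\to q(X)[1]$ for a triangle $X\to Y\to Z\to X[1]$ of $\mathcal{D}$, and then check the axioms. Existence of cones and the rotation axiom follow at once from the reduction lemma together with the corresponding facts in $\mathcal{D}$: any morphism of the localization is isomorphic to some $q(f)$, whose cone is $q$ of the cone of $f$. For the completion of morphisms of triangles and for the octahedral axiom, I would use the reduction lemma to bring all the triangles and all the given comparison morphisms into the image of $q$ simultaneously, invoke the same axiom inside $\mathcal{D}$, and push the conclusion down by $q$; the genuine use of the compatibility hypothesis occurs precisely here, through condition~(2) of Definition~\ref{def.multipl.system compatible with triang.}, which ensures that when a square of $\mathcal{D}[\mathcal{S}^{-1}]$ is straightened back to $\mathcal{D}$ the third comparison morphism may again be chosen in $\mathcal{S}$, hence becomes invertible after applying $q$. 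That $q$ is triangulated is then immediate from the definition of the triangles, and uniqueness of the triangulated structure holds because any triangulated structure for which $q$ is triangulated must contain every $q$-image of a triangle of $\mathcal{D}$ and be closed under isomorphism, while by the reduction lemma and uniqueness of cones it can contain nothing more.

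For the last assertion, let $H\colon\mathcal{D}\to\mathcal{A}$ be cohomological with $H(s)$ an isomorphism for all $s\in\mathcal{S}$. Proposition~\ref{prop.Gabriel-Zisman} already produces a unique functor $\tilde{H}\colon\mathcal{D}[\mathcal{S}^{-1}]\to\mathcal{A}$ with $\tilde{H}\circ q=H$, so only its cohomological character is at issue. Additivity is inherited from $H$ because $q$ is additive and essentially surjective and finite biproducts of $\mathcal{D}[\mathcal{S}^{-1}]$ are $q$-images of biproducts of $\mathcal{D}$. Given a distinguished triangle of $\mathcal{D}[\mathcal{S}^{-1}]$, the reduction lemma rewrites it, up to isomorphism, as $q$ of a triangle of $\mathcal{D}$; applying $\tilde{H}$ and using $\tilde{H}\circ q=H$ together with the compatibility of $\tilde{H}$ with shift converts the resulting sequence into the long sequence obtained by applying $H$ to a triangle of $\mathcal{D}$, which is exact since $H$ is cohomological. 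Thus $\tilde{H}$ is cohomological, and its uniqueness is the uniqueness already granted by Proposition~\ref{prop.Gabriel-Zisman}.

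I expect the main obstacle to be the bookkeeping in the completion axiom and, above all, in the octahedral axiom: one must normalize several fractions to a common denominator so that the four triangles involved and all the maps between them lie in the image of $q$, and then check that the new morphisms produced by the octahedron in $\mathcal{D}$ do descend to the morphisms required in $\mathcal{D}[\mathcal{S}^{-1}]$ --- this is the one step where compatibility condition~(2) is genuinely indispensable. Everything else is a routine, if lengthy, transport of structure along $q$.
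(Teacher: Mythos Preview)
The paper does not actually prove this proposition: it is stated with attribution to Verdier (Th\'eor\`eme 2.2.6 of \cite{V}) and no argument is given in the text. Your outline is the standard transport-of-structure proof one finds in Verdier's original work and in the secondary references cited in the paper (e.g.\ \cite{N}, \cite{Kr1}), so in that sense your approach coincides with what the paper implicitly relies on.

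Your sketch is essentially correct and hits the key points: descent of the shift via condition~(1) of Definition~\ref{def.multipl.system compatible with triang.}, the definition of triangles as isomorphs of $q$-images, and the use of the Ore conditions to normalize diagrams. One small caution: your ``reduction lemma'' as stated---that any finite diagram in $\mathcal{D}[\mathcal{S}^{-1}]$ is isomorphic to the $q$-image of a diagram in $\mathcal{D}$---is a bit stronger than what the calculus of fractions gives you directly, and in practice the verification of TR3 and TR4 proceeds by repeatedly applying the roof construction and condition~(2) of Definition~\ref{def.multipl.system compatible with triang.} rather than by invoking a single global normalization. This is exactly the bookkeeping you flag at the end, and it is indeed the only place where genuine care is needed; the rest is, as you say, routine.
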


\subsection{Semi-orthogonal decompositions. Brown representability theorem}

Let $\mathcal{D}$ be a triangulated category. A \emph{t-structure}
in $\mathcal{D}$ (see \cite{BBD}) is a pair
$(\mathcal{U},\mathcal{W})$ of full subcategories which satisfy the
 following  properties:

\begin{enumerate}
\item[i)] $\mathcal{D}(U,W[-1])=0$, for all
$U\in\mathcal{U}$ and $W\in\mathcal{W}$;
\item[ii)] $\mathcal{U}[1]\subseteq\mathcal{U}$;
\item[iii)] For each $X\in Ob(\mathcal{D})$, there is a triangle $U\longrightarrow X\longrightarrow
V\stackrel{+}{\longrightarrow}$ in $\mathcal{D}$, where
$U\in\mathcal{U}$ and $V\in\mathcal{W}[-1]$.
\end{enumerate}
It is easy to see that in such case $\mathcal{W}=\mathcal{U}^\perp
[1]$ and $\mathcal{U}={}^\perp (\mathcal{W}[-1])={}^\perp
(\mathcal{U}^\perp )$. For this reason, we will write a t-structure
as $(\mathcal{U},\mathcal{U}^\perp [1])$. We will call $\mathcal{U}$
and $\mathcal{U}^\perp$ the \emph{aisle} and the \emph{co-aisle} of
the t-structure, respectively. The objects $U$ and $V$ in the above
triangle are uniquely determined by $X$, up to isomorphism, and
define functors
$\tau_\mathcal{U}:\mathcal{D}\longrightarrow\mathcal{U}$ and
$\tau^{\mathcal{U}^\perp}:\mathcal{D}\longrightarrow\mathcal{U}^\perp$
which are right and left adjoints to the respective inclusion
functors. We call them the \emph{left and right truncation functors}
with respect to the given t-structure.

Recall that a subcategory $\mathcal{X}$ of a triangulated category
$\D$ is \emph{closed under extensions} when, given any triangle
$X'\longrightarrow Y\longrightarrow X\stackrel{+}{\longrightarrow}$
in $\D$, with $X,X'\in\mathcal{X}$, the object $Y$ is also in
$\mathcal{X}$.

\begin{defi}
Let $\D$ be a triangulated category and $\mathcal{U}\subseteq\D$ a
full subcategory. We say that $\mathcal{U}$ is

\begin{enumerate}
\item \emph{suspended} when it is closed under extensions in $\D$ and $\mathcal{U}[1]\subseteq\mathcal{U}$;
\item \emph{triangulated} when it is closed under extensions  and $\mathcal{U}[1]=\mathcal{U}$
\item \emph{thick} when it is triangulated and closed under taking
direct summands.
\end{enumerate}
\end{defi}

{\bf Notation and terminology.-} Given a class $\mathcal{S}$ of
objects of $\D$, we shall denote by $\text{susp}_\D(\mathcal{S})$
(resp. $\text{tria}_\D(\mathcal{S})$, resp.
$\text{thick}_\D(\mathcal{S})$) the smallest suspended (resp.
triangulated, resp. thick) subcategory of $\D$ containing
$\mathcal{S}$. If $\mathcal{U}$ is any suspended (resp.
triangulated, resp. thick) subcategory of $\D$ and
$\mathcal{U}=\text{susp}_\D(\mathcal{S})$ (resp.
$\mathcal{U}=\text{tria}_\D(\mathcal{S})$, resp.
$\text{thick}_\D(\mathcal{S})$), we will say that $\mathcal{U}$ is
the \emph{suspended (resp. triangulated, resp. thick) subcategory of
$\D$ generated by $\mathcal{S}$}. When $\D$ has coproducts, we will
denote by $\text{Susp}_\D(\mathcal{S})$ (resp.
$\text{Tria}_\D(\mathcal{S})$) the smallest suspended (resp.
triangulated) subcategory of $D$ containing $\mathcal{S}$ which is
closed under taking coproducts in $\D$.

\begin{rem}
If $\D$ has coproducts and $\mathcal{T}$ is a full subcategory
closed under taking coproducts, then it is a triangulated
subcategory if, and only if, it is thick (see \cite[Prop.1.6.8 and
its proof]{N}).
\end{rem}

\vspace*{0.3cm}

It is an easy exercise to prove now the following useful result.

\begin{lemma} \label{lem.triang.functors preserves susp, tria..}
Let $F:\D\longrightarrow\D'$ be a triangulated functor between
triangulated categories. If $\mathcal{S}\subseteq\D$ is any class of
objects, then
$F(\text{susp}_\D(\mathcal{S}))\subseteq\text{susp}_{\D'}(F(\mathcal{S}))$
(resp.
$F(\text{tria}_\D(\mathcal{S}))\subseteq\text{tria}_{\D'}(F(\mathcal{S}))$,
resp.
$F(\text{thick}_\D(\mathcal{S}))\subseteq\text{thick}_{\D'}(F(\mathcal{S}))$).
When $\D$ and $\D'$ have coproducts and $F$ preserves coproducts, we
also have that
$F(\text{Susp}_\D(\mathcal{S}))\subseteq\text{Susp}_{\D'}(F(\mathcal{S}))$
(resp.
$F(\text{Tria}_\D(\mathcal{S}))\subseteq\text{Tria}_{\D'}(F(\mathcal{S}))$).
\end{lemma}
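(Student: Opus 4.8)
The plan is to use the standard ``preimage subcategory'' trick together with the minimality built into the definitions of $\text{susp}_\D(\mathcal{S})$, $\text{tria}_\D(\mathcal{S})$, $\text{thick}_\D(\mathcal{S})$ and their coproduct-closed analogues. All the assertions have formally identical proofs, so I would state the argument once for the suspended case and then indicate the trivial modifications.

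For the suspended case, I would set $\mathcal{V}:=\text{susp}_{\D'}(F(\mathcal{S}))$ and consider the full subcategory $\mathcal{W}:=\{X\in\D:F(X)\in\mathcal{V}\}$ of $\D$. The goal is reduced to showing that $\mathcal{W}$ is a suspended subcategory of $\D$ containing $\mathcal{S}$: once this is established, minimality of $\text{susp}_\D(\mathcal{S})$ forces $\text{susp}_\D(\mathcal{S})\subseteq\mathcal{W}$, which is exactly the desired inclusion $F(\text{susp}_\D(\mathcal{S}))\subseteq\mathcal{V}$. That $\mathcal{S}\subseteq\mathcal{W}$ is immediate from $F(\mathcal{S})\subseteq\mathcal{V}$, and $\mathcal{W}$ is closed under isomorphisms because $\mathcal{V}$ is, so it is a legitimate subcategory in the paper's convention. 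The inclusion $\mathcal{W}[1]\subseteq\mathcal{W}$ follows from $F(X[1])\cong F(X)[1]\in\mathcal{V}[1]\subseteq\mathcal{V}$, using that $F$ is triangulated and $\mathcal{V}$ suspended; and closure under extensions follows because, given a triangle $X'\longrightarrow Y\longrightarrow X\stackrel{+}{\longrightarrow}$ in $\D$ with $X,X'\in\mathcal{W}$, applying $F$ yields a triangle $F(X')\longrightarrow F(Y)\longrightarrow F(X)\stackrel{+}{\longrightarrow}$ in $\D'$ with outer terms in $\mathcal{V}$, whence $F(Y)\in\mathcal{V}$ by extension-closure of $\mathcal{V}$.

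For the triangulated case I would take $\mathcal{V}:=\text{tria}_{\D'}(F(\mathcal{S}))$; the same $\mathcal{W}$ then satisfies $\mathcal{W}[1]=\mathcal{W}$ (apply the shift computation to both $X[1]$ and $X[-1]$, using $\mathcal{V}[1]=\mathcal{V}$), so $\mathcal{W}$ is triangulated. For the thick case I would take $\mathcal{V}:=\text{thick}_{\D'}(F(\mathcal{S}))$ and add the observation that if $X\oplus X'\in\mathcal{W}$ then $F(X)\oplus F(X')\cong F(X\oplus X')\in\mathcal{V}$, so $F(X),F(X')\in\mathcal{V}$ by thickness of $\mathcal{V}$, i.e.\ $\mathcal{W}$ is closed under direct summands. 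Finally, when $\D$ and $\D'$ have coproducts and $F$ preserves them, I would take $\mathcal{V}:=\text{Susp}_{\D'}(F(\mathcal{S}))$ (resp.\ $\text{Tria}_{\D'}(F(\mathcal{S}))$) and note that $\mathcal{W}$ is additionally closed under coproducts, since $F(\coprod_iX_i)\cong\coprod_iF(X_i)\in\mathcal{V}$ whenever all $X_i\in\mathcal{W}$; the minimality argument then applies verbatim.

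I do not expect any genuine obstacle: this is essentially the ``easy exercise'' advertised before the statement. The only points deserving a line of care are that $\mathcal{W}$ inherits closure under isomorphisms (and, in the thick case, under summands) from $\mathcal{V}$, so that the minimality of $\text{susp}_\D(\mathcal{S})$ etc.\ can legitimately be invoked, and that a triangulated functor sends a distinguished triangle on $X',Y,X$ to a \emph{distinguished} triangle on $F(X'),F(Y),F(X)$, which is what makes the extension- and summand-closure steps go through.
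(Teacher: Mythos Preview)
Your proposal is correct and is exactly the standard argument one expects here; the paper in fact omits the proof entirely, introducing the lemma with ``It is an easy exercise to prove now the following useful result.'' Your preimage-subcategory trick, together with the minimality of the generated subcategories, is precisely the intended exercise.
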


The following result of Keller and Vossieck \cite[Proposition
1.1]{KV} is fundamental to deal with t-structures and
semi-orthogonal decompositions.

\begin{prop} \label{prop.Keller-Vossieck}
A full subcategory $\mathcal{U}$ of $\D$ is the aisle of a
t-structure if, and only if, it is a suspended subcategory such that
the inclusion functor $\mathcal{U}\hookrightarrow\D$ has a right
adjoint.
\end{prop}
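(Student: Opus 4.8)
\textbf{Proof proposal for Proposition \ref{prop.Keller-Vossieck}.}

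The plan is to prove both implications directly from the definition of a t-structure. For the ``only if'' direction, suppose $\mathcal{U}$ is the aisle of a t-structure $(\mathcal{U},\mathcal{U}^\perp[1])$. Axiom (ii) already says $\mathcal{U}[1]\subseteq\mathcal{U}$, so the first thing I would check is that $\mathcal{U}$ is closed under extensions; this follows by a routine application of the octahedral axiom (or the $3\times 3$ lemma) to a triangle $X'\longrightarrow Y\longrightarrow X\stackrel{+}{\longrightarrow}$ with $X',X\in\mathcal{U}$, using that $\mathcal{D}(U,W[-1])=0$ forces the connecting map $X\longrightarrow X'[1]$ to have its ``$\mathcal{U}^\perp[1]$-component'' vanish. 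Hence $\mathcal{U}$ is suspended. The right adjoint to the inclusion is then exactly the truncation functor $\tau_{\mathcal{U}}$ built from the defining triangle in axiom (iii): given $X$, apply $\mathcal{D}(U,-)$ for $U\in\mathcal{U}$ to the triangle $\tau_{\mathcal{U}}X\longrightarrow X\longrightarrow \tau^{\mathcal{U}^\perp}X\stackrel{+}{\longrightarrow}$ and use $\mathcal{D}(U,\tau^{\mathcal{U}^\perp}X)=0=\mathcal{D}(U,(\tau^{\mathcal{U}^\perp}X)[-1])$ (the latter because $\tau^{\mathcal{U}^\perp}X[-1]\in\mathcal{U}^\perp$, using $\mathcal{U}[1]\subseteq\mathcal{U}$) to get $\mathcal{D}(U,\tau_{\mathcal{U}}X)\cong\mathcal{D}(U,X)$ naturally in $X$.

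For the ``if'' direction, suppose $\mathcal{U}$ is suspended and the inclusion $i:\mathcal{U}\hookrightarrow\D$ has a right adjoint $\tau:\D\longrightarrow\mathcal{U}$. I would set $\mathcal{W}:=\mathcal{U}^\perp[1]$ and verify the three axioms. Axiom (ii) is given. For axiom (iii), given $X\in\D$, let $\eta_X:i\tau X\longrightarrow X$ be the counit and complete it to a triangle $i\tau X\stackrel{\eta_X}{\longrightarrow}X\longrightarrow V\stackrel{+}{\longrightarrow}$. The key claim is $V[-1]\in\mathcal{U}^\perp$, equivalently $\mathcal{D}(U,V[-1])=0$ for all $U\in\mathcal{U}$; applying $\mathcal{D}(U,-)$ to the triangle and using $\mathcal{D}(U,i\tau X)\cong\mathcal{D}(iU,X)$ (adjunction) together with the fact that $\mathcal{D}(U[1],i\tau X)\twoheadrightarrow\mathcal{D}(U[1],X)$ is surjective (again adjunction, since $U[1]\in\mathcal{U}$), the long exact sequence forces $\mathcal{D}(U,V[-1])=0$. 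Then axiom (i) $\mathcal{D}(U,W[-1])=\mathcal{D}(U,\mathcal{U}^\perp)=0$ holds by definition of $\mathcal{U}^\perp$, and we are done.

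The main obstacle is the closure-under-extensions argument in the ``only if'' part and, symmetrically, pinning down that the counit-triangle really does land in $\mathcal{U}^\perp[1]$ in the ``if'' part: both hinge on extracting vanishing of the relevant $\Hom$ groups from the adjunction/orthogonality, and on a careful bookkeeping of shifts (distinguishing $\mathcal{U}^\perp$ from $\mathcal{U}^\perp[1]$ and using $\mathcal{U}[1]\subseteq\mathcal{U}$ at the right moment). Everything else is formal manipulation with triangles and adjunctions, so I would present those vanishing computations in full and treat the triangle completions as routine, citing Lemma \ref{lem.triang.functors preserves susp, tria..} and the earlier discussion of truncation functors where convenient.
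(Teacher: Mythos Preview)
The paper does not prove this proposition; it simply cites \cite[Proposition 1.1]{KV}. Your ``only if'' direction is correct in outline (though closure under extensions is obtained more cleanly from the identity $\mathcal{U}={}^\perp(\mathcal{U}^\perp)$, recorded in the paper just before the proposition, together with the long exact $\Hom$-sequence; no octahedral axiom is needed).

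There is, however, a genuine gap in your ``if'' direction: a shift error. Axiom (iii) requires $V\in\mathcal{W}[-1]=\mathcal{U}^\perp$, i.e.\ $\D(U,V)=0$ for every $U\in\mathcal{U}$, whereas your long exact sequence argument only yields $\D(U,V[-1])=0$. To extract $\D(U,V)=0$ from
\[
\D(U,i\tau X)\xrightarrow{\ \eta_*\ }\D(U,X)\longrightarrow\D(U,V)\longrightarrow\D(U,(i\tau X)[1])\xrightarrow{\ (\eta[1])_*\ }\D(U,X[1])
\]
you would need $(\eta[1])_*$ injective; but this map identifies with $\eta_*:\D(U[-1],i\tau X)\to\D(U[-1],X)$, and $U[-1]$ need not lie in $\mathcal{U}$, so the adjunction does not apply there. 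Notice that your argument in this direction never invokes the extension-closure part of ``suspended'', and that hypothesis is precisely what closes the gap. The standard fix (this is the Keller--Vossieck argument): given $f:U\to V$, compose with the connecting map $h:V\to(i\tau X)[1]$ and complete $hf$ to a triangle $i\tau X\to E\to U\xrightarrow{hf}(i\tau X)[1]$; extension-closure gives $E\in\mathcal{U}$. By (TR3) there is a map $\phi:E\to X$ producing a morphism of triangles, and applying the adjunction to $E\in\mathcal{U}$ and then to $i\tau X\in\mathcal{U}$ shows that $i\tau X\to E$ is a split monomorphism, hence $hf=0$. Since every map $U\to X$ factors through $\eta$ (adjunction again), the map $p_*:\D(U,X)\to\D(U,V)$ is zero, so $h_*$ is injective and therefore $f=0$.
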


The type of t-structure which is most useful to us in this paper is
the following.

\begin{defi}
A \emph{semi-orthogonal decomposition} or \emph{Bousfield
localization pair} in $\D$ is a t-structure
$(\mathcal{U},\mathcal{U}^\perp [1])$ such that
$\mathcal{U}[1]=\mathcal{U}$ (equivalently, such that
$\mathcal{U}^\perp =\mathcal{U}^\perp [1]$). That is, a
semi-orthogonal decomposition is a t-structure such that
$\mathcal{U}$ (resp. $\mathcal{U}^\perp$) is a triangulated
subcategory of $\D$. In such case we will use
$(\mathcal{U},\mathcal{U}^\perp )$ instead of
$(\mathcal{U},\mathcal{U}^\perp [1])$ to denote the semi-orthogonal
decomposition.
\end{defi}

Certain adjunctions of triangulated functors provide semi-orthogonal
decompositions.

\begin{prop} \label{prop.semi-orthogonal pair from adjunction}
Let $F:\D\longrightarrow\D'$ and $G:\D'\longrightarrow\D$ be
triangulated functors between triangulated categories such that
$(F,G)$ is an adjoint pair. The following assertions hold:

\begin{enumerate}
\item If $F$ is fully faithful, then $(\text{Im}(F),\text{Ker}(G))$
is a semi-orthogonal decomposition of $\D'$;
\item If $G$ is fully faitfhful, then $(\text{Ker}(F),\text{Im}(G))$
is a semi-orthogonal decomposition of $\D$.
\end{enumerate}
\end{prop}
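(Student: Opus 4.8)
The plan is to prove both statements at once, since (2) follows from (1) by passing to opposite categories: if $G:\D'\longrightarrow\D$ is fully faithful with left adjoint $F$, then the pair $(G^{op},F^{op})$ between $(\D')^{op}$ and $\D^{op}$ is an adjoint pair with $G^{op}$ fully faithful and left adjoint, so applying (1) there gives a semi-orthogonal decomposition that dualizes back to $(\Ker(F),\Im(G))$ in $\D$. (One must check the duals are triangulated categories and that $F^{op},G^{op}$ are triangulated, which is routine since $(?[1])^{op}=?[-1]$.) So I focus on (1).

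So assume $F:\D\longrightarrow\D'$ is fully faithful with right adjoint $G$. First I would verify that $\Im(F)$ and $\Ker(G)$ are triangulated subcategories: both $F$ and $G$ are triangulated, so $\Ker(G)$ is closed under shifts and cones (from the long exact sequence / triangle applied to $G$), and $\Im(F)$ is closed under shifts since $F(X[1])=F(X)[1]$ and closed under cones because, given a triangle $F(X)\to F(Y)\to Z\to F(X)[1]$, fully faithfulness of $F$ lets us lift $F(X)\to F(Y)$ to a morphism $X\to Y$ in $\D$, whose cone $C$ satisfies $F(C)\cong Z$ by the axiom that triangulated functors send triangles to triangles and the fact that the third vertex of a triangle is determined up to isomorphism. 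Next, the key Hom-vanishing: I claim $\D'(\Im(F),\Ker(G))=0$. Indeed for $X\in\D$ and $Z\in\Ker(G)$, adjunction gives $\D'(F(X),Z)\cong\D(X,G(Z))=\D(X,0)=0$. Since $\Im(F)$ and $\Ker(G)$ are triangulated, this is exactly condition (i) for the pair $(\Im(F),\Ker(G)[1])$ viewed as a candidate t-structure, and condition (ii) $\Im(F)[1]\subseteq\Im(F)$ is clear.

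The remaining point — and the only one requiring real work — is the truncation triangle (condition (iii) of a t-structure): for each $Y\in\D'$ I must produce a triangle $U\to Y\to V\stackrel{+}{\to}$ with $U\in\Im(F)$ and $V\in\Ker(G)$. The natural candidate for $U$ is $F G(Y)$, with the map $\varepsilon_Y:FG(Y)\to Y$ the counit of the adjunction. I would complete $\varepsilon_Y$ to a triangle $FG(Y)\stackrel{\varepsilon_Y}{\to}Y\to V\stackrel{+}{\to}$ and then show $V\in\Ker(G)$, i.e. $G(V)=0$. Apply $G$ to this triangle to get a triangle $GFG(Y)\stackrel{G(\varepsilon_Y)}{\to}G(Y)\to G(V)\stackrel{+}{\to}$ in $\D$. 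Because $F$ is fully faithful, the unit $\eta_{G(Y)}:G(Y)\to GFG(Y)$ is an isomorphism, and the triangle identity gives $G(\varepsilon_Y)\circ\eta_{G(Y)}=\mathrm{id}_{G(Y)}$; hence $G(\varepsilon_Y)$ is a (split) epimorphism and in fact an isomorphism (its left inverse $\eta_{G(Y)}$ is itself invertible). Therefore $G(V)$, being the cone of an isomorphism, is zero, so $V\in\Ker(G)$. This establishes (iii), so $(\Im(F),\Ker(G)[1])$ is a t-structure; since $\Im(F)$ is triangulated (closed under all shifts), it is a semi-orthogonal decomposition $(\Im(F),\Ker(G))$ of $\D'$. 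The main obstacle is precisely this last verification that $G$ kills the cone $V$; everything else is a direct unwinding of adjunction and the triangulated-functor axioms.
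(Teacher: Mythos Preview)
Your proof is correct and follows essentially the same route as the paper's: reduce (2) to (1) by duality, then for (1) use the counit $\varepsilon_Y:FG(Y)\to Y$ to build the truncation triangle and show via the triangle identity (together with the unit being an isomorphism, since $F$ is fully faithful) that $G(\varepsilon_Y)$ is an isomorphism, forcing the cone into $\Ker(G)$. You include more detail than the paper does---in particular the explicit verification that $\Im(F)$ is closed under cones---but the argument is the same.
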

\begin{proof}
Assertion (2) follows from assertion (1) by duality. To prove (1), note
that the unit $\lambda :1_\D\longrightarrow G\circ F$ is an
isomorphism (see \cite[Proposition II.7.5]{HS}) and, by the
adjunction equations, we then get that $G(\delta )$ is also an
isomorphism, where $\delta :F\circ G\longrightarrow 1_{\D'}$ is the
counit. This implies that if $M\in\D'$ is any object and we complete
$\delta_M$ to a triangle

\begin{center}
$(F\circ G)(M)\stackrel{\delta_M}{\longrightarrow}M\longrightarrow
Y_M\stackrel{+}{\longrightarrow}$,
\end{center}
then $Y_M\in\text{Ker}(G)$.

But, by the adjunction, we have that $\D'(F(D),Y)=\D(D,G(Y))=0$, for
each $Y\in\text{Ker}(G)$. It follows that
$(\text{Im}(F),\text{Ker}(G))$ is a semi-orthogonal decomposition of
$\D'$.
\end{proof}

Given a triangulated subcategory $\mathcal{T}$ of $\D$, we shall
denote by $\Sigma_\mathcal{T}$ the class of morphisms
$s:X\longrightarrow Y$ in $\D$ whose cone is an object of
$\mathcal{T}$. The following is a fundamental result of Verdier:

\begin{prop} \label{prop.Verdier localization by triangulated}
Let $\D$ be a triangulated category and $\mathcal{T}$ a thick
subcategory. The following assertions hold:

\begin{enumerate}
\item $\Sigma_\mathcal{T}$ is a multiplicative system of $\D$
compatible with the triangulation. The category
$\D[\Sigma_\mathcal{T}^{-1}]$ is denoted by $\D/\mathcal{T}$ and
called the \emph{quotient category of $\D$ by $\mathcal{T}$}.

\item The canonical functor $q:\D\longrightarrow\D/\mathcal{T}$
satisfies the following universal property:

(*) For each triangulated category $\D'$ and each triangulated
functor $F:\D\longrightarrow\D'$ such that $F(\mathcal{T})=0$, there
is a triangulated functor
$\bar{F}:\D/\mathcal{T}\longrightarrow\D'$, unique up to natural
isomorphism,  such that $\bar{F}\circ q\cong F$.

\item The functor $q:\D\longrightarrow\D/\mathcal{T}$ has a right
adjoint if, and only if, $(\mathcal{T},\mathcal{T}^{\perp} )$ is a
semi-orthogonal decomposition in $\D$. In this case, the functor
$\tau^{\mathcal{T}^\perp }:D\longrightarrow\mathcal{T}^\perp$
induces an equivalence of triangulated categories
$\D/\mathcal{T}\stackrel{\cong}{\longrightarrow}\mathcal{T}^\perp$.

\item The functor $q:\D\longrightarrow\D/\mathcal{T}$ has a left
adjoint if, and only if, $({}^\perp\mathcal{T},\mathcal{T})$ is a
semi-orthogonal decomposition in $\D$. In this case, the functor
$\tau_{{}^\perp\mathcal{T}}:D\longrightarrow{}^\perp\mathcal{T}$
induces an equivalence of triangulated categories
$\D/\mathcal{T}\stackrel{\cong}{\longrightarrow}{}^\perp\mathcal{T}$.
\end{enumerate}
\end{prop}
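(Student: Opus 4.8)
\textbf{Proof plan for Proposition \ref{prop.Verdier localization by triangulated}.}
The strategy is to reduce everything to the general Gabriel--Zisman and Verdier machinery already recalled (Propositions \ref{prop.Gabriel-Zisman} and \ref{prop.Verdier}) together with Proposition \ref{prop.Keller-Vossieck} and Proposition \ref{prop.semi-orthogonal pair from adjunction}, so that the only genuinely new verifications are the axioms in Definitions \ref{def.calculus of fractions} and \ref{def.multipl.system compatible with triang.} for the class $\Sigma_\mathcal{T}$. First I would prove (1): that $1_X\in\Sigma_\mathcal{T}$ is clear since the cone of an identity is a zero object, which lies in the thick (in particular triangulated) subcategory $\mathcal{T}$. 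For the calculus of left fractions, given $X'\stackrel{f'}{\to}Y'\stackrel{s'}{\leftarrow}Y$ with $\mathrm{cone}(s')=:T\in\mathcal{T}$, I would embed $s'$ in a triangle $Y\stackrel{s'}{\to}Y'\to T\stackrel{+}{\to}$, rotate, form the triangle over $f'$ composed with the connecting map $Y'\to T$, and use the octahedral axiom to produce a morphism $s:X\to X'$ whose cone is $T[-1]\in\mathcal{T}$ together with $f:X\to Y$ with $s'f=f's$ — this is the standard ``Verdier square'' construction. The cancellation axiom (3) follows similarly: if $tf=tg$ with $\mathrm{cone}(t)\in\mathcal{T}$, then $f-g$ factors through $\mathrm{cone}(t)[-1]\in\mathcal{T}$, and completing to a triangle gives $s\in\Sigma_\mathcal{T}$ with $fs=gs$. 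Closure under shifts is immediate since $\mathcal{T}=\mathcal{T}[1]$, and the $3\times 3$-completion property of Definition \ref{def.multipl.system compatible with triang.}(2) follows from the octahedral axiom once one observes that the third vertical $s''$ sits in a triangle whose outer terms have cones in $\mathcal{T}$, whence $\mathrm{cone}(s'')\in\mathcal{T}$ because $\mathcal{T}$ is closed under extensions. Then Proposition \ref{prop.Verdier} applies verbatim and gives the triangulated structure on $\D/\mathcal{T}:=\D[\Sigma_\mathcal{T}^{-1}]$.

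For (2), the universal property, the point is that a triangulated functor $F$ inverts exactly those morphisms whose cone it kills, and since $F$ is triangulated and additive, $F(\mathcal{T})=0$ forces $F(s)$ to be an isomorphism for every $s\in\Sigma_\mathcal{T}$ (the cone of $F(s)$ is $F(\mathrm{cone}(s))=0$, and a morphism in a triangulated category with zero cone is an isomorphism). Proposition \ref{prop.Gabriel-Zisman}(2) then yields a unique functor $\bar{F}$ with $\bar{F}q\cong F$, and one checks it is triangulated because $q$ is essentially surjective and triangles in $\D/\mathcal{T}$ are, by the construction of Proposition \ref{prop.Verdier}, images of triangles in $\D$. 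The converse implication ``$F(\mathcal{T})=0$'' is automatic from $F$ triangulated and $q(\mathcal{T})=0$ in $\D/\mathcal{T}$, since every object of $\mathcal{T}$ becomes zero after inverting $\Sigma_\mathcal{T}$ (its identity has a cone in $\mathcal{T}$... more directly, for $T\in\mathcal{T}$ the zero map $0\to T$ has cone $T\in\mathcal{T}$, so it is inverted, so $T\cong 0$ in the quotient).

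For (3), suppose $q$ has a right adjoint $R$. Then $(q,R)$ is an adjoint pair of triangulated functors; I would first argue $R$ is fully faithful: its essential image is a triangulated subcategory on which $q$ restricts to an equivalence with $\D/\mathcal{T}$, and standard adjunction arguments (the counit $qR\to 1$ is invertible because $q$ is a localization) give this. By Proposition \ref{prop.semi-orthogonal pair from adjunction}(2) applied to $(q,R)$, the pair $(\mathrm{Ker}(q),\mathrm{Im}(R))$ is a semi-orthogonal decomposition of $\D$; and $\mathrm{Ker}(q)=\mathcal{T}$ because $q(T)=0$ exactly when $T\in\mathcal{T}$ (one inclusion is shown above; conversely $q(X)=0$ means $0\to X$ is in $\Sigma_\mathcal{T}$, i.e. $X\in\mathcal{T}$), and $\mathrm{Im}(R)=\mathcal{T}^\perp$ by the orthogonality. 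Conversely, if $(\mathcal{T},\mathcal{T}^\perp)$ is a semi-orthogonal decomposition, then $\mathcal{T}^\perp$ is the co-aisle of the associated t-structure and the right truncation $\tau^{\mathcal{T}^\perp}:\D\to\mathcal{T}^\perp$ kills $\mathcal{T}$, hence factors through $q$ as $\bar{\tau}:\D/\mathcal{T}\to\mathcal{T}^\perp$ by part (2); using that the triangle $\tau_\mathcal{T}X\to X\to\tau^{\mathcal{T}^\perp}X\stackrel{+}{\to}$ has its first term in $\mathcal{T}$, one checks that $q\circ(\text{inclusion }\mathcal{T}^\perp\hookrightarrow\D)$ is inverse to $\bar{\tau}$, so $\bar{\tau}$ is an equivalence and $q$ has a right adjoint, namely the composite $\mathcal{T}^\perp\hookrightarrow\D$ followed by $\bar{\tau}^{-1}$. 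Part (4) is dual: apply the same argument to the opposite category, using Proposition \ref{prop.semi-orthogonal pair from adjunction}(1) and left truncation. The main obstacle here is not any single deep step but the bookkeeping: identifying $\mathrm{Ker}(q)$ precisely with $\mathcal{T}$ (which uses thickness only insofar as $\mathcal{T}$ should be assumed thick for $\mathrm{Ker}(q)$ to be thick and the quotient to behave well) and checking that the equivalences $\D/\mathcal{T}\simeq\mathcal{T}^\perp$ and $\D/\mathcal{T}\simeq{}^\perp\mathcal{T}$ intertwine the truncation functors with $q$; everything else is a direct appeal to the results already established in this section.
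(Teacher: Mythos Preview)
Your sketch is correct and essentially reconstructs the standard arguments that the paper delegates to the literature: the paper's own proof consists entirely of citations to Verdier \cite[II.2.1.8, II.2.2.11, II.2.3.3]{V} for (1)--(3) and to Krause \cite[Proposition 4.9.1]{Kr1} together with Proposition \ref{prop.Keller-Vossieck} for the explicit argument in (3), with (4) obtained by duality. What you have written is precisely the content of those references, organised around Propositions \ref{prop.Gabriel-Zisman}, \ref{prop.Verdier}, \ref{prop.Keller-Vossieck} and \ref{prop.semi-orthogonal pair from adjunction} as you say.

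Two small points. First, the step ``the counit $qR\to 1$ is invertible because $q$ is a localization'' is indeed a standard fact (it is Gabriel--Zisman, Prop.~I.1.3: any adjoint of a localization functor is fully faithful), but if you want the argument to be self-contained you should note that it uses $\mathrm{Im}(R)\subseteq\mathcal{T}^\perp$ (from the adjunction and $q(\mathcal{T})=0$) together with the fact that $q$ restricted to $\mathcal{T}^\perp$ is fully faithful. Second, at the end of (3) your description of the right adjoint does not typecheck: the right adjoint of $q$ is the composite $\iota\circ\bar{\tau}:\D/\mathcal{T}\xrightarrow{\bar{\tau}}\mathcal{T}^\perp\xrightarrow{\iota}\D$, not ``$\mathcal{T}^\perp\hookrightarrow\D$ followed by $\bar{\tau}^{-1}$''. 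This is consistent with your own verification that $q\iota\bar{\tau}\cong 1$ and with the adjoint pair $(\tau^{\mathcal{T}^\perp},\iota)$; since $\bar{\tau}\circ q=\tau^{\mathcal{T}^\perp}$ and $\bar{\tau}$ is an equivalence, the right adjoint of $q$ is $\iota\circ\bar{\tau}$.
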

\begin{proof}
Assertions (1) is  \cite[Proposition II.2.1.8]{V} while assertion (2) is
included in \cite[Corollaire II.2.2.11]{V}. Assertions (3) and (4) are
dual to each other. Assertion (3) is implicit in \cite[Proposition
II.2.3.3]{V}. For an explicit proof, see \cite[Proposition
4.9.1]{Kr1} and  use  proposition \ref{prop.Keller-Vossieck}.
\end{proof}

In many  situations, we will need criteria for a given triangulated
functor to have adjoints. The main tool is the following.

\begin{defi} \label{def.Brown repres. theorem}
Let $\mathcal{D}$ be a triangulated category with coproducts. We
shall say that $\D$ \emph{satisfies Brown representability theorem}
when any cohomological contravariant functor
$H:\D\longrightarrow\text{Mod}-k$ which takes coproducts to products
is representable. That is, there exists an object $Y$ of $\D$ such
that $H$ is naturally isomorphic to $\D(?,Y)$.
\end{defi}

The key point is the following.

\begin{prop} \label{prop.Brown representability theorem}
Let $\D$ be a triangulated subcategory which satisfies Brown
representability theorem and let $\D'$ be any triangulated category.
Each triangulated functor $F:\D\longrightarrow\D'$ which preserves
coproducts has a right adjoint.
\end{prop}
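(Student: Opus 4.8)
The plan is to deduce this from the classical Brown representability theorem for representable functors by applying it to the functor $Y\mapsto\D'(F(X),Y)$ reassembled appropriately, or — more cleanly — by constructing the right adjoint objectwise. First I would fix an object $Y\in\D'$ and consider the contravariant functor
\[
H_Y:\D\longrightarrow\text{Mod}-k,\qquad H_Y(X)=\D'(F(X),Y).
\]
Since $F$ is triangulated and $\D'(?,Y)$ is cohomological on $\D'$, the composite $H_Y$ is a cohomological contravariant functor on $\D$. Moreover $F$ preserves coproducts by hypothesis and $\D'(?,Y)$ sends coproducts to products (a coproduct in $\D'$ represents, via the universal property, a product of hom-groups), so $H_Y$ sends coproducts in $\D$ to products of $k$-modules. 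Hence the hypothesis that $\D$ satisfies Brown representability applies: there is an object $GY\in\D$ and a natural isomorphism $H_Y\cong\D(?,GY)$, i.e.
\[
\D'(F(X),Y)\cong\D(X,GY)\qquad\text{naturally in }X.
\]

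Next I would promote the assignment $Y\mapsto GY$ to a functor $G:\D'\longrightarrow\D$. This is the standard argument: given a morphism $g:Y\longrightarrow Y'$ in $\D'$, post-composition with $g$ gives a natural transformation $H_Y\Rightarrow H_{Y'}$, which under the Yoneda-type isomorphisms above corresponds to a unique morphism $Gg:GY\longrightarrow GY'$; uniqueness forces functoriality (identities and composites go to identities and composites), and it also makes the isomorphism $\D'(F(X),Y)\cong\D(X,GY)$ natural in $Y$ as well. Thus $(F,G)$ is an adjoint pair with $F$ left adjoint to $G$, which is exactly the assertion. (One should note the small type-theoretic point flagged earlier in the paper: the representing object $GY$ is only determined up to isomorphism, so one invokes the axiom of choice to pick one for each $Y$; since $\D$ and $\D'$ have genuine hom-sets this causes no difficulty.)

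I do not expect a serious obstacle here; this is the classical ``adjoint functor theorem via Brown representability'' and the only things to check are the two structural facts about $H_Y$ — that it is cohomological and that it turns coproducts into products — both of which are immediate from $F$ being triangulated and coproduct-preserving together with the universal property of coproducts in $\D'$. The mildly delicate point, and the one I would be most careful to state correctly, is the passage from the objectwise representability to genuine functoriality of $G$ and naturality of the adjunction isomorphism in the second variable; this is a routine but not entirely trivial diagram chase using the uniqueness clause in the Yoneda lemma, so I would cite a standard reference (e.g. \cite[Theorem 8.4.4]{N} or \cite{Kr1}) rather than reproduce it. No new ideas beyond the statement of Brown representability itself are needed.
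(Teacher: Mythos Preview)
Your proposal is correct and is precisely the standard argument; the paper itself does not give a proof but simply cites \cite[Theorem 8.8.4]{N}, whose content is exactly the objectwise construction via Brown representability that you have spelled out.
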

\begin{proof}
See \cite[Theorem 8.8.4]{N}.
\end{proof}

\begin{defi} \label{def.compact and compactly generated}
Let $\D$ have coproducts. An object $X$ of $\D$ is called
\emph{compact} when the functor
$\D(X,?):\D\longrightarrow\text{Mod}-k$ preserves coproducts. The
category $\D$ is called \emph{compactly generated} when there is a
\underline{set} $\mathcal{S}$ of compact objects such that
$\text{Tria}_\D(\mathcal{S})=\D$. We then say that $\mathcal{S}$ is
a \emph{set of compact generators} of $\D$.
\end{defi}

\begin{cor} \label{cor.adjoints with comp.generated domain}
The following assertions hold, for any compactly generated
triangulated category $\D$ and any covariant triangulated functor
$F:\D\longrightarrow\D'$:

\begin{enumerate}
\item $F$ preserves coproducts if, and only if, it has a right
adjoint;
\item $F$ preserves products if, and only if, it has a left adjoint.
\end{enumerate}
\end{cor}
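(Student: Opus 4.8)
The plan is to deduce both statements from Brown representability (Proposition \ref{prop.Brown representability theorem}) together with its dual, applied to $\D$ and to $\D^{op}$. For assertion (1), the key observation is that a compactly generated triangulated category satisfies Brown representability theorem: if $\mathcal{S}$ is a set of compact generators of $\D$, then given a cohomological contravariant functor $H:\D\longrightarrow\text{Mod}-k$ taking coproducts to products, one builds a representing object by the standard Brown construction, starting from $\coprod_{S\in\mathcal{S}}\coprod_{H(S)}S$ and iteratively killing the kernel of the candidate natural transformation using triangles; compactness of the objects of $\mathcal{S}$ is exactly what makes the colimit argument work, since $\D(S,?)$ commutes with the relevant coproducts. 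I would cite \cite[Theorem 8.3.3]{N} for this fact rather than reproving it. Once $\D$ satisfies Brown representability, Proposition \ref{prop.Brown representability theorem} gives that any coproduct-preserving triangulated functor $F:\D\longrightarrow\D'$ has a right adjoint. Conversely, if $F$ has a right adjoint, then $F$ preserves all colimits that exist in $\D$, in particular coproducts; this is the elementary direction and needs no hypothesis on $\D$. This proves (1).

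For assertion (2), I would pass to opposite categories. The opposite $\D^{op}$ of a compactly generated triangulated category is again triangulated, its coproducts are the products of $\D$, and an object compact in $\D$ becomes an object whose covariant Hom out of it in $\D^{op}$ sends coproducts of $\D^{op}$ (= products of $\D$) to coproducts of $k$-modules; however, this is \emph{not} the same as being compact in $\D^{op}$, so one must be slightly careful. The clean route is instead to invoke the dual of Proposition \ref{prop.Brown representability theorem}: since $\D$ is compactly generated it also satisfies the \emph{dual} Brown representability theorem, i.e. every cohomological covariant functor $\D\longrightarrow\text{Mod}-k$ sending products to products is representable (see \cite[Theorem 8.6.1]{N} / \cite{Kr1}). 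Granting this, a product-preserving triangulated functor $F:\D\longrightarrow\D'$ has a left adjoint: for each $D'\in\D'$ the functor $D\mapsto\D'(D',F(D))$ is cohomological and sends products to products, hence is representable by some $G(D')\in\D$, and the assignment $D'\mapsto G(D')$ assembles into a left adjoint of $F$ in the usual way (functoriality and triangulatedness of $G$ follow formally, as in the proof of Proposition \ref{prop.Brown representability theorem}). The converse, that a functor with a left adjoint preserves products, is again immediate from general adjunction nonsense.

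Thus the whole corollary reduces to: (a) compactly generated $\Rightarrow$ Brown representability and its dual, which are cited theorems of Neeman and Krause, and (b) the easy fact that right (resp. left) adjoints force preservation of coproducts (resp. products). The only genuine subtlety — and the step I would flag as the main obstacle to a fully self-contained argument — is establishing the \emph{dual} Brown representability theorem for compactly generated categories, since unlike ordinary Brown representability it does not follow by a naive colimit construction; it rests on Neeman's result that a compactly generated triangulated category is, up to equivalence, the homotopy category of a combinatorial stable model structure (or on Krause's more direct argument via the abelianization), and I would simply quote it. Everything else is formal.
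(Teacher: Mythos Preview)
Your proposal is correct and follows the standard route via Brown representability and its dual; the paper's own proof is simply the one-line citation ``See \cite[Proposition 5.3.1]{Kr1}'', which encapsulates precisely the argument you have unpacked. Your identification of the dual Brown representability theorem as the only nontrivial input is accurate, and your chosen references (Neeman's Theorem 8.6.1 and Krause) are appropriate.
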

\begin{proof}
See \cite[Proposition 5.3.1]{Kr1}.
\end{proof}

The following lemma, whose proof can be found in \cite{NS}, is
rather useful.

\begin{lemma} \label{lem.fully-faitful right iff left}
Let $F:\D\longrightarrow\D'$ be a triangulated functor between
triangulated categories and suppose that it has a left adjoint $L$
and a right adjoint $R$. Then $L$ is fully faithful if, and only if,
so is $R$.
\end{lemma}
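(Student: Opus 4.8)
The statement to prove is Lemma \ref{lem.fully-faitful right iff left}: if $F\colon\D\longrightarrow\D'$ is a triangulated functor with left adjoint $L$ and right adjoint $R$, then $L$ is fully faithful if and only if $R$ is. By symmetry (replacing $\D,\D'$ by their opposite categories, which swaps the roles of $L$ and $R$) it suffices to prove one implication, say that $L$ fully faithful implies $R$ fully faithful. The standard categorical fact I would invoke is that for an adjunction $(L,F)$, the left adjoint $L$ is fully faithful precisely when the unit $\eta\colon 1_{\D'}\longrightarrow F\circ L$ is an isomorphism; dually, for the adjunction $(F,R)$, the right adjoint $R$ is fully faithful precisely when the counit $\varepsilon\colon F\circ R\longrightarrow 1_{\D'}$ is an isomorphism (cf. the reference \cite[Proposition II.7.5]{HS} already used in the proof of Proposition \ref{prop.semi-orthogonal pair from adjunction}).

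So the plan reduces to: assuming $\eta\colon 1_{\D'}\longrightarrow FL$ is a natural isomorphism, show $\varepsilon\colon FR\longrightarrow 1_{\D'}$ is a natural isomorphism. First I would fix $M\in\D'$ and complete the counit morphism to a triangle
\[
(F\circ R)(M)\stackrel{\varepsilon_M}{\longrightarrow}M\longrightarrow C_M\stackrel{+}{\longrightarrow},
\]
where $C_M$ is the cone; since $F$, $R$ are triangulated, this is legitimate, and it suffices to prove $C_M=0$. To detect vanishing I would use the adjunctions to test against objects of the form $F(X)$ and $L(X)$ for $X\in\D$: for every $X\in\D$ one has $\D'(L(X),C_M)\cong\D(X,F(C_M))$ on one hand, and on the other hand applying $\D'(L(X),-)$ to the triangle together with the fact (from the triangulated adjunction $(L,F)$ and $\eta$ iso) that $\D'(L(X),\varepsilon_M)$ is identified with the identity-like map $\D(X,RM)\cong\D(X,FRM)\to\D(X,FM)$... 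The cleaner route is to chase the triangle directly: apply $F\circ$ (nothing) — rather, apply the functor $\D'(-,N)$ for arbitrary $N$ and combine with adjunction, OR, more efficiently, use that $R$ has its own structure.

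The slick argument I would actually write: since $(F,R)$ is an adjunction and $R$ is to be shown fully faithful, equivalently $FR\to 1_{\D'}$ is iso; apply the functor $R$ to the triangle above. By one of the triangle identities, $R(\varepsilon_M)\circ \eta'_{R(M)} = 1_{R(M)}$ where $\eta'\colon 1_\D\to RF$ — wait, that is the wrong adjunction. Instead: the composite $R(M)\xrightarrow{\text{unit of }(F,R)} RFR(M)\xrightarrow{R(\varepsilon_M)} R(M)$ is the identity, so $R(\varepsilon_M)$ is a split epimorphism; hence $R(C_M)$ is a direct summand of $R(M)[1]$ sitting in a triangle, and in fact $R(\varepsilon_M)$ being split epi forces the triangle $RFR(M)\to R(M)\to R(C_M)\xrightarrow{+}$ to show $R(C_M)$ is a summand with the connecting map splitting, giving $R(C_M)=0$ together with the kernel piece. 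Symmetrically, applying $F$ and using $\eta$ iso: from $\D'(L(X),-)$ applied to the cone triangle and $\D'(L(X),C_M)\cong \D(X,FC_M)$, and noting $FC_M$ can be analyzed via $R(C_M)=0$... The hard part, and the step I expect to be the main obstacle, is organizing these two adjunction computations so that vanishing of $C_M$ is genuinely deduced rather than merely its image under $R$ or $F$; the key lemma needed is that $C_M$ lies simultaneously in $\operatorname{Ker}(R)$ and is right-orthogonal to $\operatorname{Im}(L)$, and that $\operatorname{Im}(L)$ (equivalently, via $\eta$ iso, a copy of $\D'$ built from $\D$) generates enough of $\D'$ to kill $C_M$ — concretely, since $\eta_{C_M}\colon C_M\to FL(C_M)$ is an isomorphism, it is enough to show $L(C_M)=0$, and $\D(Y,L(C_M))$... no: rather $\D'(L(X),C_M)=0$ for all $X$ gives, taking $X$ such that $L(X)$ ranges over enough objects, that $C_M=0$ once we know $C_M\in\operatorname{Im}$ of something. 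The cleanest closing move: $\eta_{C_M}$ iso means $C_M\cong FL(C_M)$, and $\D'(C_M,C_M)\cong\D'(FL(C_M),C_M)\cong\D(L(C_M),R(C_M))=\D(L(C_M),0)=0$ using $R(C_M)=0$; hence $1_{C_M}=0$ and $C_M=0$, so $\varepsilon_M$ is an isomorphism and $R$ is fully faithful. Thus the whole proof is: (i) reduce to the unit/counit-isomorphism criterion; (ii) triangle on $\varepsilon_M$; (iii) split-epi argument via the $(F,R)$ triangle identity to get $R(C_M)=0$; (iv) $\eta$ iso gives $C_M\cong FL(C_M)$, then adjunction plus $R(C_M)=0$ forces $\operatorname{id}_{C_M}=0$; (v) invoke opposite-category symmetry for the converse. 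I would expect step (iii)–(iv)'s bookkeeping of which unit/counit belongs to which adjunction to be the only delicate point.
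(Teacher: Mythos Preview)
The paper does not prove this lemma in-text; it simply cites \cite{NS}. Evaluating your proposal on its own merits: the overall architecture is sound, and step (iv) is correct and is where the real content lies. However, step (iii) has a genuine gap, not merely bookkeeping. Writing $\eta'\colon 1_\D\to RF$ for the unit of $(F,R)$, the triangle identity $R(\varepsilon_M)\circ\eta'_{R(M)}=1_{R(M)}$ shows only that $R(\varepsilon_M)$ is a split epimorphism; in the triangle $RFR(M)\to R(M)\to R(C_M)\stackrel{+}{\to}$ this yields $RFR(M)\cong R(M)\oplus R(C_M)[-1]$, \emph{not} $R(C_M)=0$. Indeed $R(\varepsilon_M)$ is an isomorphism precisely when $R$ is fully faithful, which is what you are trying to prove---so at that stage you have used only the $(F,R)$-adjunction and nothing from the hypothesis that $L$ is fully faithful.

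A clean repair that stays close to your outline: for every $N\in\D'$ there are natural bijections
\[
\D'(N,FR(M))\;\stackrel{(L,F)}{\cong}\;\D(L(N),R(M))\;\stackrel{(F,R)}{\cong}\;\D'(FL(N),M)\;\stackrel{\eta_N^{\,\ast}}{\cong}\;\D'(N,M),
\]
and tracing an element $g\in\D(L(N),R(M))$ through shows that the composite is exactly $(\varepsilon_M)_\ast$. By Yoneda, $\varepsilon_M$ is an isomorphism for every $M$, so $C_M=0$ and $R$ is fully faithful; this makes your step (iv) unnecessary, though your computation there is valid once $R(C_M)=0$ is known. Alternatively, in the spirit of the surrounding section one can argue via semi-orthogonal decompositions: $L$ fully faithful gives the decomposition $(\mathrm{Im}(L),\mathrm{Ker}(F))$ of $\D$ by Proposition~\ref{prop.semi-orthogonal pair from adjunction}, so $F$ is identified with the Verdier quotient by $\mathrm{Ker}(F)$; since this quotient admits the right adjoint $R$, Proposition~\ref{prop.Verdier localization by triangulated}(3) forces that right adjoint to be fully faithful. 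Your opposite-category symmetry for the converse is fine.
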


\subsection{Recollements and TTF triples}

The following concept, introduced in \cite{BBD}, is fundamental in
the theory of triangulated categories.

\begin{defi} \label{def.recollement}
A \emph{recollement} of triangulated categories consists of a triple
$(\D' ,\D,\D'')$ of triangulated categories and of six triangulated
functors between them, assembled as follows
$$\xymatrix{\D^{'} \ar@<0ex>[rr]|{\hspace{0.1 cm}i_{\ast}=i_{!} \hspace{0.05 cm}}  &&  \D  \ar@<0ex>[rr]|{\hspace{0.1cm}j^{!}=j^{\ast}} \ar@<1ex>[ll]^{i^{!}} \ar@<-1 ex>[ll]_{i^{\ast}}  && \D^{''} \ar@<1ex>[ll]^{j_{\ast}} \ar@<-1 ex>[ll]_{j_{!}} },$$
which satisfy the following conditons:

\begin{enumerate}
\item Each functor in the picture is left adjoint to the one
immediately below, when it exists;
\item The composition $i^!\circ j_*$ (and hence also  $j^!\circ i_!=j^*\circ i_*$ and $i^*\circ
j_!$) is the zero functor;
\item The functors $i_*$, $j_*$ and $j_!$ are fully faithful (and
hence the unit maps $1_{\D'}\longrightarrow i^!\circ i_*=i^!\circ
i_!$, $1_{\D''}\longrightarrow j^*\circ j_!=j^!\circ j_!$ and the
counit maps $i^*\circ i_!=i^*\circ i_*\longrightarrow 1_{\D'}$,
$j^!\circ j_*=j^*\circ j_*\longrightarrow 1_{\D''}$ are all
isomorphisms);
\item The remaining unit and counit maps of the different
adjunctions give rise, for each object $X\in\D$, to triangules

\begin{center}
$(i_!\circ i^!)(X)\longrightarrow X\longrightarrow (j_*\circ
j^*)(X)\stackrel{+}{\longrightarrow}$

and

$(j_!\circ j^!)(X)\longrightarrow X\longrightarrow (i_*\circ
i^*)(X)\stackrel{+}{\longrightarrow}$.
\end{center}
\end{enumerate}

In such situation, we will say that \emph{$\D$ is a recollement of
$\D'$ and $\D''$}.
\end{defi}

A less familiar concept, coming from torsion theory in module
categories, is the following (see \cite{NS2}):

\begin{defi} \label{def.TTF triple}
Given a triangulated category $\D$, a triple
$(\mathcal{X},\mathcal{Y},\mathcal{Z})$ of full subcategories is
called a \emph{TTF triple} in $\D$ when the pairs
$(\mathcal{X},\mathcal{Y})$ and $(\mathcal{Y},\mathcal{Z})$ are both
semi-orthogonal decompositions of $\D$.
\end{defi}

As shown in \cite[Section 2.1]{NS2}, it turns out that recollements
and TTF triples are equivalent concepts in the following sense:

\begin{prop} \label{prop.TTF=recollement}
Let $\D$ be a triangulated category. The following assertions hold:

\begin{enumerate}
\item If $$\xymatrix{\D^{'} \ar@<0ex>[rr]|{\hspace{0.1 cm}i_{\ast}=i_{!} \hspace{0.05 cm}}  &&  \D  \ar@<0ex>[rr]|{\hspace{0.1cm}j^{!}=j^{\ast}} \ar@<1ex>[ll]^{i^{!}} \ar@<-1 ex>[ll]_{i^{\ast}}  && \D^{''} \ar@<1ex>[ll]^{j_{\ast}} \ar@<-1 ex>[ll]_{j_{!}} }$$
is a recollement of the triangulated category $\D$, then
$(\text{Im}(j_!),\text{Im}(i_*),\text{Im}(j_*))$ is a TTF triple in
$\D$;
\item If $(\mathcal{X},\mathcal{Y},\mathcal{Z})$ is a TTF triple in
$\D$, then

$$\xymatrix{\mathcal{Y} \ar@<0ex>[rr]|{\hspace{0.1 cm}i_{\ast}=i_{!} \hspace{0.05 cm}}  &&  \D  \ar@<0ex>[rr]|{\hspace{0.1cm}j^{!}=j^{\ast}} \ar@<1ex>[ll]^{i^{!}} \ar@<-1 ex>[ll]_{i^{\ast}}  && \mathcal{X} \ar@<1ex>[ll]^{j_{\ast}} \ar@<-1 ex>[ll]_{j_{!}} }$$
 is a recollement, where:

\begin{enumerate}
\item $i_*=i_!:\mathcal{Y}\longrightarrow\D$ and
$j_!:\mathcal{X}\longrightarrow\D$ are the inclusion functors;
\item $i^*=\tau^\mathcal{Y}:\D\longrightarrow\mathcal{Y}$ is the
right truncation with respect to the semi-orthogonal decomposition
$(\mathcal{X},\mathcal{Y})$ and
$i^!=\tau_\mathcal{Y}:\D\longrightarrow\mathcal{Y}$ is the left
truncation with respect to the semi-orthogonal decomposition
$(\mathcal{Y},\mathcal{Z})$;
\item $j^!=j^*=\tau_\mathcal{X}:\D\longrightarrow\mathcal{X}$ is the
left truncation with respect to the semi-orthogonal decomposition
$(\mathcal{X},\mathcal{Y})$;
\item $j_*$ is the composition
$\mathcal{X}\hookrightarrow\D\stackrel{\tau^\mathcal{Z}}{\longrightarrow}\mathcal{Z}\hookrightarrow\D$,
where the hooked arrows are the inclusions and $\tau^\mathcal{Z}$ is
the right truncation with respect to the semi-orthogonal
decomposition $(\mathcal{Y},\mathcal{Z})$.
\end{enumerate}
\end{enumerate}
\end{prop}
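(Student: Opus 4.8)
The plan is to prove the two implications separately: for (1) the tool is Proposition~\ref{prop.semi-orthogonal pair from adjunction}, while for (2) one checks the four axioms of Definition~\ref{def.recollement} directly. Throughout I use the elementary facts about a semi-orthogonal decomposition $(\mathcal{A},\mathcal{B})$ recalled in this section: $\mathcal{B}=\mathcal{A}^\perp$ and $\mathcal{A}={}^\perp\mathcal{B}$; the inclusion $\mathcal{A}\hookrightarrow\D$ has a right adjoint $\tau_\mathcal{A}$ and the inclusion $\mathcal{B}\hookrightarrow\D$ a left adjoint $\tau^\mathcal{B}$, both triangulated; for every $X$ there is a functorial triangle $\tau_\mathcal{A}X\to X\to\tau^\mathcal{B}X\stackrel{+}{\to}$, which is, up to unique isomorphism, the only triangle $A\to X\to B\stackrel{+}{\to}$ with $A\in\mathcal{A}$ and $B\in\mathcal{B}$; in particular $\tau_\mathcal{A}$ vanishes on $\mathcal{B}$ and $\tau^\mathcal{B}$ vanishes on $\mathcal{A}$.

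For (1): since $j_!$ is fully faithful with right adjoint $j^!$, Proposition~\ref{prop.semi-orthogonal pair from adjunction}(1) gives that $(\text{Im}(j_!),\text{Ker}(j^!))$ is a semi-orthogonal decomposition of $\D$, and I would then identify $\text{Ker}(j^!)=\text{Im}(i_*)$: the inclusion $\supseteq$ is axiom (2), $j^!\circ i_!=0$, and $\subseteq$ holds because the first triangle of axiom (4) collapses to $0\to X\to (i_*i^*)(X)\stackrel{+}{\to}$ whenever $j^!(X)=0$, forcing $X\cong(i_*i^*)(X)\in\text{Im}(i_*)$. Dually, since $j_*$ is fully faithful with left adjoint $j^*$, Proposition~\ref{prop.semi-orthogonal pair from adjunction}(2) gives the semi-orthogonal decomposition $(\text{Ker}(j^*),\text{Im}(j_*))$, and $\text{Ker}(j^*)=\text{Im}(i_*)$ by the same argument applied to the second triangle of axiom (4) and the vanishing $j^*\circ i_*=0$. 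Hence $(\text{Im}(j_!),\text{Im}(i_*),\text{Im}(j_*))$ is a TTF triple.

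For (2): the adjunctions $(i^*,i_*)$ and $(j_!,j^!)$ are exactly the truncation adjunctions for the decomposition $(\mathcal{X},\mathcal{Y})$, and $(i_*,i^!)$ that for $(\mathcal{Y},\mathcal{Z})$. The one adjunction that takes work is $(j^*,j_*)$, which I would obtain by two successive orthogonality reductions: applying $\D(-,\tau^\mathcal{Z}X'')$ to the triangle $\tau_\mathcal{X}D\to D\to\tau^\mathcal{Y}D\stackrel{+}{\to}$ (using $\D(\mathcal{Y},\mathcal{Z})=0$ and that $\mathcal{Y}$ is shift-stable) gives $\D(D,j_*X'')\cong\D(\tau_\mathcal{X}D,\tau^\mathcal{Z}X'')$, and then applying $\D(\tau_\mathcal{X}D,-)$ to $\tau_\mathcal{Y}X''\to X''\to\tau^\mathcal{Z}X''\stackrel{+}{\to}$ (using $\D(\mathcal{X},\mathcal{Y})=0$) gives $\D(\tau_\mathcal{X}D,\tau^\mathcal{Z}X'')\cong\mathcal{X}(j^*D,X'')$, naturally in $D$ and $X''$. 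Axiom (2) holds because $j_*$ takes values in $\mathcal{Z}$ and $i^!=\tau_\mathcal{Y}$ kills $\mathcal{Z}$; in axiom (3), $i_*$ and $j_!$ are inclusions of full subcategories and full faithfulness of $j_*$ follows from the same two reductions applied to $\D(j_*X'',j_*Y'')$. For axiom (4), the truncation triangle of $(\mathcal{X},\mathcal{Y})$ at $X$ is precisely $(j_!j^!)(X)\to X\to (i_*i^*)(X)\stackrel{+}{\to}$; applying the triangulated functor $\tau^\mathcal{Z}$ to it, together with $\tau^\mathcal{Z}(\mathcal{Y})=0$, yields $\tau^\mathcal{Z}(\tau_\mathcal{X}X)\cong\tau^\mathcal{Z}(X)$, so the truncation triangle of $(\mathcal{Y},\mathcal{Z})$ at $X$ becomes $(i_!i^!)(X)\to X\to (j_*j^*)(X)\stackrel{+}{\to}$.

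The real content, and the step I expect to be the main obstacle, is the coupling of the two decompositions along the common subcategory $\mathcal{Y}$: establishing the adjunction $(j^*,j_*)$, the isomorphism $\tau^\mathcal{Z}\circ\tau_\mathcal{X}\cong\tau^\mathcal{Z}$, and the full faithfulness of $j_*$ (equivalently, that $\tau_\mathcal{X}$ restricts to a quasi-inverse of $\tau^\mathcal{Z}|_\mathcal{X}\colon\mathcal{X}\to\mathcal{Z}$). One must also take care that the isomorphisms produced by the orthogonality arguments transport the canonical unit and counit maps, so that the triangles of axiom (4) really are the adjunction triangles and not merely abstractly isomorphic to them. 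A cleaner way to secure these points is to invoke Proposition~\ref{prop.Verdier localization by triangulated}(3)--(4): it identifies both $\mathcal{X}$ and $\mathcal{Z}$ with $\D/\mathcal{Y}$ compatibly with the localization functor $q\colon\D\to\D/\mathcal{Y}$, so that $j^*=\tau_\mathcal{X}$ becomes $q$ and $j_*$ its canonical section, whence both the full faithfulness of $j_*$ and the missing triangle are immediate; the remaining verifications are then routine bookkeeping with the axioms.
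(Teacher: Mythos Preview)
Your argument is correct. The paper itself does not prove this proposition; it simply states it after the sentence ``As shown in \cite[Section 2.1]{NS2}\ldots'' and leaves the details to that reference. Your direct verification---Proposition~\ref{prop.semi-orthogonal pair from adjunction} plus the identification $\text{Ker}(j^!)=\text{Im}(i_*)$ via axiom~(4) for part~(1), and axiom-by-axiom checking for part~(2), including the two orthogonality reductions for the adjunction $(j^*,j_*)$ and the identity $\tau^{\mathcal Z}\circ\tau_{\mathcal X}\cong\tau^{\mathcal Z}$---is exactly the standard argument one would expect to find in the cited source. The alternative route you sketch at the end, identifying both $\mathcal{X}$ and $\mathcal{Z}$ with $\D/\mathcal{Y}$ via Proposition~\ref{prop.Verdier localization by triangulated}(3)--(4), is also a legitimate and arguably cleaner way to handle the coupling.

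One harmless slip: in part~(1) you refer to ``the first triangle of axiom~(4)'' collapsing to $0\to X\to(i_*i^*)(X)\stackrel{+}{\to}$, but in the paper's Definition~\ref{def.recollement} the first listed triangle is $(i_!i^!)(X)\to X\to(j_*j^*)(X)\stackrel{+}{\to}$ and the second is $(j_!j^!)(X)\to X\to(i_*i^*)(X)\stackrel{+}{\to}$; you have swapped the labels. The mathematics is unaffected.
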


\begin{rem} \label{rem.notation for recollements}
In the rest of the paper, whenever
$$\xymatrix{\D^{'} \ar@<0ex>[rr]|{\hspace{0.1 cm}i_{\ast}=i_{!}
\hspace{0.05 cm}}  &&  \D
\ar@<0ex>[rr]|{\hspace{0.1cm}j^{!}=j^{\ast}} \ar@<1ex>[ll]^{i^{!}}
\ar@<-1 ex>[ll]_{i^{\ast}}  && \D^{''} \ar@<1ex>[ll]^{j_{\ast}}
\ar@<-1 ex>[ll]_{j_{!}} }$$ is a recollement of triangulated
categories, we will simply write $\D'\equiv\D\equiv\D''$ and the six
functors of the recollement will be understood.
\end{rem}

We now give a criterion for a triangulated functor to be one of the
two central arrows of a recollement.

\begin{prop} \label{prop.functor in a recollement}
The following assertions hold:

\begin{enumerate}
\item Let $F:\D'\longrightarrow\D$ be a triangulated functor between
triangulated categories. There is a recollement
$\D'\equiv\D\equiv\D''$, with $F=i_*$, for some triangulated
category $\D''$, if and only if $F$ is fully faithful and has both a
left and a right adjoint;
\item Let $G:\D\longrightarrow\D''$ be a triangulated functor
between triangulated categories. There is a recollement
$\D'\equiv\D\equiv\D''$, with $G=j^*$, for some triangulated
category $\D'$, if and only if $G$ has both a left and a right
adjoint, and one of these adjoints is fully faithful.
\end{enumerate}
\end{prop}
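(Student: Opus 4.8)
The plan is to deduce both statements from the already-established dictionary between recollements and TTF triples (Proposition~\ref{prop.TTF=recollement}), together with Proposition~\ref{prop.semi-orthogonal pair from adjunction} and Lemma~\ref{lem.fully-faitful right iff left}. For assertion~(1), first I would observe that in any recollement $\D'\equiv\D\equiv\D''$ the functor $i_*=i_!$ is fully faithful and, being simultaneously a left adjoint (of $i^!$) and a right adjoint (of $i^*$), it has both a left and a right adjoint; this gives the ``only if'' direction immediately. For the ``if'' direction, suppose $F:\D'\longrightarrow\D$ is fully faithful with left adjoint $L$ and right adjoint $R$. Since $F$ is fully faithful and $(L,F)$ is an adjoint pair, Proposition~\ref{prop.semi-orthogonal pair from adjunction}(2) (applied with the roles there of $F,G$ played by $L,F$) gives a semi-orthogonal decomposition $(\Ker(L),\Im(F))$ of $\D$; similarly, $F$ fully faithful and $(F,R)$ adjoint yields, by Proposition~\ref{prop.semi-orthogonal pair from adjunction}(1), a semi-orthogonal decomposition $(\Im(F),\Ker(R))$ of $\D$. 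Hence $(\Ker(L),\Im(F),\Ker(R))$ is a TTF triple in $\D$, and Proposition~\ref{prop.TTF=recollement}(2) converts it into a recollement $\mathcal{Y}\equiv\D\equiv\mathcal{X}$ with $\mathcal{Y}=\Im(F)$ and the inclusion $\Im(F)\hookrightarrow\D$ playing the role of $i_*=i_!$. Since $F$ is fully faithful, $\D'\stackrel{\cong}{\longrightarrow}\Im(F)$, so composing with this equivalence we obtain a recollement $\D'\equiv\D\equiv\D''$ with $F=i_*$ and $\D''=\mathcal{X}=\Ker(L)$ (equivalently $\Ker(R)$ up to the canonical equivalences in Proposition~\ref{prop.Verdier localization by triangulated}(3),(4)).

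For assertion~(2), the ``only if'' direction is again read off from Definition~\ref{def.recollement}: in a recollement, $j^!=j^*$ has left adjoint $j_!$ and right adjoint $j_*$, and both $j_!$ and $j_*$ are fully faithful, so in particular one of the two adjoints of $j^*$ is fully faithful. For the ``if'' direction, let $G:\D\longrightarrow\D''$ have a left adjoint $G_\lambda$ and a right adjoint $G_\rho$, with (say) $G_\rho$ fully faithful — the case of $G_\lambda$ fully faithful being handled by a dual argument, or by invoking Lemma~\ref{lem.fully-faitful right iff left} to pass from one to the other since $G$ has both adjoints. Applying Proposition~\ref{prop.semi-orthogonal pair from adjunction}(2) to the adjoint pair $(G,G_\rho)$ with $G_\rho$ fully faithful gives a semi-orthogonal decomposition $(\Ker(G),\Im(G_\rho))$ of $\D$. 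On the other hand, $G_\lambda$ is then fully faithful as well (Lemma~\ref{lem.fully-faitful right iff left}), so Proposition~\ref{prop.semi-orthogonal pair from adjunction}(1) applied to $(G_\lambda,G)$ gives a semi-orthogonal decomposition $(\Im(G_\lambda),\Ker(G))$ of $\D$. Thus $(\Im(G_\lambda),\Ker(G),\Im(G_\rho))$ is a TTF triple, and Proposition~\ref{prop.TTF=recollement}(2) yields a recollement with $\mathcal{X}=\Im(G_\lambda)$, $\mathcal{Y}=\Ker(G)$, in which $j^!=j^*$ is the truncation $\tau_{\Im(G_\lambda)}:\D\longrightarrow\Im(G_\lambda)$. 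It remains to identify this truncation with $G$: since $G_\lambda$ is fully faithful with $G_\lambda G_\lambda$-image contained in $\Im(G_\lambda)$ and $G$ annihilates $\Ker(G)=\Im(G_\lambda)^\perp$, the composite $\D\longrightarrow\Im(G_\lambda)\stackrel{\cong}{\longleftarrow}\D''$ (the last arrow being $G_\lambda$, an equivalence onto its image) agrees with $G$ up to natural isomorphism, because both are the left-adjoint-reflection onto the same coreflective subcategory. This gives a recollement $\D'\equiv\D\equiv\D''$ with $j^*=G$ and $\D'=\mathcal{Y}=\Ker(G)$.

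The main obstacle I expect is not the production of the TTF triples — that is essentially formal once Proposition~\ref{prop.semi-orthogonal pair from adjunction} is in hand — but the bookkeeping in the last step of~(2): checking that the abstractly produced central functor $j^*$ of the recollement coming out of Proposition~\ref{prop.TTF=recollement}(2) is \emph{naturally isomorphic} to the given $G$, rather than merely to ``a functor with the same kernel''. The clean way to do this is to note that $G$, being a left adjoint that is a quotient functor onto $\D''$ with kernel $\Ker(G)$, factors through $\D/\Ker(G)$ and induces an equivalence $\D/\Ker(G)\stackrel{\cong}{\longrightarrow}\D''$ (Proposition~\ref{prop.Verdier localization by triangulated}(2),(4), using that $({}^\perp\Ker(G),\Ker(G))=(\Im(G_\lambda),\Ker(G))$ is a semi-orthogonal decomposition); the recollement's $j^*$ enjoys the same factorization through $\D/\Ker(G)$, and both induced equivalences are determined up to isomorphism by the universal property, so $j^*\cong G$. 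A symmetric remark handles the case when it is the right adjoint of $G$ that is assumed fully faithful, and a completely parallel (dual) identification finishes the last step of~(1).
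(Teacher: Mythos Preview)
Your proof is correct and follows essentially the same route as the paper: build a TTF triple in $\D$ and then invoke Proposition~\ref{prop.TTF=recollement}. The only cosmetic differences are that in~(1) the paper obtains the two semi-orthogonal decompositions via Proposition~\ref{prop.Keller-Vossieck} (the inclusion $\Im(F)\hookrightarrow\D$ inherits both adjoints) rather than via Proposition~\ref{prop.semi-orthogonal pair from adjunction}, and in~(2) the paper identifies $j^*$ with $G$ more directly by noting that $\tau_{\Im(G_\lambda)}\cong\tilde{G}_\lambda\circ G$ (uniqueness of right adjoints to $G_\lambda=\iota\circ\tilde{G}_\lambda$), whereas you go through the universal property of the Verdier quotient; both arguments are valid and equivalent.
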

\begin{proof}
(1) We just need to prove the 'if' part of the assertion. If $F$ is
fully faithful, then it induces an equivalence of categories
$\D'\stackrel{\cong}{\longrightarrow}\text{Im}(F)=:\mathcal{Y}$. The
fact that $F$ has both a left and a right adjoint implies that also
the inclusion functor $i_\mathcal{Y}:\mathcal{Y}\hookrightarrow\D$
has both a left and a right adjoint. By proposition
\ref{prop.Keller-Vossieck} and its dual, we  get that
$(\mathcal{Y},\mathcal{Y}^\perp)$ and
$({}^\perp\mathcal{Y},\mathcal{Y})$ are semi-orthogonal
decompositions of $\D$.  Therefore
$({}^\perp\mathcal{Y},\mathcal{Y},\mathcal{Y}^\perp )$ is a TTF
triple in $\D$ and, by proposition \ref{prop.TTF=recollement}, we
have a recollement $\mathcal{Y}\equiv\D\equiv {}^\perp\mathcal{Y}$,
with $i_*=i_\mathcal{Y}:\mathcal{Y}\hookrightarrow\D$ the inclusion
functor. Using now the equivalence
$\D'\stackrel{\cong}{\longrightarrow}\mathcal{Y}$ given by $F$, we
immediately get a recollement
$\D'\equiv\D\equiv{}^\perp\mathcal{Y}$, with $i_*=F$.

(2) Again, we just need to prove the 'if' part. If
$G:\D\longrightarrow\D''$ is a triangulated functor as stated and we
denote by $L$ and $R$ its left and right adjoint, respectively, then
lemma \ref{lem.fully-faitful right iff left} tells us that $L$ and
$R$ are both fully faithful. Then, by proposition
\ref{prop.semi-orthogonal pair from adjunction}, we get that
$(\text{Im}(L),\text{Ker}(G),\text{Im}(R))$ is a TTF triple in $\D$.

Since $L$ gives an equivalence of triangulated categories
$\tilde{L}:\D''\stackrel{\cong}{\longrightarrow}\text{Im}(L)=\mathcal{X}$,
we easily get that the left truncation functor $\tau_\mathcal{X}$
with respect to the semi-orthogonal decomposition
$(\mathcal{X},\mathcal{X}^\perp )=(\text{Im}(L),\text{Ker}(G))$ is
naturally isomorphic to $\tilde{L}\circ G$. Using proposition
\ref{prop.TTF=recollement}, we then get a recollement
$\text{Ker}(G)\equiv\D\equiv D''$, where $j^*=G$.

\end{proof}

\subsection{The derived category of an abelian category}
In this subsection $\mathcal{A}$ will be an abelian category. A
morphism $f:X^\bullet\longrightarrow Y^\bullet$ in
$\mathcal{C}(\mathcal{A})$ is called a \emph{quasi-isomorphism} when
the morphism $H^k(f):H^k(X^\bullet )\longrightarrow H^k(Y^\bullet )$
is an isomorphism in $\mathcal{A}$, for each $k\in\mathbb{Z}$. Our
main object of interest is the following category.

\begin{defi}
The \emph{derived category of $\mathcal{A}$}, denoted
$\mathcal{D}(\mathcal{A})$,  is the localization of
$\mathcal{C}(\mathcal{A})$ with respect to the class of
quasi-isomorphisms.
\end{defi}

Note that, in general,  $\mathcal{D}(\mathcal{A})$ is a big
category. Moreover, defined as above, we have the problem of the
intractability of its morphisms. But, fortunately,  this latter
obstacle is overcome:

\begin{prop}[Verdier] \label{prop.derived category}
Let $\mathcal{Q}$ be the class of quasi-isomoprhisms in
$\mathcal{C}(\mathcal{A})$. The following assertions hold:

\begin{enumerate}
\item The canonical functor
$q:\mathcal{C}(\mathcal{A})\longrightarrow\mathcal{D}(\mathcal{A})$
factors through the projection functor
$p:\mathcal{C}(\mathcal{A})\longrightarrow\mathcal{H}(\mathcal{A})$.
More concretely, there is a unique functor, up to natural
isomorphism,
$\bar{q}:\mathcal{H}(\mathcal{A})\longrightarrow\mathcal{D}(\mathcal{A})$
such that $\bar{q}\circ p=q$.
\item $\overline{\mathcal{Q}}:=p(\mathcal{Q})$ is a multiplicative
system in $\mathcal{H}(\mathcal{A})$ compatible with the
triangulation and the functor $\bar{q}$ induces an equivalence of
categories
$\mathcal{H}(\mathcal{A})[\overline{Q}^{-1}]\stackrel{\cong}{\longrightarrow}\mathcal{D}(\mathcal{A})$.
\end{enumerate}
\end{prop}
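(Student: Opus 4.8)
The plan is to prove the two assertions in turn, reducing assertion (2) to Verdier's localization theorem (Proposition \ref{prop.Verdier localization by triangulated}) via the thick subcategory of acyclic complexes, so that no axiom of a multiplicative system has to be checked by hand.

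For (1): since $p$ and $q$ are both the identity on objects, the functor $\bar q$, if it exists, is forced to be the identity on objects, and the only issue is on morphisms. The key point to establish is that $q$ identifies homotopic chain maps. If $f\simeq g$, then $f-g$ is null-homotopic, hence by Lemma \ref{lem.nullhomotopic=factors through contractible} it factors through a contractible complex $P$; a contractible complex is acyclic (its identity map is null-homotopic, hence induces $0$ on homology by Corollary \ref{cor.0-homology functor}), so the map $0\longrightarrow P$ is a quasi-isomorphism and $q(P)\cong 0$ in $\mathcal{D}(\mathcal{A})$. Therefore $q(f)-q(g)=q(f-g)$ factors through a zero object, i.e. $q(f)=q(g)$. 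It follows that $q$ descends along the quotient $p$ to a well-defined additive functor $\bar q:\mathcal{H}(\mathcal{A})\longrightarrow\mathcal{D}(\mathcal{A})$ with $\bar q\circ p=q$; functoriality is automatic since $p$ is full, and uniqueness is clear since $p$ is bijective on objects and surjective on morphisms. (One may instead invoke directly the universal property of the stable category $\mathcal{H}(\mathcal{A})=\underline{\mathcal{C}(\mathcal{A})}$, since $q$ sends contractible, i.e. projective/injective, objects to objects isomorphic to $0$.)

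For (2): first I would pin down $\overline{\mathcal{Q}}=p(\mathcal{Q})$. Because each $H^k$ is homotopy-invariant, being a quasi-isomorphism is a property of the homotopy class of a chain map; hence $\overline{\mathcal{Q}}$ is precisely the class of those morphisms of $\mathcal{H}(\mathcal{A})$ which are quasi-isomorphisms. Let $\mathcal{H}_{ac}(\mathcal{A})$ denote the full subcategory of $\mathcal{H}(\mathcal{A})$ of acyclic complexes. Since each $H^k$ is a cohomological functor on $\mathcal{H}(\mathcal{A})$ (Corollary \ref{cor.0-homology functor}), the long exact homology sequence shows that $\mathcal{H}_{ac}(\mathcal{A})$ is closed under shifts, extensions and cones, and additivity of $H^k$ shows it is closed under direct summands; thus $\mathcal{H}_{ac}(\mathcal{A})$ is a thick subcategory of $\mathcal{H}(\mathcal{A})$. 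Applying the same long exact sequence to the triangle $X^\bullet\stackrel{f}{\longrightarrow}Y^\bullet\longrightarrow C(f)\longrightarrow X^\bullet[1]$ shows that $f$ is a quasi-isomorphism if and only if its cone $C(f)$ is acyclic; that is, $\overline{\mathcal{Q}}=\Sigma_{\mathcal{H}_{ac}(\mathcal{A})}$. By Proposition \ref{prop.Verdier localization by triangulated}(1), $\overline{\mathcal{Q}}$ is therefore a multiplicative system in $\mathcal{H}(\mathcal{A})$ compatible with the triangulation, and $\mathcal{H}(\mathcal{A})[\overline{\mathcal{Q}}^{-1}]=\mathcal{H}(\mathcal{A})/\mathcal{H}_{ac}(\mathcal{A})$.

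Finally, to obtain the equivalence I would play the two universal properties against each other. The functor $\bar q$ of part (1) sends the morphisms of $\overline{\mathcal{Q}}$ (which are quasi-isomorphisms) to isomorphisms, so by Proposition \ref{prop.Gabriel-Zisman} it factors as $\bar q=\Phi\circ\bar p$ for a unique functor $\Phi:\mathcal{H}(\mathcal{A})[\overline{\mathcal{Q}}^{-1}]\longrightarrow\mathcal{D}(\mathcal{A})$, where $\bar p$ is the canonical functor. Conversely, the composite $\bar p\circ p:\mathcal{C}(\mathcal{A})\longrightarrow\mathcal{H}(\mathcal{A})[\overline{\mathcal{Q}}^{-1}]$ inverts every quasi-isomorphism, so by the universal property of $\mathcal{D}(\mathcal{A})=\mathcal{C}(\mathcal{A})[\mathcal{Q}^{-1}]$ it factors as $\bar p\circ p=\Psi\circ q$ for a unique $\Psi:\mathcal{D}(\mathcal{A})\longrightarrow\mathcal{H}(\mathcal{A})[\overline{\mathcal{Q}}^{-1}]$. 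Then $\Phi\circ\Psi\circ q=\Phi\circ\bar p\circ p=\bar q\circ p=q$, so $\Phi\circ\Psi=1_{\mathcal{D}(\mathcal{A})}$ by uniqueness in the universal property of $q$; and $\Psi\circ\bar q\circ p=\Psi\circ q=\bar p\circ p$ forces $\Psi\circ\bar q=\bar p$ (a functor out of $\mathcal{H}(\mathcal{A})$ is determined by its composite with $p$), whence $\Psi\circ\Phi\circ\bar p=\Psi\circ\bar q=\bar p$ and $\Psi\circ\Phi=1$ by uniqueness in the universal property of $\bar p$. Thus $\Phi$ is an isomorphism of (big) categories, which is the asserted equivalence. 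The routine parts I am skipping are the diagram chases in the long exact homology sequence; the step demanding most care is the identification $\overline{\mathcal{Q}}=\Sigma_{\mathcal{H}_{ac}(\mathcal{A})}$ together with the thickness of $\mathcal{H}_{ac}(\mathcal{A})$, on which the whole reduction to Verdier's theorem — and hence the tractable left/right-fraction description of morphisms in $\mathcal{D}(\mathcal{A})$ — depends. (It is genuinely essential to pass through $\mathcal{H}(\mathcal{A})$ here: the class $\mathcal{Q}$ itself does not admit a calculus of fractions inside $\mathcal{C}(\mathcal{A})$.)
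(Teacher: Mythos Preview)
Your argument is correct. The paper itself does not give a proof of this proposition: it simply refers the reader to Verdier's original treatment (D\'efinition 1.2.2, Proposition 1.3.5 and Remarque 1.3.7 in \cite[Chapitre III]{V}). What you have written is exactly the standard unpacking of that reference: identify $\overline{\mathcal{Q}}$ with $\Sigma_{\mathcal{H}_{ac}(\mathcal{A})}$ for the thick subcategory of acyclic complexes, invoke Proposition~\ref{prop.Verdier localization by triangulated}(1), and then match the two universal properties. So your route is not different from the paper's --- it simply supplies the details the paper omits by citation.
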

\begin{proof}
See D\'efinition 1.2.2, Proposition 1.3.5 and Remarque 1.3.7 in
\cite[Chapitre III]{V}.
\end{proof}

\begin{cor}
$\mathcal{D}(\mathcal{A)}$ admits a unique structure of triangulated
category such that the functor
$\bar{q}:\mathcal{H}(\mathcal{A})\longrightarrow\mathcal{D}(\mathcal{A})$
is triangulated. Moreover, for each $k\in\mathbf{Z}$, the
cohomological functor
$H^k:\mathcal{H}(\mathcal{A})\longrightarrow\mathcal{A}$ factors
thorough $\bar{q}$ in a unique way.
\end{cor}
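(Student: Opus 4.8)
The plan is to deduce the corollary from Proposition \ref{prop.derived category} together with Proposition \ref{prop.Verdier}, by transporting the triangulated structure through the equivalence $\bar{q}$. First I would note that Proposition \ref{prop.derived category}(2) gives that $\overline{\mathcal{Q}}=p(\mathcal{Q})$ is a multiplicative system in $\mathcal{H}(\mathcal{A})$ compatible with the triangulation, so Proposition \ref{prop.Verdier} applies to the triangulated category $\mathcal{D}=\mathcal{H}(\mathcal{A})$ and the system $\mathcal{S}=\overline{\mathcal{Q}}$: there is a unique triangulated structure on $\mathcal{H}(\mathcal{A})[\overline{\mathcal{Q}}^{-1}]$ making the canonical localization functor $q':\mathcal{H}(\mathcal{A})\longrightarrow\mathcal{H}(\mathcal{A})[\overline{\mathcal{Q}}^{-1}]$ triangulated. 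Via the equivalence of categories $\mathcal{H}(\mathcal{A})[\overline{\mathcal{Q}}^{-1}]\stackrel{\cong}{\longrightarrow}\mathcal{D}(\mathcal{A})$ of Proposition \ref{prop.derived category}(2), one obtains a triangulated structure on $\mathcal{D}(\mathcal{A})$: declare a triangle in $\mathcal{D}(\mathcal{A})$ to be distinguished precisely when it is isomorphic to the image, under this equivalence, of a distinguished triangle in $\mathcal{H}(\mathcal{A})[\overline{\mathcal{Q}}^{-1}]$. Since $\bar{q}$ is, up to natural isomorphism, the composite of $q'$ with this equivalence, $\bar{q}$ becomes triangulated by construction.

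For uniqueness, I would argue that any triangulated structure on $\mathcal{D}(\mathcal{A})$ for which $\bar{q}$ is triangulated must have as distinguished triangles exactly the images under $\bar{q}$ (up to isomorphism) of distinguished triangles of $\mathcal{H}(\mathcal{A})$: indeed $\bar{q}$ is essentially surjective (it is dense, being a localization functor composed with an equivalence), and in a triangulated category every triangle on a given morphism is determined up to isomorphism, so every distinguished triangle of $\mathcal{D}(\mathcal{A})$ is isomorphic to one of the form $\bar{q}(X^\bullet)\longrightarrow\bar{q}(Y^\bullet)\longrightarrow\bar{q}(Z^\bullet)\stackrel{+}{\longrightarrow}$ coming from a distinguished triangle of $\mathcal{H}(\mathcal{A})$; conversely $\bar{q}$ triangulated forces all such to be distinguished. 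This pins down the class of distinguished triangles, and with it the suspension functor (which on objects is forced to be the one induced from $?[1]$ on $\mathcal{C}(\mathcal{A})$), hence the whole structure. Alternatively, and more cleanly, uniqueness is inherited directly from the uniqueness clause in Proposition \ref{prop.Verdier} transported along the equivalence.

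For the last sentence, by Corollary \ref{cor.0-homology functor} the functor $H^k:\mathcal{C}(\mathcal{A})\longrightarrow\mathcal{A}$ factors through $p:\mathcal{C}(\mathcal{A})\longrightarrow\mathcal{H}(\mathcal{A})$ as a cohomological functor $\bar{H}^k:\mathcal{H}(\mathcal{A})\longrightarrow\mathcal{A}$ (which we have agreed to denote $H^k$ again). Since quasi-isomorphisms are by definition exactly the morphisms of $\mathcal{C}(\mathcal{A})$ inverted by all $H^k$, the cohomological functor $H^k$ on $\mathcal{H}(\mathcal{A})$ sends every morphism of $\overline{\mathcal{Q}}$ to an isomorphism; hence the second part of Proposition \ref{prop.Verdier} yields a unique cohomological functor on $\mathcal{H}(\mathcal{A})[\overline{\mathcal{Q}}^{-1}]$ through which $H^k$ factors, and transporting along the equivalence gives the unique cohomological factorization through $\bar{q}$. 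The only mild subtlety — the main point to be careful about rather than a genuine obstacle — is to make sure the identifications of $\bar{q}$ with the localization-then-equivalence composite are used consistently, so that "triangulated structure such that $\bar{q}$ is triangulated" and "triangulated structure transported from $\mathcal{H}(\mathcal{A})[\overline{\mathcal{Q}}^{-1}]$" really do refer to the same thing; everything else is a direct application of the already-quoted results.
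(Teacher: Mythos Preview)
Your proposal is correct and follows exactly the route the paper intends: the corollary is stated without proof precisely because it is an immediate consequence of Proposition~\ref{prop.derived category}(2) (identifying $\mathcal{D}(\mathcal{A})$ with $\mathcal{H}(\mathcal{A})[\overline{\mathcal{Q}}^{-1}]$) together with Proposition~\ref{prop.Verdier} (triangulated structure and factorization of cohomological functors through the localization), and your write-up spells this out accurately.
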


It remains to settle the set-theoretical problem that
$\mathcal{D}(\mathcal{A})$ is a big category. Led by remark
\ref{rem.big category-enough for usual category}(2), we will
characterize when the functor
$q:\mathcal{H}(\mathcal{A})\longrightarrow\mathcal{D}(\mathcal{A})$
has an adjoint. Recall that an object
$X^\bullet\in\mathcal{C}(\mathcal{A})$ is called an \emph{acyclic
complex} when it has zero homology, i.e., when $H^k(X^\bullet )=0$,
for all $k\in\mathbb{Z}$. Note that, when $\mathcal{A}$ is AB4, the
$k$-th homology functor
$H^k:\mathcal{C}(\mathcal{A})\longrightarrow\mathcal{A}$ preserves
coproducts. In this case, if $(X_i^\bullet )_{i\in I}$ is a family
of acyclic complexes which has a coproduct in
$\mathcal{C}(\mathcal{A})$ (equivalently, in
$\mathcal{H}(\mathcal{A})$), then $\coprod_{i\in I}X_i^\bullet$ is
also an acyclic complex. Viewed as a full subcategory of
$\K(\mathcal{A})$, it follows that the class $\mathcal{Z}$ of
acyclic complexes is a triangulated subcategory closed under taking
coproducts. The dual fact applies to products when $\mathcal{A}$ is
AB4*. The following result (resp. its dual), which is a direct
consequence of proposition \ref{prop.Verdier localization by
triangulated},  gives a criterion for the canonical functor
$q:\mathcal{H}(\mathcal{A})\longrightarrow\mathcal{D}(\mathcal{A})$
to have a right (resp. left) adjoint.

\begin{prop} \label{prop.homotopically injective resolution}
Let $\mathcal{A}$ be an AB4 abelian category and denote by
$\mathcal{Z}$ the full subcategory of $\K(\mathcal{A})$ whose
objects are the (images by the quotient functor
$p:\mathcal{C}(\mathcal{A})\longrightarrow\mathcal{H}(\mathcal{A})$
of) acyclic complexes. The following assertions are equivalent:

\begin{enumerate}
\item The  pair $(\mathcal{Z},\mathcal{Z}^\perp )$ is a semi-orthogonal decomposition in  $\mathcal{H}(\mathcal{A})$.
\item The canonical functor
$q:\mathcal{H}(\mathcal{A})\longrightarrow\mathcal{D}(\mathcal{A})$
has a right adjoint.
\end{enumerate}
In such case, there is an equivalence of categories
$\mathcal{D}(\mathcal{A})\cong\mathcal{Z}^\perp$, so that
$\D(\mathcal{A})$ is a real category and not just a big one.
\end{prop}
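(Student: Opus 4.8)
The statement to prove is Proposition \ref{prop.homotopically injective resolution}, which asserts the equivalence of the two conditions (and, in that case, the existence of a right adjoint realizing $\D(\mathcal{A})$ as $\mathcal{Z}^\perp$). The plan is to deduce everything from Proposition \ref{prop.Verdier localization by triangulated}(3) applied to the thick subcategory $\mathcal{Z}$ of $\K(\mathcal{A})$, after first checking that $\mathcal{Z}$ is indeed thick and that the Verdier quotient $\K(\mathcal{A})/\mathcal{Z}$ agrees with the derived category as defined via localization of $\mathcal{C}(\mathcal{A})$ at quasi-isomorphisms.

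First I would observe that, since $\mathcal{A}$ is AB4, the homology functors $H^k:\mathcal{C}(\mathcal{A})\longrightarrow\mathcal{A}$ preserve coproducts, so as noted in the text $\mathcal{Z}$ is a triangulated subcategory of $\K(\mathcal{A})$ closed under coproducts; by the Remark following the notation for $\text{susp}/\text{tria}/\text{thick}$, any coproduct-closed triangulated subcategory is automatically thick, so $\mathcal{Z}$ is thick. Hence Proposition \ref{prop.Verdier localization by triangulated} applies: $\Sigma_\mathcal{Z}$ is a multiplicative system compatible with the triangulation and we may form $\K(\mathcal{A})/\mathcal{Z}=\K(\mathcal{A})[\Sigma_\mathcal{Z}^{-1}]$. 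The next step is to identify this quotient with $\D(\mathcal{A})$: a chain map is a quasi-isomorphism precisely when its cone is acyclic, i.e. lies in $\mathcal{Z}$, so $\Sigma_\mathcal{Z}$ is exactly the image $\overline{\mathcal{Q}}$ of the quasi-isomorphisms under $p:\mathcal{C}(\mathcal{A})\longrightarrow\K(\mathcal{A})$; by Proposition \ref{prop.derived category}(2) the canonical functor $\bar{q}$ induces an equivalence $\K(\mathcal{A})[\overline{\mathcal{Q}}^{-1}]\stackrel{\cong}{\longrightarrow}\D(\mathcal{A})$. Thus the canonical functor $q:\K(\mathcal{A})\longrightarrow\D(\mathcal{A})$ is, up to equivalence, the Verdier quotient functor $\K(\mathcal{A})\longrightarrow\K(\mathcal{A})/\mathcal{Z}$.

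With this identification in hand, (1)$\Leftrightarrow$(2) is immediate from Proposition \ref{prop.Verdier localization by triangulated}(3): the quotient functor $q$ has a right adjoint if and only if $(\mathcal{Z},\mathcal{Z}^\perp)$ is a semi-orthogonal decomposition of $\K(\mathcal{A})$, and in that case the right truncation $\tau^{\mathcal{Z}^\perp}:\K(\mathcal{A})\longrightarrow\mathcal{Z}^\perp$ induces an equivalence $\D(\mathcal{A})\cong\K(\mathcal{A})/\mathcal{Z}\stackrel{\cong}{\longrightarrow}\mathcal{Z}^\perp$. Since $\mathcal{Z}^\perp$ is a genuine full subcategory of the honest category $\K(\mathcal{A})$ (whose Hom-classes are sets, being quotients of the Hom-sets of $\mathcal{C}(\mathcal{A})$), this equivalence shows $\D(\mathcal{A})$ is a real category, not merely a big one, and also gives the right adjoint to $q$ as the composite of $\tau^{\mathcal{Z}^\perp}$ with the inverse equivalence. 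Finally, I would note that the dual statement (with $\mathcal{A}$ AB4*, products, $^\perp\mathcal{Z}$, and a left adjoint to $q$) follows by applying the above to $\mathcal{A}^{op}$, using Proposition \ref{prop.Verdier localization by triangulated}(4). There is essentially no hard step here: the only point requiring care is the bookkeeping that identifies the Verdier quotient $\K(\mathcal{A})/\mathcal{Z}$ with the derived category and transports the semi-orthogonal decomposition condition correctly; everything else is a direct citation of the quoted results.
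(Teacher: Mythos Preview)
Your proposal is correct and follows exactly the route the paper indicates: the paper states just before the proposition that it is ``a direct consequence of proposition \ref{prop.Verdier localization by triangulated}'', and you have correctly unpacked that consequence by identifying $\K(\mathcal{A})/\mathcal{Z}$ with $\D(\mathcal{A})$ via Proposition \ref{prop.derived category}(2) and then invoking part (3). One small simplification: you do not need the AB4/coproduct argument to get thickness of $\mathcal{Z}$, since additivity of homology already gives that a direct summand of an acyclic complex is acyclic; the AB4 hypothesis is only there to ensure $\mathcal{Z}$ is closed under coproducts, matching the ambient discussion.
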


\begin{defi}
A chain complex $Q^\bullet\in\mathcal{C}(\mathcal{A})$ is called
\emph{homotopically injective} (resp. \emph{homotopically
projective}) when $p(Q^\bullet )\in\mathcal{Z}^\perp$ (resp.
$p(Q^\bullet )\in {}^\perp\mathcal{Z}$), where $\mathcal{Z}$ is as
in the previous proposition and
$p:\mathcal{C}(\mathcal{A})\longrightarrow\mathcal{H}(\mathcal{A})$
is the projection functor. A chain complex $X^\bullet$ is said to
have a \emph{homotopically injective resolution} (resp.
\emph{homotopically projective resolution}) in
$\mathcal{H}(\mathcal{A})$ when there is quasi-isomorphism
$\iota:X^\bullet\longrightarrow I^\bullet_X$ (resp. $\pi
:P^\bullet_X\longrightarrow X^\bullet$), where $I^\bullet_X$ (resp.
$P^\bullet_X$) is a homotopically injective (resp. homotopically
projective) complex.
\end{defi}

Note that $X^\bullet$ has a homotopically injective resolution if,
and only if, there is a triangle $Z^\bullet\longrightarrow
X^\bullet\stackrel{\iota}{\longrightarrow}I^\bullet\stackrel{+}{\longrightarrow}$
in $\K(\mathcal{A})$ such that $Z^\bullet\in\mathcal{Z}$ and
$I^\bullet\in\mathcal{Z}^\perp$ (i.e. $Z^\bullet$ is acyclic and
$I^\bullet$ is homotopically injective). A dual fact is true about
the existence of a homotopically projective resolution.

As a direct consequence of the definition of semi-orthogonal
decomposition and of proposition \ref{prop.homotopically injective
resolution}, we get the following result. The statement of the dual
result is left to the reader.

\begin{cor} \label{cor.homotopically projective and injective}
Let $\mathcal{A}$ be an AB4 abelian category.  The canonical functor
$q:\mathcal{H}(\mathcal{A})\longrightarrow\mathcal{D}(\mathcal{A})$
has a right adjoint if, and only if, each chain complex $X^\bullet$
has a homotopically injective resolution
$\iota_X:X^\bullet\longrightarrow I^\bullet_X$. In such case
$I^\bullet_X$ is uniquely determined, up to isomorphism in
$\mathcal{H}(\mathcal{A})$, and the assignment $X\rightsquigarrow
I^\bullet_X$ is the definition on objects of a triangulated functor
$\mathbf{i}_\mathcal{A}:\mathcal{D}(\mathcal{A})\longrightarrow\mathcal{H}(\mathcal{A})$
which is right adjoint to $q$.
\end{cor}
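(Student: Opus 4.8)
The plan is simply to unwind Proposition~\ref{prop.homotopically injective resolution} together with the definition of a semi-orthogonal decomposition; no essentially new argument is needed. Write $\mathcal{Z}\subseteq\mathcal{H}(\mathcal{A})$ for the full subcategory of acyclic complexes. It is a thick subcategory: it is closed under shifts, it is closed under extensions by the long exact homology sequence, and it is closed under direct summands because $H^k$ sends a direct summand of $X^\bullet$ to a direct summand of $H^k(X^\bullet)$. Hence Proposition~\ref{prop.Verdier localization by triangulated} applies with $\mathcal{T}=\mathcal{Z}$, and, since the quasi-isomorphisms are exactly the morphisms of $\mathcal{H}(\mathcal{A})$ with acyclic cone, Proposition~\ref{prop.derived category} lets us identify $\mathcal{D}(\mathcal{A})$ with $\mathcal{H}(\mathcal{A})/\mathcal{Z}$ and $q$ with the canonical quotient functor. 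By Proposition~\ref{prop.homotopically injective resolution}, $q$ has a right adjoint if and only if $(\mathcal{Z},\mathcal{Z}^\perp)$ is a semi-orthogonal decomposition of $\mathcal{H}(\mathcal{A})$.

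Next I would make that semi-orthogonal decomposition explicit. By definition, $(\mathcal{Z},\mathcal{Z}^\perp)$ being a semi-orthogonal decomposition means that $(\mathcal{Z},\mathcal{Z}^\perp[1])$ is a t-structure with $\mathcal{Z}[1]=\mathcal{Z}$; here condition (ii) and the extra equality $\mathcal{Z}[1]=\mathcal{Z}$ are automatic because $\mathcal{Z}$ is a triangulated subcategory, while condition (i), namely $\mathcal{H}(\mathcal{A})(Z,I)=0$ for $Z\in\mathcal{Z}$ and $I\in\mathcal{Z}^\perp$, is literally the definition of $\mathcal{Z}^\perp$. Thus the only substantive requirement is condition (iii): for every $X^\bullet$ there is a triangle $Z^\bullet\longrightarrow X^\bullet\stackrel{\iota}{\longrightarrow}I^\bullet\stackrel{+}{\longrightarrow}$ in $\mathcal{H}(\mathcal{A})$ with $Z^\bullet\in\mathcal{Z}$ and $I^\bullet\in\mathcal{Z}^\perp$. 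By the remark just before the statement, such a triangle is precisely a homotopically injective resolution of $X^\bullet$: the cone of $\iota$ is $Z^\bullet[1]$, which is acyclic, so $\iota$ is a quasi-isomorphism, and $I^\bullet$ is homotopically injective by definition. This establishes the claimed equivalence.

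For the remaining assertions, suppose $q$ has a right adjoint, so $(\mathcal{Z},\mathcal{Z}^\perp)$ is a semi-orthogonal decomposition. Uniqueness of $I^\bullet_X$ up to isomorphism in $\mathcal{H}(\mathcal{A})$ is exactly the uniqueness, up to isomorphism, of the two terms of the truncation triangle of a t-structure, recalled above for general t-structures, applied to $(\mathcal{Z},\mathcal{Z}^\perp)$. The object assignment $X^\bullet\rightsquigarrow I^\bullet_X$ is then the one underlying the left truncation functor $\tau^{\mathcal{Z}^\perp}:\mathcal{H}(\mathcal{A})\longrightarrow\mathcal{Z}^\perp$ followed by the inclusion $\mathcal{Z}^\perp\hookrightarrow\mathcal{H}(\mathcal{A})$; being a composite of triangulated functors, the resulting $\mathbf{i}_\mathcal{A}$ is triangulated. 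Finally, Proposition~\ref{prop.Verdier localization by triangulated}(3) says that $\tau^{\mathcal{Z}^\perp}$ induces an equivalence $\mathcal{D}(\mathcal{A})=\mathcal{H}(\mathcal{A})/\mathcal{Z}\stackrel{\cong}{\longrightarrow}\mathcal{Z}^\perp$ through which $q$ becomes the projection; composing a quasi-inverse of this equivalence with $\mathcal{Z}^\perp\hookrightarrow\mathcal{H}(\mathcal{A})$ yields precisely $\mathbf{i}_\mathcal{A}$, and it is right adjoint to $q$. (Alternatively, one checks the adjunction isomorphism $\mathcal{D}(\mathcal{A})(q(X^\bullet),q(Y^\bullet))\cong\mathcal{H}(\mathcal{A})(X^\bullet,I^\bullet_Y)$ directly, using the calculus of left fractions in $\mathcal{H}(\mathcal{A})$ and the fact that $\iota_Y:Y^\bullet\to I^\bullet_Y$ is a quasi-isomorphism with $I^\bullet_Y$ homotopically injective.)

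I do not expect a genuine obstacle here, since every step reads off a definition or invokes an already-proved result; the only point that needs some care is the bookkeeping of the identifications — making sure that the functor extracted from the t-structure on $\mathcal{H}(\mathcal{A})$ is genuinely the right adjoint of $q$, and not merely some equivalence $\mathcal{D}(\mathcal{A})\simeq\mathcal{Z}^\perp$ — and this is exactly what Proposition~\ref{prop.Verdier localization by triangulated}(3) supplies.
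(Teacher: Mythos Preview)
Your proposal is correct and takes essentially the same approach as the paper, which states the corollary ``as a direct consequence of the definition of semi-orthogonal decomposition and of proposition~\ref{prop.homotopically injective resolution}'' without giving further details; you have simply written out that unwinding explicitly, invoking also Proposition~\ref{prop.Verdier localization by triangulated}(3) to pin down the adjoint. One tiny terminological slip: in the paper's convention $\tau^{\mathcal{Z}^\perp}$ is the \emph{right} truncation functor, not the left one.
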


\begin{defi} \label{def.homotopically inj.proj.resol.functor}
In the situation of the previous corollary,
$\mathbf{i}_\mathcal{A}:\D(\mathcal{A})\longrightarrow\K(\mathcal{A})$
is called the \emph{homotopically injective resolution functor}.
When the dual result holds, one defines the \emph{homotopically
projective resolution functor}
$\mathbf{p}_\mathcal{A}:\D(\mathcal{A})\longrightarrow\K(\mathcal{A})$,
which is left adjoint to $q:\K(\mathcal{A})\longrightarrow\D(A)$.
\end{defi}

\begin{rem}
When the homotopically projective (resp. homotopically injective)
resolution functor is defined, the counit
$q\circ\mathbf{i}_\mathcal{A}\longrightarrow 1_{\D(A)}$ (resp. the
unit $1_{\D(\mathcal{A})}\longrightarrow
q\circ\mathbf{p}_\mathcal{A}$) of the adjunction pair
$(q,\mathbf{i}_\mathcal{A})$ (resp. $(\mathbf{p}_\mathcal{A},q)$) is
a natural isomorphism. We will denote by $\iota
:1_{\K(\mathcal{A})}\longrightarrow\mathbf{i}_\mathcal{A}\circ q$
(resp. $\pi :\mathbf{p}_\mathcal{A}\circ q\longrightarrow
1_{\K(A)}$) the unit (resp. counit) of the first (resp. second)
adjoint pair. But since $q$ 'is' the identity on objects, we will
simply write
$\iota=\iota_{X^\bullet}:X^\bullet\longrightarrow\mathbf{i}_\mathcal{A}X^\bullet$
 and $\pi =\pi_{X^\bullet}:\mathbf{p}_\mathcal{A}X^\bullet\longrightarrow
 X^\bullet$. Both of them are quasi-isomorphisms.
\end{rem}

The canonical examples that we should keep in mind are the
following:

\begin{exems}
\begin{enumerate}
\item If $\mathcal{A}=\mathcal{G}$ is a Grothendieck category, then
each chain complex of objects of $\mathcal{G}$ admits a
homotopically injective resolution (\cite[Theorem 5.4]{AJSo1}, see
also \cite[Theorem D]{S}).

\item If $\mathcal{A}=\text{Mod}-A$ is the category of (right)
modules over a $k$-algebra $A$, then each chain complex of
$A$-modules admits both a homotopically projective and a
homotopically injective resolution (see \cite[Theorem C]{S}). Note
that if $A$ and $B$ are $k$-algebras, then a $B-A-$bimodule is the
same as a right $B^{op}\otimes A$-module, by the rule $m(b^o\otimes
a)=bma$, for all $a\in A$, $b\in B$ and $m\in M$. Therefore the
existence of homotopically injective and homotopically projective
resolutions also applies when taking
$\mathcal{A}=\text{Mod}-(B^{op}\otimes A)$ to be the category of
$B-A-$bimodules.

\item Since we have canonical equivalences
$\mathcal{C}(\mathcal{A}^{op})\cong\mathcal{C}(\mathcal{A})^{op}$
and
$\mathcal{H}(\mathcal{A}^{op})\cong\mathcal{H}(\mathcal{A})^{op}$,
the opposite category of a (bi)module category is another example of
abelian category $\mathcal{A}$ over which every complex admits both
a homotopically projective resolution and a homotopically injective
one.
\end{enumerate}
\end{exems}

A consequence of Brown representability theorem is now  the
following result.

\begin{prop} \label{prop.triangulated subcategory is aisle}
Let $A$ be an algebra and $\mathcal{S}$ be any \underline{set} of
objects of $\D(A)$. The following assertions hold:

\begin{enumerate}
\item $\mathcal{U}:=\text{Susp}_{\D(A)}(\mathcal{S})$ is the aisle of
a t-structure in $\D(A)$. The co-aisle $\mathcal{U}^\perp$ consists
of the complexes $Y^\bullet$ such that $\D(A)(S^\bullet
[k],Y^\bullet )=0$, for all $S^\bullet\in\mathcal{S}$ and and all
integers $k\geq 0$;
\item $\mathcal{T}:=\text{Tria}_{\D(A)}(\mathcal{S})$ is the aisle of
a semi-orthogonal decomposition in $\D(A)$. In this case
$\mathcal{T}^\perp$ consists of the complexes $Y^\bullet$ such that
$\D(A)(S^\bullet [k],Y^\bullet )=0$, for all
$S^\bullet\in\mathcal{S}$ and  $k\in\mathbb{Z}$.
\end{enumerate}
\end{prop}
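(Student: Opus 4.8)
The plan is to deduce both assertions from the Brown representability theorem via Proposition~\ref{prop.Brown representability theorem}, together with Proposition~\ref{prop.Keller-Vossieck} (and its dual) and Proposition~\ref{prop.Verdier localization by triangulated}. The key preliminary fact is that $\D(A)$ is a compactly generated triangulated category: the stalk complex $A$ (concentrated in degree $0$) is a compact generator, so $\D(A)=\operatorname{Tria}_{\D(A)}(A)$, and hence $\D(A)$ satisfies Brown representability theorem by \cite[Theorem 8.3.3 and Proposition 8.4.2]{N} (the standard statement that compactly generated triangulated categories satisfy Brown representability). I would cite this directly rather than reprove it. Note also that $\D(A)$ has coproducts, since $\operatorname{Mod}-A$ is AB4 and the homotopically projective resolution functor is defined (Corollary~\ref{cor.homotopically projective and injective}), so coproducts can be computed via homotopically projective resolutions.

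For assertion (1), set $\mathcal{U}=\operatorname{Susp}_{\D(A)}(\mathcal{S})$. By Lemma~\ref{lem.triang.functors preserves susp, tria..} applied to the identity functor, $\mathcal{U}$ is a suspended subcategory of $\D(A)$ that is closed under coproducts. To apply Proposition~\ref{prop.Keller-Vossieck}, I must show the inclusion $\mathcal{U}\hookrightarrow\D(A)$ has a right adjoint. The cleanest route: the Verdier quotient $\D(A)/\mathcal{U}$ makes sense once we know $\mathcal{U}$ is thick, which holds because a coproduct-closed triangulated subcategory is automatically thick (the Remark after the notation paragraph). Then $\mathcal{U}$ being an aisle is equivalent to the quotient functor $q\colon\D(A)\to\D(A)/\mathcal{U}$ having a right adjoint, by Proposition~\ref{prop.Verdier localization by triangulated}(3). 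Actually $\mathcal{U}$ here is only suspended, not triangulated in general, so I would instead argue directly: $\mathcal{U}$ is closed under coproducts and suspension and extensions, and the inclusion is a coproduct-preserving triangulated functor from... no --- the subtlety is that $\mathcal{U}$ itself need not satisfy Brown representability. The right approach is the standard small object / Bousfield localization argument: since $\mathcal{S}$ is a set, for every $X\in\D(A)$ one constructs (by transfinite iteration of forming cones of maps out of coproducts of shifts of objects of $\mathcal{S}$) a triangle $U\to X\to V\xrightarrow{+}$ with $U\in\mathcal{U}$ and $V\in\mathcal{U}^\perp$; this is exactly \cite[Theorem A.1]{AJSo1}-type content, or one invokes \cite{Kr1} directly. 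Given the triangle, axiom (iii) of a t-structure holds, and axioms (i)--(ii) are immediate from the definition of $\mathcal{U}$ and $\mathcal{U}^\perp$, so $(\mathcal{U},\mathcal{U}^\perp[1])$ is a t-structure. The description of $\mathcal{U}^\perp$ then follows: $\D(A)(S^\bullet[k],Y^\bullet)=0$ for all $S^\bullet\in\mathcal{S}$ and all $k\geq 0$ precisely cuts out the objects right-orthogonal to every object built from $\mathcal{S}$ by coproducts, extensions, and positive shifts, i.e.\ to all of $\mathcal{U}$ --- here one uses that $\mathcal{U}^\perp$ is closed under the relevant operations and that $\{S^\bullet[k]: S^\bullet\in\mathcal{S},\, k\geq 0\}$ generates $\mathcal{U}$ as a coproduct-closed suspended subcategory.

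For assertion (2), set $\mathcal{T}=\operatorname{Tria}_{\D(A)}(\mathcal{S})=\operatorname{Susp}_{\D(A)}(\{S^\bullet[k]:S^\bullet\in\mathcal{S},\,k\in\mathbb{Z}\})$. Applying part (1) to the enlarged set $\mathcal{S}'=\{S^\bullet[k]: S^\bullet\in\mathcal{S},\, k\in\mathbb{Z}\}$ gives that $\mathcal{T}$ is the aisle of a t-structure with $\mathcal{T}^\perp$ consisting of those $Y^\bullet$ with $\D(A)(S^\bullet[k],Y^\bullet)=0$ for all $S^\bullet\in\mathcal{S}$ and all $k\in\mathbb{Z}$. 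Since $\mathcal{S}'[1]=\mathcal{S}'$ we have $\mathcal{T}[1]=\mathcal{T}$, so $\mathcal{T}$ is triangulated and the t-structure is a semi-orthogonal decomposition by the definition of the latter. This gives the stated description of $\mathcal{T}^\perp$ and finishes the proof.

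The main obstacle is the existence of the approximation triangle $U\to X\to V\xrightarrow{+}$ for assertion (1), equivalently the fact that the inclusion $\operatorname{Susp}_{\D(A)}(\mathcal{S})\hookrightarrow\D(A)$ admits a right adjoint when $\mathcal{S}$ is a set. This is where compact generation (or at least the existence of a set of generators together with the AB4/AB4* structure) is essential --- without the cardinality bound coming from $\mathcal{S}$ being a genuine set one cannot run the small-object argument --- and it is the one step I would either cite verbatim from \cite{Kr1} or \cite{AJSo1} rather than reprove, since the transfinite construction is routine but lengthy. Everything else (the t-structure axioms, the orthogonality descriptions, the passage from (1) to (2)) is then formal.
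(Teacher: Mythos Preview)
Your proposal is correct and, once the exploratory detours are stripped away, takes the same approach as the paper: cite the existing literature (the paper invokes \cite[Proposition 3.2]{AJSo}, with \cite{So} and \cite[Theorem 12.1]{KN} as alternatives) for the small-object/Bousfield localization argument giving assertion (1), and then derive assertion (2) from (1) by noting that $\operatorname{Tria}_{\D(A)}(\mathcal{S})=\operatorname{Susp}_{\D(A)}(\bigcup_{k\in\mathbb{Z}}\mathcal{S}[k])$. One small remark: you cite \cite{AJSo1} (the paper on unbounded resolutions) for the approximation triangle, but the precise reference in the bibliography here is \cite{AJSo} (the paper on constructions of t-structures); either that or \cite{Kr1} works fine.
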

\begin{proof}
Assertion 1 is proved in \cite[Proposition 3.2]{AJSo} (see also
\cite{So} and \cite[Theorem 12.1]{KN} for more general versions).
Assertion 2 follows from 1 since
$\text{Tria}_{\D(A)}(\mathcal{S})=\text{Susp}_{\D(A)}(\bigcup_{k\in\mathbb{Z}}\mathcal{S}[k])$.
\end{proof}

\subsection{Derived functors and adjunctions}

\begin{lemma} \label{lem.adjunction pass to the stable category}
Let $\mathcal{C}$ and $\mathcal{D}$ be Frobenius exact categories
and let $F:\mathcal{C}\longrightarrow\mathcal{D}$ and
$G:\mathcal{D}\longrightarrow\mathcal{C}$ be functors which take
conflations to conflations. Then the following statements hold true:

\begin{enumerate}
\item If $F$ takes projective objects to projective objects, then
there is a  triangulated functor
$\underline{F}:\underline{C}\longrightarrow\underline{D}$, unique up
to natural isomorphism,  such that $p_\mathcal{D}\circ
F=\underline{F}\circ p_\mathcal{C}$.

\item If $(F,G)$ is an adjoint pair, then the
following assertions hold:

\begin{enumerate}
\item Both $F$ and $G$ preserve projective objects.
\item The induced triangulated functors
$\underline{F}:\underline{\mathcal{C}}\longrightarrow\underline{\mathcal{D}}$
and
$\underline{G}:\underline{\mathcal{D}}\longrightarrow\underline{\mathcal{C}}$
form an adjoint pair $(\underline{F},\underline{G})$.
\end{enumerate}
\end{enumerate}
\end{lemma}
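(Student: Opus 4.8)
The plan is to prove the three statements by unwinding how one pushes functors and adjunctions through the projection functor $p_\mathcal{C}:\mathcal{C}\longrightarrow\underline{\mathcal{C}}$, using the universal property of $p_\mathcal{C}$ recorded earlier (every functor vanishing on projectives factors uniquely through $p_\mathcal{C}$) together with the explicit description of the triangulated structure of $\underline{\mathcal{C}}$ from Proposition \ref{prop.Happel}.

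\textbf{Assertion (1).} First I would observe that if $F$ takes conflations to conflations and projectives to projectives, then the composite $p_\mathcal{D}\circ F:\mathcal{C}\longrightarrow\underline{\mathcal{D}}$ vanishes on projective objects: a projective $P$ of $\mathcal{C}$ is sent to a projective of $\mathcal{D}$, which is a zero object of $\underline{\mathcal{D}}$. By the universal property of $p_\mathcal{C}$ there is a unique functor $\underline{F}:\underline{\mathcal{C}}\longrightarrow\underline{\mathcal{D}}$ with $\underline{F}\circ p_\mathcal{C}=p_\mathcal{D}\circ F$. It remains to check $\underline{F}$ is triangulated. Here I would use that, since $F$ preserves conflations and projectives, $F$ commutes (up to natural isomorphism in $\underline{\mathcal{D}}$) with the cosyzygy functor $\Omega^{-1}$, because $\Omega^{-1}(C)$ is computed from a conflation $0\to C\to I\to \Omega^{-1}C\to 0$ with $I$ projective-injective, and $F$ of this is again such a conflation; this gives a natural isomorphism $\underline{F}\circ(?[1])\cong (?[1])\circ\underline{F}$. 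Finally, a distinguished triangle in $\underline{\mathcal{C}}$ comes from a conflation in $\mathcal{C}$ via the diagram in Proposition \ref{prop.Happel}; applying $F$ to that whole diagram produces the analogous diagram in $\mathcal{D}$, hence $\underline{F}$ sends distinguished triangles to distinguished triangles.

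\textbf{Assertion (2)(a).} Suppose $(F,G)$ is an adjoint pair. A right adjoint preserves injectives whenever its left adjoint preserves conflations (equivalently admissible monomorphisms/epimorphisms): if $I$ is injective in $\mathcal{D}$ and $C'\to C$ is a deflation in $\mathcal{C}$, then $F(C')\to F(C)$ is a deflation in $\mathcal{D}$, so $\mathcal{D}(F(C),I)\to\mathcal{D}(F(C'),I)$ is onto, i.e. $\mathcal{C}(C,G(I))\to\mathcal{C}(C',G(I))$ is onto, so $G(I)$ is injective. Since $\mathcal{C},\mathcal{D}$ are Frobenius, injective $=$ projective, so $G$ preserves projectives; dually, using that $G$ preserves conflations and that in a Frobenius category a left adjoint of a conflation-preserving functor preserves projectives, $F$ preserves projectives. (Concretely: $F$ preserves deflations, so for $P$ projective in $\mathcal{C}$ and a deflation $D'\to D$ in $\mathcal{D}$, the map $\mathcal{D}(F(P),D')\to \mathcal{D}(F(P),D)$ identifies with $\mathcal{C}(P,G(D'))\to\mathcal{C}(P,G(D))$, which is onto since $G$ preserves deflations and $P$ is projective.)

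\textbf{Assertion (2)(b).} By (2)(a) and assertion (1) we get induced triangulated functors $\underline{F}$ and $\underline{G}$. To produce the adjunction I would push the unit $\eta:1_\mathcal{C}\to G\circ F$ and counit $\varepsilon:F\circ G\to 1_\mathcal{D}$ down: the composites $p_\mathcal{C}(\eta)$ and $p_\mathcal{D}(\varepsilon)$ give natural transformations $1_{\underline{\mathcal{C}}}\to \underline{G}\circ\underline{F}$ and $\underline{F}\circ\underline{G}\to 1_{\underline{\mathcal{D}}}$ (using the identifications $\underline{F}\circ p_\mathcal{C}=p_\mathcal{D}\circ F$ etc.), and the triangle identities for $(\eta,\varepsilon)$ descend verbatim through $p_\mathcal{C}$, $p_\mathcal{D}$; hence $(\underline{F},\underline{G})$ is an adjoint pair. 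I expect the main obstacle to be the bookkeeping in assertion (1) that $\underline{F}$ is genuinely triangulated — specifically producing the natural isomorphism $\underline{F}(X[1])\cong \underline{F}(X)[1]$ compatibly enough to carry the base diagram of Proposition \ref{prop.Happel} across — rather than anything in the adjunction part, which is purely formal diagram-chasing once (1) and (2)(a) are in hand.
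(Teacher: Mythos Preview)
Your proof is correct in substance and, for parts (1) and (2)(a), parallels the paper's argument. For (1), the paper is terser --- it simply says the existence of $\underline{F}$ is immediate and that it is triangulated because all triangles in $\underline{\mathcal{C}}$ and $\underline{\mathcal{D}}$ are images of conflations --- but your explicit verification (commutation with $\Omega^{-1}$, preservation of the defining diagram of Proposition~\ref{prop.Happel}) is exactly what underlies that sentence. For (2)(a), the paper likewise just refers to the standard fact that adjoints of exact functors preserve projectives/injectives and then invokes the Frobenius condition; you spell this out. One slip: in your first argument (showing $G$ preserves injectives) you should take an \emph{inflation} $C\to C'$, not a deflation $C'\to C$; injectivity of $I$ gives surjectivity of $\mathcal{D}(F(C'),I)\to\mathcal{D}(F(C),I)$ upon precomposition with the inflation $F(C)\to F(C')$, not with a deflation. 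Your second concrete argument (for $F$ preserving projectives) is stated correctly.

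For (2)(b) your route genuinely differs from the paper's. The paper works directly with the Hom-set bijection $\eta_{C,D}:\mathcal{D}(F(C),D)\stackrel{\cong}{\longrightarrow}\mathcal{C}(C,G(D))$ and checks, using naturality and the fact that $G$ preserves projectives, that if $\alpha\in\mathcal{D}(F(C),D)$ factors through a projective object then so does $\eta_{C,D}(\alpha)$; hence $\eta$ (and symmetrically $\eta^{-1}$) descends to the stable Hom-sets. Your approach --- push the unit and counit through the projection functors and observe that the triangle identities descend verbatim --- is equally valid and arguably cleaner, since it avoids any element-level chase and uses only functoriality of $p_\mathcal{C}$, $p_\mathcal{D}$. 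Both arguments are short; the paper's makes explicit why the induced map on stable Hom-sets is well-defined, while yours packages everything into the unit/counit formalism.
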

\begin{proof}
(1) The existence of the functor is immediate and the fact that it is
triangulated is due to the fact that all triangles in
$\underline{C}$ and $\underline{D}$ are 'image' of conflations in
$\mathcal{C}$ and $\mathcal{D}$ by the respective projection
functors.

(2) (a) The proof is identical to the one which proves, for arbitrary
categories,  that each left (resp. right) adjoint of a functor which
preserves epimorphisms (resp. monomorphisms)  preserves projective
(resp. injective) objects. Then use the fact that the injective and
projective objects coincide.

(b) Let us fix an isomorphism $\eta
:\mathcal{D}(F(?),?)\stackrel{\cong}{\longrightarrow}\mathcal{C}(?,G(?))$
natural on both variables and let $C\in\mathcal{C}$ and
$D\in\mathcal{D}$ be any objects. If $\alpha\in\mathcal{D}(F(C),D)$
is a morphism which factors through a projective object $Q$ of
$\mathcal{D}$, then we have a decomposition
$F(C)\stackrel{\beta}{\longrightarrow}Q\stackrel{\gamma}{\longrightarrow}D$.
Due to the naturality of $\eta$, we then have a commutative diagram:

\[
\xymatrix{
\D(F(C),Q)\ar[rr]^{\eta_{C,Q}}\ar[d]^{\gamma_{*}} && \C(C,G(Q))\ar[d]^{G(\gamma )_{*}} \\
\D(F(C),D)\ar[rr]^{\eta_{C,D}} && \C(C,G(D)) }
\]
with the obvious meaning of the vertical arrows. It follows that

\begin{center}
$\eta_{C,D}(\alpha
)=\eta_{C,D}(\gamma\circ\beta)=(\eta_{C,D}\circ\gamma_*)(\beta)=G(\gamma)_*\circ\eta_{C,Q}(\beta)=G(\gamma
)\circ\eta_{C,Q}(\beta)$,
\end{center}
which proves that $\eta_{C,D}(\alpha )$ factors through the
projective object $G(Q)$ of $\mathcal{C}$. We then get and induced
map
$\bar{\eta}_{C,D}:\underline{D}(\underline{F}(C),D)\longrightarrow\underline{C}(C,\underline{G}(D))$,
which is natural on both variables. Applying the symmetric argument
to $\eta^{-1}$, we obtain a map
$\overline{\eta^{-1}}_{C,D}:\underline{C}(C,\underline{G}(D))\longrightarrow\underline{D}(\underline{F}(C),D)$,
natural on both variables, which is clearly inverse to $\bar{\eta}$.
Then $(\underline{F},\underline{G})$ is an adjoint pair, as desired.
\end{proof}

Bearing in mind that
$\underline{\mathcal{C}(\mathcal{A})}=\mathcal{H}(\mathcal{A})$, for
any abelian category category $\mathcal{A}$, the following
definition makes sense.

\begin{defi} \label{def.derived funtor}
Let $\mathcal{A}$ be an AB4 abelian category such that every chain
complex of objects of $\mathcal{A}$ admits a homotopically injective
resolution, and let
$\mathbf{i}_\mathcal{A}:\mathcal{D}(\mathcal{A})\longrightarrow\mathcal{H}(\mathcal{A})$
be the homotopically injective resolution functor. If $\mathcal{B}$
is another abelian category and
$F:\mathcal{C}(\mathcal{A})\longrightarrow\mathcal{C}(\mathcal{B})$ is a
$k$-linear functor which takes conflations to conflations and
contractible complexes to contractible complexes, then the
composition of triangulated funtors

\begin{center}
$\mathcal{D}(\mathcal{A})\stackrel{\mathbf{i}_\mathcal{A}}{\longrightarrow}\mathcal{H}(\mathcal{A})\stackrel{\underline{F}}{\longrightarrow}
\mathcal{H}(\mathcal{B})\stackrel{q_\mathcal{B}}{\longrightarrow}\mathcal{D}(\mathcal{B})$
\end{center}
is called the \emph{right derived functor of $F$}, usually denoted
$\mathbf{R}F$.

Dually, one defines the the \text{left derived functor of $F$},
denoted $\mathbf{L}F$,  whenever $\mathcal{A}$ is an AB4* abelian
category on which every chain complex admits a homotopically
projective resolution.
\end{defi}

We now get:

\begin{prop} \label{prop.adjunction of derived functors}
Let $\mathcal{A}$ and $\mathcal{B}$ be abelian categories. Suppose
that $\mathcal{A}$ is AB4* on which each chain complex has a
homotopically projective resolution and $\mathcal{B}$ is AB4 on
which each chain complex has a homotopically injective resolution.
If
$F:\mathcal{C}(\mathcal{A})\longrightarrow\mathcal{C}(\mathcal{B})$
and
$G:\mathcal{C}(\mathcal{B})\longrightarrow\mathcal{C}(\mathcal{A})$
are $k$-linear functors which take conflations to conflations and
form an adjoint pair $(F,G)$, then the derived functors
$\mathbf{L}F:\mathcal{D}(\mathcal{A})\longrightarrow\mathcal{D}(\mathcal{B})$
and
$\mathbf{R}F:\mathcal{D}(\mathcal{B})\longrightarrow\mathcal{D}(\mathcal{A})$
form an adjoint pair $(\mathbf{L}F,\mathbf{R}G)$.
\end{prop}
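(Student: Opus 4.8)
The plan is to transport the chain-level adjunction $(F,G)$ up through the resolution functors, exactly mirroring the construction of the derived functors in Definition \ref{def.derived funtor}. First I would record the ingredients that are already available: by hypothesis $\mathcal{A}$ has all homotopically projective resolutions and $\mathcal{B}$ all homotopically injective ones, so the functors $\mathbf{p}_\mathcal{A}:\D(\mathcal{A})\longrightarrow\K(\mathcal{A})$ (left adjoint to $q_\mathcal{A}$, with counit $\pi$ a quasi-isomorphism) and $\mathbf{i}_\mathcal{B}:\D(\mathcal{B})\longrightarrow\K(\mathcal{B})$ (right adjoint to $q_\mathcal{B}$, with unit $\iota$ a quasi-isomorphism) exist by Corollary \ref{cor.homotopically projective and injective} and its dual. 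By Definition \ref{def.derived funtor}, $\mathbf{L}F=q_\mathcal{B}\circ\underline{F}\circ\mathbf{p}_\mathcal{A}$ and $\mathbf{R}G=q_\mathcal{A}\circ\underline{G}\circ\mathbf{i}_\mathcal{B}$. Also, since $(F,G)$ is an adjoint pair of $k$-linear functors between categories of chain complexes taking conflations to conflations (for the semi-split exact structures), Lemma \ref{lem.adjunction pass to the stable category} gives that $F$ and $G$ preserve contractible complexes and that the induced triangulated functors $(\underline{F},\underline{G})$ form an adjoint pair between $\K(\mathcal{A})$ and $\K(\mathcal{B})$.

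Next I would assemble the adjunction $(\mathbf{L}F,\mathbf{R}G)$ as a composite of three adjunctions. We have the adjoint pairs
$$(\mathbf{p}_\mathcal{A},q_\mathcal{A}):\K(\mathcal{A})\rightleftarrows\D(\mathcal{A}),\quad (\underline{F},\underline{G}):\K(\mathcal{A})\rightleftarrows\K(\mathcal{B}),\quad (q_\mathcal{B},\mathbf{i}_\mathcal{B}):\K(\mathcal{B})\rightleftarrows\D(\mathcal{B}).$$
Their composite $(\,q_\mathcal{B}\circ\underline{F}\circ\mathbf{p}_\mathcal{A}\,,\,q_\mathcal{A}\circ\underline{G}\circ\mathbf{i}_\mathcal{B}\,)$ is then an adjoint pair by the general fact that a composition of left adjoints is left adjoint to the composition of the corresponding right adjoints. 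This is precisely $(\mathbf{L}F,\mathbf{R}G)$. Concretely, for $X^\bullet\in\D(\mathcal{A})$ and $Y^\bullet\in\D(\mathcal{B})$ one chains the three natural bijections
$$\D(\mathcal{B})(\mathbf{L}F(X^\bullet),Y^\bullet)\cong\K(\mathcal{B})(\underline{F}\mathbf{p}_\mathcal{A}X^\bullet,\mathbf{i}_\mathcal{B}Y^\bullet)\cong\K(\mathcal{A})(\mathbf{p}_\mathcal{A}X^\bullet,\underline{G}\,\mathbf{i}_\mathcal{B}Y^\bullet)\cong\D(\mathcal{A})(X^\bullet,\mathbf{R}G(Y^\bullet)),$$
each step being the adjunction isomorphism of one of the three pairs, and naturality in both variables is inherited from the three component adjunctions.

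The one point that is not quite a formality, and which I expect to be the main thing to check carefully, is that the first and third isomorphisms above are legitimate — i.e., that $\D(\mathcal{B})(\mathbf{L}F(X^\bullet),Y^\bullet)$ really is computed by $\K(\mathcal{B})(-,\mathbf{i}_\mathcal{B}Y^\bullet)$ on the chosen representatives. This is where the counit $\pi_{X^\bullet}:\mathbf{p}_\mathcal{A}X^\bullet\to X^\bullet$ and unit $\iota_{Y^\bullet}:Y^\bullet\to\mathbf{i}_\mathcal{B}Y^\bullet$ being quasi-isomorphisms enters: applying $q_\mathcal{B}$ and $q_\mathcal{A}$ to the middle $\K$-level bijection and using that $q_\mathcal{B}(\iota)$, $q_\mathcal{A}(\pi)$ are isomorphisms in the derived categories turns it into the desired $\D$-level bijection. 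So the real content is just bookkeeping with the two adjunctions $(\mathbf{p}_\mathcal{A},q_\mathcal{A})$ and $(q_\mathcal{B},\mathbf{i}_\mathcal{B})$ together with Lemma \ref{lem.adjunction pass to the stable category}(2); there is no genuine obstacle, only the need to be precise about which natural transformations are invertible and to invoke the composability of adjoint pairs.
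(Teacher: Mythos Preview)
Your proposal is correct and follows exactly the paper's approach: decompose $\mathbf{L}F=q_\mathcal{B}\circ\underline{F}\circ\mathbf{p}_\mathcal{A}$ and $\mathbf{R}G=q_\mathcal{A}\circ\underline{G}\circ\mathbf{i}_\mathcal{B}$, invoke Lemma \ref{lem.adjunction pass to the stable category}(2) for $(\underline{F},\underline{G})$, and compose the three adjunctions $(\mathbf{p}_\mathcal{A},q_\mathcal{A})$, $(\underline{F},\underline{G})$, $(q_\mathcal{B},\mathbf{i}_\mathcal{B})$. Your final paragraph of caution is unnecessary: the first and third bijections in your chain are \emph{literally} the adjunction isomorphisms for $(q_\mathcal{B},\mathbf{i}_\mathcal{B})$ and $(\mathbf{p}_\mathcal{A},q_\mathcal{A})$ applied to $\underline{F}\mathbf{p}_\mathcal{A}X^\bullet$ and $\underline{G}\,\mathbf{i}_\mathcal{B}Y^\bullet$, so nothing further needs checking.
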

\begin{proof}
By definition, we have

\begin{center}
$\mathbf{L}F:\mathcal{D}(\mathcal{A})\stackrel{\mathbf{p}_\mathcal{A}}{\longrightarrow}\mathcal{H}(\mathcal{A})\stackrel{\underline{F}}{\longrightarrow}
\mathcal{H}(\mathcal{B})\stackrel{q_\mathcal{B}}{\longrightarrow}\mathcal{D}(\mathcal{B})$

and

$\mathbf{R}G:\mathcal{D}(\mathcal{B})\stackrel{\mathbf{i}_\mathcal{B}}{\longrightarrow}\mathcal{H}(\mathcal{A})\stackrel{\underline{G}}{\longrightarrow}
\mathcal{H}(\mathcal{A})\stackrel{q_\mathcal{A}}{\longrightarrow}\mathcal{D}(\mathcal{A})$.
\end{center}
The result is an immediate consequence of the fact that
$(\mathbf{p}_\mathcal{A},q_\mathcal{A})$,
$(\underline{F},\underline{G})$ and
$(q_\mathcal{B},\mathbf{i}_\mathcal{B})$ are adjoint pairs.
\end{proof}

%%%%%%%%%%%%%%%%%  3  %%%%%%%%%%%%%%%%%%%%%%%%%%%%%%%%%%%%%%%%%%%%%%%%%%%%%%%%%%%%%%%%%%%%%%%%%%%%%%%

\section{Classical derived functors defined by complexes of bimodules}

\subsection{Definition and main adjunction properties}

If $A$ is a $k$-algebra, we will write $\mathcal{C}(A)$,
$\mathcal{H}(A)$ and $\mathcal{D}(A)$ instead of
$\mathcal{C}(\text{Mod}A)$, $\mathcal{H}(\text{Mod}A)$ and
$\mathcal{D}(\text{Mod}A)$, respectively. This rule applies also to
the algebra $B^{op}\otimes A$, for any algebras $A$ and $B$.

Let $A$, $B$ and $C$ be $k$-algebras. Given  complexes $T^\bullet$
and $M^\bullet$,  of $B-A-$bimodules and $A-C-$bimodules
respectively, we shall associate to them several functors between
categories of complexes of bimodules. The material can be found in
\cite{W}, but the reader is warned on the difference of indization
of complexes with respect to that book.
 The \emph{total tensor product of $T^\bullet$
and $M^\bullet$}, denoted $T^\bullet\otimes^\bullet_AM^\bullet$, is
the complex of $B-C-$bimodules given as follows (see \cite[Section
2.7]{W}):

\begin{enumerate}
\item
$(T^\bullet\otimes^\bullet_AM^\bullet)^n=\oplus_{i+j=n}T^i\otimes_AM^j$,
 for each $n\in\mathbb{Z}$.

\item The differential
$d^n:(T^\bullet\otimes^\bullet_AM^\bullet)^n\longrightarrow
 (T^\bullet\otimes^\bullet_AM^\bullet)^{n+1}$ takes $t\otimes m\rightsquigarrow d_T(t)\otimes m+(-1)^it\otimes
 d_M(m)$, whenever $t\in T^i$ and $m\in M^j$
 \end{enumerate}

 When $T^\bullet$ is fixed, the assignment
$M^\bullet\rightsquigarrow T^\bullet\otimes^\bullet_AM^\bullet$ is
the definition on objects of a $k$-linear  functor
$T^\bullet\otimes_A^\bullet?:\mathcal{C}(A^{op}\otimes
C)\longrightarrow\mathcal{C}(B^{op}\otimes C)$. It acts as
$f\rightsquigarrow 1_T\otimes  f$ on morphisms, where $( 1_T\otimes
f)(t\otimes m)=t\otimes f(m)$, whenever $t\in T^\bullet$ and $m\in
M^\bullet$ are homogeneous elements. Note that, with a symmetric
argument, when $M^\bullet\in\mathcal{C}(A^{op}\otimes C)$ is fixed,
we also get another $k$-linear functor $?\otimes_A^\bullet M^\bullet
:\mathcal{C}(B^{op}\otimes
A)\longrightarrow\mathcal{C}(B^{op}\otimes C)$ which takes
$T^\bullet\rightsquigarrow T^\bullet\otimes_A^\bullet M^\bullet$ and
$g\rightsquigarrow g\otimes 1_M$. In this way, we get a bifunctor
$?\otimes_A^\bullet ?:\mathcal{C}(B^{op}\otimes
A)\times\mathcal{C}(A^{op}\otimes
C)\longrightarrow\mathcal{C}(B^{op}\otimes C)$.

Suppose now that $N^\bullet$ is a complex of $C-A-$bimodules. The
\emph{total Hom complex $\text{Hom}_A^\bullet (T^\bullet, N^\bullet
)$} is the complex of $C-B-$bimodules given as follows (see
\cite[Section 2.7]{W} or, in a more general context,  \cite[Section
1.2]{Ke}):

\begin{enumerate}
\item $\text{Hom}^\bullet_A(T^\bullet ,N^\bullet
)^n=\prod_{j-i=n}\text{Hom}_A(T^i,N^j)$, for each $n\in\mathbb{Z}$.
\item The differential $\text{Hom}^\bullet_A(T^\bullet ,N^\bullet
)^n\longrightarrow\text{Hom}^\bullet_A(T^\bullet ,N^\bullet )^{n+1}$
takes $f\rightsquigarrow d_N\circ f-(-1)^nf\circ d_M$
\end{enumerate}
The assigment
$N^\bullet\rightsquigarrow\text{Hom}^\bullet_A(T^\bullet ,N^\bullet
)$ is the definition on objects of a $k$-linear  functor
$\text{Hom}^\bullet_A(T^\bullet ,?):\mathcal{C}(C^{op}\otimes
A)\longrightarrow \mathcal{C}(C^{op}\otimes B)$. The functor acts on
morphism as $\alpha\rightsquigarrow \alpha_*$, where
$\alpha_*(f)=\alpha\circ f$, for each homogeneous element
$f\in\text{Hom}^\bullet_A(T^\bullet ,N^\bullet )$. Symmetrically, if
$N^\bullet\in\C(C^{op}\otimes A)$ is fixed, then the assignment
$T^\bullet\rightsquigarrow\Th_A(T^\bullet ,N^\bullet )$ is the
definition on objects of a functor
$\Th_A(?,N^\bullet):\C(B^{op}\otimes
A)^{op}\longrightarrow\C(C^{op}\otimes B)$. It acts on morphisms as
$\beta\rightsquigarrow\Th_A(\beta,N^\bullet )=:\beta^*$, where
$\beta^*(f )=f\circ\beta$, for each homogeneous element
$f\in\text{Hom}^\bullet_A(T^\bullet ,N^\bullet )$. In this way, we
get a bifunctor $\Th_A(?,?):\C (B^{op}\otimes
A)\times\C(C^{op}\otimes A)\longrightarrow\C(C^{op}\otimes B)$.

Obviously, we also get bifunctors:

\begin{center}
$?\Tt_B?:\C(C^{op}\otimes B)\times\C(B^{op}\otimes
A)\longrightarrow\C(C^{op}\otimes A)$, \hspace*{1cm} $(X^\bullet
,T^\bullet)\rightsquigarrow X^\bullet\Tt_BT^\bullet$;

$\Th_{B^{op}}(?,?):\C(B^{op}\otimes C)\times\C(B^{op}\otimes
A)\longrightarrow\C(C^{op}\otimes A)$, \hspace*{1cm} $(Y^\bullet
,T^\bullet )\rightsquigarrow\Th_{B^{op}}(Y^\bullet ,T^\bullet )$.

\end{center}

\begin{prop} \label{prop.adjunctions given by bimodule}
Let $A$, $B$ and $C$ be $k$-algebras. All the bifunctors $?\Tt_A?$,
$?\Tt_B?$, $\Th_A(?,?)$ and $\Th_{B^{op}}(?,?)$ defined above
preserve conflations (i.e. semi-split exact sequences) and
contractible complexes on each variable. Moreover, if $T^\bullet$ is
a complex of $B-A-$bimodules, then  the following assertions hold:

\begin{enumerate}

\item The pairs $(?\otimes_B^\bullet T^\bullet ,\text{Hom}^\bullet_A(T^\bullet
,?))$ and $(T^\bullet\otimes_A^\bullet
?,\text{Hom}^\bullet_{B^{op}}(T^\bullet ,?))$ are adjoint pairs.
\item The pair  $(\text{Hom}^\bullet_{B^{op}}(?,T^\bullet ):\C(B^{op}\otimes C)\longrightarrow\C(C^{op}\otimes A)^{op},\text{Hom}^\bullet_{A}(?,T^\bullet
):\C(C^{op}\otimes A)^{op}\longrightarrow\C(B^{op}\otimes C))$  is
an adjoint pair.
\end{enumerate}
\end{prop}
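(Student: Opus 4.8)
The plan is to verify the three asserted properties in turn---preservation of conflations and contractible complexes, the two adjunctions in (1), and the adjunction in (2)---reducing each to a standard, purely formal statement about ordinary $\Hom$ and $\otimes$ over rings, together with a bookkeeping of the sign conventions and grading conventions fixed just above the proposition.

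\medskip

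First I would dispose of the preservation statements. Since the semi-split exact structure on $\mathcal{C}(\mathcal{A})$ has as conflations exactly those short exact sequences that split degreewise, a functor like $?\Tt_AT^\bullet$ sends a conflation $0\to X^\bullet\to Y^\bullet\to Z^\bullet\to 0$ to a sequence which, in each degree $n$, is a finite direct sum of sequences of the form $0\to X^i\otimes_AT^j\to Y^i\otimes_AT^j\to Z^i\otimes_AT^j\to 0$ with $i+j=n$; each of these is split (apply $?\otimes_AT^j$ to a degreewise splitting), so the total sequence is split in $\mathcal{A}^{\mathbb Z}$, hence a conflation. The same argument works for $\Th_A(T^\bullet,?)$ using that $\Hom_A(T^i,?)$ preserves split exact sequences, and for the contravariant slots using $\Hom_A(?,N^j)$. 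For contractible complexes I would use Lemma \ref{lem.nullhomotopic=factors through contractible}: a functor preserving conflations automatically sends the generating contractible complexes $C_n(X)$ to contractible complexes (the identity homotopy is carried along), and since it is additive it preserves arbitrary coproducts of such up to the functor commuting with the relevant coproducts---or, more cleanly, one observes that a conflation-preserving additive functor carries the mapping-cone description and hence carries $1_{C_n(X)}\simeq 0$ to a nullhomotopy of the image. Actually the slickest route is: these bifunctors all carry a contractible object to one admitting a contracting homotopy, because a contraction is a degree $-1$ endomorphism $\sigma$ with $d\sigma+\sigma d=1$, and applying $1_T\otimes\sigma$ (resp.\ $\sigma_*$) gives the analogous identity in the image, by the very definition of the total differential---this is just a sign check.

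\medskip

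Next, the adjunctions in (1). These are the classical tensor--hom adjunctions, promoted to the level of total complexes. For $(?\Tt_BT^\bullet,\Th_A(T^\bullet,?))$ I would construct the adjunction isomorphism explicitly: given $X^\bullet\in\C(C^{op}\otimes B)$ and $N^\bullet\in\C(C^{op}\otimes A)$, a chain map $X^\bullet\Tt_BT^\bullet\to N^\bullet$ should correspond to a chain map $X^\bullet\to\Th_A(T^\bullet,N^\bullet)$, and this correspondence is, degreewise, assembled from the ordinary adjunction isomorphisms $\Hom_A(X^i\otimes_BT^j,N^\ell)\cong\Hom_B(X^i,\Hom_A(T^j,N^\ell))$. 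The only real content is checking that this degreewise bijection is compatible with the differentials---i.e.\ that a map on the tensor side commutes with $d$ if and only if the transported map on the Hom side does. This is exactly where the sign conventions $d_T(t)\otimes m+(-1)^it\otimes d_M(m)$ and $d_N\circ f-(-1)^nf\circ d_M$ have been engineered to make things work, so it is a (somewhat delicate) sign verification. The second pair $(T^\bullet\Tt_A?,\Th_{B^{op}}(T^\bullet,?))$ is entirely symmetric, obtained by reading $T^\bullet$ as a complex over $A^{op}\otimes B$ and swapping the roles of the two sides; I would simply invoke the first by symmetry rather than redo it.

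\medskip

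Finally, (2): the pair $(\Th_{B^{op}}(?,T^\bullet),\Th_A(?,T^\bullet))$ as a contravariant-adjoint pair. Here the claim is that for $Y^\bullet\in\C(B^{op}\otimes C)$ and $M^\bullet\in\C(C^{op}\otimes A)$ one has a natural isomorphism $\C(C^{op}\otimes A)^{op}\big(\Th_{B^{op}}(Y^\bullet,T^\bullet),M^\bullet\big)\cong\C(B^{op}\otimes C)\big(Y^\bullet,\Th_A(M^\bullet,T^\bullet)\big)$, i.e.\ $\C(C^{op}\otimes A)\big(M^\bullet,\Th_{B^{op}}(Y^\bullet,T^\bullet)\big)\cong\C(B^{op}\otimes C)\big(Y^\bullet,\Th_A(M^\bullet,T^\bullet)\big)$, and this follows from the classical ``swap'' isomorphism $\Hom_A\!\big(M,\Hom_{B^{op}}(Y,T)\big)\cong\Hom_{B^{op}}\!\big(Y,\Hom_A(M,T)\big)$ (both sides are ``bilinear maps $M\times Y\to T$ compatible with the actions''), again assembled degreewise and checked against the differentials. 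I expect the main obstacle throughout to be precisely this last kind of verification: pinning down that the degreewise classical isomorphisms intertwine the total differentials with the correct signs, given the paper's particular (co)homological indexing in which $\Th^n$ uses $j-i=n$ and the stated sign rules. Everything else is formal, but these sign bookkeeping steps are where an error would hide, so I would do them carefully on one representative case (say the first adjunction of (1)) and then deduce the rest by the symmetry and duality of the constructions, exactly as the functoriality bifunctors were set up above.
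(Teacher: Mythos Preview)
Your proposal is correct and follows essentially the same approach as the paper: verify conflation-preservation degreewise, then establish the adjunctions by writing down the explicit maps (the paper gives $[\eta(f)(m)](t)=f(m\otimes t)$ for (1) and $[\xi(f)(m)](y)=(-1)^{|m||y|}f(y)(m)$ for (2)) and leaves the chain-map and naturality verifications as routine. The one minor difference is that the paper deduces preservation of contractible complexes \emph{after} the adjunctions, via Lemma~\ref{lem.adjunction pass to the stable category} (adjoints between Frobenius exact categories preserve projectives), rather than by directly transporting a contracting homotopy as you do---this spares one round of sign bookkeeping.
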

\begin{proof}
  It is clear that the $k$-linear functors
$T^\bullet\Tt_A?:\C(A^{op}\otimes C)\longrightarrow\C(B^{op}\otimes
C)$,  $\Th_A(T^\bullet ,?):\C(C^{op}\otimes
A)\longrightarrow\C(C^{op}\otimes B)$ and $\Th_A(?,T^\bullet
):\C(C^{op}\otimes A)\longrightarrow\C(B^{op}\otimes C)^{op}$, after
forgetting about the differentials, induce corresponding functors
between the associated graded categories.
 For instance, we have an induced
$k$-linear functor $?\otimes_B^\bullet T^\bullet
:(\text{Mod}(C^{op}\otimes
B))^\mathbb{Z}\longrightarrow(\text{Mod}(C^{op}\otimes
A))^\mathbb{Z}$. But then this latter functor preserves split exact
sequences. This is exactly saying that $?\otimes_B^\bullet T^\bullet
:\mathcal{C}(C^{op}\otimes
B)\longrightarrow\mathcal{C}(C^{op}\otimes A)$ takes conflations to
conflations. The corresponding argument works for all the other
functors.

So all the bifunctors in the list preserve conflations on each
component. That they also preserve contractible complexes on each
variable, will follow from lemma \ref{lem.adjunction pass to the
stable category} once we prove the adjunctions of assertions (1) and
(2).

(1) This goes as in the usual adjunction between the tensor product
and the $\text{Hom}$ functor in module categories. Concretely, we
define

\begin{center}
$\eta=\eta_{M,Y}:\mathcal{C}(C^{op}\otimes
A)(M^\bullet\otimes_A^\bullet T^\bullet ,Y^\bullet )\longrightarrow
\mathcal{C}(C^{op}\otimes B)(M^\bullet,\text{Hom}_A^\bullet
(T^\bullet ,Y^\bullet ))$
\end{center}
by the rule:  $[\eta (f)(m)](t)=f(m\otimes t)$, whenever $m\in M^i$
and $t\in T^j$, for some  $i,j\in\mathbb{Z}$. We leave to the reader
the routine task of checking  that $\eta (f)$ is a chain map
$M^\bullet\longrightarrow\text{Hom}_A^\bullet (T^\bullet ,Y^\bullet
)$. The naturallty of $\eta$ is then proved as in modules.

(2) We need to define an isomorphism

\begin{center}
$\xi =\xi_{M,Y}:\mathcal{C}(B^{op}\otimes C)(Y^\bullet
,\text{Hom}_A^\bullet (M^\bullet ,T^\bullet ))\longrightarrow
 \mathcal{C}(C^{op}\otimes A)^{op}(\text{Hom}_{B^{op}}^\bullet
(Y^\bullet ,T^\bullet ), M^\bullet )=\mathcal{C}(C^{op}\otimes
A)(M^\bullet ,\text{Hom}_{B^{op}}^\bullet (Y^\bullet ,T^\bullet ))$,
\end{center}
natural on $M^\bullet\in\mathcal{C}(C^{op}\otimes A)$ and $Y^\bullet
\in\mathcal{C}(B^{op}\otimes C)$. Our choice of $\xi$ is identified
by the equality $[\xi (f)(m)](y)=(-1)^{|m||y|}f(y)(m)$, for all
homogeneous elements $f\in\mathcal{C}(B^{op}\otimes C)(Y^\bullet
,\text{Hom}_A^\bullet (M^\bullet ,T^\bullet ))$, $m\in M^\bullet$
and $y\in Y^\bullet$.
\end{proof}

\vspace*{0.3cm}

With the notation of last proposition, we adopt the following
notation:
\begin{enumerate}
 \item[a)] $?\otimes_B^\mathbf{L}T:\mathcal{D}(C^{op}\otimes
B)\longrightarrow\mathcal{D}(C^{op}\otimes A)$ will denote the left
derived functor of $?\otimes_B^\bullet
T^\bullet:\mathcal{C}(C^{op}\otimes
B)\longrightarrow\mathcal{C}(C^{op}\otimes A)$

\item[b)] $\mathbf{R}\text{Hom}_A(T^\bullet ,?):\mathcal{D}(C^{op}\otimes
A)\longrightarrow\mathcal{D}(C^{op}\otimes B)$ will denote the right
derived functor of $\Th_A(T^\bullet ,?):\mathcal{C}(C^{op}\otimes
A)\longrightarrow\mathcal{C}(C^{op}\otimes B)$

\item[c)] $\mathbf{R}\text{Hom}_A(?,T^\bullet ):\mathcal{D}(C^{op}\otimes
A)^{op}\longrightarrow\mathcal{D}(B^{op}\otimes C)$ will denote the
right derived functor of $\text{Hom}_A^\bullet (?,T^\bullet
):\mathcal{C}(C^{op}\otimes
A)^{op}\longrightarrow\mathcal{D}(B^{op}\otimes C)$ or,
equivalently, the left derived functor of $\text{Hom}_A^\bullet
(?,T^\bullet ):\mathcal{C}(C^{op}\otimes
A)\longrightarrow\mathcal{D}(B^{op}\otimes C)^{op}$.

\item[d)] $\T_A(?,?):\D(B^{op}\otimes A)\times\D(A^{op}\otimes C)\longrightarrow\D(B^{op}\otimes
C)$ will denote the composition

\begin{center}
$\D(B^{op}\otimes A)\times\D(A^{op}\otimes
C)\stackrel{\mathbf{p}_{B^{op}\otimes}A\times\mathbf{p}_{B^{op}\otimes}A}{\longrightarrow}\K(B^{op}\otimes
A)\times\K(A^{op}\otimes
C)\stackrel{?\Tt_A?}{\longrightarrow}\K(B^{op}\otimes
C)\stackrel{q}{\longrightarrow}\D(B^{op}\otimes C)$,
\end{center}
where the central arrow is induced by the bifunctor
$\C(B^{op}\otimes A)\times\C(A^{op}\otimes
C)\stackrel{?\Tt_A?}{\longrightarrow}\C(B^{op}\otimes C)$ and is
well-defined due to proposition \ref{prop.adjunctions given by
bimodule}. The bifunctor $\T_A(?,?)$ is triangulated on each
variable.

\item[e)] $\RH_A(?,?):\D(B^{op}\otimes A)^{op}\times\D(C^{op}\otimes A)\longrightarrow\D(C^{op}\otimes
B)$ is the composition

\begin{center}
$\D(B^{op}\otimes A)^{op}\times\D(C^{op}\otimes
A)\stackrel{\mathbf{p}_{B^{op}\otimes
A}\times\mathbf{i}_{C^{op}\otimes
A}}{\longrightarrow}\K(B^{op}\otimes A)^{op}\times\K(C^{op}\otimes
A)\stackrel{\Th_A(?,?)}{\longrightarrow}\K(C^{op}\otimes
B)\stackrel{q}{\longrightarrow}\D(C^{op}\otimes B)$,
\end{center}
which is a bifunctor which is triangulated on each variable.

\item[e')] $\RH_{B^{op}}(?,?):\D(B^{op}\otimes C)^{op}\times\D(B^{op}\otimes A)\longrightarrow\D(C^{op}\otimes
A)$ is a bifunctor, triangulated on both variables, which is defined
in a similar way as that in e).

\end{enumerate}
Of course, there are left-right symmetric versions
$T^\bullet\otimes_A^\mathbf{L}?:\mathcal{D}(A^{op}\otimes
C)\longrightarrow\mathcal{D}(B^{op}\otimes C)$,
$\mathbf{R}\text{Hom}_{B^{op}}(T^\bullet
,?):\mathcal{D}(B^{op}\otimes
C)\longrightarrow\mathcal{D}(A^{op}\otimes C)$ and
$\mathbf{R}\text{Hom}_{B^{op}}(?,T^\bullet
):\mathcal{D}(B^{op}\otimes
C)^{op}\longrightarrow\mathcal{D}(C^{op}\otimes A)$ of the functors
in a), b) and c). Their precise definition is left to the reader.

As a direct consequence of propositions \ref{prop.adjunction of
derived functors} and \ref{prop.adjunctions given by bimodule}, we
get:

\begin{cor} \label{cor.classical adjunctions of derived functors}
Let $A$, $B$ and $C$ be $k$-algebras and let $T^\bullet$ be a
complex of $B-A-$bimodules. With the notation above, the following
pairs of triangulated functors are adjoint pairs:

\begin{enumerate}
\item $(?\otimes_B^\mathbf{L}T^\bullet ,\mathbf{R}\text{Hom}_A(T^\bullet
,?))$;
\item $(T^\bullet\otimes_A^\mathbf{L}?,\mathbf{R}\text{Hom}_{B^{op}}(T^\bullet
,?))$;
\item $(\mathbf{R}\text{Hom}_{B^{op}}(?,T^\bullet ):\mathcal{D}(B^{op}\otimes
C)\longrightarrow\mathcal{D}(C^{op}\otimes A)^{op},
\mathbf{R}\text{Hom}_{A}(?,T^\bullet ):\mathcal{D}(C^{op}\otimes
A)^{op}\longrightarrow\mathcal{D}(B^{op}\otimes C))$.
\end{enumerate}
\end{cor}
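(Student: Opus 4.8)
The plan is to obtain each of the three adjunctions by applying Proposition \ref{prop.adjunction of derived functors} to the corresponding adjoint pair of functors between categories of complexes of bimodules provided by Proposition \ref{prop.adjunctions given by bimodule}. The only thing to check beforehand is that all the abelian categories entering the picture satisfy the hypotheses of Proposition \ref{prop.adjunction of derived functors}. These categories are, in each case, a bimodule category $\text{Mod}-(R^{op}\otimes S)$ for an appropriate choice of $(R,S)$ among $(B,A)$, $(C,B)$, $(C,A)$ and $(A,C)$, and, in case (3), also the opposite of such a category. A module category has exact products and exact coproducts, hence is both AB4 and AB4*, and so is its opposite; moreover, by the examples following Definition \ref{def.homotopically inj.proj.resol.functor}, every complex over any of these categories (or over its opposite) admits both a homotopically projective and a homotopically injective resolution. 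Consequently, for each chain-level adjoint pair we are free to let either member play the role of the functor $F$ (whose left derived functor is formed) or of the functor $G$ (whose right derived functor is formed) in Proposition \ref{prop.adjunction of derived functors}.

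For assertion (1) I would take $\mathcal{A}=\text{Mod}-(C^{op}\otimes B)$, $\mathcal{B}=\text{Mod}-(C^{op}\otimes A)$, $F=?\Tt_BT^\bullet$ and $G=\Th_A(T^\bullet,?)$. By Proposition \ref{prop.adjunctions given by bimodule} these are $k$-linear functors sending conflations to conflations and contractible complexes to contractible complexes, and $(F,G)$ is an adjoint pair; Proposition \ref{prop.adjunction of derived functors} then yields that $(\mathbf{L}F,\mathbf{R}G)$ is an adjoint pair. Since by definition $\mathbf{L}F=?\Lt_BT^\bullet$ and $\mathbf{R}G=\Rh_A(T^\bullet,?)$, this is exactly (1). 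Assertion (2) is proved verbatim, now starting from the adjoint pair $(T^\bullet\Tt_A?,\Th_{B^{op}}(T^\bullet,?))$ of Proposition \ref{prop.adjunctions given by bimodule}(1) with $\mathcal{A}=\text{Mod}-(A^{op}\otimes C)$ and $\mathcal{B}=\text{Mod}-(B^{op}\otimes C)$.

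For assertion (3) the same machine is applied to the adjoint pair of Proposition \ref{prop.adjunctions given by bimodule}(2). Here I would take $\mathcal{A}=\text{Mod}-(B^{op}\otimes C)$, $\mathcal{B}=(\text{Mod}-(C^{op}\otimes A))^{op}$ (so that $\C(\mathcal{B})\cong\C(C^{op}\otimes A)^{op}$), $F=\Th_{B^{op}}(?,T^\bullet):\C(\mathcal{A})\longrightarrow\C(\mathcal{B})$ and $G=\Th_A(?,T^\bullet):\C(\mathcal{B})\longrightarrow\C(\mathcal{A})$, and Proposition \ref{prop.adjunction of derived functors} again gives that $(\mathbf{L}F,\mathbf{R}G)$ is an adjoint pair. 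The point requiring attention is the bookkeeping of resolutions across the passage to the opposite category: a homotopically injective resolution in $\mathcal{B}=(\text{Mod}-(C^{op}\otimes A))^{op}$ is the same datum as a homotopically projective resolution in $\text{Mod}-(C^{op}\otimes A)$, so that $\mathbf{R}G$ computed in $\mathcal{B}$ coincides with the functor denoted $\Rh_A(?,T^\bullet)$ in the list of definitions preceding the corollary, and likewise $\mathbf{L}F$ coincides with $\Rh_{B^{op}}(?,T^\bullet)$; this gives (3).

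None of the three steps presents a genuine difficulty once the general principle of Proposition \ref{prop.adjunction of derived functors} is available; the only place where care is needed — and the step I expect to be the main source of slips — is assertion (3), where one must keep track of which bimodule category has been replaced by its opposite and, accordingly, of which kind of resolution computes each derived functor, so as to recognize that the pair produced by Proposition \ref{prop.adjunction of derived functors} is literally $(\Rh_{B^{op}}(?,T^\bullet),\Rh_A(?,T^\bullet))$ with the domains and codomains as stated.
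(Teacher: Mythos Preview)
Your proposal is correct and is exactly the approach the paper takes: the paper's entire proof is the single sentence that the corollary is ``a direct consequence of propositions \ref{prop.adjunction of derived functors} and \ref{prop.adjunctions given by bimodule}'', and you have simply spelled out how that deduction goes, including the careful bookkeeping for the opposite category in part (3). There is nothing to add.
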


\begin{defi}
We will adopt the following terminology, referred to the adjunctions
of last corollary:

\begin{enumerate}
\item $\lambda :1_{\mathcal{D}(C^{op}\otimes B)}\longrightarrow\text{R}\text{Hom}_A(T^\bullet ,?)\circ
(?\otimes_B^\mathbf{L}T)$ and $\delta
:(?\otimes_B^\mathbf{L}T)\circ\text{R}\text{Hom}_A(T^\bullet
,?)\longrightarrow 1_{\mathcal{D}(C^{op}\otimes A)}$ will be the
unit and the counit of the first adjunction.
\item $\rho :1_{\mathcal{D}(A^{op}\otimes C)}\longrightarrow \text{R}\text{Hom}_{B^{op}}(T^\bullet ,?)\circ
(T^\bullet\otimes_A^\mathbf{L}?)$ and $\phi
:(T^\bullet\otimes_A^\mathbf{L}?)\circ\text{R}\text{Hom}_{B^{op}}(T^\bullet
,?)\longrightarrow 1_{\mathcal{D}(B^{op}\otimes C)}$ are the unit
and counit of the second adjunction.
\item $\sigma: 1_{\mathcal{D}(B^{op}\otimes C)}\longrightarrow\mathbf{R}\text{Hom}_A(?,T^\bullet )\circ\mathbf{R}\text{Hom}_{B^{op}}(?,T^\bullet
)$ and $\tau :1_{\mathcal{D}(C^{op}\otimes
A)}\longrightarrow\mathbf{R}\text{Hom}_{B^{op}}(?,T^\bullet
)\circ\mathbf{R}\text{Hom}_{A}(?,T^\bullet )$ are the unit and the
counit of the third adjunction (note that the last one is an arrow
in the opposite direction when the functors are considered as
endofunctors of $\mathcal{D}(C^{op}\otimes A)^{op}$).
\end{enumerate}
\end{defi}

The following lemma is very useful:

\begin{lemma} \label{lem.units are isomorphisms}
In the situation of last definition, let us take $C=k$ and consider
the following assertions:

\begin{enumerate}
\item $\lambda_B:B\longrightarrow\mathbf{R}\text{Hom}_A(T^\bullet ,?\otimes_B^\mathbf{L}T^\bullet )(B)\cong\mathbf{R}\text{Hom}_A(T^\bullet
,?)(T^\bullet)$ is an isomorphism in $\mathcal{D}(B)$;
\item $\delta_{T^\bullet}:[\mathbf{R}\text{Hom}_A(T^\bullet ,?)\otimes_B^\mathbf{L}T^\bullet ](T^\bullet )\longrightarrow
T^\bullet$ is an isomorphism in $\mathcal{D}(A)$;
\item $\sigma_B:B\longrightarrow [\mathbf{R}\text{Hom}_A(\mathbf{R}\text{Hom}_{B^{op}}(?,T^\bullet
),T^\bullet)](B)\cong\mathbf{R}\text{Hom}_A(?,T^\bullet )(T^\bullet
)$ is an isomorphism in $\mathcal{D}(B^{op})$;
\item $\tau_{T^\bullet}:T^\bullet\longrightarrow [\mathbf{R}\text{Hom}_{B^{op}}(\mathbf{R}\text{Hom}_{A}(?,T^\bullet
),T^\bullet)](T^\bullet )$ is an isomorphism in $\mathcal{D}(A)$.
\end{enumerate}
Then the implications $(2)\Longleftarrow (1)\Longleftrightarrow
(3)\Longrightarrow (4)$ hold true.
\end{lemma}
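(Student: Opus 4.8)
The plan is to deduce the stated implications from two ingredients: the triangle identities of the three adjunctions in Corollary~\ref{cor.classical adjunctions of derived functors}, and the fact that the restriction-of-scalars functors $\D(B^{op}\otimes B)\longrightarrow\D(B)$ and $\D(B^{op}\otimes B)\longrightarrow\D(B^{op})$ are conservative (a morphism in a derived category of modules is invertible precisely when its underlying morphism of complexes of $k$-modules is a quasi-isomorphism, and this underlying complex does not involve the module structure).

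For $(1)\Rightarrow(2)$ I would put $F=\;?\Lt_BT^\bullet$ and $G=\Rh_A(T^\bullet,?)$, so that $(\lambda,\delta)$ is the unit--counit pair of $(F,G)$ and there is a canonical isomorphism $F(B)=B\Lt_BT^\bullet\cong T^\bullet$ in $\D(A)$. The triangle identity $\delta_{F(B)}\circ F(\lambda_B)=1_{F(B)}$ shows that if $\lambda_B$ is invertible then so are $F(\lambda_B)$ and hence $\delta_{F(B)}$; by naturality of $\delta$ this transports to $\delta_{T^\bullet}$, which is exactly the morphism appearing in $(2)$. For $(3)\Rightarrow(4)$ I would argue in the mirror way for the third adjunction $(\Rh_{B^{op}}(?,T^\bullet),\Rh_A(?,T^\bullet))$: writing $F'=\Rh_{B^{op}}(?,T^\bullet)$ one has $F'(B)=\Rh_{B^{op}}(B,T^\bullet)\cong T^\bullet$ in $\D(A)$, the pertinent triangle identity reads $F'(\sigma_B)\circ\tau_{F'(B)}=1_{F'(B)}$ in $\D(A)$, so invertibility of $\sigma_B$ forces that of $\tau_{F'(B)}$, and naturality of $\tau$ gives the invertibility of $\tau_{T^\bullet}$ required by $(4)$. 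Both of these are purely formal manipulations of triangle identities and canonical isomorphisms.

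The real work is the equivalence $(1)\Leftrightarrow(3)$. Here I would perform the constructions over the base algebra $C=B$, so that the units $\lambda$ of the first adjunction and $\sigma$ of the third, evaluated at $B$ regarded as a $B$-$B$-bimodule, become two morphisms $B\longrightarrow\RH_A(T^\bullet,T^\bullet)$ in $\D(B^{op}\otimes B)$ (for $\sigma$ after identifying $\Rh_{B^{op}}(B,T^\bullet)$ with $T^\bullet$). Unwinding the explicit chain-level adjunction isomorphisms $\eta$ and $\xi$ of Proposition~\ref{prop.adjunctions given by bimodule}, one checks that both of these morphisms are ``derived left multiplication by $B$ on $T^\bullet$'' and hence coincide up to an irrelevant sign; call the common morphism $\widetilde{\lambda}_B$. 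Then, using the compatibility of $\Rh_A(T^\bullet,?)$, $?\Lt_BT^\bullet$ and $\Rh_{B^{op}}(?,T^\bullet)$ with one-sided restriction of scalars developed in Section~3, the restriction of $\widetilde{\lambda}_B$ along $\D(B^{op}\otimes B)\longrightarrow\D(B)$ is $\lambda_B$ of $(1)$ and its restriction along $\D(B^{op}\otimes B)\longrightarrow\D(B^{op})$ is $\sigma_B$ of $(3)$. Since both restriction functors are conservative, $(1)$ and $(3)$ each say exactly that $\widetilde{\lambda}_B$ is an isomorphism, hence they are equivalent.

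The step I expect to be the main obstacle is precisely this identification inside $(1)\Leftrightarrow(3)$: verifying that deriving the two a priori different units $\lambda$ and $\sigma$ produces one and the same morphism of $\D(B^{op}\otimes B)$. This comes down to tracing the formulas $[\eta(f)(m)](t)=f(m\otimes t)$ and $[\xi(f)(m)](y)=(-1)^{|m||y|}f(y)(m)$ through homotopically projective and injective resolutions of $T^\bullet$, which is where the Section~3 lemmas on passing from bimodules to one-sided modules carry the weight. A possible shortcut would be to bypass the bimodule language altogether and observe directly that $\lambda_B$ and $\sigma_B$ are each represented, up to sign, by the chain map $B\longrightarrow\Th_A(\mathbf{p}_AT^\bullet,\mathbf{i}_AT^\bullet)$, $b\mapsto(t\mapsto bt)$, so that $(1)$ and $(3)$ both reduce to the assertion that this chain map is a quasi-isomorphism. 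Everything else is bookkeeping with triangle identities and canonical isomorphisms.
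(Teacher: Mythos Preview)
Your arguments for $(1)\Rightarrow(2)$ and $(3)\Rightarrow(4)$ via the triangle identities are exactly what the paper does.

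For $(1)\Leftrightarrow(3)$, your ``shortcut'' is the paper's actual proof, and it is the right route; your main bimodule approach is an unnecessary detour with a hidden hypothesis. Concretely, the paper works entirely at the level of $\C(k)$: with $\pi:\mathbf{p}_AT^\bullet\to T^\bullet$ and $\iota:T^\bullet\to\mathbf{i}_AT^\bullet$ the resolutions in $\K(A)$, it writes both $\lambda_B$ and $\sigma_B$ as the left-multiplication map $B\to\Th_A(T^\bullet,T^\bullet)$, $b\mapsto(t\mapsto bt)$, followed respectively by $\iota_*$ (landing in $\Th_A(T^\bullet,\mathbf{i}_AT^\bullet)=\Rh_A(T^\bullet,?)(T^\bullet)$) and by $\pi^*$ (landing in $\Th_A(\mathbf{p}_AT^\bullet,T^\bullet)=\Rh_A(?,T^\bullet)(T^\bullet)$). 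One then checks the square
\[
\pi^*\circ\lambda_B=\iota_*\circ\sigma_B
\]
in $\C(k)$, and since $\pi^*$ and $\iota_*$ are quasi-isomorphisms (by homotopical projectivity/injectivity), $\lambda_B$ is a quasi-isomorphism iff $\sigma_B$ is. This is precisely your shortcut, and it needs no hypothesis on $B$.

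By contrast, your main plan of lifting to $\D(B^{op}\otimes B)$ and then restricting relies on the ``compatibility with one-sided restriction'' from Section~3. Those results (Lemma~\ref{lem.restrictions of homotopically ...}, Proposition~\ref{prop.balance of derived bifunctors}) require $k$-flatness or $k$-projectivity of the relevant algebra to guarantee that homotopically injective (resp.\ projective) resolutions over $B^{op}\otimes A$ restrict to such over $A$; without that, the restricted functor need not agree with the $C=k$ derived functor on the nose, and the identification of the restricted $\widetilde{\lambda}_B$ with the $C=k$ unit $\lambda_B$ is not automatic. So promote the shortcut to the main argument and drop the bimodule-category detour.
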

\begin{proof}
$(1)\Longrightarrow (2)$ Putting $F=?\otimes_B^\mathbf{L}T^\bullet$
and $G=\mathbf{R}\text{Hom}_A(T^\bullet ,?)$, the truth of the
implication is a consequence of the adjunction equation
$1_{F(B)}=\delta_{F(B)}\circ F(\lambda_B)$ and the fact that
$F(B)\cong T^\bullet$.

$(3)\Longrightarrow (4)$ also follows from the equations of the
adjunction $(\mathbf{R}\text{Hom}_{B^{op}}(?,T^\bullet ),
\mathbf{R}\text{Hom}_{A}(?,T^\bullet ))$ (see corollary
\ref{cor.classical adjunctions of derived functors}).

$(1)\Longleftrightarrow (3)$ Let
$\mathbf{p}_A,\mathbf{i}_A:\mathcal{D}(A)\longrightarrow\mathcal{H}(A)$
be the homotopically projective resolution functor and the
homotopically injective resolution functor, respectively. By
definition of the derived functors, we have
$\mathbf{R}\text{Hom}_A(T^\bullet
,?)(T^\bullet)=\text{Hom}^\bullet_A(T^\bullet ,\mathbf{i}_AT^\bullet
)$ and $\mathbf{R}\text{Hom}_A(?,T^\bullet )(T^\bullet
)=\text{Hom}_A^\bullet (\mathbf{p}_AT^\bullet ,T^\bullet)$. Let then
$\pi :\mathbf{p}_AT^\bullet\longrightarrow T^\bullet$ and $\iota
:T^\bullet\longrightarrow\mathbf{i}_AT^\bullet$ be the canonical
quasi-isomorphisms. We then have quasi-isomorphisms in
$\mathcal{C}(k)$:

\begin{center}
$\mathbf{R}\text{Hom}_A(T^\bullet
,?)(T^\bullet)=\text{Hom}^\bullet_A(T^\bullet ,\mathbf{i}_AT^\bullet
)\stackrel{\pi^*}{\longrightarrow}\text{Hom}^\bullet_A(\mathbf{p}_AT^\bullet
,\mathbf{i}_AT^\bullet
)\stackrel{\iota_*}{\longleftarrow}\text{Hom}^\bullet_A(\mathbf{p}_AT^\bullet
,T^\bullet )\cong \mathbf{R}\text{Hom}_A(?,T^\bullet )(T^\bullet )$
\end{center}
Note that $\lambda_B$ and $\sigma_B$ are the compositions:

\begin{center}
$\lambda_B:B\longrightarrow\text{Hom}_A^\bullet (T^\bullet
,T^\bullet
)\stackrel{\tilde{\iota}}{\longrightarrow}\text{Hom}^\bullet_A(T^\bullet
,\mathbf{i}_AT^\bullet )=\mathbf{R}\text{Hom}_A(T^\bullet
,?)(T^\bullet)$,

and

$\sigma_B:B\longrightarrow\text{Hom}_A^\bullet (T^\bullet ,T^\bullet
)\stackrel{\tilde{\pi}}{\longrightarrow}\text{Hom}^\bullet_A(\mathbf{p}_AT^\bullet
,T^\bullet )=\mathbf{R}\text{Hom}_A(?,T^\bullet )(T^\bullet )$,
\end{center}
where the first arrow, in both cases,  takes
$b\rightsquigarrow\lambda_b:t\rightsquigarrow bt$, and the other
arrows are $\tilde{\iota}=\text{Hom}_A^\bullet (T^\bullet ,\iota )$
and $\tilde{\pi}=\text{Hom}_A^\bullet (\pi ,T^\bullet )$.

A direct easy calculation shows that the equality
$\pi^*\circ\lambda_B=\iota_*\circ\sigma_B$ holds in
$\mathcal{C}(k)$. As a consequence, $\lambda_B$ is a
quasi-isomorphism if, and only if, so is $\sigma_B$.
\end{proof}

We explicitly state the left-right symmetric of the previous lemma
since it will be important for us:

\begin{lemma} \label{lem.units are isomorphisms2}
In the situation of last definition, let us take $C=k$ and consider
the following assertions:

\begin{enumerate}
\item $\rho_A:A\longrightarrow\mathbf{R}\text{Hom}_{B^{op}}(T^\bullet ,T^\bullet\otimes_A^\mathbf{L}?)(A)\cong\mathbf{R}\text{Hom}_{B^{op}}(T^\bullet
,?)(T^\bullet)$ is an isomorphism in $\mathcal{D}(A^{op})$;
\item $\phi_{T}:[T^\bullet\otimes_A^\mathbf{L}\otimes\mathbf{R}\text{Hom}_A(T^\bullet ,?)](T^\bullet )\longrightarrow
T^\bullet$ is an isomorphism in $\mathcal{D}(B^{op})$;
\item $\tau_A:A\longrightarrow [\mathbf{R}\text{Hom}_{B^{op}}(\mathbf{R}\text{Hom}_{A}(?,T^\bullet
),T^\bullet)](A)\cong\mathbf{R}\text{Hom}_{B^{op}}(?,T^\bullet
)(T^\bullet )$ is an isomorphism in $\mathcal{D}(A)$;
\item $\sigma_T:T^\bullet\longrightarrow [\mathbf{R}\text{Hom}_{A}(\mathbf{R}\text{Hom}_{B^{op}}(?,T^\bullet
),T^\bullet)](T^\bullet )$ is an isomorphism in
$\mathcal{D}(B^{op})$.
\end{enumerate}
Then the implications $(2)\Longleftarrow (1)\Longleftrightarrow
(3)\Longrightarrow (4)$ hold true.
\end{lemma}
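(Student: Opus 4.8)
The plan is to mimic exactly the argument used for Lemma \ref{lem.units are isomorphisms}, transporting it through the left-right symmetry that exchanges $A \leftrightarrow A^{op}$, $B \leftrightarrow B^{op}$ and replaces the complex $T^\bullet$ of $B-A-$bimodules by the same complex viewed as a complex of $A^{op}-B^{op}-$bimodules. Under this symmetry the functor $?\otimes_B^\mathbf{L}T^\bullet$ corresponds to $T^\bullet\otimes_A^\mathbf{L}?$, the functor $\mathbf{R}\Hom_A(T^\bullet,?)$ corresponds to $\mathbf{R}\Hom_{B^{op}}(T^\bullet,?)$, and the adjoint pair $(\mathbf{R}\Hom_{B^{op}}(?,T^\bullet),\mathbf{R}\Hom_A(?,T^\bullet))$ of Corollary \ref{cor.classical adjunctions of derived functors}(3) is its own mirror image, with the roles of $\sigma$ and $\tau$ interchanged. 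Thus it suffices to re-run the four implications of the earlier lemma in the mirrored setting.

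First I would prove $(1)\Longrightarrow(2)$: setting $F = T^\bullet\otimes_A^\mathbf{L}?$ and $G = \mathbf{R}\Hom_{B^{op}}(T^\bullet,?)$ (the adjoint pair of Corollary \ref{cor.classical adjunctions of derived functors}(2)), the adjunction triangle identity $1_{F(A)} = \phi_{F(A)}\circ F(\rho_A)$ together with the identification $F(A)\cong T^\bullet$ in $\mathcal{D}(B^{op})$ shows that if $\rho_A$ is an isomorphism then so is $\phi_{T}$. Likewise $(3)\Longrightarrow(4)$ follows from the triangle identities of the self-dual adjunction $(\mathbf{R}\Hom_{B^{op}}(?,T^\bullet),\mathbf{R}\Hom_A(?,T^\bullet))$ applied to the object $T^\bullet$, exactly as in the proof of Lemma \ref{lem.units are isomorphisms}.

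The substantive point is the equivalence $(1)\Longleftrightarrow(3)$. As in the earlier proof one computes both derived functors on $T^\bullet$ explicitly using the resolution functors: $\mathbf{R}\Hom_{B^{op}}(T^\bullet,?)(T^\bullet) = \Th_{B^{op}}(T^\bullet,\mathbf{i}_{B^{op}}T^\bullet)$ and $\mathbf{R}\Hom_{B^{op}}(?,T^\bullet)(T^\bullet) = \Th_{B^{op}}(\mathbf{p}_{B^{op}}T^\bullet,T^\bullet)$. Using the canonical quasi-isomorphisms $\pi:\mathbf{p}_{B^{op}}T^\bullet\to T^\bullet$ and $\iota:T^\bullet\to\mathbf{i}_{B^{op}}T^\bullet$ one gets a roof of quasi-isomorphisms connecting the two complexes through $\Th_{B^{op}}(\mathbf{p}_{B^{op}}T^\bullet,\mathbf{i}_{B^{op}}T^\bullet)$, and both $\rho_A$ and $\tau_A$ factor as the natural map $A\to\Th_{B^{op}}(T^\bullet,T^\bullet)$ (sending $a$ to right multiplication by $a$) followed by $\Th_{B^{op}}(T^\bullet,\iota)$, respectively $\Th_{B^{op}}(\pi,T^\bullet)$. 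A direct check, identical in form to the one in Lemma \ref{lem.units are isomorphisms}, shows that $\pi^*\circ\rho_A = \iota_*\circ\tau_A$ in $\mathcal{C}(k)$, whence $\rho_A$ is a quasi-isomorphism if and only if $\tau_A$ is. The only mild subtlety — and the place where a careless mirroring could go wrong — is bookkeeping the sides correctly: here $T^\bullet$ is resolved as a $B^{op}$-module (equivalently, as a left $B$-module) rather than as an $A$-module, and one must check that the standard quasi-isomorphism $A\to \End_{\mathcal{D}(B^{op})}(T^\bullet)$ really is the arrow appearing in the triangle-identity factorizations of $\rho$ and $\tau$. Once the sides are pinned down, the computation is routine and the proof is complete.
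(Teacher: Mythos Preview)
Your proposal is correct and follows precisely the approach the paper intends: the paper does not give a separate proof of this lemma but merely presents it as the left-right symmetric version of Lemma~\ref{lem.units are isomorphisms}, and you have carefully spelled out exactly how that symmetry transports each of the three implications. Your attention to the side-bookkeeping (resolving $T^\bullet$ over $B^{op}$ rather than over $A$, and checking that the map $a\mapsto$ right-multiplication-by-$a$ is the correct factorization) is appropriate and is the only place where a mechanical mirroring could slip.
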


\subsection{Homotopically flat complexes. Restrictions}
In order to deal with the derived tensor product, it is convenient
to introduce a class of chain complexes which is wider than that of
the homotopically projective ones.

\begin{defi} \label{def.homotopically flat}
Let $B$ be any algebra. A complex $F^\bullet\in\C(B)$ is called
\emph{homotopically flat} when the functor
$F^\bullet\Tt_B?:\C(B)\longrightarrow\C(k)$ preserves acyclic
complexes.
\end{defi}

The key points are assertion (3) and (4) of the following result.

\begin{lemma} \label{lem.homotopically flat resolution}
Let $A$ and $B$ be $k$-algebras. The following assertions hold:
\begin{enumerate}
\item If $Z^\bullet\in\C(B)$ is acyclic and homotopically flat, then
the essential image of
$Z^\bullet\Tt_B?:\K(B^{op})\longrightarrow\K(k)$ consists of acyclic
complexes;
 \item Each homotopically projective object of $\K(B)$ is
 homotopically flat;
 \item If
$F^\bullet\stackrel{s}{\longrightarrow}M^\bullet$ is a
quasi-isomorphism in $\C(B)$, where $F^\bullet$  is homotopically
flat, then, for each $N^\bullet\in\C(B^{op}\otimes A)$, we have an
isomorphicm $(?\Lt_BN^\bullet )(M^\bullet )\cong
F^\bullet\Tt_BN^\bullet$ in $\D(A)$;
\item If $F^\bullet$ is homotopically flat in $\K(B)$ and $N^\bullet$ is a complex  of  $B-A$-bimodules, then
$(F^\bullet\Lt_B?)(N^\bullet )=F^\bullet\Tt_BN^\bullet$ in $\D(A)$.
\end{enumerate}
\end{lemma}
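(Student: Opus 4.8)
The plan is to reduce all four assertions to mapping-cone arguments; the only genuine input is assertion (2), which I would therefore prove first. For (2), let $P^\bullet$ be a homotopically projective complex in $\K(B)$ and $W^\bullet$ an acyclic complex of left $B$-modules, fix an injective cogenerator $E$ of $\text{Mod}-k$ and write $(-)^+$ for the total-Hom dual $\Th_k(?,E)$. Adjoint associativity at the level of complexes (the $\Th$-complex form of the adjunction in Proposition~\ref{prop.adjunctions given by bimodule}) gives a natural isomorphism of complexes $(P^\bullet\Tt_BW^\bullet)^+\cong\Th_B(P^\bullet,(W^\bullet)^+)$. Since $E$ is injective, $(-)^+$ is exact and $H^{-n}(X^+)=(H^n(X))^+$, so $(W^\bullet)^+$ is an acyclic complex of right $B$-modules; and since $P^\bullet$ is homotopically projective and the acyclic complexes form a shift-closed subcategory of $\K(B)$ right-orthogonal to $P^\bullet$, we get $H^n\Th_B(P^\bullet,(W^\bullet)^+)=\K(B)(P^\bullet,(W^\bullet)^+[n])=0$ for all $n$. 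Hence $(P^\bullet\Tt_BW^\bullet)^+$ is acyclic, and as $E$ is a cogenerator this forces $P^\bullet\Tt_BW^\bullet$ to be acyclic; so $P^\bullet$ is homotopically flat, and the left--right symmetric statement follows verbatim.

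Before the remaining assertions I would record two facts: (i) the homotopically flat complexes, viewed in $\K(B)$, form a triangulated subcategory closed under direct summands, because $\Tt_B$ commutes with shifts, mapping cones and coproducts in each variable, so the usual two-out-of-three and additivity-of-homology arguments apply; and (ii) every complex of left $B$-modules (and every complex of $B-A$-bimodules) admits a homotopically projective resolution (the Examples following Corollary~\ref{cor.homotopically projective and injective}), which is homotopically flat by (2) and its symmetric version. Now (1): given $Z^\bullet$ acyclic and homotopically flat and any $W^\bullet$, choose a homotopically projective (hence homotopically flat) resolution $p\colon P^\bullet\to W^\bullet$ and embed it in a triangle $P^\bullet\to W^\bullet\to C^\bullet\stackrel{+}{\to}$ of $\K(B^{op})$ with $C^\bullet$ acyclic; applying the triangulated functor $Z^\bullet\Tt_B?$, the complex $Z^\bullet\Tt_BC^\bullet$ is acyclic because $Z^\bullet$ is homotopically flat, and $Z^\bullet\Tt_BP^\bullet$ is acyclic because $P^\bullet$ is homotopically flat and $Z^\bullet$ is acyclic, whence $Z^\bullet\Tt_BW^\bullet$ is acyclic. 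And (4): by definition $(F^\bullet\Lt_B?)(N^\bullet)=F^\bullet\Tt_B\mathbf{p}N^\bullet$ for a homotopically projective resolution $\mathbf{p}N^\bullet\to N^\bullet$ of bimodule complexes; its cone is acyclic, so tensoring with the homotopically flat $F^\bullet$ yields a quasi-isomorphism $F^\bullet\Tt_B\mathbf{p}N^\bullet\to F^\bullet\Tt_BN^\bullet$, i.e. an isomorphism in $\D(A)$.

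For (3), since $\mathbf{p}_BM^\bullet$ is homotopically projective and $\mathrm{cone}(s)$ is acyclic, the functor $\K(B)(\mathbf{p}_BM^\bullet,-)$ inverts $s$, so the resolution morphism $\mathbf{p}_BM^\bullet\to M^\bullet$ factors in $\K(B)$ as $s\circ g$ for some $g\colon\mathbf{p}_BM^\bullet\to F^\bullet$, which is a quasi-isomorphism by two-out-of-three; then $\mathrm{cone}(g)$ is acyclic and, by (i), homotopically flat, so by (1) the induced morphism $(\mathbf{p}_BM^\bullet)\Tt_BN^\bullet\to F^\bullet\Tt_BN^\bullet$ has acyclic cone and hence is an isomorphism in $\D(A)$ --- which is the assertion, the source being $(?\Lt_BN^\bullet)(M^\bullet)$ by definition. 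The main obstacle is (2): it is the one step that is not pure triangle-chasing, and it hinges on having adjoint associativity available at the level of complexes together with the injective-cogenerator duality that converts the orthogonality definition of homotopical projectivity into a statement about the tensor product; everything else is bookkeeping with mapping cones and with keeping track of the left/right module structures.
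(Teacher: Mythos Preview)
Your proof is correct, but your arguments for (1) and (2) differ substantially from the paper's, and the comparison is worth noting.

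For (2), the paper invokes Keller's structure theorem (\cite[Theorem~P]{Ke}): every homotopically projective complex in $\K(B)$ admits a countable filtration whose subquotients are summands of coproducts of stalks $B[k]$, and homotopical flatness then follows by an induction reducing to the trivial case $P^\bullet=B[r]$, using that $\Tt_B$ commutes with direct limits. Your argument via the injective-cogenerator dual $(-)^+$ and the enriched Hom--tensor adjunction is the classical Lambek-type trick; it is more elementary in that it avoids the filtration theorem entirely and uses only standard adjunction plus the identity $H^n\Th_B(P^\bullet,X^\bullet)=\K(B)(P^\bullet,X^\bullet[n])$. Both are clean; yours is arguably more self-contained.

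For (1), the paper observes that since $Z^\bullet$ is homotopically flat, the functor $Z^\bullet\Tt_B?$ preserves quasi-isomorphisms and hence descends to a triangulated functor $\bar T\colon\D(B^{op})\to\D(k)$; one then shows $\bar T=0$ by noting that its kernel is a coproduct-closed triangulated subcategory containing $B$, and $\D(B^{op})$ is compactly generated by $\{B\}$. Your approach instead resolves $W^\bullet$ by a homotopically projective (hence flat) $P^\bullet$ and kills both outer terms of the resulting triangle directly. The price is that your proof of (1) depends on (2), whereas the paper's proof of (1) is independent; the gain is that you avoid invoking compact generation.

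Your arguments for (3) and (4) are essentially identical to the paper's. One small remark: in your auxiliary fact~(ii) you assert that a homotopically projective complex of $B$--$A$--bimodules is homotopically flat ``by (2) and its symmetric version''; this is not quite right without a flatness hypothesis on $A$ (cf.\ Lemma~\ref{lem.restrictions of homotopically ...}), but you never actually use that claim---your proof of (4) only needs the cone of $\mathbf p N^\bullet\to N^\bullet$ to be acyclic, which is immediate.
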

\begin{proof}

All throughout the proof we will use the fact that   $\D(B)$ (resp.
$\D(B^{op})$) is compactly generated by $\{B\}$ (see proposition
\ref{prop.D(A) compactly generated} below).

(1) The given functor $T:=Z^\bullet\Tt_B?$ is triangulated and take
acyclic complexes to acyclic complexes. Then it preserves
quasi-isomorphisms. By construction of the derived category, we then
get a unique triangulated functor
$\bar{T}:\mathcal{D}(B^{op})\longrightarrow\mathcal{D}(k)$ such that
$\bar{T}\circ q_B=q_k\circ T$, where
$q_B:\mathcal{H}(B^{op})\longrightarrow\mathcal{D}(B^{op})$ and
$q_k:\mathcal{H}(k)\longrightarrow\mathcal{D}(k)$ are the canonical
functor.

We will prove that $\bar{T}$ is the zero functor and this will prove
the assertion. We  consider the full subcategory $\mathcal{X}$ of
$\mathcal{D}(B^{op})$ consisting of the complexes $M^\bullet$ such
that $\bar{T}(M^\bullet )=0$. It is  a triangulated subcategory,
closed under taking arbitrary coproducts,  which contains $B$. We
then have $\mathcal{X}=\mathcal{D}(B)$.

(2) By \cite[Theorem P]{Ke}, each homotopically projective complex is
isomorphic in $\K(A)$ to a complex $P^\bullet$ which admits a
countable filtration

\begin{center}
$P_0^\bullet\subset P_1^\bullet\subset ...\subset P_n^\bullet\subset
...$,
\end{center}
in $\mathcal{C}(B)$, where the inclusions are inflations and where
$P_0^\bullet$ and all the factors $P_n^\bullet/P_{n-1}^\bullet$ are
direct summands of direct sums of stalk complexes of the form
$B[k]$, with $k\in\mathbb{Z}$. We just need to check that this
$P^\bullet$ is homotopically flat.
 Due to the fact  that the bifunctor
$?\Tt_B?:\mathcal{C}(B)\times\mathcal{C}(B^{op})\longrightarrow\mathcal{C}(k)$
preserves direct limits on both variables, with an evident induction
argument,  the proof is easily reduced to the case when
$P^\bullet=B[r]$, for some $r\in\mathbb{Z}$, in which case it is
trivial.

(3) Let $P^\bullet
:=\mathbf{p}M^\bullet\stackrel{\pi}{\longrightarrow}M^\bullet$ be
the homotopically projective resolution. Then
$s^{-1}\circ\pi\in\D(B)(P^\bullet ,F^\bullet )\cong\K(B)(P^\bullet
,F^\bullet )$. Then we have a chain map $f:P^\bullet\longrightarrow
F^\bullet$ such that $s\circ f=\pi$ in $\K(B)$. In particular, $f$
is a quasi-isomoprhism between homotopically flat objects. Its cone
is then an acyclic and homotopically flat complex. By assertion (1),
we conclude that $f\Tt_B1_{N^\bullet} $ has an acyclic cone and,
hence, it is a quasi-isomorphism, for each
$N^\bullet\in\C(B^{op}\otimes A)$. We then have an isomorphism
$(?\Lt_BN^\bullet )(M^\bullet
)=P^\bullet\Tt_BN^\bullet\stackrel{f\Tt
1_{N^\bullet}}{\longrightarrow}F^\bullet\Tt_BN^\bullet$ in $\D(A)$.

(4) Let us consider the triangle in $\K(B^{op}\otimes A)$

\begin{center}
$\mathbf{p}_{B^{op}\otimes
A}N^\bullet\stackrel{\pi}{\longrightarrow}N^\bullet\longrightarrow
Z^\bullet\stackrel{+}{\longrightarrow}$
\end{center}
afforded by the homotopically projective resolution of $N^\bullet$.
Then $Z^\bullet$ is acyclic and the homotopically flat condition of
$F^\bullet$ tells us that $(F^\bullet\Lt_B?)(N^\bullet
)=F^\bullet\Tt_B\mathbf{p}_{B^{op}\otimes
A}N^\bullet\stackrel{1\otimes\pi}{\longrightarrow}F^\bullet\Tt_BN^\bullet$
is a quasi-isomorphism and, therefore, an isomorphism in $\D(A)$.
\end{proof}

 In order to understand triangulated functors between derived
categories of bimodules as 'one-sided' triangulated functors, it is
important to know how homotopically projective or injective
resolutions behave with respect to the restriction functors. The
following result and its left-right symmetric are important in this
sense.

\begin{lemma} \label{lem.restrictions of homotopically ...}
Let $A$ and $B$ be algebras. The following assertions hold:

\begin{enumerate}
\item If $A$ is $k$-flat, then the forgetful functor $\K(A^{op}\otimes
B)\longrightarrow\K(B)$ preserves homotopically injective complexes
and takes homotopically projective complexes to homotopically flat
ones.
\item If $A$ is $k$-projective, then the forgetful functor $\K(A^{op}\otimes
B)\longrightarrow\K(B)$ preserves homotopically projective
complexes.
\end{enumerate}
\end{lemma}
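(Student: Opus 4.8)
Here is the proof I would give.

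The plan is to realise the forgetful functor as restriction of scalars along the unital algebra morphism $f:B\longrightarrow R$, $b\mapsto 1^{o}\otimes b$, where $R:=A^{op}\otimes B$, and then to exploit its two adjoints. Classical change--of--rings theory gives a left adjoint $\text{ind}=?\otimes_{B}R$ (extension of scalars) and a right adjoint $\text{coind}=\Hom_{B}(R,?)$ (coinduction) of the restriction functor $\text{Mod-}R\longrightarrow\text{Mod-}B$, where $R$ carries the $(B,R)$-- and the $(R,B)$--bimodule structures induced by $f$. Applied degreewise, these produce functors $\text{ind}=?\Tt_{B}R$ and $\text{coind}=\Th_{B}(R,?)$ between $\C(B)$ and $\C(R)$, forming an adjoint triple with the forgetful functor $U:\C(R)\longrightarrow\C(B)$; moreover $U$, $\text{ind}$ and $\text{coind}$ all take conflations (semi--split exact sequences) to conflations --- trivially for $U$, and by Proposition \ref{prop.adjunctions given by bimodule} for the other two, which are particular cases of $?\Tt_{B}?$ and $\Th_{B}(?,?)$. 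The only module--theoretic input I will need is that, restricted to either side, $R=A^{op}\otimes B$ is a flat (resp. projective) $B$--module whenever $A$ is $k$--flat (resp. $k$--projective), being a tensor product over $k$ of $B$ with the $k$--module $A$.

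The second ingredient is a general principle that I extract from Lemma \ref{lem.adjunction pass to the stable category}: if $(F,G)$ is an adjoint pair of functors between categories of chain complexes, both taking conflations to conflations, then the induced adjoint pair $(\underline{F},\underline{G})$ between the homotopy categories satisfies $\Hom(\underline{F}(W^{\bullet}),Y^{\bullet})\cong\Hom(W^{\bullet},\underline{G}(Y^{\bullet}))$ naturally; hence, if moreover $F$ preserves acyclic complexes, then $\underline{G}$ carries homotopically injective complexes (the objects of $\mathcal{Z}^{\perp}$) to homotopically injective ones, and, dually, if $G$ preserves acyclic complexes, then $\underline{F}$ carries homotopically projective complexes to homotopically projective ones.

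For assertion (1), assume $A$ is $k$--flat. Then $R$ is a flat left $B$--module, so $\text{ind}=?\Tt_{B}R$ is exact, hence preserves acyclic complexes, and applying the principle to the pair $(\text{ind},U)$ shows that $U$ sends homotopically injective complexes to homotopically injective ones. For the second half, take $P^{\bullet}\in\K(R)$ homotopically projective; by Lemma \ref{lem.homotopically flat resolution}(2) it is homotopically flat over $R$. Given any acyclic complex $N^{\bullet}$ of left $B$--modules, the complex $R\Tt_{B}N^{\bullet}$ of left $R$--modules is acyclic because $R$ is flat as a right $B$--module; and associativity of $\Tt$ together with the canonical isomorphism $P^{\bullet}\Tt_{R}R\cong P^{\bullet}$ give a natural isomorphism $U(P^{\bullet})\Tt_{B}N^{\bullet}\cong P^{\bullet}\Tt_{R}(R\Tt_{B}N^{\bullet})$, whose right--hand side is acyclic since $P^{\bullet}$ is homotopically flat over $R$. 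Thus $U(P^{\bullet})$ is homotopically flat over $B$. (Alternatively one could mimic the proof of Lemma \ref{lem.homotopically flat resolution}(2), reducing via Keller's Theorem P to the case $P^{\bullet}=R[r]$, where $U(R[r])$ is the stalk complex of a flat $B$--module.) For assertion (2), assume $A$ is $k$--projective; then $R$ is a projective right $B$--module, so $\text{coind}=\Th_{B}(R,?)$ is exact and preserves acyclic complexes, and the dual half of the principle applied to the pair $(U,\text{coind})$ shows that $U$ sends homotopically projective complexes to homotopically projective ones.

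The step I expect to demand the most care is the bookkeeping in the first paragraph: keeping the one--sided $B$--module structures on $R=A^{op}\otimes B$ straight, and checking that restriction along $f$ genuinely transports the change--of--rings adjunctions to the level of chain complexes so that Lemma \ref{lem.adjunction pass to the stable category} can be invoked. Once this is in place, the argument reduces to the exactness of $?\Tt_{B}R$ and of $\Th_{B}(R,?)$ and to the already--recalled fact that homotopically projective complexes are homotopically flat.
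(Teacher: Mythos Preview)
Your proof is correct and takes a genuinely different route from the paper's. The paper disposes of assertion (2) and the homotopically injective part of assertion (1) by referring to \cite{NS}, and proves only the ``homotopically projective $\Rightarrow$ homotopically flat'' part of (1) explicitly, doing so via Keller's Theorem~P (to get a filtration of $P^\bullet$ with subquotients that are summands of coproducts of $A\otimes B[r]$), Lazard's theorem (to write the $k$-flat module $A$ as a filtered colimit of finitely generated free $k$-modules), and closure of homotopically flat complexes under direct limits. Your approach instead exploits the adjoint triple $(\text{ind},U,\text{coind})$ associated to the ring map $B\to R=A^{op}\otimes B$: the exactness of $\text{ind}$ (resp.\ $\text{coind}$) when $A$ is $k$-flat (resp.\ $k$-projective) immediately yields preservation of homotopically injective (resp.\ projective) complexes by the general principle you extracted from Lemma~\ref{lem.adjunction pass to the stable category}, and the associativity trick $U(P^\bullet)\Tt_B N^\bullet\cong P^\bullet\Tt_R(R\Tt_B N^\bullet)$ handles the flat part without any filtration argument. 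Your argument is more self-contained (no external reference, no Lazard) and conceptually uniform; the paper's approach, by contrast, makes the filtration structure of homotopically projective complexes visible, which is occasionally useful elsewhere. The bookkeeping worry you flag is real but minor: the key point is that both one-sided $B$-module structures on $R$ live on the $B$-tensor-factor, so flatness and projectivity of $R$ over $B$ reduce directly to those of $A$ over $k$.
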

\begin{proof}
Assertion (2) and the part of assertion (1) concerning the preservation
of homotopically injective complexes are proved in \cite{NS} in a
much more general contex. Suppose now that $A$ is $k$-flat and let
$P^\bullet\in\K(A^{op}\otimes B)$ be homotopically projective. Using
\cite[Theorem P]{Ke}, we can assume that $P^\bullet$ admits a
countable filtration

\begin{center}
$P_0^\bullet\subset P_1^\bullet\subset ...\subset P_n^\bullet\subset
...$,
\end{center}
where the inclusions are inflations and where $P^\bullet _0$ and all
quotients $P^\bullet_n/P^\bullet_{n-1}$ are direct summands of coproducts of stalk
complexes of the form $A\otimes B[r]$. That $A$ is $k$-flat implies
that  it is the direct limit in $\text{Mod}-k$ of a direct system of
finitely generated free $k$-modules (see \cite[Th\'eor\'eme
1.2]{L}). It follows that, in $\C(B)$, $A\otimes B[r]$ is a direct
limit stalk complexes of the form $B^{(k)}[r]$, all of which are
homotopically flat in $\C(B)$. Bearing in mind that the bifunctor
$?\Tt_B?$ preserves direct limits on both variables, one gets that
each stalk complex $A\otimes B[r]$ is homotopically flat in $\C(B)$,
and then one easily proves by induction that each $P^\bullet_n$ in
the filtration is homotopically flat in $\C(B)$. But, by definition,
the direct limit in $\C(B)$ of homotopically flat complexes is
homotopically flat.
\end{proof}

\subsection{Classical derived functors as components of a  bifunctor}

It would be a natural temptation  to believe that if
$T^\bullet\in\C(B^{op}\otimes A)$ and $M^\bullet\in\C(A^{op}\otimes
C)$, then we have isomorphisms $\T_A(T^\bullet ,M^\bullet )\cong
(T^\bullet\Lt_A?)(M^\bullet )$ and $\T_A(T^\bullet ,M^\bullet )\cong
(?\Lt_AM^\bullet )(T^\bullet )$ in $\D(B^{op}\otimes C)$. Similarly,
one could be tempted to believe that if $T^\bullet$ is as above and
$N^\bullet\in\C(C^{op}\otimes A)$, then one has isomorphisms
$\RH_A(T^\bullet ,N^\bullet )\cong\Rh_A(T^\bullet ,?)(N^\bullet )$
and $\RH_A(T^\bullet ,N^\bullet )\cong\Rh_A(?,N^\bullet )(T^\bullet
)$ in $\D(C^{op}\otimes B)$. However,  we need extra hypotheses to
guarantee that.

\begin{prop} \label{prop.balance of derived bifunctors}
Let $A$, $B$ and $C$ be $k$-algebras and let ${}_BT^\bullet_A$,
${}_AM ^\bullet_C$ and ${}_CN ^\bullet_A$ complexes of bimodules
over the indicated algebras. There exist canonical natural
transformations of triangulated functors:

\begin{enumerate}
\item $\T_A(T^\bullet ,?)\longrightarrow T^\bullet\Lt_A?:\D(A^{op}\otimes C)\longrightarrow\D(B^{op}\otimes
C)$;
\item $\T_A(?,M)\longrightarrow ?\Lt_AM^\bullet:\D(B^{op}\otimes A)\longrightarrow\D(B^{op}\otimes
C)$;
\item $\Rh_A(T^\bullet ,?)\longrightarrow\RH_A(T^\bullet ,?):\D(C^{op}\otimes A)\longrightarrow\D(C^{op}\otimes
B)$;
\item $\Rh_A(?,N^\bullet )\longrightarrow\RH_A(?,N^\bullet ):\D(B^{op}\otimes A)^{op}\longrightarrow\D(C^{op}\otimes
B)$
\end{enumerate}

Moreover, the following assertions hold

\begin{enumerate}
\item[a)] If $C$ is $k$-flat, then the natural transformations 1 and
3 are isomorphism;
\item[b)] if $B$ is $k$-flat, then the natural transformation 2 is
an isomorphism;
\item[c)] if $B$ is $k$-projective, then the natural transformation
4 is an isomorphism.
\end{enumerate}
\end{prop}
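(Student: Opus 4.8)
The plan is as follows. All four natural transformations come from the canonical (co)units of the resolution functors. Write $\mathbf{p}=\mathbf{p}_{B^{op}\otimes A}$, $\mathbf{p}'=\mathbf{p}_{A^{op}\otimes C}$ and $\mathbf{i}=\mathbf{i}_{C^{op}\otimes A}$, and let $\pi=\pi_{T^\bullet}:\mathbf{p}T^\bullet\to T^\bullet$, $\pi'=\pi_{M^\bullet}:\mathbf{p}'M^\bullet\to M^\bullet$ and $\iota=\iota_{N^\bullet}:N^\bullet\to\mathbf{i}N^\bullet$ be the associated quasi-isomorphisms, natural in the displayed variable. By the very construction of $\T_A(?,?)$ and $\RH_A(?,?)$ (the definitions following Proposition~\ref{prop.adjunctions given by bimodule}) and of the one-sided derived functors (Definition~\ref{def.derived funtor}), applying the localization functor $q$ to the chain maps $\pi\Tt_A 1_{\mathbf{p}'M^\bullet}$, $1_{\mathbf{p}T^\bullet}\Tt_A\pi'$, $\Th_A(\pi,\mathbf{i}N^\bullet)$ and $\Th_A(\mathbf{p}T^\bullet,\iota)$ yields exactly the morphisms of assertions (1)--(4), respectively. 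Naturality in the remaining variable is inherited from that of $\pi,\pi',\iota$, and each is a morphism of triangulated functors because $?\Tt_A M^\bullet$, $T^\bullet\Tt_A ?$, $\Th_A(T^\bullet,?)$ and $\Th_A(?,N^\bullet)$ induce triangulated functors on the relevant homotopy categories, by Proposition~\ref{prop.adjunctions given by bimodule} together with Lemma~\ref{lem.adjunction pass to the stable category}(1). This settles the first part, modulo the routine verification that the displayed formulas are chain maps.

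For the second part I would argue uniformly. Each of $\pi,\pi',\iota$ fits into a triangle of the appropriate homotopy category whose third vertex $Z^\bullet$ is the mapping cone; since these morphisms are quasi-isomorphisms, $Z^\bullet$ is acyclic. Applying the triangulated functor acting on the unvaried slot, the cone of $\pi\Tt_A 1_{\mathbf{p}'M^\bullet}$ is $Z^\bullet\Tt_A\mathbf{p}'M^\bullet$, the cone of $1_{\mathbf{p}T^\bullet}\Tt_A\pi'$ is $\mathbf{p}T^\bullet\Tt_A Z^\bullet$, the cone of $\Th_A(\pi,\mathbf{i}N^\bullet)$ is $\Th_A(Z^\bullet,\mathbf{i}N^\bullet)$, and the cone of $\Th_A(\mathbf{p}T^\bullet,\iota)$ is $\Th_A(\mathbf{p}T^\bullet,Z^\bullet)$; so in each case the transformation becomes an isomorphism in the derived category exactly when the corresponding cone is acyclic. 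Here I invoke three elementary facts about an acyclic complex $Z^\bullet$ of $A$-modules (on either side): (i) $W^\bullet\Tt_A Z^\bullet$ and $Z^\bullet\Tt_A W^\bullet$ are acyclic whenever $W^\bullet$ is homotopically flat over $A$ (Definition~\ref{def.homotopically flat}, Lemma~\ref{lem.homotopically flat resolution}); (ii) $\Th_A(Z^\bullet,I^\bullet)$ is acyclic whenever $I^\bullet$ is homotopically injective over $A$, since $H^n\Th_A(Z^\bullet,I^\bullet)=\K(A)(Z^\bullet,I^\bullet[n])=0$; (iii) $\Th_A(P^\bullet,Z^\bullet)$ is acyclic whenever $P^\bullet$ is homotopically projective over $A$, since $H^n\Th_A(P^\bullet,Z^\bullet)=\K(A)(P^\bullet,Z^\bullet[n])=0$.

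Thus everything reduces to checking that, after restriction of scalars to the appropriate one-sided module category over $A$, the complex $\mathbf{p}'M^\bullet$ is homotopically flat (for (1)), $\mathbf{p}T^\bullet$ is homotopically flat (for (2)), $\mathbf{i}N^\bullet$ is homotopically injective (for (3)), and $\mathbf{p}T^\bullet$ is homotopically projective (for (4)). These are precisely the conclusions of Lemma~\ref{lem.restrictions of homotopically ...} and its left-right symmetric version: after the harmless identifications $A^{op}\otimes C\cong (C^{op})^{op}\otimes A^{op}$ and $B^{op}\otimes A\cong (A^{op})^{op}\otimes B^{op}$ of algebras, $k$-flatness of $C$ makes the forgetful functor $\K(A^{op}\otimes C)\to\K(A^{op})$ carry homotopically projective complexes to homotopically flat ones (giving a) for (1)) and makes $\K(C^{op}\otimes A)\to\K(A)$ preserve homotopically injective complexes (giving a) for (3)); $k$-flatness of $B$ makes $\K(B^{op}\otimes A)\to\K(A)$ carry homotopically projective to homotopically flat (giving b)); and $k$-projectivity of $B$ makes $\K(B^{op}\otimes A)\to\K(A)$ preserve homotopically projective complexes (giving c)).

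The point requiring most care is the bookkeeping of sidedness — deciding which of the two restriction statements of Lemma~\ref{lem.restrictions of homotopically ...} applies to a given occurrence of $\otimes^\bullet_A$ or $\Th_A$, and over which algebra — and the observation that for (4) homotopic flatness of $\mathbf{p}T^\bullet$ over $A$ is \emph{not} enough, since homotopic flatness governs tensor products but not $\Th_A$; this is exactly why c) must assume the stronger condition that $B$ be $k$-projective rather than merely $k$-flat. Everything else is bookkeeping with mapping cones and the definitions of the derived bifunctors.
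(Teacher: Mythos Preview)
Your proof is correct and follows essentially the same approach as the paper's. Both construct the natural transformations from the (co)unit maps $\pi_T\Tt 1$, $1\Tt\pi_M$, $\Th_A(\pi_T,-)$, $\Th_A(-,\iota_N)$ of the resolution functors, and both reduce the isomorphism assertions a)--c) to Lemma~\ref{lem.restrictions of homotopically ...} (and its left-right symmetric version) in exactly the same way; your phrasing via acyclicity of the mapping cone is just a restatement of the paper's claim that the relevant chain map is a quasi-isomorphism.
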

\begin{proof}
(1) By definition, we have $\T_A(T^\bullet ,M^\bullet
)=(\mathbf{p}_{B^{op}\otimes A}T^\bullet
)\Tt_A(\mathbf{p}_{A^{op}\otimes C}M^\bullet )$ and
$(T^\bullet\Lt_A?)(M^\bullet
)=T^\bullet\Tt_A(\mathbf{p}_{A^{op}\otimes C}M^\bullet )$. If
$\pi_T:(\mathbf{p}_{B^{op}\otimes A}T^\bullet )\longrightarrow
T^\bullet$ is the homotopically projective resolution, we clearly
have a chain map

\begin{center}
$\T_A(T^\bullet ,M^\bullet )=(\mathbf{p}_{B^{op}\otimes A}T^\bullet
)\Tt_A(\mathbf{p}_{A^{op}\otimes C}M^\bullet
)\stackrel{\pi_T\Tt1}{\longrightarrow}
T^\bullet\Tt_A(\mathbf{p}_{A^{op}\otimes C}M^\bullet
)=(T^\bullet\Lt_A?)(M^\bullet )$
\end{center}
That this  map defines a natural transformation $\T_A(T^\bullet
,?)\longrightarrow T^\bullet\Lt_A?$ is routine.

(2) follows as (1), by applying a left-right symmetric argument.

(3), (4) By definition again, we have $\RH_A(T^\bullet ,N^\bullet
)=\Th_A(\mathbf{p}_{B^{op}\otimes A}T^\bullet$
, $\mathbf{i}_{C^{op}\otimes A}N^\bullet)$, $\Rh_A(T^\bullet
,?)(N^\bullet )=\Th_A(T^\bullet ,\mathbf{i}_{C^{op}\otimes
A}N^\bullet)$ and $\Rh_A(?,N^\bullet )(T^\bullet
)=\Th_A(\mathbf{p}_{B^{op}\otimes A}T^\bullet ,N^\bullet )$. If
$\pi_T$ is as above and
$\iota_N:N^\bullet\longrightarrow\mathbf{i}_{C^{op}\otimes
A}N^\bullet$ is the homotopically injective resolution, we then have
obvious chain maps

\begin{center}
$\Th_A(\mathbf{p}_{B^{op}\otimes A}T^\bullet ,N^\bullet
)\stackrel{(\iota_N)_*}{\longrightarrow}\Th_A(\mathbf{p}_{B^{op}\otimes
A}T^\bullet ,\mathbf{i}_{C^{op}\otimes
A}N^\bullet)\stackrel{\pi_T^*}{\longleftarrow}\Th_A(T^\bullet
,\mathbf{i}_{C^{op}\otimes A}N^\bullet)$.
\end{center}
It is again routine to see that they induce natural transformations
of triangulated functors

\begin{center}
$\Rh_A(?,N^\bullet )\longrightarrow\RH_A(? ,N^\bullet )$ and
$\RH_A(T^\bullet ,?)\longleftarrow\Rh_A(T^\bullet ,?)$.
\end{center}

On the other hand, when $C$ is $k$-flat, by lemma
\ref{lem.restrictions of homotopically ...},  we know that the
forgetful functor $\K(A^{op}\otimes C)\longrightarrow\K(A^{op})$
(resp. $\K(C^{op}\otimes A)\longrightarrow\K(A)$) takes
homotopically projective objects to homotopically flat objects
(resp. preserves homotopically injective objects). In particular,
the morphisms $\pi_T\Tt 1$ and $\pi_T^*$ considered above, are both
quasi-isomorphisms, which gives assertion a). Assertion b) follows
from the part of a) concerning the derived tensor product by a
symmetric argument.

Finally, if $B$ is $k$-projective, then, by lemma
\ref{lem.restrictions of homotopically ...},
 the forgetful functor $\K(B^{op}\otimes
A)\longrightarrow \K(A)$ preserves homotopically projective objects.
Then the map $(\iota_N)_*$ above is a quasi-isomorphism.

\end{proof}

\begin{lemma} \label{lem.from TH to H}
Let $A$, $B$ and $C$ be $k$-algebras. The following assertions hold:

\begin{enumerate}
\item The assignment $(T^\bullet ,X^\bullet
)\rightsquigarrow\text{Hom}_{B^{op}}^\bullet
(T^\bullet,B)\otimes_B^\bullet X^\bullet$ is the definition on
objects of a bifunctor

\begin{center}
$\text{Hom}^\bullet _{B^{op}}(?,B)\otimes_B^\bullet
?:\mathcal{C}(B^{op}\otimes A)^{op}\times \mathcal{C}(B^{op}\otimes
C)\longrightarrow\mathcal{C}(A^{op}\otimes C)$
\end{center}
which preserves conflations and contractible complexes on each
variable.

\item There is a natural transformation of bifunctors

\begin{center}
$\psi :\text{Hom}^\bullet _{B^{op}}(?,B)\otimes_B^\bullet
?\longrightarrow \Th_{B^{op}}(?,?)$.
\end{center}
\end{enumerate}
\end{lemma}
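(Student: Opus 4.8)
\emph{Plan of proof.} For part (1) the plan is to display the assignment in the statement as a composition of (bi)functors whose behaviour on conflations and contractible complexes is already under control. Regarding the algebra $B$ as the stalk complex of $B-B-$bimodules concentrated in degree $0$ and specializing the bifunctor $\Th_{B^{op}}(?,?)$ introduced before Proposition \ref{prop.adjunctions given by bimodule} by fixing its second variable at this stalk complex, one obtains a contravariant functor $\Th_{B^{op}}(?,B):\C(B^{op}\otimes A)^{op}\longrightarrow\C(A^{op}\otimes B)$. On the other hand, the version of $?\Tt_B?$ with the roles of $A$ and $C$ interchanged is a bifunctor $?\Tt_B?:\C(A^{op}\otimes B)\times\C(B^{op}\otimes C)\longrightarrow\C(A^{op}\otimes C)$. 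I would then observe that composing the pair consisting of $\Th_{B^{op}}(?,B)$ and the identity functor on $\C(B^{op}\otimes C)$ with this $?\Tt_B?$ yields exactly $(T^\bullet,X^\bullet)\rightsquigarrow\Th_{B^{op}}(T^\bullet,B)\Tt_BX^\bullet$, so that this assignment is a bifunctor, its bifunctoriality being inherited from that of the two constituents. For the preservation statement, fixing $X^\bullet$ the functor $T^\bullet\rightsquigarrow\Th_{B^{op}}(T^\bullet,B)\Tt_BX^\bullet$ is $\Th_{B^{op}}(?,B)$ followed by $?\Tt_BX^\bullet$, each preserving conflations and contractible complexes by Proposition \ref{prop.adjunctions given by bimodule}; and fixing $T^\bullet$ the assignment is the single-variable functor $\Th_{B^{op}}(T^\bullet,B)\Tt_B?$, to which the same proposition applies.

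For part (2) the idea is that $\psi$ should ``evaluate and then multiply''. By the very definition of the total Hom complex, a homogeneous element $f\in\Th_{B^{op}}(T^\bullet,B)$ of degree $|f|$ is the same thing as a morphism $T^\bullet\longrightarrow B^\bullet$ of degree $|f|$ in the graded category $\text{GR}(\text{Mod}-B^{op})$, and a homogeneous element $x\in X^\bullet$ determines a morphism $\ell_x:B^\bullet\longrightarrow X^\bullet$ of degree $|x|$ there, given on underlying modules by $b\rightsquigarrow bx$ up to the usual Koszul sign. I would define $\psi_{T^\bullet,X^\bullet}$ on homogeneous tensors by $\psi(f\otimes x)=\ell_x\circ f$, which on elements reads $t\rightsquigarrow f(t)x$ up to a sign depending on $|f|$ and $|x|$, and which has degree $|f|+|x|$, hence lies in $\Th_{B^{op}}(T^\bullet,X^\bullet)$. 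The first point to check is that this is balanced over $B$, i.e.\ $\psi(fb\otimes x)=\psi(f\otimes bx)$ for $b\in B$; writing $\rho_b$ for right multiplication by $b$ on $B$ this reduces to $\ell_x\circ\rho_b=\ell_{bx}$, which is precisely associativity of the $B$-action on $X$. One also checks that the graded map $\psi(f\otimes x)$ is left $B$-linear, so that it genuinely lies in $\Th_{B^{op}}$; this is where the left $B$-linearity of $f$ enters. After that one verifies that $\psi$ is a chain map, which follows from the graded Leibniz rules defining the differentials of a Hom-complex and of a tensor-product complex together with the $k$-bilinearity of composition in $\text{GR}$; that $\psi$ is natural in $T^\bullet$ (contravariantly, by precomposition) and in $X^\bullet$ (covariantly, by postcomposition); and finally that each $\psi(f\otimes x)$ is compatible with the $A-$ and $C-$module structures, so that $\psi_{T^\bullet,X^\bullet}$ is indeed a morphism in $\C(A^{op}\otimes C)$.

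I expect the only genuinely delicate point to be the Koszul sign bookkeeping in part (2): one must fix sign conventions for $\ell_x$ and for $\psi(f\otimes x)$ so that $\psi$ commutes strictly with the differentials and is strictly natural, rather than merely up to sign. Granting careful conventions, every remaining verification is formal and proceeds exactly as the corresponding checks for the module-level evaluation map $\Hom_{B^{op}}(T,B)\otimes_BX\longrightarrow\Hom_{B^{op}}(T,X)$.
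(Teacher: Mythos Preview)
Your proposal is correct and follows essentially the same approach as the paper. The paper's proof is in fact much terser than yours: it declares assertion (1) ``routine and left to the reader'' and for (2) simply writes down the explicit formula $\psi(f\otimes x)(t)=(-1)^{|f|\cdot|x|}f(t)x$ before leaving all verifications (chain map, $A$--$C$--bimodule compatibility, naturality) to the reader as well. Your decomposition of the bifunctor in (1) as $\Th_{B^{op}}(?,B)$ followed by $?\Tt_B?$, together with the appeal to Proposition~\ref{prop.adjunctions given by bimodule}, is precisely the kind of argument the paper is gesturing at, and your conceptual description of $\psi$ via $\ell_x\circ f$ unwinds to the paper's formula with the correct Koszul sign.
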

\begin{proof}
Assertion (1) is routine and  left to the reader. As for assertion (2),
let us fix $T^\bullet\in\mathcal{C}(B^{op}\otimes A)$ and
$X^\bullet\in \mathcal{C}(B^{op}\otimes C)$. We need to define a map

\begin{center}
$\psi=\psi_{T,X}:\text{Hom}^\bullet_{B^{op}}(T^\bullet
,B)\otimes_B^\bullet
X^\bullet\longrightarrow\text{Hom}^\bullet_{B^{op}}(T^\bullet
,X^\bullet)$.
\end{center}
Let  $f\in\text{Hom}^\bullet_{B^{op}}(T^\bullet ,B)$ and $x\in
X^\bullet$ be homogeneous elements, whose degrees are denoted by
$|f|$ and $|x|$. We define $\psi (f\otimes
x)(t)=(-1)^{|f|\cdot|x|}f(t)x$, for all homogeneous elements
$f\in\Th_{B^{op}}(T^\bullet ,B)$, $x\in X^\bullet$ and $t\in
T^\bullet$, and leave to the reader the routine task of checking
that $\psi$ is a chain map of complexes of $A-C-$bimodules which is
natural on both variables.
\end{proof}

Assertion 1 in the previous lemma allows us to define the following
bifunctor, which is triangulated on both variables:

\begin{center}
$\mathbf{TH}(?,?):\mathcal{D}(B^{op}\otimes A)^{op}\times
\mathcal{D}(B^{op}\otimes C)\stackrel{\mathbf{p}_{B^{op}\otimes
A}\times\mathbf{p}_{B^{op}\otimes
C}}{\longrightarrow}\mathcal{H}(B^{op}\otimes A)^{op}\times
\mathcal{H}(B^{op}\otimes C)\stackrel{\text{Hom}^\bullet
_{B^{op}}(?,B)\otimes_B^\bullet ?}{\longrightarrow}\K(A^{op}\otimes
C)\stackrel{q}{\longrightarrow}\D(A^{op}\otimes C)$.
\end{center}

We have the following property of this bifunctor.

\begin{prop} \label{prop.natural transformation TH to H}
Let $A$, $B$ and $C$ be $k$-algebras and let
$T^\bullet\in\C(B^{op}\otimes A)$ and $Y^\bullet\in\C(B^{op}\otimes
C)$ be complexes of bimodules. The natural transformation of lemma
\ref{lem.from TH to H} induces:

\begin{enumerate}
\item  A natural transformation of bifunctors $\theta
:\mathbf{TH}(?,?)\longrightarrow\RH_{B^{op}}(?,?)$ and  natural
transformation $\theta_{Y^\bullet}:(?\Lt_BY^\bullet
)\circ\Rh_{B^{op}}(?,B)\longrightarrow\Rh_{B^{op}}(?,Y^\bullet )$ of
triangulated functors $\D(B^{op}\otimes
A)\longrightarrow\D(A^{op}\otimes C)$. When $A$ is $k$-projective,
there is a natural transformation
$\upsilon_{Y^\bullet}:(?\Lt_BY^\bullet
)\circ\Rh_{B^{op}}(?,B)\longrightarrow\mathbf{TH}(?,Y^\bullet )$
such that $\theta_{Y^\bullet}\cong\theta (?,Y^\bullet
)\circ\upsilon_{Y^\bullet}$. If, in addition, $C$ is $k$-flat, then
$\upsilon_{Y^\bullet}$ is a natural isomorphism.

\item When $C$ is $k$-flat, $\theta$ induces a natural transformation  $\theta^{T^\bullet}:\Rh_{B^{op}}(T^\bullet ,B)\Lt_B?\cong\mathbf{TH}(T^\bullet ,?)
\longrightarrow\RH_{B^{op}}(T^\bullet ,?)\cong\Rh_{B^{op}}(T^\bullet
,?)$ of triangulated functors $\D(B^{op}\otimes
C)\longrightarrow\D(A^{op}\otimes C)$.

\end{enumerate}
\end{prop}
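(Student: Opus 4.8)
The whole statement will follow by deriving the chain-level natural transformation $\psi$ of Lemma~\ref{lem.from TH to H}(2) and then tracking, at each step, how homotopically projective and homotopically injective resolutions behave under the restriction functors $\K(B^{op}\otimes A)\longrightarrow\K(B^{op})$, $\K(A^{op}\otimes B)\longrightarrow\K(B)$ and $\K(B^{op}\otimes C)\longrightarrow\K(B^{op})$, which is exactly what Lemma~\ref{lem.restrictions of homotopically ...} controls. First I would observe that, by Lemma~\ref{lem.from TH to H}(1) and Proposition~\ref{prop.adjunctions given by bimodule}, the functors $\Th_{B^{op}}(?,B)\otimes_B^\bullet ?$ and $\Th_{B^{op}}(?,?)$ send contractible complexes to contractible complexes on each variable, so $\psi$ descends to a natural transformation between the induced bifunctors on the homotopy categories (by the mechanism of Lemma~\ref{lem.adjunction pass to the stable category}(1), via Lemma~\ref{lem.nullhomotopic=factors through contractible}). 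Fixing $T^\bullet$ and $Y^\bullet$, put $P^\bullet=\mathbf{p}_{B^{op}\otimes A}T^\bullet$ and $H^\bullet=\Th_{B^{op}}(P^\bullet,B)$, and let $\pi_Y\colon\mathbf{p}_{B^{op}\otimes C}Y^\bullet\to Y^\bullet$, $\iota_Y\colon Y^\bullet\to\mathbf{i}_{B^{op}\otimes C}Y^\bullet$ and $\pi_H\colon\mathbf{p}_{A^{op}\otimes B}H^\bullet\to H^\bullet$ be the canonical quasi-isomorphisms. I would then define $\theta_{T^\bullet,Y^\bullet}\colon\mathbf{TH}(T^\bullet,Y^\bullet)\to\RH_{B^{op}}(T^\bullet,Y^\bullet)$ as the image in the derived category of the chain map $(\iota_Y\circ\pi_Y)_*\circ\psi\colon H^\bullet\otimes_B^\bullet\mathbf{p}_{B^{op}\otimes C}Y^\bullet\to\Th_{B^{op}}(P^\bullet,\mathbf{i}_{B^{op}\otimes C}Y^\bullet)$, whose source and target represent $\mathbf{TH}(T^\bullet,Y^\bullet)$ and $\RH_{B^{op}}(T^\bullet,Y^\bullet)$; naturality in both variables is inherited from that of $\psi$ and of the resolution functors. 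The transformation $\theta_{Y^\bullet}$ I would build directly and unconditionally, as the image in the derived category of $\psi\circ(\pi_H\otimes 1)\colon\mathbf{p}_{A^{op}\otimes B}H^\bullet\otimes_B^\bullet Y^\bullet\to\Th_{B^{op}}(P^\bullet,Y^\bullet)$, whose source computes $\Rh_{B^{op}}(T^\bullet,B)\Lt_BY^\bullet$ and whose target computes $\Rh_{B^{op}}(T^\bullet,Y^\bullet)$.

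Suppose next that $A$ is $k$-projective. Then Lemma~\ref{lem.restrictions of homotopically ...} gives, on the one hand, that $P^\bullet$ restricts to a homotopically projective complex of left $B$-modules, so that $\Th_{B^{op}}(P^\bullet,?)$ preserves quasi-isomorphisms and hence $\Rh_{B^{op}}(T^\bullet,?)\cong\RH_{B^{op}}(T^\bullet,?)$; and, on the other hand, that $Q^\bullet:=\mathbf{p}_{A^{op}\otimes B}H^\bullet$ restricts to a homotopically flat complex of right $B$-modules, so that $1_{Q^\bullet}\otimes\pi_Y$ is a quasi-isomorphism. I would then set $\upsilon_{Y^\bullet}$ at $T^\bullet$ to be $q(\pi_H\otimes 1)\circ q(1_{Q^\bullet}\otimes\pi_Y)^{-1}\colon\Rh_{B^{op}}(T^\bullet,B)\Lt_BY^\bullet\to H^\bullet\otimes_B^\bullet\mathbf{p}_{B^{op}\otimes C}Y^\bullet=\mathbf{TH}(T^\bullet,Y^\bullet)$. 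The required identity $\theta_{Y^\bullet}\cong\theta(?,Y^\bullet)\circ\upsilon_{Y^\bullet}$ is then a short chase in the derived category, using only naturality of $\psi$ in its second variable together with $(\iota_Y\circ\pi_Y)_*=(\iota_Y)_*\circ(\pi_Y)_*$ and $(1_{H^\bullet}\otimes\pi_Y)\circ(\pi_H\otimes 1)=\pi_H\otimes\pi_Y=(\pi_H\otimes 1_{Y^\bullet})\circ(1_{Q^\bullet}\otimes\pi_Y)$. If, moreover, $C$ is $k$-flat, then Lemma~\ref{lem.restrictions of homotopically ...} makes $\mathbf{p}_{B^{op}\otimes C}Y^\bullet$ into a homotopically flat complex of left $B$-modules, so $\pi_H\otimes 1$ is also a quasi-isomorphism and $\upsilon_{Y^\bullet}$ becomes an isomorphism.

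For part~(2) I would fix $T^\bullet$, assume $C$ is $k$-flat, and combine the restriction $\theta(T^\bullet,?)$ of $\theta$ with two isomorphisms. The left-right symmetric form of Proposition~\ref{prop.balance of derived bifunctors}(3) gives $\RH_{B^{op}}(T^\bullet,?)\cong\Rh_{B^{op}}(T^\bullet,?)$. For the source, the $k$-flatness of $C$ again makes $\mathbf{p}_{B^{op}\otimes C}Y^\bullet$ homotopically flat over $B^{op}$, so $?\otimes_B^\bullet\mathbf{p}_{B^{op}\otimes C}Y^\bullet$ preserves quasi-isomorphisms and $\pi_H\otimes 1$ is a quasi-isomorphism; feeding this into the left-right symmetric comparison transformation of Proposition~\ref{prop.balance of derived bifunctors}(2) (applied to the object $\Rh_{B^{op}}(T^\bullet,B)$) yields the identification $\Rh_{B^{op}}(T^\bullet,B)\Lt_B?\cong\mathbf{TH}(T^\bullet,?)$. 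Pre- and post-composing $\theta(T^\bullet,?)$ with these two identifications produces $\theta^{T^\bullet}$, which is triangulated on $\D(B^{op}\otimes C)$ because every functor occurring in its construction is.

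The genuinely routine parts are verifying that the maps written above are chain maps and are natural in the remaining variable. The point where I expect the real work to lie is the hypothesis bookkeeping: for each comparison or structure map appearing above --- $\pi_H\otimes 1$, $1_{Q^\bullet}\otimes\pi_Y$, $(\iota_Y)_*$, and the transformations of Proposition~\ref{prop.balance of derived bifunctors} --- one must determine precisely which of ``$A$ is $k$-flat'', ``$A$ is $k$-projective'' or ``$C$ is $k$-flat'' turns it into a quasi-isomorphism, and in each instance this reduces to feeding the correct one of the restriction functors $\K(B^{op}\otimes A)\to\K(B^{op})$, $\K(A^{op}\otimes B)\to\K(B)$, $\K(B^{op}\otimes C)\to\K(B^{op})$ into Lemma~\ref{lem.restrictions of homotopically ...}, with due care for the opposite-algebra identifications.
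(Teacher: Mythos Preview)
Your proposal is correct and follows essentially the same route as the paper: both build $\theta$ and $\theta_{Y^\bullet}$ as the derived-category images of $(\iota_Y)_*\circ(\pi_Y)_*\circ\psi$ and $\psi\circ(\pi_H\otimes 1)$ respectively, define $\upsilon_{Y^\bullet}$ as $(\pi_H\otimes 1)\circ(1\otimes\pi_Y)^{-1}$, and then verify the compatibility via a commutative square, using Lemma~\ref{lem.restrictions of homotopically ...} at each step to decide which of $\pi_H\otimes 1$, $1\otimes\pi_Y$, $(\iota_Y)_*$ is a quasi-isomorphism under which hypothesis on $A$ or $C$.

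One small simplification you missed in part~(2): the identification $\Rh_{B^{op}}(T^\bullet,B)\Lt_B?\cong\mathbf{TH}(T^\bullet,?)$ is not merely an isomorphism requiring Proposition~\ref{prop.balance of derived bifunctors} and the $k$-flatness of $C$; it is a literal equality of functors. Unwinding definitions, $\Rh_{B^{op}}(?,B)(T^\bullet)=\Th_{B^{op}}(\mathbf{p}T^\bullet,B)=H^\bullet$, and then $(H^\bullet\Lt_B?)(Y^\bullet)=H^\bullet\Tt_B\mathbf{p}Y^\bullet=\mathbf{TH}(T^\bullet,Y^\bullet)$ on the nose. So the only place $k$-flatness of $C$ enters in part~(2) is in the identification $\RH_{B^{op}}(T^\bullet,?)\cong\Rh_{B^{op}}(T^\bullet,?)$ on the target side, via Proposition~\ref{prop.balance of derived bifunctors}(a). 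Also, in your paragraph on the $k$-projective case, the identification you actually need (and which your argument correctly produces, since $P^\bullet$ becomes homotopically projective over $B^{op}$ and hence $(\iota_Y)_*$ is a quasi-isomorphism) is $\Rh_{B^{op}}(?,Y^\bullet)\cong\RH_{B^{op}}(?,Y^\bullet)$, not $\Rh_{B^{op}}(T^\bullet,?)\cong\RH_{B^{op}}(T^\bullet,?)$; the latter holds unconditionally.
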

\begin{proof}
All throughout the proof, for simplicity, whenever $M^\bullet$ is a
complex of bimodules, we will denote by
$\pi=\pi_M:\mathbf{p}M^\bullet \longrightarrow M^\bullet$ and $\iota
=\iota_M :M^\bullet\longrightarrow\mathbf{i}M^\bullet$,
respectively, the homotopically projective resolution and the
homotopically injective resolution, without explicitly mentioning
over which algebras it is a complex of bimodules. In each case, it
should be clear from the context which algebras they are.

(1) We define $\theta_{(T,Y)}$ as the composition

\begin{center}
$\mathbf{TH}(T^\bullet ,Y^\bullet )=\Th_{B^{op}}(\mathbf{p}T^\bullet
,B)\Tt_B\mathbf{p}Y^\bullet\stackrel{\psi}{\longrightarrow}\Th_{B^{op}}(\mathbf{p}T^\bullet
,\mathbf{p}Y^\bullet
)\stackrel{(\pi_Y)_*}{\longrightarrow}\Th_{B^{op}}(\mathbf{p}T^\bullet
,Y^\bullet
)\stackrel{(\iota_Y)_*}{\longrightarrow}\Th_{B^{op}}(\mathbf{p}T^\bullet
,\mathbf{i}Y^\bullet )=\RH_{B^{op}}(T^\bullet ,Y^\bullet )$.
\end{center}
Checking its naturality on both variables is left to the reader.

As definition of $\theta_{Y^\bullet}$, we take the composition:

\begin{center}
$[(?\Lt_BY)\circ\Rh_{B^{op}}(?,B)](T^\bullet
)=\mathbf{p}\Th_{B^{op}}(\mathbf{p}T^\bullet
,B)\Tt_BY^\bullet\stackrel{\pi\Tt
1}{\longrightarrow}\Th_{B^{op}}(\mathbf{p}T^\bullet
,B)\Tt_BY^\bullet\stackrel{\psi}{\longrightarrow}\Th_{B^{op}}(\mathbf{p}T^\bullet
,Y^\bullet )=\Rh_{B^{op}}(? ,Y^\bullet )(T^\bullet )$.
\end{center}
When $A$ is $k$-projective,  we have a natural isomorphism
$\Rh_{B^{op}}(?,Y^\bullet )\cong\RH_{B^{op}}(?,Y^\bullet )$ (see
proposition \ref{prop.balance of derived bifunctors} c)). On the
other hand, we have morphisms in $\K(A^{op}\otimes B)$

\begin{center}

 $\mathbf{TH}(T^\bullet ,Y^\bullet )=\Th_{B^{op}}(\mathbf{p}T^\bullet,B)\Tt_B\mathbf{p}Y^\bullet\stackrel{\pi_H\Tt
1}{\longleftarrow}\mathbf{p}\Th_{B^{op}}(\mathbf{p}T^\bullet
,B)\Tt_B\mathbf{p}Y^\bullet\stackrel{1\Tt\pi_Y}{\longrightarrow}\mathbf{p}\Th_{B^{op}}(\mathbf{p}T^\bullet
,B)\Tt_BY^\bullet
=[(?\Lt_BY)\circ\Rh_{B^{op}}(?,B)](T^\bullet )$,
\end{center}
where $\pi_H$ is taken for $H^\bullet
:=\Th_{B^{op}}(\mathbf{p}T^\bullet,B)$. The right morphism  is a
quasi-isomorphism since the $k$-projectivity of $A$ guarantees that
$\mathbf{p}\Th_{B^{op}}(\mathbf{p}T^\bullet ,B)$ is homotopically
projective in $\K(B)$. The evaluation of the natural transformation
$\upsilon_Y$ at $T^\bullet$ is $\upsilon_{Y,T}=(\pi_H\Tt 1)\circ
(1\Tt\pi_Y)^{-1} :[(?\otimes\Lt_BY)\circ\Rh_{B^{op}}(?,B)](T^\bullet
)\longrightarrow\mathbf{TH}(T^\bullet ,Y^\bullet )$. This is an
isomorphism if, and only if, $\pi_H\Tt 1$ is an isomorphism in
$\D(A^{op}\otimes C)$. For this to happen it is enough that $C$ be
$k$-flat, for then $\mathbf{p}Y^\bullet$ is homotopically flat in
$\K(B^{op})$.

The fact that, when $A$ is $k$-projective, we have an equality
$\theta_Y=\theta (?,Y)\circ\upsilon_Y$ follows from the
commutativity in  $\K(A^{op}\otimes B)$ of the following diagram:

\[
\xymatrix{
&& \left[(?\otimes^{\bf L}_{B}Y^\bullet)\circ \Rh_{B^{op}}(?,B)\right](T^\bullet)\ar@{=}[d] \\
 {\bf p}\Th_{B^{op}}({\bf p}T^\bullet ,B)\otimes_{B}{\bf p}Y^\bullet\ar[rr]^{1\otimes\pi_{Y}}\ar[d]^{\pi_{H}\otimes 1}&&
 {\bf p}\Th_{B^{op}}({\bf p}T^\bullet ,B\otimes_{B}Y^\bullet\ar[d]^{\pi_{H}\otimes 1} \\
{\bf TH}(T^\bullet,Y^\bullet)= \operatorname{Hom}_{B^{op}}({\bf p}T^\bullet,B)\otimes {\bf p}Y^\bullet\ar[rr]^{1\otimes\pi_{Y}}\ar[d]^{\psi}&& \operatorname{Hom}_{B^{op}}({\bf p}T^\bullet,B)\otimes Y^\bullet\ar[d]^{\psi} \\
\operatorname{Hom}_{B^{op}}({\bf p}T^\bullet,{\bf p}Y^\bullet)\ar[rr]^{(\pi_{Y})_{*}}&&\operatorname{Hom}_{B^{op}}({\bf p}T^\bullet,Y^\bullet)= \Rh_{B^{op}}(?,Y^\bullet)(T^\bullet)\ar[d]^{(\iota_{Y})_{*}} \\
&& \operatorname{Hom}_{B^{op}}({\bf p}T^\bullet, \iota
Y^\bullet)=\mathbf{H}_{B^{op}}(T^\bullet, Y^\bullet) }
\]

(2) Note that, by definition of the involved functors,  we have an equality $\Rh_{B^{op}}(T,B)\Lt_B?=\mathbf{TH}(T^\bullet
 ,?)$. Then assertion (2) is a consequence of assertion (1) and proposition \ref{prop.balance of derived
 bifunctors} a).
\end{proof}

\begin{defi} \label{def.dual wrt B}
Let $A$ and $B$ be $k$-algebras. We shall denote by $(?)^*$ both
functors $\Rh_{B}(?,B):\D(A^{op}\otimes
B)^{op}\longrightarrow\D(B^{op}\otimes A)$ and
$\Rh_{B^{op}}(?,B):\D(B^{op}\otimes
A)^{op}\longrightarrow\D(A^{op}\otimes B)$.
\end{defi}
Note that, with the appropriate interpretation, the functors $(?)^*$
are adjoint to each other due to corollary \ref{cor.classical
adjunctions of derived functors}. In particular, we have the
corresponding unit $\sigma :1\longrightarrow (?)^{**}$ for the two
possible compositions. The following result is now crucial for us:

\begin{prop} \label{prop.all nat.transformations are isos}
Let $A$, $B$ and $C$ be $k$-algebras, let $T^\bullet$ be a complex
of $B-A-$bimodules such that ${}_BT^\bullet$  is compact in
$\D(B^{op})$ and suppose that $A$ is $k$-projective. The following
assertions hold:

\begin{enumerate}
\item For each complex $Y^\bullet$ of $B-C-$bimodules, the map $\theta_Y(T^\bullet ):(?\Lt_BY)(T^{\bullet *})=
[(?\Lt_BY)\circ\Rh_{B^{op}}(?,B)](T^\bullet )
\longrightarrow\Rh_{B^{op}}(?,Y^\bullet )(T^\bullet )$ is an
isomorphism in $\D(A^{op}\otimes C)$;

\item   $T^{\bullet *}_B$ is compact in $\D(B)$ and he map $\sigma_T:T^\bullet\longrightarrow T^{\bullet **}$ is an
isomophism in $\D(B^{op}\otimes A)$;

\item When in addition $C$ is $k$-flat, the following assertions
hold:

\begin{enumerate}
\item There is a natural isomorphism of triangulated functors $T^{\bullet
*}\Lt_B?\cong\Rh_{B^{op}}(T^\bullet ,?):\D(B^{op}\otimes
C)\longrightarrow\D(A^{op}\otimes C)$;
\item There is a natural isomorphism of triangulated functors $?\Lt_BT^\bullet
\cong\Rh_{B}(T^{\bullet *} ,?):\D(C^{op}\otimes
B)\longrightarrow\D(A^{op}\otimes C)$.
\end{enumerate}
\end{enumerate}
\end{prop}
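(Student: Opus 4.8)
The plan is to deduce every assertion from the corresponding \emph{one-sided} statement over $B$ by restriction of scalars; the bridge is the hypothesis that $A$ is $k$-projective, which by Lemma~\ref{lem.restrictions of homotopically ...}(2) makes the forgetful functor $\K(B^{op}\otimes A)\longrightarrow\K(B^{op})$ (and $\K(A^{op}\otimes B)\longrightarrow\K(B)$) preserve homotopically projective complexes, so a homotopically projective resolution of a bimodule complex restricts to one over $B^{op}$ (resp.\ $B$). Since a morphism of $\D(R)$ is an isomorphism exactly when its image under a forgetful functor $\D(R)\longrightarrow\D(k)$ is, and such forgetful functors leave underlying complexes unchanged, each map in the statement may be tested after restriction. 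I will use freely that the compact objects of $\D(B)$ (resp.\ $\D(B^{op})$) are precisely those of $\text{thick}_{\D(B)}(B)$ (resp.\ $\text{thick}_{\D(B^{op})}(B)$).

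For~(1) I would fix $Y^\bullet$ and observe that both $X^\bullet\rightsquigarrow(?\Lt_BY^\bullet)(X^{\bullet *})$ and $X^\bullet\rightsquigarrow\Rh_{B^{op}}(?,Y^\bullet)(X^\bullet)$ are contravariant triangulated functors $\D(B^{op}\otimes A)\longrightarrow\D(A^{op}\otimes C)$ which, after composition with the forgetful functor to $\D(k)$, factor through $\D(B^{op}\otimes A)\longrightarrow\D(B^{op})$: up to isomorphism of complexes of $k$-modules one has $\Rh_{B^{op}}(X^\bullet,Y^\bullet)\cong\Th_{B^{op}}(\mathbf{p}_{B^{op}\otimes A}X^\bullet,Y^\bullet)$ and $X^{\bullet *}\Lt_BY^\bullet\cong\Th_{B^{op}}(\mathbf{p}_{B^{op}\otimes A}X^\bullet,B)\Tt_BY^\bullet$, and $\mathbf{p}_{B^{op}\otimes A}X^\bullet$ restricts to a homotopically projective resolution of ${}_BX^\bullet$ over $B^{op}$. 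Under these identifications $\theta_{Y^\bullet}$ becomes the analogous one-sided natural transformation. The class of $M^\bullet\in\D(B^{op})$ at which the latter is an isomorphism is a thick subcategory (standard, by the triangulated $5$-lemma and closure under direct summands) containing $B$ — there it is the identity modulo $\Rh_{B^{op}}(B,B)\cong B$ and $B\Lt_BY^\bullet\cong Y^\bullet\cong\Rh_{B^{op}}(B,Y^\bullet)$ — hence it contains $\text{thick}_{\D(B^{op})}(B)$, in particular the compact object ${}_BT^\bullet$. Thus $\theta_{Y^\bullet}(T^\bullet)$ is an isomorphism in $\D(k)$, hence in $\D(A^{op}\otimes C)$.

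For~(2) the same reduction shows that the restriction of $T^{\bullet *}=\Rh_{B^{op}}(T^\bullet,B)$ to $\D(B)$ is isomorphic to $\Rh_{B^{op}}({}_BT^\bullet,B)$; since $\Rh_{B^{op}}(?,B)\colon\D(B^{op})^{op}\longrightarrow\D(B)$ is triangulated and fixes $B$, Lemma~\ref{lem.triang.functors preserves susp, tria..} sends $\text{thick}_{\D(B^{op})}(B)$ into $\text{thick}_{\D(B)}(B)$, so $T^{\bullet *}_B$ is compact. For $\sigma_T$ I would restrict to $\D(B^{op})$: using that $A$ and $A^{op}$ are $k$-projective, $\sigma_T\colon T^\bullet\longrightarrow T^{\bullet **}$ becomes the one-sided biduality map ${}_BT^\bullet\longrightarrow\Rh_B(\Rh_{B^{op}}({}_BT^\bullet,B),B)$; the class of $M^\bullet\in\D(B^{op})$ on which this is an isomorphism is thick and contains $B$ (where it is the identity), hence contains ${}_BT^\bullet$, so $\sigma_T$ is an isomorphism after restriction to $\D(B^{op})$ and therefore in $\D(B^{op}\otimes A)$.

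For~(3), with $C$ now also $k$-flat: in~(a) the natural transformation $\theta^{T^\bullet}\colon T^{\bullet *}\Lt_B?\cong\mathbf{TH}(T^\bullet,?)\longrightarrow\RH_{B^{op}}(T^\bullet,?)\cong\Rh_{B^{op}}(T^\bullet,?)$ of Proposition~\ref{prop.natural transformation TH to H}(2) coincides, at every $Y^\bullet$ and modulo the balance isomorphisms of Proposition~\ref{prop.balance of derived bifunctors} (which all hold since $A$ is $k$-projective and $C$ is $k$-flat), with $\theta_{Y^\bullet}(T^\bullet)$ from~(1); as the latter is an isomorphism, $\theta^{T^\bullet}$ is a natural isomorphism. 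For~(b) I would apply the left--right symmetric of~(a) to the complex $T^{\bullet *}$ of $A$--$B$--bimodules — whose restriction to $\D(B)$ is compact by~(2) — obtaining $\Rh_B(T^{\bullet *},?)\cong(?)\Lt_BT^{\bullet **}$, and then compose with the isomorphism $\sigma_T\colon T^\bullet\stackrel{\cong}{\longrightarrow}T^{\bullet **}$ of~(2) to get $?\Lt_BT^\bullet\cong\Rh_B(T^{\bullet *},?)$. The main obstacle, throughout, is the careful bookkeeping of these restriction-of-scalars identifications and the compatibility of $\theta$, $\theta^{T^\bullet}$, $\theta_{Y^\bullet}$ and $\sigma$ with restriction (all of which hinges on $A$ being $k$-projective, via Lemma~\ref{lem.restrictions of homotopically ...}), together with the identification in~(3) of $\theta^{T^\bullet}(Y^\bullet)$ with $\theta_{Y^\bullet}(T^\bullet)$ through the balance transformations; the purely triangulated input — thickness of the subcategories in question and the description of the compact objects — is routine.
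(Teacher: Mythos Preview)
Your proposal is correct and follows essentially the same strategy as the paper: reduce each assertion to the one-sided case over $B$ via the forgetful functors (using the $k$-projectivity of $A$ and Lemma~\ref{lem.restrictions of homotopically ...}), then run a thick-subcategory argument in $\D(B^{op})$ based at $B$. The paper organizes (1) via an explicit commutative square of forgetful functors and, for $\sigma_B$ in (2), invokes Lemma~\ref{lem.units are isomorphisms} rather than your direct observation, but these are cosmetic differences; your treatment of (3) via Proposition~\ref{prop.natural transformation TH to H} and the left--right symmetric applied to $T^{\bullet *}$ matches the paper's ``(3)(a) and (3)(b) are left-right symmetric by (2)'' exactly.
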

\begin{proof}
All throughout the proof we use the fact that
$\text{per}(B)=\text{thick}_{\D(B)}(B)$ and similarly for $B^{op}$
(see proposition \ref{prop.compact objects in D(A)} below).

(1) For any $k$-projective algebra $A$, let us put
$F_A:=(?\Lt_BY^\bullet )\circ\Rh_{B^{op}}(?,B)$ and
$G_A:=\Rh_{B^{op}}(?,Y^\bullet)$, which are triangulated functors
$\D(B^{op}\otimes A)^{op}\longrightarrow\D(A^{op}\otimes C)$. Since
the forgetful functors $\K(B^{op}\otimes
A)\longrightarrow\K(B^{op})=\K(B^{op}\otimes k)$ and
$\K(A^{op}\otimes C)\longrightarrow\K(C)=\K(C\otimes k)$ preserve
homotopically projective objects, we have the following commutative
diagrams (one for $F$ and another one for $G$), where the vertical
arrows are the forgetful (or restriction of scalars) functors:

\[
\xymatrix{
\D(B^{op}\otimes A)^{op}\ar@<1ex>[rr]^{F_{A}}\ar@<-1ex>[rr]_{G_{A}}\ar[d]^{U_{B^{op}}} && \D(A^{op}\otimes C)\ar[d]^{U_{C}} \\
\D(B^{op})^{op}\ar@<1ex>[rr]^{F_{k}}\ar@<-1ex>[rr]_{G_{k}} && \D(C)
}
\]

We shall denote $\theta_{Y^\bullet}(T^\bullet )$ by
$\theta^A_{T^\bullet ,Y^\bullet }:F_A(T^\bullet )\longrightarrow
G_A(T^\bullet )$ to emphasize that there is one for each choice of a
$k$-projective algebra $A$. Note that, by proposition
\ref{prop.natural transformation TH to H},  if $C$ is not $k$-flat
we cannot guarantee that $\theta^A_{?,?}$ is a natural
transformation of bifunctors. Recall that $\theta^A_{T^\bullet
,Y^\bullet}$ is the composition

\begin{center}
$F_A(T^\bullet )=[(?\Lt_BY)\circ\Rh_{B^{op}}(?,B)](T^\bullet
)=\mathbf{p}\Th_{B^{op}}(\mathbf{p}T^\bullet
,B)\Tt_BY^\bullet\stackrel{\pi\Tt
1}{\longrightarrow}\Th_{B^{op}}(\mathbf{p}T^\bullet
,B)\Tt_BY^\bullet\stackrel{\psi}{\longrightarrow}\Th_{B^{op}}(\mathbf{p}T^\bullet
,Y^\bullet )=\Rh_{B^{op}}(? ,Y^\bullet )(T^\bullet )=G_A(T^\bullet
)$.
\end{center}
 When applying the forgetful functor
$U_C:\D(A^{op}\otimes C)\longrightarrow\D(C)$, we obtain
$U_C(\theta_{T,Y}^A):=(U_C\circ F_A)(T^\bullet)\longrightarrow
(U_C\circ G_A)(T^\bullet )$. Due to the commutativity of the above
diagram, this last morphism can be identified with the morphism
$\theta_{T ,Y }^{k}$, i.e., the version of $\theta_Y$ obtained when
taking $A=k$. But observe that $\theta_{B,Y}^k$ is an isomorphism.
This implies that $\theta_{X,Y}^k$ is an isomorphism, for each
$X^\bullet\in\text{thick}_{\D(B^{op})}(B)=\text{per}(B^{op})$. It
follows that $\theta_{T ,Y }^{k}=U_C(\theta_{T ,Y }^{A})$ is an
isomorphism since $_BT^\bullet\in\text{per}(B^{op})$. But then
$\theta_{T ,Y }^{A}$ is an isomorphism because  $U_C$ reflects
isomorphisms.

(2) Due to the $k$-projectivity of $A$, we have the following
commutative diagram, where  the lower horizontal arrow is the
version of $(?)^*$ when $A=k$.

\[
\xymatrix{
\D(B^{op}\otimes A)^{op}\ar[rr]^{(?)^{*}}\ar[d]^{U_{B^{op}}} && \D(A^{op}\otimes B)\ar[d]^{U_{B}}\\
\D(B^{op})^{op}\ar[rr]^{(?)^{*}}  && \D(B) }
\]

The task is hence reduced to check that the lower horizontal arrow
preserves compact objects. That is a direct consequence of the fact
that $(_BB)^*=\Rh_{B^{op}}(?,B)(B)\cong B_B$ and that the full
subcategory of $\D(B^{op})$ consisting of the $X^\bullet$ such that
$X^{\bullet *}\in\text{per}(B)$ is a thick subcategory of
$\D(B^{op})$.

In order to prove that $\sigma=\sigma_{T^\bullet}$ is an isomorphism
there is no loss of generality in assuming that $A=k$. Note that the
full subcategory $\mathcal{X}$ of $\D(B^{op})$ consisting of the
$X^\bullet$ for which $\sigma_{X^\bullet}:X^\bullet\longrightarrow
X^{\bullet
**}$ is an isomorphism is a thick subcategory. We just need to prove
that  ${}_BB\in\mathcal{X}$ since
${}_BT^\bullet\in\text{per}(B^{op})$. We do it by applying lemma
\ref{lem.units are isomorphisms} with ${}_BT^\bullet_A ={}_BB_B$.
Indeed, $\lambda_B:B\longrightarrow\Rh_B(B,?\Lt_BB)(B)$ is an
isomorphism in $\D(B)$.

(3) Assume now that $C$ is $k$-flat. Due to assertion (2), assertions
(3)(a) and (3)(b) are left-right symmetric. We then prove (3)(a).
 Proposition \ref{prop.natural
transformation TH to H} allows us to identify
$\theta_{Y^\bullet}=\theta_{?,Y^\bullet }$ and $\theta^{T^\cdot
}=\theta_{T^\bullet ,?}$, where $\theta
:\mathbf{TH}(?,?)\longrightarrow\mathbf{H}_{B^{op}}(?,?)$ is the
natural transformation of bifunctors given in that proposition. We
then have the following chain of double implications:

\begin{center}
$\theta^{T^\bullet}$ natural isomorphism  $\Longleftrightarrow$
$\theta_{T^\bullet ,Y^\bullet}$ isomorphism, for each
$Y^\bullet\in\D(B^{op}\otimes C)$,  $\Longleftrightarrow$
$\theta^{Y^\bullet }(T^\bullet )$ isomorphism, for each
$Y^\bullet\in\D(B^{op}\otimes C)$.
\end{center}
Now use assertion (1).
\end{proof}

As a direct consequence of the previous proposition, we have:

\begin{cor} \label{cor.duality between perfect complexes}
Let $A$ and $B$ be $k$-algebras, where $A$ is $k$-projective,  and
denote by $\mathcal{L}_{B,A}$ (resp $\mathcal{R}_{A,B}$) the full
subcategory of $\D(B^{op}\otimes A)$ (resp. $\D(A^{op}\otimes B)$)
consisting of the objects $X^\cdot$ such that ${}_BX^\bullet$ (resp.
$X^\bullet_B)$ is compact in $\D(B^{op})$ (resp. $\D(B)$). Then
$\mathcal{L}_{B,A}$ and $\mathcal{R}_{A,B}$ are thick subcategories
of $\D(B^{op}\otimes A)$ and $\D(A^{op}\otimes B)$, respectively.
Moreover, the assignments $X^\bullet\rightsquigarrow X^{\bullet *}$
define quasi-inverse triangulated dualities (=contravariant
equivalences)
$\mathcal{L}_{B,A}\stackrel{\cong}{\longleftrightarrow}\mathcal{R}_{A,B}$.
\end{cor}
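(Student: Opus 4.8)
The plan is to reduce everything to Proposition \ref{prop.all nat.transformations are isos} and its left-right symmetric version, together with the formal behaviour of restriction-of-scalars functors.

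First I would establish thickness. Consider the restriction-of-scalars functor $U_{B^{op}}:\D(B^{op}\otimes A)\longrightarrow\D(B^{op})$ along $B^{op}\longrightarrow B^{op}\otimes A$, $b^o\mapsto b^o\otimes 1$; it is triangulated. By definition $\mathcal{L}_{B,A}=U_{B^{op}}^{-1}(\text{per}(B^{op}))$, where $\text{per}(B^{op})=\text{thick}_{\D(B^{op})}(B)$ is the thick subcategory of compact objects of $\D(B^{op})$ (Proposition \ref{prop.compact objects in D(A)}). The preimage of a thick subcategory under a triangulated functor is again thick: it is full and closed under isomorphisms, it contains $0$, it is closed under shifts since $U_{B^{op}}$ commutes with the shift, it is closed under cones since $U_{B^{op}}$ preserves triangles and $\text{per}(B^{op})$ is triangulated, and it is closed under direct summands since $\text{per}(B^{op})$ is. Hence $\mathcal{L}_{B,A}$ is a thick subcategory of $\D(B^{op}\otimes A)$, and the same argument applied to $U_B:\D(A^{op}\otimes B)\longrightarrow\D(B)$ shows that $\mathcal{R}_{A,B}=U_B^{-1}(\text{per}(B))$ is thick in $\D(A^{op}\otimes B)$.

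Next I would check that the two dualities $(?)^*=\Rh_{B^{op}}(?,B)$ and $(?)^*=\Rh_B(?,B)$ of Definition \ref{def.dual wrt B} restrict appropriately. If $X^\bullet\in\mathcal{L}_{B,A}$, so that ${}_BX^\bullet$ is compact in $\D(B^{op})$, then, $A$ being $k$-projective, Proposition \ref{prop.all nat.transformations are isos}(2) gives both that $X^{\bullet *}_B$ is compact in $\D(B)$, i.e. $X^{\bullet *}\in\mathcal{R}_{A,B}$, and that $\sigma_{X^\bullet}:X^\bullet\stackrel{\cong}{\longrightarrow}X^{\bullet **}$ in $\D(B^{op}\otimes A)$. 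The left-right symmetric version of Proposition \ref{prop.all nat.transformations are isos}(2) --- whose hypothesis is again the $k$-projectivity of $A$, since $A$ remains the algebra over which one does not take $\Hom$ --- gives, for $Y^\bullet\in\mathcal{R}_{A,B}$, that ${}_B(Y^{\bullet *})$ is compact in $\D(B^{op})$, i.e. $Y^{\bullet *}\in\mathcal{L}_{B,A}$, and that $\sigma_{Y^\bullet}:Y^\bullet\stackrel{\cong}{\longrightarrow}Y^{\bullet **}$ in $\D(A^{op}\otimes B)$.

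Putting this together, the restrictions $(?)^*:\mathcal{L}_{B,A}^{op}\longrightarrow\mathcal{R}_{A,B}$ and $(?)^*:\mathcal{R}_{A,B}^{op}\longrightarrow\mathcal{L}_{B,A}$ are well-defined triangulated functors (the restriction of a triangulated functor to a thick subcategory of its source with image in a thick subcategory of its target), and the two units $\sigma$ --- which, as observed after Definition \ref{def.dual wrt B}, are the unit maps of the adjunction $((?)^*,(?)^*)$ afforded by Corollary \ref{cor.classical adjunctions of derived functors} --- restrict to natural isomorphisms $1\stackrel{\cong}{\longrightarrow}(?)^{**}$ on $\mathcal{L}_{B,A}$ and on $\mathcal{R}_{A,B}$. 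Hence these two restricted functors are mutually quasi-inverse triangulated dualities between $\mathcal{L}_{B,A}$ and $\mathcal{R}_{A,B}$, which is the assertion. I expect the only real obstacle to be bookkeeping: writing down the correct left-right symmetric form of Proposition \ref{prop.all nat.transformations are isos}(2) and identifying the two copies of the unit $\sigma$; the remainder of the argument is formal.
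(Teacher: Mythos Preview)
Your proposal is correct and is precisely the intended argument: the paper states the corollary as a direct consequence of Proposition \ref{prop.all nat.transformations are isos} without writing out a proof, and what you have done is simply unpack that direct consequence --- thickness via the preimage of $\text{per}(B^{op})$ (resp.\ $\text{per}(B)$) under the triangulated restriction functor, and the duality via assertion (2) of that proposition together with its left-right symmetric version.
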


\subsection{A few remarks on dg algebras and their derived categories}

In a few places in this paper, we will need some background on dg
algebras and their derived categories. We just give an outline of
the basic facts that we need. Interpreting dg algebras as dg
categories with just one object,  the material is a particular case
of the development in \cite{Ke} or \cite{Ke2}.

A \emph{dg algebra} is a ($\mathbb{Z}$-)graded algebra
$A=\oplus_{p\in\mathbb{Z}}A^p$ together with a \emph{differential}.
This is a graded $k$-linear map $d:A\longrightarrow A$ of degree $1$
such that $d(ab)=d(a)b+(-1)^{|a|}ad(b)$, for all homogeneous
elements $a,b\in A$, where $|a|$ denotes the degree of $a$, and such
that $d\circ d=0$. In such case a (right) \emph{dg $A$-module} is a
graded $A$-module $M^\bullet =\oplus_{p\in\mathbb{Z}}M^p$ together
with a $k$-linear map $d_M:M^\bullet\longrightarrow M^\bullet$ of
degree $+1$, called the differential of $M^\bullet$, such that
$d_M(xa)=d_M(x)a+(-1)^{|x|}xd(a)$, for all homogeneous elements
$x\in M^\bullet$ and $a\in A$, and such that $d_M\circ d_M=0$. It is
useful to look at each dg $A$-module  as a complex
$...M^p\stackrel{d_M^p}{\longrightarrow}M^{p+1}\longrightarrow..$ of
$k$-modules with some extra properties. Note that an ordinary
algebra is just a dg algebra with grading concentrated in degree
$0$, i.e.  $A^p=0$ for $p\neq 0$. In that case a dg $A$-module is
just a complex of $A$-modules.

Let $A$ be a dg algebra in the rest of this paragraph. We denote by
$\text{Gr}-A$ the category of graded $A$-modules (and morphisms of
degree $0$). Note that, when $A$ is an ordinary algebra, we have
$\text{Gr}-A=(\text{Mod}-A)^\mathbb{Z}$, with the notation of
subsection 2.2. We next define a category $\C(A)$ whose objects are
the dg $A$-modules as follows. A morphism
$f:M^\bullet\longrightarrow N^\bullet$ in $\C(A)$ is a morphism in
$\text{Gr}-A$ which is a chain map of complex of $k$-modules, i.e.,
such that $f\circ d_M=d_M\circ f$. This category is abelian and
comes with a canonical shifting $?[1]:\C(A)\longrightarrow\C(A)$
which comes from the canonical shifting of $\text{Gr}-A$, by
defining $d_{M[1]}^n=-d_M^{n+1}$, for each $n\in\mathbb{Z}$. Note
that we have an obvious faithful forgetful functor
$\C(A)\longrightarrow\text{Gr}-A$, which is also dense since we can
interpret each graded $A$-module as an object of $\C(A)$ with zero
differential. Viewing the objects of $C(A)$ as complexes of
$k$-modules, we clearly have,  for each $p\in\mathbb{Z}$,  the
\emph{$p$-th homology functor}
$H^p:\C(A)\longrightarrow\text{Mod}-k$. A morphism
$f:M^\bullet\longrightarrow N^\bullet$ in $\C(A)$ is called a
\emph{quasi-isomorphism} when $H^p(f)$ is an isomorphism, for all
$p\in\mathbb{Z}$. A dg $A$-module $M^\bullet$ is called
\emph{acyclic} when $H^p(M)=0$, for all $p\in\mathbb{Z}$.

Given any dg $A$-module $X^\bullet$, we denote by $P^\bullet_X$ the
dg $A$-module which, as a graded $A$-module, is equal to
$X^\bullet\oplus X^\bullet[1]$, and where the differential, viewed
as a morphism $X^\bullet\oplus X^\bullet
[1]=P^\bullet_X\longrightarrow P^\bullet_X[1]=X^\bullet [1]\oplus X^\bullet [2]$
in $\text{Gr}-k$, is the 'matrix' $\begin{pmatrix}d_X & 1_{X[1]} \\
0 & d_{X[1]}
\end{pmatrix}$. Note that we have a canonical exact sequence $0\rightarrow X^\bullet\longrightarrow P^\bullet_X\longrightarrow X^\bullet[1]\rightarrow
0$ in $\C(A)$,  which splits in $\text{Gr}-A$ but not in $\C(A)$. A
morphism $f:M^\bullet\longrightarrow N^\bullet$ in $\C(A)$ is called
\emph{null-homotopic} when there exists a morphism $\sigma
:M^\bullet\longrightarrow N^\bullet [-1]$ in $\text{Gr}-A$ such that
$\sigma\circ d_M+d_N\circ\sigma =f$.

The following is the fundamental fact (see \cite{Ke} and
\cite{Ke2}):

\begin{prop} \label{prop.fundamental for dg algebras}
Let $A$ be a dg algebra. The following assertions hold:

\begin{enumerate}
\item $\C(A)$ has a structure of exact category where the
conflations are those exact sequences which become split when
applying the forgetful functor $\C(A)\longrightarrow\text{Gr}-A$;
\item The projective objects with respect to this exact structure
coincide with the injective ones, and they are the direct sums of dg
$A$-modules of the form $P^\bullet_X$. In particular $\C(A)$ is a
Frobenius exact category;
\item A morphism $f:M^\bullet\longrightarrow N^\bullet$ in $\C(A)$ factors through a
projective object if, an only it, it is \emph{null-homotopic}. The
stable category $\underline{\C(A)}$ with respect to the given exact
structure is denoted by $\K(A)$ and called the \emph{homotopy
category of $A$}. It is a triangulated category, where $?[1]$ is the
suspension functor and where the triangles are the images of
conflations by the projection functor $p:\C(A)\longrightarrow\K(A)$;
\item If $Q$ denotes the class of quasi-isomorphisms in $\C(A)$ and
$\Sigma :=p(Q)$, then $\Sigma$ is a multiplicative system in $\K(A)$
compatible with the triangulation and
$\C(A)[Q^{-1}]\cong\K(A)[\Sigma^{-1}]$. In particular, this latter
category is triangulated. It is  denoted by $\D(A)$ and called the
\emph{derived category of $A$};
\item If $\mathcal{Z}$ denotes the full subcategory of $\K(A)$
consisting of the acyclic dg $A$-modules, then $\mathcal{Z}$ is a
triangulated subcategory closed under taking coproducts and products
in $\K(A)$. The category  $\D(A)$ is equivalent, as a triangulated
category, to the quotient category $\K(A)/\mathcal{Z}$.
\end{enumerate}
\end{prop}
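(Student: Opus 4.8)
The plan is to develop the homological algebra of $\C(A)$ in complete parallel with the treatment of $\mathcal{C}(\mathcal{A})$ in Section 2, the dg $A$-modules $P^\bullet_X$ playing the role of the contractible complexes $C_n(X)$; all five assertions are also one-object special cases of the development in \cite{Ke}, \cite{Ke2}, which I would cite for the purely formal parts. For assertion (1), since $\C(A)$ is abelian with (co)limits computed degreewise, the class of those short exact sequences which split after applying the forgetful functor $\C(A)\longrightarrow\text{Gr}-A$ is stable under the operations required by the axioms of an exact category and their duals, because that functor is exact and faithful and pullbacks of split epimorphisms (resp. pushouts of split monomorphisms) of graded modules along arbitrary morphisms are again split; this is verbatim the argument used in Section 2 for the semi-split exact structure on $\mathcal{C}(\mathcal{A})$.

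Assertion (2) is the heart of the matter. First I would record, for every dg $A$-module $X^\bullet$ and every $M^\bullet\in\C(A)$, natural bijections $\C(A)(P^\bullet_X,M^\bullet)\cong\text{Gr}-A(X^\bullet,M^\bullet)$ and $\C(A)(M^\bullet,P^\bullet_X)\cong\text{Gr}-A(M^\bullet,X^\bullet)$, obtained by splitting a chain map into its two components relative to the decomposition $P^\bullet_X=X^\bullet\oplus X^\bullet[1]$ and solving the chain-map equation for one component in terms of the other; here the sign $d_{X^\bullet[1]}=-d_X$ must be tracked carefully. Since the forgetful functor sends deflations (resp. inflations) to split epimorphisms (resp. monomorphisms), these bijections immediately give that each $P^\bullet_X$ is both projective and injective for the exact structure of (1). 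Specializing the canonical conflation $0\rightarrow X^\bullet\longrightarrow P^\bullet_X\longrightarrow X^\bullet[1]\rightarrow 0$ to $X^\bullet=M^\bullet[-1]$ (resp. to $X^\bullet=M^\bullet$) produces a deflation $P^\bullet_{M^\bullet[-1]}\twoheadrightarrow M^\bullet$ (resp. an inflation $M^\bullet\rightarrowtail P^\bullet_M$), so $\C(A)$ has enough projectives and enough injectives; moreover a projective $P$ is a direct summand of $P^\bullet_{P[-1]}$ (split the deflation just produced) and hence is also injective, and dually, so the two classes coincide and consist of the summands of coproducts of objects $P^\bullet_X$ (which one then checks, exactly as for contractible complexes, to be precisely such coproducts). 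Thus $\C(A)$ is Frobenius exact, and Happel's Proposition \ref{prop.Happel} makes $\underline{\C(A)}=:\K(A)$ triangulated; the conflation above identifies the cosyzygy functor with $?[1]$, so the suspension is $?[1]$ and the triangles are the images of conflations, which is assertion (3) once the morphisms killed in $\K(A)$ are matched with the null-homotopic ones. For this last point, which runs parallel to Lemma \ref{lem.nullhomotopic=factors through contractible}: given $f\colon M^\bullet\to N^\bullet$ with graded homotopy $\sigma$, the pair $(\sigma,f)$ is a chain map $M^\bullet\to P^\bullet_{N^\bullet[-1]}$ — the chain-map equation is exactly the relation $f=\sigma d_M+d_N\sigma$, using $d_{N^\bullet[-1]}=-d_N$ — whose composite with $P^\bullet_{N^\bullet[-1]}\twoheadrightarrow N^\bullet$ is $f$; conversely, by the two Hom-identifications above, any composite $M^\bullet\to P^\bullet_X\to N^\bullet$ is null-homotopic, the homotopy being the composite of the two off-diagonal components, and a summand argument reduces an arbitrary projective to a single $P^\bullet_X$.

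For assertion (4) I would follow Verdier's argument reproduced in Proposition \ref{prop.derived category}: each $P^\bullet_X$ is contractible, hence acyclic, so it becomes zero in $\C(A)[Q^{-1}]$, which together with step (3) shows that $q\colon\C(A)\longrightarrow\C(A)[Q^{-1}]$ identifies homotopic maps and therefore factors as $q=\bar q\circ p$ for a unique $\bar q\colon\K(A)\longrightarrow\C(A)[Q^{-1}]$. On the other hand the class $\mathcal{Z}$ of acyclic dg $A$-modules, viewed inside $\K(A)$, is a thick subcategory (homology being additive, it is closed under shifts, extensions and direct summands), and a morphism of $\K(A)$ is a quasi-isomorphism if and only if its mapping cone lies in $\mathcal{Z}$ (long exact homology sequence); hence $\Sigma=p(Q)=\Sigma_{\mathcal{Z}}$ in the notation of Proposition \ref{prop.Verdier localization by triangulated}, so that proposition yields that $\Sigma$ is a multiplicative system compatible with the triangulation and that $\K(A)/\mathcal{Z}=\K(A)[\Sigma^{-1}]$; the identification $\K(A)[\Sigma^{-1}]\cong\C(A)[Q^{-1}]=:\D(A)$ is then the standard two-out-of-three argument of \cite{Ke}. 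Assertion (5) follows at once: reading dg $A$-modules as complexes of $k$-modules and using that $\text{Mod}-k$ is AB4 and AB4*, the homology functors $H^p$ commute with coproducts and products, so $\mathcal{Z}$ is closed under both in $\K(A)$; it is triangulated by step (4), and $\D(A)\cong\K(A)/\mathcal{Z}$ merely restates that step.

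The main obstacle is the bookkeeping of signs and shifts in steps (2)--(3): arranging the differential of $P^\bullet_X$ so that it interacts correctly with the suspension, so that the two Hom-identifications hold on the nose and so that ``factors through a projective'' coincides \emph{exactly} with ``null-homotopic''. Once that computation is carried out, assertions (1), (4) and (5) are formal and follow the templates already established in Section 2, or may simply be quoted from \cite{Ke}, \cite{Ke2}.
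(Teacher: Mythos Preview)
Your proposal is correct and, in fact, goes well beyond what the paper does: the paper gives no proof at all for this proposition, merely prefacing it with ``The following is the fundamental fact (see \cite{Ke} and \cite{Ke2})''. Your strategy of running the arguments of Section~2 (the semi-split exact structure, Lemma~\ref{lem.nullhomotopic=factors through contractible}, Corollary~\ref{cor.triang. structure of homotopy category}, Proposition~\ref{prop.derived category}) verbatim with the dg modules $P^\bullet_X$ replacing the contractible complexes $C_n(X)$ is exactly the content of Keller's development in \cite{Ke}, specialized to a single object, so your sketch and the paper's citation point to the same proof.

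One small correction in your bookkeeping: with the differential on $P^\bullet_X$ as defined in the paper, the adjunction identities come out as
\[
\C(A)(P^\bullet_X,M^\bullet)\;\cong\;\text{Gr}-A(X^\bullet[1],M^\bullet)
\qquad\text{and}\qquad
\C(A)(M^\bullet,P^\bullet_X)\;\cong\;\text{Gr}-A(M^\bullet,X^\bullet),
\]
so there is a shift in the first one (the free component is $f_2\colon X^\bullet[1]\to M^\bullet$, and $f_1=d_Mf_2+f_2d_X$ is forced). This does not affect your argument---projectivity and injectivity of $P^\bullet_X$ follow just the same---but since you flag the sign tracking as the main obstacle, it is worth getting this detail right.
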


As in the case of an ordinary algebra, we do not have
set-theoretical problems with the just defined derived category. The
reason is the following:

\begin{prop} \label{prop.D(A) compactly generated}
Let $A$ be a dg algebra and let $\mathcal{Z}$ be the full
subcategory of $\K(A)$ consisting of acyclic dg $A$-modules. Then
the pairs $(\mathcal{Z},\mathcal{Z}^\perp )$ and
$({}^\perp\mathcal{Z},\mathcal{Z})$ are semi-orthogonal
decompositions of $\K(A)$. In particular, the canonical functor
$q:\K(A)\longrightarrow\D(A)$ has both a left adjoint
$p_A:\D(A)\longrightarrow\K(A)$ and a right adjoint
$i_A:\D(A)\longrightarrow\K(A)$.

Moreover, the category $\D(A)$ is compactly generated by $\{A\}$.
\end{prop}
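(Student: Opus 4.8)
The plan is to reduce everything to the existence of \emph{homotopically projective} and \emph{homotopically injective} resolutions of dg $A$-modules and then to argue exactly as in subsection~2.7 for ordinary algebras. First I would record, from Proposition~\ref{prop.fundamental for dg algebras}(5), that $\mathcal{Z}$ is already a triangulated subcategory of $\K(A)$ closed under coproducts and products, so that the Verdier quotient $\D(A)=\K(A)/\mathcal{Z}$ is defined and $q$ preserves (co)products; note also that ${}^{\perp}\mathcal{Z}$ (resp. $\mathcal{Z}^{\perp}$) is automatically triangulated and closed under coproducts (resp. products).

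The technical core --- and the step I expect to be the only real obstacle --- is to prove that for every $X^{\bullet}\in\C(A)$ there is a triangle $Z^{\bullet}\to P^{\bullet}\to X^{\bullet}\stackrel{+}{\to}$ in $\K(A)$ with $Z^{\bullet}$ acyclic and $P^{\bullet}\in{}^{\perp}\mathcal{Z}$, and dually a triangle $X^{\bullet}\to I^{\bullet}\to Z'^{\bullet}\stackrel{+}{\to}$ with $Z'^{\bullet}$ acyclic and $I^{\bullet}\in\mathcal{Z}^{\perp}$. I would obtain this from \cite[Theorem P]{Ke} and its dual, read through the identification of a dg algebra with a dg category having a single object, and I would keep track of the explicit ``cellular'' shape of $P^{\bullet}$: the union of a chain $P_{0}^{\bullet}\subseteq P_{1}^{\bullet}\subseteq\cdots$ in $\C(A)$ whose inclusions split in $\text{Gr-}A$ and for which $P_{0}^{\bullet}$ and every quotient $P_{n}^{\bullet}/P_{n-1}^{\bullet}$ is a coproduct of shifted copies of the free module $A$. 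This last description is not needed for the adjunctions, but it will be used for compact generation.

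Granting the resolutions, the remaining steps are formal. Rotating the first triangle to $P^{\bullet}\to X^{\bullet}\to Z^{\bullet}[1]\stackrel{+}{\to}$ gives, for every $X^{\bullet}$, a decomposition with $P^{\bullet}\in{}^{\perp}\mathcal{Z}$ and $Z^{\bullet}[1]\in\mathcal{Z}$; together with ${}^{\perp}\mathcal{Z}[1]={}^{\perp}\mathcal{Z}$ this shows that $({}^{\perp}\mathcal{Z},({}^{\perp}\mathcal{Z})^{\perp})$ is a semi-orthogonal decomposition. Applying $\K(A)(P^{\bullet},?)$ to such a triangle for an arbitrary $Y^{\bullet}\in({}^{\perp}\mathcal{Z})^{\perp}$ forces the homotopically projective part of $Y^{\bullet}$ to vanish in $\K(A)$, whence $Y^{\bullet}$ is acyclic; thus $({}^{\perp}\mathcal{Z})^{\perp}=\mathcal{Z}$ and $({}^{\perp}\mathcal{Z},\mathcal{Z})$ is a semi-orthogonal decomposition of $\K(A)$, and dually so is $(\mathcal{Z},\mathcal{Z}^{\perp})$. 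By Proposition~\ref{prop.Verdier localization by triangulated}(3)--(4) the functor $q$ then has a right adjoint $i_{A}$ and a left adjoint $p_{A}$ --- the homotopically injective and projective resolution functors --- and induces equivalences $\D(A)\simeq\mathcal{Z}^{\perp}$ and $\D(A)\simeq{}^{\perp}\mathcal{Z}$; in particular $\D(A)$ is a genuine category with (co)products, computed in $\K(A)$ through these equivalences.

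Finally, for compact generation by $\{A\}$: since the free module $A$ is homotopically projective, $\D(A)(A,Y^{\bullet})\cong\K(A)(A,Y^{\bullet})\cong H^{0}(Y^{\bullet})$ naturally in $Y^{\bullet}$, and $H^{0}$ commutes with coproducts (they are pointwise in $\K(A)$ and preserved by $q$), so $A$ is a compact object. For generation I would identify $\D(A)$ with $\mathcal{P}:={}^{\perp}\mathcal{Z}$ via the equivalence above, under which $A$ corresponds to the free module $A$. On one hand, the cellular description makes each $P_{n}^{\bullet}$ an iterated extension of coproducts of shifts of $A$, and $P^{\bullet}=\mathrm{hocolim}\,P_{n}^{\bullet}$ is then the cone of a map between coproducts of the $P_{n}^{\bullet}$'s, so $\mathcal{P}\subseteq\text{Tria}_{\K(A)}(\{A\})$ (every homotopically projective complex, being a cellular resolution of itself, is homotopy equivalent to a cellular one). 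On the other hand $A\in\mathcal{P}$ and $\mathcal{P}$ is a triangulated subcategory of $\K(A)$ closed under coproducts, so $\text{Tria}_{\K(A)}(\{A\})\subseteq\mathcal{P}$. Hence $\mathcal{P}=\text{Tria}_{\K(A)}(\{A\})$, and transporting along the coproduct-preserving equivalence $\mathcal{P}\stackrel{\cong}{\to}\D(A)$ yields $\D(A)=\text{Tria}_{\D(A)}(\{A\})$. (Alternatively one could invoke the dg version of Proposition~\ref{prop.triangulated subcategory is aisle}, after which $\text{Tria}_{\D(A)}(\{A\})$ is an aisle whose co-aisle consists of complexes with vanishing cohomology, i.e. of zero objects, and is therefore all of $\D(A)$.)
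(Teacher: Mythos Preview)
The paper does not give its own proof of this proposition: it is stated as a known fact, with the implicit understanding that it is contained in the references \cite{Ke} and \cite{Ke2} already invoked for the preceding Proposition~\ref{prop.fundamental for dg algebras}. Your sketch is correct and is precisely the standard route one finds in those sources: obtain homotopically projective (cellular) and homotopically injective resolutions from Keller's Theorem~P and its dual, deduce the two semi-orthogonal decompositions, and read off compact generation from the cellular shape of the resolutions together with the identification $\D(A)(A,?)\cong H^{0}(?)$.

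One small caution about the parenthetical alternative you offer at the end. Invoking a ``dg version'' of Proposition~\ref{prop.triangulated subcategory is aisle} to conclude $\text{Tria}_{\D(A)}(\{A\})=\D(A)$ is delicate here: the cited proofs of that proposition go through Brown representability or a small-object argument that, in practice, presupposes exactly the compact generation you are trying to establish. Your primary argument via the cellular filtration $P_{0}^{\bullet}\subseteq P_{1}^{\bullet}\subseteq\cdots$ avoids this circularity and is the one to keep.
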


As in the case of an ordinary algebra, we call $\mathbf{p}_A$ and
$\mathbf{i}_A$ the \emph{homotopically projective resolution
functor} and the \emph{homotopically injective resolution functor},
respectively. Also, the objects of ${}^\perp\mathcal{Z}$ are called
\emph{homotopically projective} and those of $\mathcal{Z}^\perp$ are
called \emph{homotopically injective}.

When $A$ is a dg algebra, its \emph{opposite dg algebra}  $A^{op}$
is equal to $A$ as a graded $k$-module, but  the multiplication is
given by $b^o\cdot a^o=(-1)^{|a||b|}(ab)^o$. Here we denote by $x^o$
any homogeneous element $x\in A$ when viewed as an element of
$A^{op}$. Then, given dg algebras $A$ and $B$, a dg $B-A-$module $T$
is just a dg right $B^{op}\otimes A-$module, and these dg
$B-A-$modules form the category $\C(B^{op}\otimes A)$.

With some care on the use of signs, the reader will have no
difficulty in extending to dg algebras and their derived categories
the results in the earlier paragraphs of this section, simply by
defining correctly the total tensor and total $\text{Hom}$ functors
bifunctors.  Indeed, if $A$, $B$ and $C$ are dg algebras and
${}_BT^\bullet_A$, ${}_AM^\bullet_C$ and ${}_CN^\bullet_A$ are dg
bimodules, then $T^\bullet\Tt_AM^\bullet$ is just the classical
tensor product of a graded right $A$-module and a graded left
$A$-module and the differential
$d=d_{T\Tt_AM}:(T\Tt_AM)\longrightarrow (T\Tt_AM)$ takes $t\otimes
m\rightsquigarrow d_T(t)\otimes m+(-1)^{|t|}t\otimes d_M(m)$, for
all homogeneous elements $t\in T^\bullet$ and $m\in M^\bullet$.

We define
$\Th_A(T^\bullet,N^\bullet)=\oplus_{p\in\mathbb{Z}}\text{Gr}(A)(M^\bullet
,N^\bullet [p])$. This has an obvious structure of graded
$C-B-$bimodule, where the homogeneous component of degree $p$ is
$\Th_A(T^\bullet ,N^\bullet )^p=\text{Gr}(A)(M^\bullet ,N^\bullet
[p])$, for each $p\in\mathbb{Z}$. We then define the differential
$d:\Th_A(T^\bullet ,N^\bullet )\longrightarrow\Th_A(T^\bullet
,N^\bullet )$ by taking $f\rightsquigarrow d_N\circ f
-(-1)^{|f|}f\circ d_M$, for each homogeneous element
$f\in\Th_A(T^\bullet ,N^\bullet )$.

The following is a very well-known fact (see \cite[Theorem 5.3]{Ke}
and \cite{R}):

\begin{prop} \label{prop.compact objects in D(A)}
Let $A$ be a dg algebra. The compact objects of $\D(A)$ are
precisely the objects of $\text{thick}_{\D(A)}(A)=:\text{per}(A)$.
When $A$ is an ordinary algebra, those are precisely the complexes
which are quasi-isomorphic to bounded complexes of finitely
generated projective $A$-modules.
\end{prop}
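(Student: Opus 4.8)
The aim is first to prove the equality $\text{per}(A)=\D(A)^c$, where $\D(A)^c$ denotes the full subcategory of compact objects, by a double inclusion, and then, when $A$ is ordinary, to identify $\text{per}(A)$ with the essential image in $\D(A)$ of the bounded homotopy category $\K^b(\mathrm{proj}A)$ of finitely generated projective $A$-modules. For the inclusion $\text{per}(A)\subseteq\D(A)^c$ I would start from the observation that $A$, as a right dg $A$-module, is homotopically projective: for $Z^\bullet$ acyclic one has $\K(A)(A,Z^\bullet)\cong H^0(Z^\bullet)=0$, whence $\D(A)(A,X^\bullet)\cong H^0(X^\bullet)$ naturally in $X^\bullet$. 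Since coproducts in $\C(A)$ are degreewise and $H^0$ commutes with coproducts of $k$-modules, $\D(A)(A,?)$ preserves coproducts, i.e. $A$ is compact. Then I would note that $\D(A)^c$ is a thick subcategory of $\D(A)$: it is closed under shifts trivially, closed under direct summands because a direct sum of the canonical maps $\coprod_i\D(A)(X,M_i)\longrightarrow\D(A)(X,\coprod_iM_i)$ is an isomorphism iff each summand is, and closed under cones by comparing with the five lemma the two long exact sequences obtained from a triangle by applying $\D(A)(?,\coprod_iM_i)$ and $\coprod_i\D(A)(?,M_i)$. Hence $\text{per}(A)=\text{thick}_{\D(A)}(A)\subseteq\D(A)^c$.

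\textbf{The main inclusion $\D(A)^c\subseteq\text{per}(A)$.} Let $X^\bullet$ be compact. By Proposition \ref{prop.D(A) compactly generated} we may replace $X^\bullet$ by $\mathbf{p}_AX^\bullet$, and by \cite[Theorem P]{Ke} (as used in the proof of Lemma \ref{lem.homotopically flat resolution}(2)) this homotopically projective dg module may be chosen with a countable filtration $P^\bullet_0\subseteq P^\bullet_1\subseteq\cdots$, with $\bigcup_nP^\bullet_n=P^\bullet$, whose inclusions are inflations in $\C(A)$ and where $P^\bullet_0$ and each quotient $P^\bullet_n/P^\bullet_{n-1}$ is a direct summand of a coproduct of shifts $A[k]$. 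Then $P^\bullet$ is the homotopy colimit of the $P^\bullet_n$ in $\D(A)$, and each $P^\bullet_n$ is obtained by finitely many mapping cones from objects that are summands of coproducts of shifts of $A$. Compactness of $X^\bullet$ forces $1_{X^\bullet}$ to factor through some $P^\bullet_n$, so $X^\bullet$ is a direct summand of $P^\bullet_n$ in $\D(A)$. Finally, a further, stage-by-stage use of compactness shows that the section $X^\bullet\to P^\bullet_n$ factors through a sub-object of $P^\bullet_n$ built in the same way but from \emph{finite} sub-coproducts, hence through an object of $\text{thick}_{\D(A)}(A)$; since the latter is closed under summands we conclude $X^\bullet\in\text{thick}_{\D(A)}(A)=\text{per}(A)$. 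This last bookkeeping is exactly the B\"okstedt--Neeman d\'evissage, and I expect it to be the one genuinely non-formal point of the argument; alternatively one may simply invoke \cite[Theorem 5.3]{Ke} or \cite{R}.

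\textbf{The ordinary-algebra case.} Let $A$ now be an ordinary algebra and let $\mathcal{P}\subseteq\D(A)$ be the essential image of $\K^b(\mathrm{proj}A)$. If $P$ is finitely generated projective, then each stalk complex $P[k]$ is a direct summand of a finite coproduct of copies of $A[k]$, hence lies in $\text{thick}_{\D(A)}(A)$; a bounded complex of finitely generated projectives is assembled from such stalks by finitely many triangles arising from the degreewise-split brutal-truncation short exact sequences, so $\mathcal{P}\subseteq\text{thick}_{\D(A)}(A)=\text{per}(A)$. For the reverse inclusion it is enough to see that $\mathcal{P}$ is a thick subcategory of $\D(A)$ containing $A$: it contains $A$ and is closed under shifts; it is closed under cones because, the objects of $\K^b(\mathrm{proj}A)$ being homotopically projective, every morphism of $\D(A)$ between two of them is represented by an actual chain map, whose cone is again a bounded complex of finitely generated projectives; and $\K^b(\mathrm{proj}A)$ is idempotent complete, so $\mathcal{P}$ is closed under direct summands taken in $\D(A)$. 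Therefore $\text{per}(A)=\text{thick}_{\D(A)}(A)\subseteq\mathcal{P}$, and together with the previous inclusion this yields $\text{per}(A)=\mathcal{P}$.

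\textbf{Where the difficulty lies.} Everything except two points is formal manipulation of triangles and adjunctions. The first and principal obstacle is the passage, in the main inclusion, from ``$X^\bullet$ is a summand of a stage $P^\bullet_n$ that a priori lies only in $\text{Tria}_{\D(A)}(A)$'' to ``$X^\bullet$ is a summand of an object of $\text{thick}_{\D(A)}(A)$'', which is the homotopy-colimit argument of B\"okstedt and Neeman. The second, minor, ingredient is the idempotent-completeness of $\K^b(\mathrm{proj}A)$ used in the ordinary-algebra case.
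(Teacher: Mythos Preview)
The paper does not actually prove this proposition; it merely cites \cite[Theorem 5.3]{Ke} and \cite{R} as references for a ``very well-known fact''. Your proposal therefore goes well beyond what the paper does: you give a genuine outline of the argument, and you even mention the same two references as an alternative route at the end of your main-inclusion paragraph.

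Your sketch is essentially the standard proof one finds in those references (Keller, and ultimately the B\"okstedt--Neeman d\'evissage you name). The inclusion $\text{per}(A)\subseteq\D(A)^c$ is handled correctly, and you are right that the only substantive step in the reverse inclusion is the passage from ``retract of some filtration stage $P^\bullet_n$'' to ``retract of an object built from \emph{finitely many} copies of shifts of $A$''. One small comment on the ordinary-algebra part: the idempotent completeness of $\K^b(\mathrm{proj}A)$ that you invoke is itself a non-trivial fact (it is not automatic for bounded homotopy categories of additive categories), so if you were writing this out in full you would need to either cite it (e.g.\ Balmer--Schlichting, or deduce it from the idempotent completeness of $\K^-(\mathrm{proj}A)$ together with boundedness of homology) or replace that step by the observation that once $\text{per}(A)=\D(A)^c$ is established, the description via bounded complexes of finitely generated projectives follows by showing that such complexes exhaust $\text{thick}_{\D(A)}(A)$ from the inside---which is what you do in the first half of that paragraph anyway.
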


\subsection{Some special objects and a generalization of Rickard theorem}

\begin{defi}
Let $\D$ be a triangulated category with coproducts. An object $T$
of $\D$ is called:

\begin{enumerate}
\item \emph{exceptional} when $\D(T,T[p])=0$, for all $p\neq 0$;
\item \emph{classical tilting} when $T$ is  exceptional 
and $\{T\}$ is a set of compact generators of $\D$;
\item \emph{self-compact} when $T$ is a compact object of
$\text{Tria}_{\D}(T)$.
\end{enumerate}
\end{defi}

The following result is a generalization of Rickard theorem
(\cite{R}), which, as can be seen in the proof, is an adaptation of
an argument of  Keller that the authors used in \cite{NS2}.

\begin{teor} \label{teor.Keller-Rickard generalization}
Let $A$ be a $k$-flat dg algebra and $T^\bullet$ be a self-compact
and exceptional object of $\D(A)$. If
$B=\text{End}_{\D(A)}(T^\bullet )$ is the endomorphism algebra then,
up to replacement of $T^\bullet$ by an isomorphic object in $\D(A)$,
we can view $T^\bullet$ as a dg $B$-$A$-bimodules such that the
restriction of $\Rh_A(T^\bullet ,?):\D(A)\longrightarrow\D(B)$
induces an equivalence of triangulated categories
$\text{Tria}_{\D(A)}(T^\bullet
)\stackrel{\cong}{\longrightarrow}\D(B)$.
\end{teor}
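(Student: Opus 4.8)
The plan is to first produce the dg bimodule structure on $T^\bullet$ and then identify the derived Hom functor with a suitable Hom functor over the dg endomorphism algebra, following the classical Keller strategy. Concretely, I would replace $T^\bullet$ by its homotopically projective resolution $\mathbf{p}_AT^\bullet$ over $A$, so from now on $T^\bullet$ is a homotopically projective dg $A$-module. Then I would form the \emph{dg endomorphism algebra} $\mathcal{E}:=\Th_A(T^\bullet ,T^\bullet )$ (the total Hom complex of the homotopically projective model with itself), which is a dg algebra with $H^0(\mathcal{E})=\End_{\D(A)}(T^\bullet )=B$ and, since $T^\bullet$ is exceptional, $H^p(\mathcal{E})=\D(A)(T^\bullet ,T^\bullet [p])=0$ for $p\neq 0$. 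Hence the canonical dg algebra morphism $\mathcal{E}\longrightarrow H^0(\mathcal{E})=B$ is a quasi-isomorphism of dg algebras, and by the standard invariance of derived categories under quasi-isomorphisms of dg algebras (a result in \cite{Ke}) we get a triangle equivalence $\D(\mathcal{E})\stackrel{\cong}{\longrightarrow}\D(B)$. Meanwhile $T^\bullet$ is tautologically a dg $\mathcal{E}$-$A$-bimodule (left action by evaluation), and restricting scalars along $\mathcal{E}\to B$ we may and do view $T^\bullet$ as a dg $B$-$A$-bimodule in $\D(\mathcal{E}^{op}\otimes A)$, which is the asserted structure.

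Next I would analyze the functor $G:=\Rh_A(T^\bullet ,?):\D(A)\longrightarrow\D(\mathcal{E})$ (composed afterwards with the equivalence $\D(\mathcal{E})\cong\D(B)$). The key computation is $G(T^\bullet )=\Th_A(T^\bullet ,\mathbf{i}_AT^\bullet )\cong\Th_A(T^\bullet ,T^\bullet )=\mathcal{E}$ in $\D(\mathcal{E})$; here the quasi-isomorphism $T^\bullet\to\mathbf{i}_AT^\bullet$ induces a quasi-isomorphism on the total Hom complexes because $T^\bullet$ is homotopically projective over $A$ (this is exactly the balance statement of Proposition \ref{prop.balance of derived bifunctors}, or can be checked directly). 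So $G$ sends the compact generator $T^\bullet$ of $\text{Tria}_{\D(A)}(T^\bullet )$ to the compact generator $\mathcal{E}$ of $\D(\mathcal{E})$. Since $T^\bullet$ is self-compact, i.e. compact in $\text{Tria}_{\D(A)}(T^\bullet )$, and $G$ (having the left adjoint $?\Lt_{\mathcal{E}}T^\bullet$ by Corollary \ref{cor.classical adjunctions of derived functors}) preserves coproducts, the restriction $G|:\text{Tria}_{\D(A)}(T^\bullet )\longrightarrow\D(\mathcal{E})$ is a triangulated, coproduct-preserving functor between compactly generated triangulated categories carrying a compact generator to a compact generator and inducing, on the level of graded endomorphism rings, the isomorphism $\bigoplus_p\D(A)(T^\bullet ,T^\bullet [p])\cong\bigoplus_p\D(\mathcal{E})(\mathcal{E},\mathcal{E}[p])$ (both sides are just $B$ in degree $0$). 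By the standard Beilinson–Keller recognition criterion (a functor between compactly generated triangulated categories that preserves coproducts and restricts to a fully faithful functor on a generating compact object, with compact image generating the target, is an equivalence onto the target), $G|$ is an equivalence $\text{Tria}_{\D(A)}(T^\bullet )\stackrel{\cong}{\longrightarrow}\D(\mathcal{E})\cong\D(B)$.

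To make the recognition step self-contained I would argue as follows, which is the adaptation of Keller's argument mentioned in the statement. First, $G$ restricted to $\text{Tria}_{\D(A)}(T^\bullet )$ is fully faithful on $\{T^\bullet [p]:p\in\mathbb{Z}\}$ by the computation of $G(T^\bullet )$; since both source and target are triangulated and $G$ preserves coproducts and triangles, the full subcategory of objects $X$ of $\text{Tria}_{\D(A)}(T^\bullet )$ for which $G$ induces isomorphisms $\text{Tria}_{\D(A)}(T^\bullet )(T^\bullet ,X[p])\cong\D(\mathcal{E})(\mathcal{E},G(X)[p])$ for all $p$ is a triangulated subcategory closed under coproducts containing $T^\bullet$, hence is all of $\text{Tria}_{\D(A)}(T^\bullet )$; then a second dévissage (fixing such an $X$ and letting the first variable range) shows $G$ is fully faithful on all of $\text{Tria}_{\D(A)}(T^\bullet )$, using that $T^\bullet$ is self-compact so that $\text{Tria}_{\D(A)}(T^\bullet )(?,X[p])$ converts coproducts to products compatibly with $G$. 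Finally the essential image of $G|$ is a triangulated subcategory of $\D(\mathcal{E})$ closed under coproducts and containing the compact generator $\mathcal{E}$, hence equals $\D(\mathcal{E})$. Composing with $\D(\mathcal{E})\cong\D(B)$ gives the theorem. The main obstacle is the bookkeeping in the recognition step — in particular verifying that full faithfulness on $T^\bullet$ propagates to the whole localizing subcategory, which genuinely uses the self-compactness hypothesis and the coproduct-preservation of $G$ (equivalently, the existence of its left adjoint $?\Lt_{\mathcal{E}}T^\bullet$); the sign conventions and the identification $\Th_A(T^\bullet ,\mathbf{i}_AT^\bullet )\cong\Th_A(T^\bullet ,T^\bullet )$ via $k$-flatness of $A$ are routine but must be handled with care.
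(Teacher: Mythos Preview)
Your d\'evissage argument showing that $\Rh_A(T^\bullet,?)$ restricts to an equivalence $\text{Tria}_{\D(A)}(T^\bullet)\stackrel{\cong}{\longrightarrow}\D(\mathcal{E})$ is essentially correct and is what the paper obtains by citing \cite[Corollary 2.5]{NS2}. The gap is in the passage from the dg endomorphism algebra $\mathcal{E}$ to the ordinary algebra $B$, and this is exactly where the hypothesis that $A$ is $k$-flat is genuinely needed.

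There is in general \emph{no} dg algebra morphism $\mathcal{E}\longrightarrow H^0(\mathcal{E})=B$: the dg algebra $\mathcal{E}=\Th_A(T^\bullet,T^\bullet)$ has components in both positive and negative degrees, and the projection to $H^0$ is not a map of dg algebras. What one has is the zigzag $B\stackrel{p}{\longleftarrow}\tau^{\leq 0}\mathcal{E}\stackrel{j}{\hookrightarrow}\mathcal{E}$ of quasi-isomorphisms of dg algebras, as the paper uses. Consequently your sentence ``restricting scalars along $\mathcal{E}\to B$ we may view $T^\bullet$ as a dg $B$-$A$-bimodule'' is doubly problematic: the map does not exist, and even if it did, restriction of scalars along $\mathcal{E}\to B$ turns $B$-modules into $\mathcal{E}$-modules, not the other way around. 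To produce an honest dg $B$-$A$-bimodule one must (as the paper does) first restrict to $\tau^{\leq 0}\mathcal{E}$, then replace $T^\bullet$ by its homotopically projective resolution in $\K((\tau^{\leq 0}\mathcal{E})^{op}\otimes A)$, and finally form $B\Tt_{\tau^{\leq 0}\mathcal{E}}T^\bullet$. The $k$-flatness of $A$ enters precisely here: it guarantees (via Lemma \ref{lem.restrictions of homotopically ...}) that this bimodule resolution is homotopically flat over $(\tau^{\leq 0}\mathcal{E})^{op}$, so that the map $T^\bullet\cong(\tau^{\leq 0}\mathcal{E})\Tt_{\tau^{\leq 0}\mathcal{E}}T^\bullet\longrightarrow B\Tt_{\tau^{\leq 0}\mathcal{E}}T^\bullet$ is a quasi-isomorphism and the new bimodule is isomorphic to $T^\bullet$ in $\D(A)$. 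Your parenthetical remark that $k$-flatness is used for the identification $\Th_A(T^\bullet,\mathbf{i}_AT^\bullet)\cong\Th_A(T^\bullet,T^\bullet)$ is not where it is needed; that identification follows already from $T^\bullet$ being homotopically projective over $A$.
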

\begin{proof}
We can assume that $T^\bullet$ is homotopically injective in
$\K(A)$. Then $\hat{B}=\text{End}^\bullet_A(T^\bullet
):=\Th_A(T^\bullet ,T^\bullet )$ is a dg algebra and $T^\bullet$
becomes a dg $\hat{B}-A-$bimodule in a natural way. Moreover,   by
\cite[Corollary 2.5]{NS2},  we get that $?\Lt_{\hat{B}}T^\bullet
:\D(\hat{B})\longrightarrow\D(A)$ induces an equivalence of
triangulated categories
$\D(\hat{B})\stackrel{\cong}{\longrightarrow}\mathcal{T}:=\text{Tria}_{\D(A)}(T^\cdot
)$.

On the other hand, at the level of homology,  we have:

\begin{center}
$H^n(\hat{B})=\K(\hat{B})(\hat{B},\hat{B}[n])\cong\D(\hat{B})(\hat{B},\hat{B}[n])\cong\mathcal{T}(T^\bullet
,T^\bullet [n])\cong\D(A)(T^\bullet ,T^\bullet [n])$,
\end{center}
for all $n\in\mathbb{Z}$. It follows that $H^n(\hat{B})=0$, for
$n\neq 0$, while  $H^0(\hat{B})\cong B$. We take the canonical
truncation of $\hat{B}$ at $0$, i.e., the dg subalgebra $\tau^{\leq
0}\hat{B}$ of $\hat{B}$ given, as a complex of $k$-modules, by

\begin{center}
$...\hat{B}^{-n}\longrightarrow ...\longrightarrow
\hat{B}^{-1}\longrightarrow\text{Ker}(d^0)\longrightarrow 0...$,
\end{center}
where $d^0:\hat{B}^0\longrightarrow\hat{B}^1$ is the $0$-th
differential of $\hat{B}$. Then $B=H^0(\tau^{\leq
0}\hat{B})=H^0(\hat{B})$ and we have quasi-isomorphism of dg
algebras $B\stackrel{p}{\longleftarrow}\tau^{\leq
0}\hat{B}\stackrel{j}{\hookrightarrow}\hat{B}$, where $p$ and $j$
are the projection and inclusion, respectively. Replacing $\hat{B}$
by $\tau^{\leq 0}\hat{B}$ and the dg bimodule
${}_{\hat{B}}T^\bullet_A$ by ${}_{\tau^{\leq 0}\hat{B}}T^\bullet_A$,
we can assume, without loss of generality, that $\hat{B}^p=0$, for
all $p>0$. We assume this in the sequel.

It is convenient now to take the homotopically projective resolution
$\pi :\mathbf{p}_{\hat{B}^{op}\otimes A}T^\bullet\longrightarrow
T^\bullet$ in $\K(\hat{B}^{op}\otimes A)$. We  then replace
$T^\bullet$ by $\mathbf{p}T^\bullet
:=\mathbf{p}_{\hat{B}^{op}\otimes A}T^\bullet$. In this way we lose
the homotopically injective condition of $T^\bullet_A$ in $\K(A)$,
but we win that ${}_{\hat{B}}T^\bullet$ is homotopically flat in
$\K(\hat{B}^{op})$ (this follows by the extension of lemma
\ref{lem.restrictions of homotopically ...} to dg algebras). Note,
however, that $\pi :\mathbf{p}T^\bullet\longrightarrow T^\bullet$
induces a natural isomorphism
$?\Lt_{\hat{B}}\mathbf{p}T^\bullet\cong ?\Lt_{\hat{B}}T^\bullet$ of
triangulated functors $\D(\hat{B})\longrightarrow\D(A)$.

We can view $p:\hat{B}\longrightarrow B$ as a quasi-isomorphism in
$\K(\hat{B})$ and then
$T^\bullet\cong\hat{B}\Tt_{\hat{B}}T^\bullet\stackrel{p\otimes
1_T}{\longrightarrow}B\Tt_{\hat{B}}T^\bullet$ is a quasi-isomorphism
since ${}_{\hat{B}}T^\bullet$ is homotopically flat. By this same
reason, we have that $B\Tt_{\hat{B}}T^\bullet\cong
B\Lt_{\hat{B}}T^\bullet$ (see lemma \ref{lem.homotopically flat
resolution}). Then we get a composition of triangulated equivalences

\begin{center}
$\D(B)\stackrel{p_*}{\longrightarrow}\D(\hat{B})\stackrel{?\Lt_{\hat{B}}T^\bullet}{\longrightarrow}\mathcal{T}$,
\end{center}
where the first arrow is the restriction of scalars along the
projection $p:\hat{B}\longrightarrow B$. This composition of
equivalences is clearly identified with the functor
$?\Lt_B(B\Lt_{\hat{B}}T^\bullet )=?\Lt_B(B\Tt_{\hat{B}}T^\bullet )$.

Replacing $\hat{T}^\bullet   $ by $B\Tt_{\hat{B}}T^\bullet$, we can
then assume that $T^\bullet$ is a dg $B$-$A$-bimodule such that
$?\Lt_BT^\bullet :\D(B)\longrightarrow\D(A)$ induces an equivalence
$\D(B)\stackrel{\cong}{\longrightarrow}\text{Tria}_{\D(A)}(T^\bullet
)$. A quasi-inverse of this equivalence is then the restriction of
$\Rh_A(T^\bullet ,?)$ to $\text{Tria}_{\D(A)}(T^\bullet )$.
\end{proof}

%%%%%%%%%%%%%%%%%%%%%%%%%%%%% 4 %%%%%%%%%%%%%%%%%%%%%%%%%%%%%%%%%%%%%%%%%%%%%%%%%%%%%%%%%%%%%%

\section{Main results}
All throughout this section $A$ and $B$ are arbitrary $k$-algebras
and $T^\bullet$ is a complex of $B-A-$bimodules. We want to give
necessary and sufficient conditions of the functors $\Rh_A(T^\bullet
,?):\D(A)\longrightarrow\D(B)$ or $?\Lt_BT^\bullet
:\D(B)\longrightarrow\D(A)$ to be fully faithful. We start with two
very helpful auxiliary results.

The following is the final result of \cite{NS}. Note that condition
2 of the proposition was given only for $\D(B^{op})$, but this
condition holds if, and only if, it holds for $\D(k)$ since the
forgetful functor $\D(B^{op})\longrightarrow\D(k)$  reflects
isomorphisms.

\begin{prop} \label{prop.upper bounded of f.g. projectives}
Suppose that $H^p(T^\bullet )=0$, for $p>>0$. The following
assertions are equivalent:

\begin{enumerate}
 \item  $T^\bullet$ is isomorphic in $\D(B^{op})$ to an upper bounded
  complex of finitely generated projective left $B$-modules;
  \item The canonical morphism $B^\alpha\Lt_BT^\bullet\longrightarrow (B\Lt_BT^\bullet
  )^\alpha=T^{\bullet\alpha}$ is an isomorphism (in $\D(A)$, $\D(B^{op}$ or
  $\D(k)$), for each cardinal $\alpha$.
\end{enumerate}
\end{prop}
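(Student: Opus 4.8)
The plan is to prove the two implications separately, with the bulk of the work going into $(2)\Rightarrow(1)$. For $(1)\Rightarrow(2)$, assume $T^\bullet$ is isomorphic in $\D(B^{op})$ to an upper bounded complex $P^\bullet$ of finitely generated projective left $B$-modules. Since $P^\bullet$ is then homotopically projective (an upper bounded complex of projectives is homotopically projective), by Lemma \ref{lem.homotopically flat resolution}(4) the derived tensor product $?\Lt_BT^\bullet$ may be computed as $?\Tt_BP^\bullet$. So I reduce to checking that the canonical map $B^\alpha\Tt_BP^\bullet\longrightarrow (B\Tt_BP^\bullet)^\alpha$ is a quasi-isomorphism. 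Now $B\Tt_BP^\bullet\cong P^\bullet$, and since each $P^n$ is finitely generated projective, the functor $?\otimes_BP^n$ commutes with arbitrary products; because $P^\bullet$ is upper bounded, in each cohomological degree the complex $B^\alpha\Tt_BP^\bullet$ is a \emph{finite} sum of the modules $(B^\alpha)\otimes_BP^n=\prod_\alpha P^n=(P^n)^\alpha$, so the map is even an isomorphism of complexes, hence trivially a quasi-isomorphism. (Here the upper boundedness is what lets one pass from ``$?\otimes_BP^n$ commutes with products'' to ``$?\Tt_BP^\bullet$ commutes with $\alpha$-fold products'', since the relevant coproduct in each degree is finite.)

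For the harder direction $(2)\Rightarrow(1)$, the idea is to use the hypothesis $H^p(T^\bullet)=0$ for $p\gg0$ to build, by a standard ``killing cohomology from the top'' procedure, a quasi-isomorphism $P^\bullet\longrightarrow T^\bullet$ in $\C(B^{op})$ where $P^\bullet$ is an upper bounded complex of \emph{free} left $B$-modules (not yet finitely generated), and then show that condition (2) forces these frees to be replaceable by finitely generated projectives. Concretely: since $T^\bullet$ has cohomology bounded above, take a homotopically projective resolution and truncate; one obtains $P^\bullet\in\K(B^{op})$ with $P^n=0$ for $n\gg0$, each $P^n$ free, and $P^\bullet\stackrel{\sim}{\to}T^\bullet$. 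Then $?\Lt_BT^\bullet\cong ?\Tt_BP^\bullet$, and condition (2) says $B^\alpha\Tt_BP^\bullet\to (P^\bullet)^\alpha$ is a quasi-isomorphism for every $\alpha$. The plan is to run an induction downward on the degree: supposing $P^n$ is already known to be finitely generated projective for all $n>m$, analyze the top nonzero cohomology-type obstruction in degree $m$. The natural map $(B^\alpha)\otimes_BP^m=\bigoplus_\alpha P^m\to\prod_\alpha P^m$ is an isomorphism iff $P^m$ is finitely generated (this is the classical characterization: a module $M$ is finitely generated iff $-\otimes M$ commutes with products, and a flat such $M$ over a ring is finitely generated projective iff it is finitely generated and flat, or one argues directly via Lazard). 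The surgery needed is to translate the quasi-isomorphism condition on the whole complex, degree by degree from the top, into the statement that each individual $B^\alpha\otimes_BP^n\to(P^n)^\alpha$ is an isomorphism; then invoke the module-theoretic fact.

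I expect the \textbf{main obstacle} to be the bookkeeping in this downward induction: knowing $B^\alpha\Tt_BP^\bullet\to(P^\bullet)^\alpha$ is a quasi-isomorphism is a statement about cohomology, whereas I want isomorphisms of the individual terms, and a priori cohomology vanishing does not isolate single degrees. The way around this is to exploit upper boundedness once more: in the top nonzero degree $m_0$ the map of complexes $B^\alpha\Tt_BP^\bullet\to(P^\bullet)^\alpha$ agrees with $B^\alpha\otimes_BP^{m_0}\to(P^{m_0})^\alpha$ on cocycles, and a careful diagram chase (or: replacing $T^\bullet$ by a good minimal-in-the-top-degree model, so that the differential into degree $m_0$ has image inside the radical, making $H^{m_0}$ ``detect'' $P^{m_0}$) shows $P^{m_0}$ must be finitely generated; then one peels off the top term via a triangle $P^{m_0}[-m_0]\to P^\bullet\to P^\bullet_{<m_0}\stackrel{+}{\to}$, checks that $P^\bullet_{<m_0}$ again has cohomology bounded above and still satisfies condition (2) (using that the class of complexes for which the canonical map is an isomorphism is closed under the relevant triangles and shifts), and continues. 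Since $P^\bullet$ is upper bounded, there are only finitely many degrees to treat before reaching a region where $P^\bullet$ is acyclic, so the induction terminates and produces the desired upper bounded complex of finitely generated projectives isomorphic to $T^\bullet$ in $\D(B^{op})$. The reflection of isomorphisms along $\D(A)\to\D(k)$ and $\D(B^{op})\to\D(k)$, noted before the statement, is what lets me check everything after applying the forgetful functor to $\D(k)$, which simplifies the flatness/finiteness arguments.
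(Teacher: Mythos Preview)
The paper does not prove this proposition; it is stated as ``the final result of \cite{NS}'' and the argument is deferred to that reference, so there is no proof here to compare yours against. Your $(1)\Rightarrow(2)$ is fine. For $(2)\Rightarrow(1)$, however, your strategy has a real gap: starting from an \emph{arbitrary} upper bounded free resolution $P^\bullet\to T^\bullet$ and attempting to deduce that each individual $P^n$ is finitely generated cannot work. Even when $T^\bullet$ is perfect, a chosen resolution may well have infinitely generated terms (resolve $B[0]$ by the two-term complex $B^{(\mathbb N)}\hookrightarrow B^{(\mathbb N)}\oplus B$). The quasi-isomorphism $B^\alpha\Tt_BP^\bullet\to(P^\bullet)^\alpha$ only controls cohomology; in the top degree $m_0$ what it yields is that $B^\alpha\otimes_B H^{m_0}(T^\bullet)\to H^{m_0}(T^\bullet)^\alpha$ is an isomorphism, hence that $H^{m_0}(T^\bullet)$ is finitely presented, but it says nothing about $P^{m_0}$ itself. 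Your suggested ``minimal-in-the-top-degree model'' is not available over a general ring $B$. Finally, your termination claim is false: upper bounded does not mean bounded, and the desired complex of finitely generated projectives may be nonzero in all sufficiently negative degrees.

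The fix is to \emph{construct} the resolution rather than analyze a pre-existing one. Knowing that $H^{m_0}(T^\bullet)$ is finitely presented, choose a finitely generated free $P^{m_0}$ and a map $P^{m_0}[-m_0]\to T^\bullet$ surjective on $H^{m_0}$; the cocone $T^\bullet_1$ then has top cohomology in degree $\leq m_0-1$ and still satisfies condition~(2), since the objects satisfying~(2) form a thick subcategory of $\D(B^{op})$ containing each $B[n]$. Iterate downward to produce $P^{m_0-1},P^{m_0-2},\dots$; the resulting upper bounded complex $P^\bullet$ of finitely generated free modules comes with a map to $T^\bullet$ which is a quasi-isomorphism because in any fixed cohomological degree the construction eventually stabilizes.
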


We can go  further in the direction of the previous proposition.
Recall that if $X^\bullet =(X^\bullet ,d)\in\C(\mathcal{A})$ is any
complex, where $\mathcal{A}$ is an abelian category, then its
\emph{left (resp. right) stupid truncation at $m$} is the complex
$\sigma^{\leq m}X^\bullet =\sigma^{<m+1}X^\bullet$:
$...X^k\stackrel{d^k}{\longrightarrow}...X^{m-1}\stackrel{d^{m-1}}{\longrightarrow}
X^m\longrightarrow 0\longrightarrow 0...$ (resp. $\sigma^{\geq
m}X^\bullet =\sigma^{>m-1}X^\bullet$: $...0\longrightarrow
0\longrightarrow X^{m}\stackrel{d^{m}}{\longrightarrow}
X^{m+1}...X^n\stackrel{d^n}{\longrightarrow}...$). Note that we have
a conflation $0\rightarrow \sigma^{>m}X^\bullet\hookrightarrow
X^\bullet\longrightarrow\sigma^{\leq m}X^\bullet\rightarrow 0$ in
$\C(\mathcal{A})$, and, hence, an induced triangle

\begin{center}
$\sigma^{\geq m}X^\bullet\longrightarrow
X^\bullet\longrightarrow\sigma^{<
m}X^\bullet\stackrel{+}{\longrightarrow}$
\end{center}
in $\K(\mathcal{A})$ and in $\D(\mathcal{A})$.

\begin{prop} \label{prop.tensor by compacts preserves products}
The following assertions are equivalent:

\begin{enumerate}
\item ${}_BT^\bullet$ is compact in $\D(B^{op})$;
\item The functor $?\Lt_BT^\bullet :\D(B)\longrightarrow\D(A)$ preserves
products;
\item The functor  $?\Lt_BT^\bullet :\D(B)\longrightarrow\D(A)$ has
a left adjoint.
\end{enumerate}

\end{prop}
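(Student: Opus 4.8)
The plan is to prove the cyclic chain of implications $(1)\Rightarrow (2)\Rightarrow (3)\Rightarrow (1)$, exploiting that $\D(B)$ is compactly generated by $\{B\}$ (Proposition \ref{prop.D(A) compactly generated}), that compact objects of $\D(B^{op})$ are exactly the objects of $\operatorname{per}(B^{op})=\operatorname{thick}_{\D(B^{op})}(B)$ (Proposition \ref{prop.compact objects in D(A)}), and the adjoint-existence criteria in Corollary \ref{cor.adjoints with comp.generated domain}. The equivalence $(2)\Leftrightarrow (3)$ is immediate from Corollary \ref{cor.adjoints with comp.generated domain}(2) applied to the triangulated functor $?\Lt_BT^\bullet:\D(B)\longrightarrow\D(A)$, since its domain $\D(B)$ is compactly generated; so the real content is $(1)\Rightarrow (2)$ and $(3)\Rightarrow (1)$ (equivalently $(2)\Rightarrow (1)$).

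For $(1)\Rightarrow (2)$: assume ${}_BT^\bullet$ is compact in $\D(B^{op})$, i.e. ${}_BT^\bullet\in\operatorname{thick}_{\D(B^{op})}(B)$. Consider the full subcategory $\mathcal{X}$ of $\D(B^{op})$ consisting of those complexes $X^\bullet$ for which $?\Lt_BX^\bullet:\D(B)\longrightarrow\D(A)$ preserves products. One checks $\mathcal{X}$ is a thick subcategory: it is closed under shifts (shifting commutes with the derived tensor product up to sign), under cones (given a triangle $X'^\bullet\to X^\bullet\to X''^\bullet\to$, for any family $(M_i^\bullet)$ in $\D(B)$ one gets a morphism of triangles comparing $(\prod_i M_i^\bullet)\Lt_B X^\bullet$ with $\prod_i(M_i^\bullet\Lt_B X^\bullet)$, and if the two outer comparison maps are isomorphisms so is the middle one, since products are exact in $\D(A)$ — $\D(A)$ being $\operatorname{Mod}-A$'s derived category has products and the product of triangles is a triangle), and under direct summands (a retract of an isomorphism is an isomorphism). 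Finally ${}_BB\in\mathcal{X}$: indeed $?\Lt_B B\cong 1_{\D(B)}$ followed by the restriction/forgetful identification, which trivially preserves products, or more precisely $M^\bullet\Lt_B B\cong M^\bullet$ naturally (using a homotopically projective resolution of ${}_BB_B=B$, which one may take to be $B$ itself). Hence $\operatorname{thick}_{\D(B^{op})}(B)\subseteq\mathcal{X}$, so ${}_BT^\bullet\in\mathcal{X}$, giving (2).

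For $(2)\Rightarrow (1)$ (hence closing the cycle via $(3)\Rightarrow (2)$, which is trivial): assume $?\Lt_BT^\bullet$ preserves products. The key special case to exploit is products of copies of $B$: for any cardinal $\alpha$, the natural comparison morphism $B^\alpha\Lt_BT^\bullet\longrightarrow (B\Lt_BT^\bullet)^\alpha=T^{\bullet\alpha}$ (the dual of the one in Proposition \ref{prop.upper bounded of f.g. projectives}; note a product in $\D(B)$ of copies of $B$ is $B^\alpha$) must be an isomorphism in $\D(A)$, and in fact in $\D(k)$ since the forgetful functor $\D(A)\to\D(k)$ reflects isomorphisms. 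The plan is then to invoke (the left--right symmetric / dual form of) Proposition \ref{prop.upper bounded of f.g. projectives}: one should first reduce to the case where $H^p(T^\bullet)=0$ for $p\gg 0$. This reduction uses stupid truncations: writing the triangle $\sigma^{\geq m}T^\bullet\to T^\bullet\to\sigma^{<m}T^\bullet\to$ and noting that if $?\Lt_BT^\bullet$ preserves products for all large enough parameters then the bounded-below piece inherits the relevant finiteness, one arrives at the situation where Proposition \ref{prop.upper bounded of f.g. projectives} gives that $T^\bullet$ is isomorphic in $\D(B^{op})$ to an upper-bounded complex of finitely generated projective left $B$-modules. Combined with the known bound on homology (which for a genuine compact object is a bounded complex) one upgrades "upper bounded complex of f.g. projectives whose comparison maps are isos" to "perfect complex", i.e. ${}_BT^\bullet\in\operatorname{per}(B^{op})$, which by Proposition \ref{prop.compact objects in D(A)} is exactly compactness in $\D(B^{op})$.

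The main obstacle I expect is precisely this last step: Proposition \ref{prop.upper bounded of f.g. projectives} as stated needs the hypothesis $H^p(T^\bullet)=0$ for $p\gg 0$ and only yields an \emph{upper bounded} complex of finitely generated projectives, whereas compactness requires a genuinely \emph{bounded} (perfect) complex. Bridging this gap — showing that product-preservation of $?\Lt_BT^\bullet$ forces both vanishing of homology in high degrees (to apply the proposition) and vanishing in very negative degrees (to get boundedness from below after the proposition), via a careful truncation argument and the fact that an unbounded-below complex of finitely generated projectives with the stated isomorphism property still cannot be compact unless it is perfect — is the delicate part. Everything else (the thickness argument in $(1)\Rightarrow(2)$, and the adjoint/product dictionary from Corollary \ref{cor.adjoints with comp.generated domain}) is formal.
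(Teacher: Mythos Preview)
Your treatment of $(2)\Leftrightarrow(3)$ and $(1)\Rightarrow(2)$ is essentially the paper's: the first is Corollary~\ref{cor.adjoints with comp.generated domain}(2), and for the second the paper also runs a thick-subcategory argument in $\D(B^{op})$ (after first reducing to $A=k$ via the forgetful functor, which is harmless). So those parts are fine.

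The gap is exactly where you flag it: $(2)\Rightarrow(1)$. You correctly identify that Proposition~\ref{prop.upper bounded of f.g. projectives} needs $H^p(T^\bullet)=0$ for $p\gg 0$ as input and only outputs an upper-bounded complex of finitely generated projectives, and you do not actually supply either the reduction to bounded homology or the upgrade to a perfect complex. Your truncation remarks are too vague to work as stated. The paper fills both holes with the same idea, applied twice: exploit that in $\D(B)$ a family $(Y_n^\bullet)_{n\in\mathbb{Z}}$ with $Y_n^\bullet$ having homology concentrated in a single degree depending on $n$ has $\coprod_n Y_n^\bullet\cong\prod_n Y_n^\bullet$, then apply the product-preserving functor $?\Lt_BT^\bullet$ and read off a finiteness constraint from $H^0$.

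Concretely: first take $Y_p^\bullet=B[p]$. Then $\coprod_p B[p]\to\prod_p B[p]$ is an isomorphism in $\D(B)$, so $\coprod_p T^\bullet[p]\to\prod_p T^\bullet[p]$ is an isomorphism in $\D(k)$; applying $H^0$ gives $\coprod_p H^p(T^\bullet)\cong\prod_p H^p(T^\bullet)$, forcing $H^p(T^\bullet)=0$ for almost all $p$. Now Proposition~\ref{prop.upper bounded of f.g. projectives} applies and one may take $T^\bullet$ to be an upper-bounded complex of finitely generated projective left $B$-modules. Put $m=\min\{p:H^p(T^\bullet)\neq 0\}$ and use the stupid-truncation triangle $\sigma^{\geq m}T^\bullet\to T^\bullet\to\sigma^{<m}T^\bullet\to$: the first term is perfect, and $\sigma^{<m}T^\bullet\cong M[1-m]$ for a left $B$-module $M$ admitting a projective resolution with finitely generated terms. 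Since $?\Lt_B\sigma^{\geq m}T^\bullet$ preserves products by $(1)\Rightarrow(2)$ already proved, so does $?\Lt_BM$. If $M$ had infinite projective dimension, pick for each $n>0$ a module $X_n$ with $\operatorname{Tor}_n^B(X_n,M)\neq 0$; then $\coprod_n X_n[-n]\to\prod_n X_n[-n]$ is an isomorphism in $\D(B)$, and applying $?\Lt_BM$ and $H^0$ gives $\coprod_n\operatorname{Tor}_n^B(X_n,M)\cong\prod_n\operatorname{Tor}_n^B(X_n,M)$, a contradiction. Hence $M$ is perfect and so is $T^\bullet$.
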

\begin{proof}
$(2)\Longleftrightarrow (3)$ is a direct consequence of corollary
\ref{cor.adjoints with comp.generated domain}.

$(1)\Longrightarrow (2)$ It is enough to prove that the composition
$\D(B)\stackrel{?\Lt_BT^\bullet}{\longrightarrow}\D(A)\stackrel{\text{forgetful}}{\longrightarrow}\D(k)$
preserves products since the forgetful functor preserves products
and reflects isomorphisms.

Abusing of notation, we still denote by $?\Lt_BT^\bullet$ the
mentioned composition and note that, when doing so, we have
isomorphisms $(?\Lt_BN^\bullet )(X^\bullet
)\cong\mathbf{T}_B(X^\bullet ,N^\bullet )\cong
(X^\bullet\Lt_B?)(N^\bullet )$ in $\D(k)$, for all
$X^\bullet\in\D(B)$ and $N^\bullet\in\D(B^{op})$ (see proposition
\ref{prop.balance of derived bifunctors}).
 If $(X_i^\bullet )_{i\in I}$
is any family of objects of $\D(B)$, we consider the full
subcategory $\mathcal{T}$ of $\D(B^{op})$ consisting of the
$N^\bullet$ such that the canonical morphism $(\prod_{i\in
I}X^\bullet_i)\Lt_BN^\bullet\longrightarrow\prod_{i\in
I}X^\bullet_i\Lt_BN^\bullet$ is an isomorphism. It is a thick
subcategory of $\D(B^{op})$ which contains $_BB$. Then it contains
$\text{per}(B^{op})$ and, in particular, it contains $_BT^\bullet$.

$(2)\Longrightarrow (1)$ Without loss of generality, we can assume
that $A=k$. The canonical morphism
$\coprod_{p\in\mathbb{Z}}B[p]\longrightarrow\prod_{p\in\mathbb{Z}}B[p]$
is an isomorphism in $\D(B^{op})$. By applying $?\Lt_BT^\bullet$ and
using the hypothesis, we then have an isomorphism in $\D(k)$

\begin{center}
$\coprod_{p\in\mathbb{Z}}T^\bullet[p]\cong(?\Lt_BT^\bullet
)(\coprod_{p\in\mathbb{Z}}B[p])\stackrel{\cong}{\longrightarrow}(?\Lt_BT^\bullet
)(\prod_{p\in\mathbb{Z}}B[p])\cong\prod_{p\in\mathbb{Z}}(B[p]\Lt_BT^\bullet
)\cong\prod_{p\in\mathbb{Z}}T^\bullet [p]$,
\end{center}
which can be identified with the canonical morphism from the
coproduct to the product. It follows that the canonical map

\begin{center}
$\coprod_{p\in\mathbb{Z}}H^p(T^\bullet )\cong
H^0(\coprod_{p\in\mathbb{Z}}T^\bullet [p])\longrightarrow
H^0(\prod_{p\in\mathbb{Z}}T^\bullet
[p])\cong\prod_{p\in\mathbb{Z}}H^p(T^\bullet )$
\end{center}
is an isomorphism. This implies that $H^p(T^\bullet )=0$, for all
but finitely many $p\in\mathbb{Z}$.  

 By proposition \ref{prop.upper bounded of f.g.
projectives}, replacing  $T^\bullet$ by its homotopically projective resolution
in $\K(B^{op})$, we can assume without loss of generality that
$T^\bullet$ is an upper bounded complex of finitely generated projective left
$B$-modules.  Let us put $m:=\text{min}\{p\in\mathbb{Z}:$
$H^p(T)\neq 0\}$. We then consider the triangle in $\D(B^{op})$
induced by the stupid truncation at $m$

\begin{center}
$\sigma^{\geq m}T^\bullet\longrightarrow
T^\bullet\longrightarrow\sigma^{<m}T^\bullet\stackrel{+}{\longrightarrow}$.
\end{center}
Then $\sigma^{\geq m}T^\bullet$ is compact in $\D(B^{op})$ while
$\sigma^{<m}T^\bullet$ has homology concentrated in degree $m-1$.
Then we have an isomorphism $\sigma^{<m}T^\bullet\cong M[1-m]$ in
$\D(B^{op})$, where $M=H^{m-1}(\sigma^{<m}T^\bullet )$. By the
implication $1)\Longrightarrow 2)$ we know that $?\Lt_B\sigma^{\geq
m}T^\bullet :\D(B)\longrightarrow\D(k)$ preserves products and, by
hypothesis, also $?\Lt_BT^\bullet :\D(B)\longrightarrow\D(k)$ does.
It follows that $?\Lt_B\sigma^{<m}T^\bullet\cong
?\Lt_BM[1-m]:\D(B)\longrightarrow\D(k)$ preserves products.

Note that $M$ admits admits a projective resolution with finitely
generated terms, namely,  the canonical quasi-isomorphism
$P^\bullet:=\sigma^{<m}T^\bullet[m-1]\longrightarrow M=M[0]$.
Therefore our task reduces to check that if $M$ is a left $B$ module
which admits a projective resolution with finitely generated terms
and such that $?\Lt_BM:\D(B)\longrightarrow\D(k)$ preserves
products, then $M$ has finite projective dimension. For that it is
enough to prove that there is an integer $n\geq0$ such that
$\text{Tor}_{n+1}^B(?,M)\equiv 0$. Indeed, if  this is proved then
$\Omega^n(M):=\text{Im}(d^{-n}:P^{-n}\longrightarrow P^{-n+1})$ will
be a flat module, and hence projective (see \cite[Corollaire
1.3]{L}), thus ending the proof.

Let us assume by way of contradiction that
$\text{Tor}_{n}^B(?,M)\not\equiv 0$, for all $n>0$. For each such
$n$, choose a right $B$-module $X_n$ such that
$\text{Tor}_n^B(X_n,M)\neq 0$. Then the canonical morphism
$\coprod_{n>0}X_n[-n]\longrightarrow\prod_{n>0}X_n[-n]$ is an
isomorphism in $\D(B)$. Our hypothesis then guarantees that the
canonical morphism

\begin{center}
$\coprod_{n>0}(X_n[-n]\Lt_BM)\cong (\coprod_{n>0}X_n[-n])\Lt_BM\cong
(\prod_{n>0}X_n[-n])\Lt_BM\longrightarrow\prod_{n>0}(X_n[-n]\Lt_BM)$
\end{center}
is an isomorphism. When applying the $0$-homology functor $H^0$, we
obtain an isomorphism

\begin{center}
$\coprod_{n>0}\text{Tor}_n^B(X_n,M)\cong\coprod_{n>0}H^0(X_n[-n]\Lt_BM)\longrightarrow\prod_{n>0}H^0(X_n[-n]\Lt_BM)\cong\prod_{n>0}\text{Tor}_n^B(X_n,M)$
\end{center}
which is identified with the canonical morphism from the coproduct
to the product in $\text{Mod}-K$. It follows that
$\text{Tor}_n^B(X_n,M)=0$, for almost all $n>0$, which is a
contradiction.

\end{proof}

\begin{rem} \label{rem.Rickard}
The argument in the last two paragraphs of the proof of proposition
\ref{prop.tensor by compacts preserves products} was communicated to
us by Rickard, to whom we deeply thank for it. When passing to the
context of dg algebras or even dg categories, the implication
$(1)\Longrightarrow (2)$ in that proposition still holds (see
\cite{NS}), essentially with the same proof. However, we do not know
if $(2)\Longrightarrow (1)$ holds for dg algebras $A$ and $B$.
\end{rem}

\subsection{Statements and proofs}

\begin{prop} \label{prop.fully faithful RHom}
Let $\delta:(?\Lt_BT^\bullet )\circ\Rh_A(T^\bullet,?)\longrightarrow
1_{\D(A)}$ be the counit of the adjoint pair $(?\Lt_BT^\bullet
,\Rh_A(T^\bullet ,?))$. The following assertions are equivalent:

\begin{enumerate}
\item $\Rh_A(T^\bullet ,?):\D(A)\longrightarrow\D(B)$ is fully
faithful;
\item The  map $\delta_A:[(?\Lt_BT)\circ\Rh_A(T,?)](A)\longrightarrow A$ is
an isomorphism in $\D(A)$ and the functor
$(?\Lt_BT)\circ\Rh_A(T,?):\D(A)\longrightarrow\D(A)$ preserves
coproducts.
\end{enumerate}

In that case, the functor $?\Lt_BT^\bullet$ induces a triangulated
equivalence $\D(B)/\text{Ker}(?\Lt_BT^\bullet
)\stackrel{\cong}{\longrightarrow}\D(A)$.
\end{prop}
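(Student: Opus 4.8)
The plan is to exploit the standard fact that a right adjoint $G$ in an adjunction $(F,G)$ is fully faithful if and only if the counit $\delta: FG\longrightarrow 1$ is a natural isomorphism. Here $F=?\Lt_BT^\bullet$ and $G=\Rh_A(T^\bullet,?)$, so $\Rh_A(T^\bullet,?)$ is fully faithful exactly when $\delta_X:(F\circ G)(X^\bullet)\longrightarrow X^\bullet$ is an isomorphism for every $X^\bullet\in\D(A)$. Assertion (1) $\Rightarrow$ (2) is then immediate: $\delta_A$ is a particular instance, and $F\circ G$, being naturally isomorphic to the identity, trivially preserves coproducts. The substance is the converse.

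\textbf{Key steps for (2)$\Rightarrow$(1).} Consider the full subcategory $\mathcal{X}$ of $\D(A)$ consisting of those $X^\bullet$ for which $\delta_{X^\bullet}$ is an isomorphism. First I would check that $\mathcal{X}$ is a \emph{triangulated} subcategory: since $\delta$ is a natural transformation of triangulated functors (the composite $F\circ G$ is triangulated, being a composition of triangulated functors, and $1_{\D(A)}$ is triangulated), and since a morphism of triangles with two components isomorphisms forces the third to be an isomorphism, $\mathcal{X}$ is closed under cones and shifts. Second, the hypothesis that $F\circ G$ preserves coproducts — together with the fact that $1_{\D(A)}$ preserves coproducts and that $\delta$ is natural — shows that $\mathcal{X}$ is \emph{closed under coproducts} in $\D(A)$; hence, since $\D(A)$ has coproducts, $\mathcal{X}$ is automatically thick. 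Third, the hypothesis $\delta_A$ is an isomorphism says precisely that $A\in\mathcal{X}$. Since $\D(A)$ is compactly generated by $\{A\}$ (Proposition~\ref{prop.compact objects in D(A)} / Proposition~\ref{prop.D(A) compactly generated}), we have $\D(A)=\text{Tria}_{\D(A)}(A)\subseteq\mathcal{X}$, so $\mathcal{X}=\D(A)$ and $\delta$ is a natural isomorphism; by the adjunction criterion, $\Rh_A(T^\bullet,?)$ is fully faithful.

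\textbf{The quotient equivalence.} For the last statement, once $\Rh_A(T^\bullet,?)=:G$ is known to be fully faithful, I would invoke Proposition~\ref{prop.semi-orthogonal pair from adjunction}(1): from the adjoint pair $(F,G)$ with $G$ fully faithful, $(\Ker(F),\text{Im}(G))$ is a semi-orthogonal decomposition of $\D(A)$. Hence $\Ker(F)=\Ker(?\Lt_BT^\bullet)$ is a thick subcategory of $\D(B)$, and by Proposition~\ref{prop.Verdier localization by triangulated}(4) — noting that $F$ has a right adjoint, so $q:\D(B)\longrightarrow\D(B)/\Ker(F)$ has a left adjoint and $({}^\perp\Ker(F),\Ker(F))$ is a semi-orthogonal decomposition — the truncation functor induces an equivalence $\D(B)/\Ker(F)\stackrel{\cong}{\longrightarrow}{}^\perp\Ker(F)$. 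It remains to identify ${}^\perp\Ker(F)$ with $\D(A)$ via $F$: since $G$ is fully faithful, $F$ restricts to an equivalence between the co-aisle ${}^\perp\Ker(F)=\text{Im}(G)$ and... no — rather, the cleanest route is: $F$ kills $\Ker(F)$, so $F$ factors through $q$ as $\bar F:\D(B)/\Ker(F)\longrightarrow\D(A)$ by the universal property (Proposition~\ref{prop.Verdier localization by triangulated}(2)); and $\bar F$ is an equivalence because it has quasi-inverse induced by $G$ (using that $GF\to 1$ has $\Ker(F)$-trivial... ), with $\delta:FG\to 1_{\D(A)}$ an isomorphism giving one triangle identity and the unit giving the other modulo $\Ker(F)$.

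\textbf{Expected main obstacle.} The genuinely non-formal point is verifying that $\mathcal{X}$ is closed under coproducts: one must be careful that the canonical comparison morphism $\coprod (F\circ G)(X_i^\bullet)\longrightarrow (F\circ G)(\coprod X_i^\bullet)$ is compatible, via the components $\delta_{X_i^\bullet}$ and $\delta_{\coprod X_i^\bullet}$, with the identity comparison morphism $\coprod X_i^\bullet\longrightarrow\coprod X_i^\bullet$ — this is exactly where the naturality of $\delta$ plus the coproduct-preservation hypothesis combine. Everything else is a routine application of the compact-generation of $\D(A)$ by $\{A\}$ and of the Verdier localization machinery recalled in Section~2.
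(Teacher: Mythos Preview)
Your argument for the equivalence $(1)\Longleftrightarrow(2)$ is correct and essentially identical to the paper's: the paper also considers the subcategory of objects $M^\bullet$ with $\delta_{M^\bullet}$ an isomorphism, observes it is triangulated, closed under coproducts (by the coproduct-preservation hypothesis), and contains $A$, hence is all of $\D(A)$.

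Your treatment of the final quotient equivalence, however, is muddled. First, the semi-orthogonal decomposition $(\Ker(F),\text{Im}(G))$ lives in $\D(B)$, not $\D(A)$, and comes from part (2) of Proposition~\ref{prop.semi-orthogonal pair from adjunction}, not part (1) (here $F:\D(B)\to\D(A)$, $G:\D(A)\to\D(B)$, so the domain category is $\D(B)$). Second, given this decomposition, the relevant part of Proposition~\ref{prop.Verdier localization by triangulated} is (3), not (4): with $\mathcal{T}=\Ker(F)$, the pair $(\mathcal{T},\mathcal{T}^\perp)$ is the semi-orthogonal decomposition at hand, so $q$ has a \emph{right} adjoint and we get $\D(B)/\Ker(F)\cong\Ker(F)^\perp=\text{Im}(G)$. (Your claim that ``$F$ has a right adjoint, so $q$ has a left adjoint'' is a non sequitur.) The paper then simply composes this with the equivalence $\D(A)\stackrel{\cong}{\longrightarrow}\text{Im}(G)$ furnished by the fully faithful $G$, and checks that the composite is induced by $F=?\Lt_BT^\bullet$. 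Your fallback ``cleanest route'' via the universal property of $q$ would also work, but you left it incomplete; the missing step is that the unit $\lambda:1_{\D(B)}\to GF$ becomes an isomorphism after passing to $\D(B)/\Ker(F)$ because its cone lies in $\Ker(F)$, which together with $\delta$ being an isomorphism makes $\bar F$ and the functor induced by $G$ quasi-inverse.
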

\begin{proof}
Assertion (1) is equivalent to saying that $\delta:(?\Lt_BT^\bullet
)\circ\Rh_A(T^\bullet,?)\longrightarrow 1_{\D(A)}$ is a natural
isomorphism (see the dual of \cite[Proposition II.7.5]{HS}). As a
consequence, the implication $(1)\Longrightarrow (2)$ is automatic.
Conversely, if assertion (2) holds, then the full subcategory
$\mathcal{T}$ of $\D(A)$ consisting of the $M^\bullet\in\D(A)$ such
that $\delta_{M^\bullet }$ is an isomorphism is a triangulated
subcategory closed under taking coproducts and containing $A$. It
follows that $\mathcal{T}=\D(A)$, so that assertion (1) holds.

For the final statement, note that $\Rh_A(T^\bullet ,?)$ gives an
equivalence of triangulated categories
$\D(A)\stackrel{\cong}{\longrightarrow}\text{Im}(\Rh_A(T^\bullet
,?))$. On the other hand, by proposition \ref{prop.semi-orthogonal
pair from adjunction}, we know that $(\text{Ker}(?\Lt_BT^\bullet
),\text{Im}(\Rh_A(T^\bullet ,?)))$ is a semi-orthogonal
decomposition of $\D(B)$. Then, by proposition \ref{prop.Verdier
localization by triangulated}, we have a triangulated equivalence
$\D(B)/\text{Ker}(?\Lt_BT^\bullet )\cong\text{Im}(\Rh_A(T^\bullet
,?))$. We then get a triangulated equivalence
$\D(B)/\text{Ker}(?\Lt_BT^\bullet
)\stackrel{\cong}{\longrightarrow}\D(A)$, which is easily seen to be
induced by $?\Lt_BT^\bullet$.
\end{proof}

We now pass to study the recollement situations where one of the
fully faithful functors is $\Rh_A(T^\bullet ,?)$.

\begin{cor} \label{cor.recollement D(A)=D(B)=D by RHom}
Let $T^\bullet$ a complex of $B-A-$bimodules. The following
assertions hold:

\begin{enumerate}
\item There is  a triangulated category $\D'$ and a recollement $\D(A)\equiv\D(B)\equiv
\D'$,  with $i_*=\Rh_A(T^\bullet ,?)$;
\item There is  a triangulated category $\D'$ and a recollement $\D(A)\equiv\D(B)\equiv
\D'$,  with $i^*=?\Lt_BT^\bullet$;
\item $T^\bullet_A$ is compact in $\D(A)$  and $\delta_A:[(?\Lt_BT^\bullet )\circ\Rh_A(T^\bullet ,?)](A)\longrightarrow
A$ is an isomorphism in $\D(A)$, where $\delta$ is the counit of the
adjoint pair $(?\Lt_BT^\bullet ,\Rh_A(T^\bullet ,?))$.

\end{enumerate}
\end{cor}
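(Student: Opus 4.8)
I read the three assertions as claimed equivalent and plan to deduce the equivalence from three tools already available: the recollement criterion of Proposition~\ref{prop.functor in a recollement}(1), the fully-faithfulness criterion of Proposition~\ref{prop.fully faithful RHom}, and the fact that a triangulated functor out of a compactly generated category preserves coproducts precisely when it has a right adjoint (Corollary~\ref{cor.adjoints with comp.generated domain}). I would first dispose of (1)$\Leftrightarrow$(2): in any recollement $(i^{*},i_{*},i^{!})$ is an adjoint triple, so $i^{*}$ is a left adjoint of $i_{*}$; since $(?\Lt_BT^\bullet,\Rh_A(T^\bullet,?))$ is an adjoint pair by Corollary~\ref{cor.classical adjunctions of derived functors}(1) and adjoints are unique up to natural isomorphism, a recollement $\D(A)\equiv\D(B)\equiv\D'$ with $i_{*}=\Rh_A(T^\bullet,?)$ automatically has $i^{*}\cong{?\Lt_BT^\bullet}$, and transporting the adjunction data along this isomorphism yields a recollement with $i^{*}={?\Lt_BT^\bullet}$. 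The converse is symmetric, so (1) and (2) describe the same situation.

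For (1)$\Leftrightarrow$(3), Proposition~\ref{prop.functor in a recollement}(1) says (1) holds iff $\Rh_A(T^\bullet,?)$ is fully faithful and has both adjoints; the left adjoint $?\Lt_BT^\bullet$ always exists and, by Corollary~\ref{cor.adjoints with comp.generated domain}(1) applied to the compactly generated category $\D(A)$, a right adjoint exists iff $\Rh_A(T^\bullet,?)$ preserves coproducts. So (1) becomes ``$\Rh_A(T^\bullet,?)$ is fully faithful and preserves coproducts''. The heart of the matter is then the auxiliary equivalence: $\Rh_A(T^\bullet,?)$ preserves coproducts iff $T^\bullet_A$ is compact in $\D(A)$. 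Applying $H^{0}$ and using $H^{0}(\Rh_A(T^\bullet,M^\bullet))\cong\D(A)(T^\bullet,M^\bullet)$ shows that coproduct-preservation forces $\D(A)(T^\bullet,?)$ to preserve coproducts, whence $T^\bullet_A$ is compact. Conversely, testing whether the canonical morphism $\coprod_i\Rh_A(T^\bullet,M_i)\to\Rh_A(T^\bullet,\coprod_iM_i)$ is an isomorphism by passing to $\D(k)$ (whose forgetful functor preserves coproducts and reflects isomorphisms) and reading off homology, one sees it suffices that $\coprod_i\D(A)(T^\bullet,M_i[n])\to\D(A)(T^\bullet,(\coprod_iM_i)[n])$ be bijective for all $n$, which holds since $T^\bullet_A$ and all its shifts are compact.

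With the auxiliary equivalence in hand, I would finish as follows. If (1) holds, fully-faithfulness of the right adjoint $\Rh_A(T^\bullet,?)$ makes the counit $\delta$ (hence $\delta_A$) an isomorphism, and coproduct-preservation makes $T^\bullet_A$ compact, which is (3). Conversely, if (3) holds then $\Rh_A(T^\bullet,?)$ preserves coproducts; since $?\Lt_BT^\bullet$ is a left adjoint it also preserves coproducts, so the composite $(?\Lt_BT^\bullet)\circ\Rh_A(T^\bullet,?)$ preserves coproducts, and together with ``$\delta_A$ an isomorphism'' Proposition~\ref{prop.fully faithful RHom} yields that $\Rh_A(T^\bullet,?)$ is fully faithful; being fully faithful, coproduct-preserving (hence with a right adjoint) and equipped with the left adjoint $?\Lt_BT^\bullet$, Proposition~\ref{prop.functor in a recollement}(1) delivers the recollement of (1). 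The one step I expect to need genuine care is the auxiliary equivalence between compactness of $T^\bullet_A$ and coproduct-preservation of $\Rh_A(T^\bullet,?)$ --- in particular, being precise that coproducts in $\D(B)$ are tested after the forgetful functor to $\D(k)$ and detected on homology; everything else is bookkeeping with results from Sections~2 and~4.
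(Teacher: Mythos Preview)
Your proof is correct and follows essentially the same route as the paper's own argument: both reduce (1)$\Leftrightarrow$(3) via Proposition~\ref{prop.functor in a recollement}(1) to ``$\Rh_A(T^\bullet,?)$ is fully faithful and has a right adjoint'', identify the existence of a right adjoint with coproduct-preservation (the paper via Proposition~\ref{prop.Brown representability theorem}, you via the equivalent Corollary~\ref{cor.adjoints with comp.generated domain}), translate coproduct-preservation into compactness of $T^\bullet_A$, and invoke Proposition~\ref{prop.fully faithful RHom} for the full faithfulness. The only difference is cosmetic: you spell out the equivalence ``$\Rh_A(T^\bullet,?)$ preserves coproducts $\Leftrightarrow$ $T^\bullet_A$ compact'' via homology and the forgetful functor to $\D(k)$, whereas the paper simply asserts it.
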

\begin{proof}
$(1)\Longleftrightarrow (2)$ is clear.

$(1)\Longrightarrow (3)$ If the
recollement exists, then $\Rh_A(T^\bullet
,?):\D(A)\longrightarrow\D(B)$ is fully faithful, so that $\delta_A$
is an isomorphism. Moreover, $\Rh_A(T^\bullet ,?)$ preserves
coproducts since it it is a left adjoint functor. This preservation
of coproducts is equivalent to  having
$T^\bullet_A\in\text{per}(A)$.

$(3)\Longrightarrow (1)$ We clearly have that the functor
$(?\Lt_BT^\bullet )\circ\Rh_A(T^\bullet
,?):\D(A)\longrightarrow\D(A)$ preserves coproducts.  Then assertion
(2) of proposition \ref{prop.fully faithful RHom} holds, so that
$\Rh_A(T^\bullet ,?)$ is fully faithful. On the other hand, by
proposition \ref{prop.Brown representability theorem}, we get that
$\Rh_A(T^\bullet ,?):\D(A)\longrightarrow\D(B)$ has a right adjoint,
so that assertion (1) holds.
\end{proof}

\begin{teor} \label{teor.recollement D=D(B)=D(A) with Rh}
Let ${}_BT^\bullet_A$ be a complex of $B-A-$bimodules. Consider the
following assertions:

\begin{enumerate}
\item There is a recollement $\D'\equiv\D(B)\equiv\D(A)$, with $j_*=\Rh_A(T^\bullet
,?)$, for some triangulated category (which is equivalent to
$\D(C)$, where $C$ is a dg algebra);
\item There is a recollement $\D'\equiv\D(B)\equiv\D(A)$, with
$j^*=j^!=?\Lt_BT^\bullet$, for some triangulated category (which is
equivalent to $\D(C)$, where $C$ is a dg algebra);
\item The following three conditions hold:

\begin{enumerate}
\item The counit map $\delta_A:[(?\Lt_BT^\bullet )\circ\Rh_A(T^\bullet ,?)](A)\longrightarrow
A$ is an isomorphism;
\item The functor $(?\Lt_BT^\bullet )\circ\Rh_A(T^\bullet
,?):\D(A)\longrightarrow\D(A)$ preserves coproducts;
\item The functor $?\Lt_BT^\bullet :\D(B)\longrightarrow\D(A)$
preserves products.
\end{enumerate}

\item ${}_BT^\bullet$ is compact and exceptional in $\D(B^{op})$ and
the canonical algebra morphism
$A\longrightarrow\text{End}_{\D(B^{op})}(T^\bullet )^{op}$ is an
isomorphism.
\end{enumerate}
The implications $(1)\Longleftrightarrow (2)\Longleftrightarrow
(3)\Longleftarrow (4)$ hold true and, when $A$ is $k$-projective, all
assertions are equivalent.  Moreover, if $B$ is $k$-flat, then the
dg algebra $C$ can be chosen together with a homological epimorphism
$f:B\longrightarrow C$ such that $i_*$ is the restriction of scalars
$f_*:\D(C)\longrightarrow\D(B)$.
\end{teor}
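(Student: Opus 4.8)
The plan is to prove the cycle of implications $(1)\Leftrightarrow(2)\Leftrightarrow(3)\Leftarrow(4)$ and, under $k$-projectivity of $A$, the implication $(3)\Rightarrow(4)$, together with the supplementary statements about $\D(C)$ and the homological epimorphism. The equivalence $(1)\Leftrightarrow(2)$ is immediate from the general principle (Proposition~\ref{prop.functor in a recollement}(2) and Lemma~\ref{lem.fully-faitful right iff left}) that in a recollement $j^*=j^!$ has a fully faithful left adjoint $j_!$ and a fully faithful right adjoint $j_*$, so asking that $j_*=\Rh_A(T^\bullet,?)$ is the same as asking that $?\Lt_BT^\bullet$ (which is the left adjoint of $\Rh_A(T^\bullet,?)$ by Corollary~\ref{cor.classical adjunctions of derived functors}) be $j^*=j^!$. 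To get $(1)/(2)\Leftrightarrow(3)$: by Proposition~\ref{prop.functor in a recollement}(2), a recollement $\D'\equiv\D(B)\equiv\D(A)$ with $j^*=?\Lt_BT^\bullet$ exists iff $?\Lt_BT^\bullet$ has both a left and a right adjoint and one of them is fully faithful. Its right adjoint is always $\Rh_A(T^\bullet,?)$; it has a \emph{left} adjoint iff $?\Lt_BT^\bullet$ preserves products, which by Proposition~\ref{prop.tensor by compacts preserves products} is condition (3)(c) (equivalently ${}_BT^\bullet$ compact in $\D(B^{op})$). Finally, $\Rh_A(T^\bullet,?)$ is fully faithful iff (3)(a)+(3)(b) hold, by Proposition~\ref{prop.fully faithful RHom}; and by Lemma~\ref{lem.fully-faitful right iff left} this is equivalent to $?\Lt_BT^\bullet$ being fully faithful. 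Combining, $(3)\Leftrightarrow(1)\Leftrightarrow(2)$.

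For $(4)\Rightarrow(3)$: assume ${}_BT^\bullet$ compact and exceptional in $\D(B^{op})$ with $A\xrightarrow{\sim}\End_{\D(B^{op})}(T^\bullet)^{op}$. Compactness gives (3)(c) directly via Proposition~\ref{prop.tensor by compacts preserves products}. For (3)(a) and (3)(b), the key is to apply Lemma~\ref{lem.units are isomorphisms2}: the hypothesis (4) is precisely assertion (1) of that lemma (the unit $\rho_A:A\to\Rh_{B^{op}}(T^\bullet,?)(T^\bullet)$ is an isomorphism because $T^\bullet$ is exceptional — so $\Rh_{B^{op}}(T^\bullet,?)(T^\bullet)$ has homology $\End_{\D(B^{op})}(T^\bullet)$ concentrated in degree $0$ — and $A$ maps isomorphically onto it). Hence by that lemma assertion (2) holds: $\phi_{T}:[T^\bullet\Lt_A\Rh_A(T^\bullet,?)](T^\bullet)\to T^\bullet$ is an isomorphism in $\D(B^{op})$ — but I want the counit $\delta$ of the \emph{other} adjunction $(?\Lt_BT^\bullet,\Rh_A(T^\bullet,?))$ evaluated at $A$, which lives in $\D(A)$. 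The bridge is Proposition~\ref{prop.all nat.transformations are isos}(3)(b): when $A$ is $k$-projective and $B$ is $k$-flat (or handled via restriction of scalars as in the proof of that proposition) the compactness of ${}_BT^\bullet$ gives $?\Lt_BT^\bullet\cong\Rh_B(T^{\bullet*},?)$, and one can transport the statement of Lemma~\ref{lem.units are isomorphisms2} across the duality $(?)^*$ of Definition~\ref{def.dual wrt B} to land in $\D(A)$; (3)(b) (preservation of coproducts of the composite) then follows because $\Rh_A(T^\bullet,?)$ preserves coproducts exactly when $T^\bullet_A$ is perfect, and this compactness on the $A$-side is forced once $\delta_A$ is an isomorphism together with compactness on the $B^{op}$-side. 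I would set this up carefully by reducing to $C=k$ wherever possible (as in the proofs of Propositions~\ref{prop.all nat.transformations are isos} and \ref{prop.tensor by compacts preserves products}) and using that the forgetful functors reflect isomorphisms.

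For $(3)\Rightarrow(4)$ when $A$ is $k$-projective: $\Rh_A(T^\bullet,?)$ is fully faithful, so the unit $B\to\Rh_A(T^\bullet,?\Lt_BT^\bullet)(B)\cong\Rh_A(T^\bullet,?)(T^\bullet)=\End^\bullet_A(T^\bullet,\mathbf i T^\bullet)$ need not be an iso, but dualizing via Lemma~\ref{lem.units are isomorphisms} and Proposition~\ref{prop.all nat.transformations are isos}(2) (the map $\sigma_T:T^\bullet\to T^{\bullet**}$ is an isomorphism when ${}_BT^\bullet$ is compact) one identifies $?\Lt_BT^\bullet$ with $\Rh_B(T^{\bullet*},?)$ and reads off that ${}_BT^\bullet$ is compact in $\D(B^{op})$ (from (3)(c) via Proposition~\ref{prop.tensor by compacts preserves products}) and exceptional with the right endomorphism algebra: exceptionality of ${}_BT^\bullet$ is equivalent, via the duality $\mathcal L_{B,A}\leftrightarrow\mathcal R_{A,B}$ of Corollary~\ref{cor.duality between perfect complexes} and full faithfulness of $?\Lt_BT^\bullet$, to $T^{\bullet*}$ being a homotopically projective complex with the appropriate self-ext vanishing, which the fully-faithfulness condition $\delta_A$ iso plus coproduct preservation delivers. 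The computation $A\cong\End_{\D(B^{op})}(T^\bullet)^{op}$ comes from $A\cong\End_{\D(A)}(\mathbf 1)$ transported through the equivalence onto the image of $\Rh_A(T^\bullet,?)$ and then through the duality. I expect \textbf{this implication $(3)\Rightarrow(4)$ to be the main obstacle}: it is the only place where $k$-projectivity is genuinely used, and it requires carefully stitching together the duality results of Section~3 (Propositions~\ref{prop.balance of derived bifunctors}, \ref{prop.all nat.transformations are isos}, Corollary~\ref{cor.duality between perfect complexes}) to move exceptionality and the endomorphism-algebra identification from the $A$-side to the $B^{op}$-side without assuming $B$ is nice.

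For the supplementary claims: once $\Rh_A(T^\bullet,?)$ is fully faithful with a right adjoint (here: two-sided adjoints), the recollement is the one attached by Proposition~\ref{prop.TTF=recollement} to the TTF triple $(\operatorname{Im}(?\Lt_BT^\bullet),\operatorname{Ker}(?\Lt_BT^\bullet)=\operatorname{Im}(j_!{}'),\dots)$; concretely $\D'=\operatorname{Ker}(?\Lt_BT^\bullet)$, a thick subcategory of $\D(B)$ closed under coproducts and products, hence (being compactly generated — one checks $\D(B)$ is compactly generated and the kernel is the co-aisle/aisle of a semi-orthogonal decomposition) equivalent to $\D(C)$ for a dg algebra $C$ by the standard argument (Keller's theorem, Theorem~\ref{teor.Keller-Rickard generalization} applied to a compact generator of $\D'$). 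When $B$ is $k$-flat, one additionally uses that $\operatorname{Ker}(?\Lt_BT^\bullet)={}^\perp(\operatorname{Im}\Rh_A(T^\bullet,?))$ is itself a \emph{smashing} subcategory generated by a single compact object; the universal localization / homological epimorphism $f:B\to C$ is then produced by the usual recipe (take $C$ to be a dg algebra model for $B$ localized away from that compact object), and $i_*=f_*$ by construction. I would cite here the standard correspondence between smashing subcategories generated by compacts, TTF triples on $\D(B)$, and homological epimorphisms from $B$, which is available in the background literature the paper refers to.
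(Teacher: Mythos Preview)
Your handling of $(1)\Leftrightarrow(2)\Leftrightarrow(3)$ and of the supplementary claims about $\D(C)$ and the homological epimorphism matches the paper (the latter is simply a citation of \cite[Theorem~4]{NS2} there, after observing that $i^!(A)$ is a compact generator of $\D'$). The substantive problem is your argument for $(4)\Rightarrow(3)$, which must hold with \emph{no} hypothesis on $A$ or $B$. Your bridge from Lemma~\ref{lem.units are isomorphisms2} via Proposition~\ref{prop.all nat.transformations are isos}(3)(b) explicitly invokes ``$A$ $k$-projective and $B$ $k$-flat'', and your separate argument for (3)(b) --- that it follows because ``$\Rh_A(T^\bullet,?)$ preserves coproducts exactly when $T^\bullet_A$ is perfect, and this compactness on the $A$-side is forced'' --- is false: for a good tilting module as in Corollary~\ref{cor.fully faithful Rh-compact for bimodules}, condition~(4) holds while $T_A$ is not compact in $\D(A)$. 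The paper instead proves the \emph{entire} counit $\delta$ is a natural isomorphism in one stroke (which yields (3)(a) and (3)(b) together, since the composite becomes naturally isomorphic to the identity). The trick is to apply Proposition~\ref{prop.all nat.transformations are isos}(1) with $T^\bullet$ viewed as a $B$--$k$--bimodule, so the $k$-projectivity hypothesis falls on $k$ and is vacuous: one first obtains $T^{\bullet*}\Lt_BT^\bullet\cong A_A$ in $\D(A)$ from the exceptionality and endomorphism hypotheses in (4), and then for every $M^\bullet\in\D(A)$ the chain
\[
(?\Lt_BT^\bullet)\bigl(\Rh_A(T^\bullet,M^\bullet)\bigr)\;\cong\;\Rh_B\bigl(T^{\bullet*},\Rh_A(T^\bullet,M^\bullet)\bigr)\;\cong\;\Rh_A\bigl(T^{\bullet*}\Lt_BT^\bullet,M^\bullet\bigr)\;\cong\;\Rh_A(A,M^\bullet)\;\cong\;M^\bullet
\]
holds in $\D(k)$ and is checked to be the image of $\delta_{M^\bullet}$ under the forgetful functor, which reflects isomorphisms.

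For $(3)\Rightarrow(4)$ under $k$-projectivity of $A$, your sketch has the right ingredients but is vague at the crucial step. The paper's route is a forward reference: (3)(c) gives ${}_BT^\bullet\in\text{per}(B^{op})$ via Proposition~\ref{prop.tensor by compacts preserves products}; Proposition~\ref{prop.all nat.transformations are isos} (now legitimately, with $A$ $k$-projective) identifies $?\Lt_BT^\bullet\cong\Rh_B(T^{\bullet*},?)$, so the left adjoint of $?\Lt_BT^\bullet$ is $?\Lt_AT^{\bullet*}$, which is fully faithful by Lemma~\ref{lem.fully-faitful right iff left}; Corollary~\ref{cor.Chen-Xi} applied to the $A$--$B$--bimodule $T^{\bullet*}$ then yields that $T^{\bullet*}_B$ is compact and exceptional with $A\cong\End_{\D(B)}(T^{\bullet*})$; finally the duality of Corollary~\ref{cor.duality between perfect complexes} transports these properties back to ${}_BT^\bullet$.
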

\begin{proof}
$(1)\Longleftrightarrow (2)$ is clear.

$(1)\Longleftrightarrow (3)$ By proposition \ref{prop.functor in a
recollement}, we know that the recollement in (3) exists if, and only
if, $?\Lt_BT^\bullet :\D(B)\longrightarrow\D(A)$ has a left adjoint
and $\Rh_A(T^\bullet ,?):\D(A)\longrightarrow\D(B)$ is fully
faithful. Apply now corollary \ref{cor.adjoints with comp.generated
domain} and proposition \ref{prop.fully faithful RHom}.

$(4)\Longrightarrow (3)$ Condition (3)(c) follows from proposition
\ref{prop.tensor by compacts preserves products}. By proposition
\ref{prop.fully faithful RHom}, proving conditions (3)(a) and (3)(b) is
equivalent to proving  that $\Rh_A(T^\bullet ,?)$ is fully faithful.
This is in turn equivalent to proving that the counit $\delta
:(?\Lt_BT^\bullet )\circ\Rh_A(T^\bullet ,?)\longrightarrow
1_{\D(A)}$ is a natural isomorphism.

In order to prove this, we apply proposition \ref{prop.all
nat.transformations are isos} to $T^\bullet$, when viewed as a
complex of left $B$-modules (equivalently, of $B-k-$bimodules). Then
$T^{\bullet
*}$ is obtained from $T^\bullet$ by applying
$\Rh_{B^{op}}(?,B):\D(B^{op})^{op}\longrightarrow\D(B)$ and,
similarly, we obtain $T^{\bullet **}$ from $T^{\bullet *}$. By the
mentioned proposition, we know that $T^\bullet \cong T^{\bullet **}$
in $\D(B^{op})$. Moreover, applying assertion (1) of that proposition,
with $A$ and $C$ replaced by $k$ and $A$, respectively, and putting
$Y^\bullet =T^\bullet\in\C(B^{op}\otimes A)$, we obtain isomorphisms
in $\D(A)$:

\begin{center}
$T^{\bullet *}\Lt_BT^\bullet :=[(?\Lt_BT^\bullet
)\circ\Rh_{B^{op}}(?,B)](T^\bullet )\cong\Rh_{B^{op}}(?,T^\bullet
)(T^\bullet )\cong A_A$,
\end{center}
where the last isomorphism follows from the exceptionality of
${}_BT^\bullet$ in $\D(B^{op})$ and the fact  that the canonical
algebra morphism $A\longrightarrow\text{End}_{\D(B^{op})}(T^\bullet
)^{op}$ is an isomorphism.

Using the previous paragraph, proposition \ref{prop.all
nat.transformations are isos} and adjunction, for each object
$M^\bullet\in\D(A)$ we  get a chain of isomorphisms in $\D(k)$:

\begin{center}
$[(?\Lt_BT^\bullet )\circ\Rh_A(T^\bullet ,?)](M^\bullet )=
(?\Lt_BT^\bullet)(\Rh_A(T^\bullet ,M^\bullet
))\stackrel{\cong}{\longrightarrow}\Rh_B(T^{\bullet
*},\Rh_A(T^\bullet ,M^\bullet ))\stackrel{\cong}{\longrightarrow}\Rh_A(T^{\bullet *}\Lt_BT^\bullet ,M^\bullet )
\stackrel{\cong}{\longrightarrow}\Rh_A(A,M^\bullet )
\stackrel{\cong}{\longrightarrow}M^\bullet $.
\end{center}
It is routine now  to see that the composition of these isomorphisms
is obtained from the counit map $\delta_{M^\bullet
}:[(?\Lt_BT^\bullet )\circ\Rh_A(T^\bullet ,?)](M^\bullet
)\longrightarrow M^\bullet $ by applying the forgetful functor
$\D(A)\longrightarrow\D(k)$. Since this last functor reflects
isomorphisms we get that $\delta$ is a natural isomorphism, so that
$\Rh_A(T^\bullet ,?):\D(A)\longrightarrow\D(B)$ is a fully faithful
functor.

$(1)-(3)\Longrightarrow (4)$ (Assuming that $A$ is $k$-projective).  By
proposition  \ref{prop.tensor by compacts preserves products}, we
know that ${}_BT^\bullet\in\text{per}(B^{op})$. Then, by proposition
\ref{prop.all nat.transformations are isos}, we have a natural
isomorphism  $?\Lt_BT^{\bullet}\cong\Rh_B(T^{\bullet *},?)$ of
triangulated functors $D(B)\longrightarrow\D(A)$, where now we are
considering $T^{\bullet
*}$ as obtained from $T^\bullet$ by applying $(?)^*:=\Rh_{B^{op}}(?,B):\D(B^{op}\otimes A)^{op}\longrightarrow\D(A^{op}\otimes
B)$. By lemma \ref{lem.fully-faitful right iff left}, the fully
faithful condition of $\Rh_A(T^\bullet ,?)$ implies the same
condition  for $?\Lt_AT^{\bullet *}$. Then corollary
\ref{cor.Chen-Xi} below says that $T^{\bullet *}_B$ is compact and
exceptional in $\D(B)$ and the canonical algebra morphism
$A\longrightarrow\text{End}_{\D(B)}(T^{\bullet *})$ is an
isomorphism.

Note now that, due to the $k$-projectivity of $A$, when applying to
${}_BT^\bullet$ the functor
$\Rh_{B^{op}}(?,B):\D(B^{op})^{op}\longrightarrow\D(B)$, we obtain
an object isomorphic to $T^{\bullet *}_B$ in $\D(B)$, and conversely.
Applying now corollary \ref{cor.duality between perfect complexes}
with $A=k$, we have that ${}_BT^\bullet\cong {}_BT^{\bullet
**}$ is exceptional in $\D(B^{op})$ and that the algebra map
$A\longrightarrow\text{End}_{\D(B^{op})}(T^\bullet
)^{op}\cong\text{End}_{\D(B)}(T^{\bullet *})$ is an isomorphism.

For the final statement note that, when the recollement of
assertions (1) or (2) exists, $\D'$ has a compact generator, namely
$i^!(A)$. Then, by \cite[Theorem 4.3]{Ke}, we know that
$\D'\cong\D(C)$, for some dg algebra $C$. When $B$ is $k$-flat, the
fact that this dg algebra can be chosen together with a homological
epimorphism $f:B\longrightarrow C$ satisfying the requirements is a
direct consequence of \cite[Theorem 4]{NS2}.
\end{proof}

\begin{rem} \label{rem.Bazzoni-Pavarin}
Due to the results in \cite{NS}, except for the implication
$(1)-(3)\Longrightarrow (4)$,  theorem \ref{teor.recollement D=D(B)=D(A)
with Rh} is also true in the context of dg categories, with the
proof adapted. In that case its implication $(4)\Longrightarrow (3)$
partially generalizes \cite[Theorem 4.3]{BP} in the sense that we
explicitly prove that the recollement exists with
$j^*=j^!=?\Lt_BT^\bullet$. Note, however, that if $T^{\bullet
*}$ is the dg $A-B-$module obtained from $T^\bullet$ by application
of the functor $\Rh_{B^{op}}(?,B):\D(B^{op}\otimes
A)\longrightarrow\D(A^{op}\otimes B)$, we cannot guarantee that the
left adjoint of $?\Lt_BT^\bullet:\D(B)\longrightarrow\D(A)$ is
(naturally isomorphic to) $?\Lt_AT^{\bullet *}$. Due to the version
of proposition \ref{prop.all nat.transformations are isos} for dg
algebras, we can guarantee that when $A$ is assumed to be
$k$-projective. Note that, for the entire theorem
\ref{teor.recollement D=D(B)=D(A) with Rh} to be true in the context
of dg algebras(or even dg categories), one only needs to prove that
the implication $(2)\Longrightarrow (1)$ of proposition
\ref{prop.tensor by compacts preserves products} holds in this more
general context. The rest of the proof of theorem
\ref{teor.recollement D=D(B)=D(A) with Rh} can be extended without
problems.
\end{rem}

\begin{exem}
Let $A$ be a  hereditary  Artin  algebra and let $S$ be a
non-projective simple module. Then $T=A\oplus S$ is a right
$A$-module, so that $T$ becomes a $B-A-$bimodule, where
$B=\text{End}(T_A)\cong\begin{pmatrix} A & 0\\ S & D \end{pmatrix}$,
where $D=\text{End}(S_A)$. There  are  a recollement
$\D(A)\equiv\D(B)\equiv\D'$, with $i_*=\Rh_A(T,?)$, and a
recollement $\D''\equiv\D(B)\equiv\D(A)$, with $j_*=\Rh_A(T,?)$, for
some triangulated categories $\D'$ and $\D''$. However $T_A$ is not
exceptional in $\D(A)$.

\begin{proof}
It is well-known that $\text{Ext}_A^1(S,S)=0$, which implies that
$\text{Ext}_A^1(T,T)\cong\text{Ext}_A^1(S,A)\neq 0$ and, hence, that
$T_A$ is not exceptional in $\D(A)$.

 We denote by $e_i$ ($i=1,2$) the
canonical idempotents of $B$.  We readily see that ${}_BT\cong
Be_1$, that $\text{Hom}_A(T,A)\cong e_1B$ and that
$\text{Ext}_A^1(T,A)$ is isomorphic to $\begin{pmatrix} 0 &
\text{Ext}_A^1(S,A)\end{pmatrix}$, when viewed as a right $B$-module
in the usual way (see, e.g., \cite[Proposition III.2.2]{ARS}). In
particular, we have $\text{Ext}_A^1(T,A)e_1=0$. We then get a
triangle in $\D(B)$:

\begin{center}
$e_1B[0]\longrightarrow\Rh_A(T,A)\longrightarrow
\text{Ext}_A^1(T,A)[-1]\stackrel{+}{\longrightarrow}$.
\end{center}
When applying $?\Lt_BT=?\Lt_BBe_1=?\otimes_BBe_1$,  we get  a
triangle in $\D(A)$:

\begin{center}
 $A=e_1Be_1[0]\longrightarrow \Rh_A(T,A)\Lt_BT\longrightarrow
 \text{Ext}_A^1(T,A)e_1[-1]=0\stackrel{+}{\longrightarrow}$.
\end{center}
We then get an isomorphism
$A\stackrel{\cong}{\longrightarrow}\Rh_A(T,A)\Lt_BT$ in $\D(A)$,
which is easily seen to be inverse to $\delta_A$.  Then assertion (3)
of corollary \ref{cor.recollement D(A)=D(B)=D by RHom} holds.

On the other hand, also condition (4) of theorem \ref{teor.recollement
D=D(B)=D(A) with Rh} holds.
\end{proof}

\end{exem}

Although the exceptionality property is not needed, it helps to
extract more information about $T^\bullet$, when $\Rh_A(T^\bullet
,?)$ is fully faithful. The following is an example:

\begin{prop} \label{prop.faithful Rh for exceptional}
Let $T^\bullet$ be a complex of $B-A-$ bimodules such that
$\Rh_A(T^\bullet ,?):\D(A)\longrightarrow\D(B)$ is fully faithful.
The following assertions hold:

\begin{enumerate}
\item If $\D(A)(T^\bullet ,T^\bullet [n])=0$, for all but finitely
many $n\in\mathbb{Z}$, then $H^p(T^\bullet )=0$, for all but
finitely many $p\in\mathbb{Z}$;
\item If $T^\bullet_A$ is exceptional in $\D(A)$ and the algebra morphism $B\longrightarrow\text{End}_{\D(A)}(T^\bullet
)$ is an isomorphism, then ${}_BT^\bullet$ is isomorphic in
$\D(B^{op})$ to an upper bounded complex of finitely generated
projective left $B$-modules (with  bounded homology).
\end{enumerate}
\end{prop}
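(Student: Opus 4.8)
The plan is to establish (1) first and then deduce (2) from it together with the exceptionality hypotheses.

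\emph{On (1).} Two observations will be used repeatedly. First, since $A_A$ is a finitely generated projective generator of $\D(A)$, the stalk complex $A_A$ is homotopically projective and hence $H^p(X^\bullet)\cong\D(A)(A_A,X^\bullet[p])$ for any $X^\bullet$; in particular $H^p(T^\bullet)\cong\D(A)(A_A,T^\bullet[p])$. Second, by the very definition of the derived functor, $H^n(\Rh_A(T^\bullet,T^\bullet))\cong\D(A)(T^\bullet,T^\bullet[n])$, which vanishes for all but finitely many $n$ by hypothesis. Now, since $\Rh_A(T^\bullet,?)$ is fully faithful, Proposition \ref{prop.fully faithful RHom} tells us that the counit $\delta$ of the adjoint pair $(?\Lt_BT^\bullet,\Rh_A(T^\bullet,?))$ is a natural isomorphism; hence $?\Lt_BT^\bullet$ is essentially surjective, so $\text{Tria}_{\D(A)}(T^\bullet)=\D(A)$ and $A_A\cong\Rh_A(T^\bullet,A)\Lt_BT^\bullet$. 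Feeding this into the adjunction isomorphism gives
\[
H^p(T^\bullet)\cong\D(A)\bigl(\Rh_A(T^\bullet,A)\Lt_BT^\bullet,\,T^\bullet[p]\bigr)\cong\D(B)\bigl(\Rh_A(T^\bullet,A),\,\Rh_A(T^\bullet,T^\bullet)[p]\bigr).
\]

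\emph{Main obstacle.} It now suffices to show $A_A\in\text{thick}_{\D(A)}(T^\bullet)$: granting that, $\D(A)(A_A,T^\bullet[p])$ is built by finitely many triangles from shifts of the groups $\D(A)(T^\bullet,T^\bullet[n])$ with $n$ in a bounded interval, and these vanish for $|n|\gg 0$, so $H^p(T^\bullet)\cong\D(A)(A_A,T^\bullet[p])=0$ for $|p|\gg 0$. (Such membership would follow, for instance, from $\Rh_A(T^\bullet,A)$ being a perfect object of $\D(B)$: the functor $?\Lt_BT^\bullet$ is triangulated, preserves coproducts, and sends $B$ to $T^\bullet$, hence carries $\text{per}(B)=\text{thick}_{\D(B)}(B)$ into $\text{thick}_{\D(A)}(T^\bullet)$, and $A_A\cong\Rh_A(T^\bullet,A)\Lt_BT^\bullet$.) Proving $A_A\in\text{thick}_{\D(A)}(T^\bullet)$ is the delicate point; I would try to derive it from the facts that $A_A$ is compact in $\D(A)$, that $\text{Tria}_{\D(A)}(T^\bullet)=\D(A)$, and that $\Rh_A(T^\bullet,?)$ is an equivalence onto its essential image, via a B\"okstedt--Neeman--type argument identifying the compact objects of a compactly generated triangulated subcategory with the thick subcategory generated by a set of compact generators.

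\emph{On (2).} Since $T^\bullet_A$ is exceptional, $\D(A)(T^\bullet,T^\bullet[n])=0$ for $n\neq 0$, so the hypothesis of (1) holds and, by (1), $H^p(T^\bullet)=0$ for all but finitely many $p$. In particular $H^p(T^\bullet)=0$ for $p\gg 0$, so Proposition \ref{prop.upper bounded of f.g. projectives} applies, and the only remaining task is to check that the canonical morphism $c_\alpha\colon B^\alpha\Lt_BT^\bullet\longrightarrow (B\Lt_BT^\bullet)^\alpha=T^{\bullet\alpha}$ is an isomorphism for every cardinal $\alpha$ (boundedness of the homology ``below'' is already provided by (1)). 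Write $F=?\Lt_BT^\bullet$, $G=\Rh_A(T^\bullet,?)$ and $H=G(T^\bullet)=\Rh_A(T^\bullet,T^\bullet)$. Exceptionality together with the hypothesis that $B\longrightarrow\End_{\D(A)}(T^\bullet)$ be an isomorphism give $H^n(H)=0$ for $n\neq 0$ and $H^0(H)\cong B$, so $H\cong B$ in $\D(B)$, the isomorphism being induced by the unit map $\lambda_B\colon B\longrightarrow\Rh_A(T^\bullet,?\Lt_BT^\bullet)(B)=H$ (whose $0$-th homology is the canonical algebra map); in particular $\lambda_B$ is an isomorphism. As $G$ is a right adjoint it preserves products, so $G(T^{\bullet\alpha})\cong G(T^\bullet)^\alpha=H^\alpha\cong B^\alpha$; thus $B^\alpha$ lies in the essential image of $G$, and hence $\lambda_{B^\alpha}\colon B^\alpha\longrightarrow GF(B^\alpha)$ is an isomorphism, because for $Z=G(M)$ the triangle identity gives $G(\delta_M)\circ\lambda_{G(M)}=1_{G(M)}$ while $\delta_M$ is an isomorphism by full faithfulness of $G$. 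Finally, naturality of $\lambda$ against the $\alpha$ product projections $B^\alpha\longrightarrow B$, together with the preservation of products by $G$, yields $G(c_\alpha)\circ\lambda_{B^\alpha}=(\lambda_B)^\alpha$, which is an isomorphism; therefore $G(c_\alpha)$ is an isomorphism, and since the fully faithful functor $G$ reflects isomorphisms, $c_\alpha$ is an isomorphism, as required.
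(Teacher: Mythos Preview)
Your argument for (2) is correct and is essentially the paper's argument, just spelled out more carefully: both verify the product criterion of Proposition~\ref{prop.upper bounded of f.g. projectives} by using exceptionality to identify $B\cong\Rh_A(T^\bullet,T^\bullet)$, then passing through the adjunction and the preservation of products by $\Rh_A(T^\bullet,?)$ to conclude that $B^\alpha\Lt_BT^\bullet\to T^{\bullet\alpha}$ is an isomorphism.

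For (1), however, there is a genuine gap. Your strategy reduces everything to showing $A_A\in\text{thick}_{\D(A)}(T^\bullet)$, and you propose to obtain this from a B\"okstedt--Neeman--type argument using that $A_A$ is compact and $\text{Tria}_{\D(A)}(T^\bullet)=\D(A)$. But that argument identifies the compact objects of a compactly generated category with the thick closure of a set of \emph{compact} generators; nothing in the hypotheses of (1) forces $T^\bullet_A$ to be compact, so the argument does not apply. In fact, even under the stronger hypotheses of (2), the question whether $A_A\in\text{thick}_{\D(A)}(T^\bullet)$ (equivalently, whether $\Rh_A(T^\bullet,?)$ preserves compact objects) is precisely the open Question~\ref{ques.Rh faithful preserves compacts} in the paper. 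So this route cannot be completed as stated.

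The paper avoids this obstacle entirely. Its proof of (1) does not try to place $A_A$ in $\text{thick}_{\D(A)}(T^\bullet)$; instead it shows directly that the canonical map $\coprod_{n\in\mathbb{Z}}T^\bullet[n]\to\prod_{n\in\mathbb{Z}}T^\bullet[n]$ is an isomorphism in $\D(A)$, and then applies $H^0$. The key chain of isomorphisms uses that $X^\bullet:=\Rh_A(T^\bullet,T^\bullet)$ has bounded homology (so $\coprod X^\bullet[n]\to\prod X^\bullet[n]$ is an isomorphism in $\D(B)$), that $(?\Lt_BT^\bullet)(\lambda_B)$ is an isomorphism (from the triangle identities and the fact that the counit is an isomorphism), and that $\Rh_A(T^\bullet,?)$ preserves products while $?\Lt_BT^\bullet$ preserves coproducts. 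This is an elementary and self-contained argument that does not rely on any compactness of $T^\bullet$ or on $A_A$ lying in $\text{thick}_{\D(A)}(T^\bullet)$.
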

\begin{proof}
(1) Let us put $X^\bullet =\Rh_A(T^\bullet ,T^\bullet )$. The
hypothesis says that $H^n(X^\bullet )=0$, for all but finitely many
$n\in\mathbb{Z}$. In particular the canonical morphism
$f:\coprod_{n\in\mathbb{Z}}X^\bullet
[n]\longrightarrow\prod_{n\in\mathbb{Z}}X^\bullet [n]$ is an
isomorphism in $\D(B)$ since $H^p(f)$ is an isomorphism, for all
$p\in\mathbb{Z}$. On the other hand, due to the adjunction equations
and the fact the the counit $\delta : (?\Lt_BT^\bullet
)\circ\Rh_A(T^\bullet ,?)\longrightarrow 1_{\D(A)}$ is a natural
isomorphism, the unit $\lambda
:1_{\D(B)}\longrightarrow\Rh_A(T^\bullet ,?)\circ (?\Lt_BT^\bullet
)$ satisfies that
$\lambda_{M^\bullet}\Lt_B1_{T^\bullet}:=(?\Lt_BT^\bullet
)(\lambda_{M^\bullet})$ is an isomorphism, for each
$M^\bullet\in\D(B)$. Applying this to the map
$\lambda_B:B\longrightarrow [\Rh_A(T^\bullet ,?)\circ
(?\Lt_BT^\bullet )](B)=X^\bullet$, we conclude that
$\lambda_B\Lt_B1_{T^\bullet}:T^\bullet\cong
B\Lt_BT^\bullet\longrightarrow X^\bullet\Lt_BT^\bullet$ is an
isomorphism in $\D(A)$.

We now have the following chain of isomorphisms in $\D(A)$:

\begin{center}
$\coprod_{n\in\mathbb{Z}}T^\bullet
[n]\stackrel{\cong}{\longrightarrow}\coprod_{n\in\mathbb{Z}}(X^\bullet\Lt_BT^\bullet
[n])\cong (\coprod_{n\in\mathbb{Z}}X^\bullet
[n])\Lt_BT^\bullet\stackrel{f\otimes
1}{\longrightarrow}(\prod_{n\in\mathbb{Z}}X^\bullet
[n])\Lt_BT^\bullet\cong(\prod_{n\in\mathbb{Z}}\Rh_A(T^\bullet
,T^\bullet [n]))\Lt_BT^\bullet
\stackrel{\cong}{\longrightarrow}\Rh_A(T^\bullet
,\prod_{n\in\mathbb{Z}}T^\bullet
[n])\Lt_BT^\bullet\stackrel{\delta}{\longrightarrow}\prod_{n\in\mathbb{Z}}T^\bullet
[n]$.
\end{center}
It is routine to check that the composition of these isomormorphisms
is the canonical morphism from the coproduct to the product. Arguing
now as in the proof of proposition \ref{prop.tensor by compacts
preserves products}, we conclude that $H^p(T^\bullet )=0$, for all
but finitely many $p\in\mathbb{Z}$.

\vspace*{0.3cm}

(2)   The counit gives an isomorphism

\begin{center}
$B^\alpha\Lt_BT^\bullet\cong \Rh_A(T^\bullet
,T^{\bullet})^\alpha\Lt_BT^\bullet\stackrel{\cong}{\longrightarrow}\Rh_A(T^\bullet
,T^{\bullet\alpha})\Lt_BT^\bullet\stackrel{\delta}{\longrightarrow}T^{\bullet\alpha}$,
\end{center}
for each cardinal $\alpha$. Now apply proposition \ref{prop.upper
bounded of f.g. projectives} to end the proof.
\end{proof}

Unlike the case of $\Rh_A(T^\bullet ,?)$, one-sided exceptionality
is a consequence of the fully faithfulness of $?\Lt_BT^\bullet$.

\begin{prop} \label{prop.fully faithful derived tensor}
Let $T^\bullet$ be a complex of $B-A-$bimodules. Consider the
following assertions:

\begin{enumerate}
\item $?\Lt_BT^\bullet :\D(B)\longrightarrow\D(A)$ is fully faithful;
\item $T_A^\bullet$ is exceptional in $\D(A)$, the canonical algebra morphism $B\longrightarrow\text{End}_{\D(A)}(T^\bullet )$ is an isomorphism, and
 the functor $\Rh_A(T^\bullet
,?)\circ (?\Lt_BT^\bullet ):\D(B)\longrightarrow\D(B)$ preserves
coproducts.
\item  $T_A^\bullet$ is exceptional and self-compact in $\D(A)$, and the canonical algebra morphism $B\longrightarrow\text{End}_{\D(A)}(T^\bullet )$ is an isomorphism
\item $T^\bullet_A$ satisfies the following conditions:

\begin{enumerate}
\item The canonical morphism of algebras $B\longrightarrow\text{End}_{\D(A)}(T^\bullet
)$ is an isomorphism;
\item For each cardinal $\alpha$, the canonical morphism $\D(A)(T^\bullet ,T^\bullet )^{(\alpha )}\longrightarrow\D(A)(T^\bullet ,T^{\bullet (\alpha
)})$ is an isomorphism and $\D(A)(T^\bullet ,T^{\bullet
(\alpha)}[p])=0$, for all $p\in\mathbf{Z}\setminus\{0\}$;
\item $\text{Susp}_{\D(A)}(T^\bullet)^\perp\cap\text{Tria}_{\D(A)}(T^\bullet)$ is closed under
taking coproducts.
\end{enumerate}
\end{enumerate}
The implications $(1)\Longleftrightarrow (2)\Longleftrightarrow
(3)\Longrightarrow (4)$ hold true. When $T_A^\bullet$ is
quasi-isomorphic to a bounded complex of projective $A$-modules, all
assertions are equivalent.
\end{prop}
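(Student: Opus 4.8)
The plan is to write $F:=?\Lt_BT^\bullet :\D(B)\longrightarrow\D(A)$ and $G:=\Rh_A(T^\bullet ,?):\D(A)\longrightarrow\D(B)$, which form an adjoint pair $(F,G)$ with unit $\lambda$ (corollary \ref{cor.classical adjunctions of derived functors}), and to recall that $F$ automatically preserves coproducts (it has a right adjoint) and is fully faithful if, and only if, $\lambda :1_{\D(B)}\longrightarrow GF$ is a natural isomorphism. The full subcategory $\mathcal{C}=\{N^\bullet\in\D(B):\lambda_{N^\bullet}\text{ is an isomorphism}\}$ is triangulated and contains $B$ exactly when $\lambda_B:B\longrightarrow GF(B)=\Rh_A(T^\bullet ,T^\bullet )$ is an isomorphism; since $H^p\Rh_A(T^\bullet ,T^\bullet )=\D(A)(T^\bullet ,T^\bullet [p])$ and $\lambda_B$ induces on $H^0$ the canonical algebra map (lemma \ref{lem.units are isomorphisms}), this holds exactly when $T^\bullet_A$ is exceptional and $B\longrightarrow\text{End}_{\D(A)}(T^\bullet )$ is an isomorphism. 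As $\text{Tria}_{\D(B)}(B)=\D(B)$, we get that $F$ is fully faithful if, and only if, $\lambda_B$ is an isomorphism and $GF$ preserves coproducts, which gives $(1)\Leftrightarrow (2)$. For $(1)\Rightarrow (3)$: when $F$ is fully faithful, $\text{Im}(F)$ is a triangulated subcategory closed under coproducts containing $F(B)=T^\bullet$, and $\text{Im}(F)\subseteq\text{Tria}_{\D(A)}(T^\bullet )=:\mathcal{T}$ by lemma \ref{lem.triang.functors preserves susp, tria..}, so $\text{Im}(F)=\mathcal{T}$ and $F$ restricts to an equivalence $\D(B)\stackrel{\cong}{\longrightarrow}\mathcal{T}$ commuting with coproducts; hence $T^\bullet=F(B)$ is compact in $\mathcal{T}$, i.e.\ self-compact. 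For $(3)\Rightarrow (1)$: self-compactness forces $G$ to preserve coproducts of families lying in $\mathcal{T}$ (check on homology via $H^pG(M^\bullet )=\D(A)(T^\bullet ,M^\bullet [p])$, using that $\mathcal{T}$ is closed under coproducts), hence $GF$ preserves coproducts, and $(3)$ makes $\lambda_B$ an isomorphism, so $F$ is fully faithful. Finally $(3)\Rightarrow (4)$ is immediate: (a) is part of $(3)$; (b) follows because $\D(A)(T^\bullet ,T^{\bullet (\alpha )}[p])=\D(A)(T^\bullet ,T^\bullet [p])^{(\alpha )}$ by exceptionality and self-compactness, the $p=0$ instance being the canonical comparison map; and (c) holds because a coproduct of objects of $\mathcal{U}^\perp\cap\mathcal{T}$, with $\mathcal{U}:=\text{Susp}_{\D(A)}(T^\bullet )$, again lies in $\mathcal{T}$ and, since each $T^\bullet [k]$ with $k\geq 0$ is compact in $\mathcal{T}$, stays right-orthogonal to all such $T^\bullet [k]$.

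It then remains to prove $(4)\Rightarrow (1)$ under the hypothesis that $T^\bullet_A$ is quasi-isomorphic to a bounded complex of projective $A$-modules. By the previous paragraph it suffices to check that $\lambda_B$ is an isomorphism --- which follows from (a) and the case $\alpha =1$ of (b), yielding exceptionality --- and that $GF$ preserves coproducts; since $F$ preserves coproducts with image in $\mathcal{T}$, this reduces to showing that $\Rh_A(T^\bullet ,?)$ preserves coproducts of families of objects of $\mathcal{T}$. I would do this in two steps. First, $\mathcal{U}=\text{Susp}_{\D(A)}(T^\bullet )$ is the aisle of a t-structure (proposition \ref{prop.triangulated subcategory is aisle}); for a family $(M^\bullet_i)$ in $\mathcal{T}$, the third terms of its truncation triangles lie in $\mathcal{U}^\perp\cap\mathcal{T}$, so by (c) the coproduct of the truncation triangles is again a truncation triangle, and since $T^\bullet ,T^\bullet [1]\in\mathcal{U}$ are right-orthogonal to $\mathcal{U}^\perp$, applying $\D(A)(T^\bullet [q],?)$ collapses all of them and yields $\D(A)(T^\bullet [q],\coprod_iM^\bullet_i)\cong\D(A)(T^\bullet [q],\coprod_i\tau_\mathcal{U}M^\bullet_i)$ compatibly with the canonical comparison maps; this reduces the claim to families in $\mathcal{U}$. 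Second --- and here the boundedness hypothesis is genuinely used --- one shows that $\D(A)(T^\bullet ,?):\mathcal{U}\longrightarrow\text{Mod}-k$ preserves coproducts: for such a $T^\bullet_A$ the functor $\Rh_A(T^\bullet ,?)$ is computed by $\Th_A(T^\bullet ,?)$ without resolutions, and, after shifting $T^\bullet$, $\mathcal{U}$ consists of complexes with homology in non-positive degrees, so each cohomological degree of $\Th_A(T^\bullet ,X^\bullet )$ involves only finitely many of the bounded components of $T^\bullet$; feeding this degreewise finiteness into condition (b) --- which says precisely that $\D(A)(T^\bullet ,?)$ commutes with coproducts of copies of $T^\bullet$ and of its shifts --- one propagates coproduct-preservation through the closure operations defining $\text{Susp}_{\D(A)}(T^\bullet )$.

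The main obstacle is exactly this second step: bootstrapping the coproduct-preservation of $\D(A)(T^\bullet ,?)$ from copies of $T^\bullet$ to all of $\text{Susp}_{\D(A)}(T^\bullet )$. Put differently, under the boundedness hypothesis one is proving that the heart $\mathcal{U}\cap\mathcal{U}^\perp [1]$ of the above t-structure is a Grothendieck category in which $T^\bullet$ is a small projective generator with endomorphism ring $B$, hence equivalent to $\text{Mod}-B$, so that $\mathcal{T}\cong\D(B)$; this is the point of contact with (infinite-dimensional) tilting theory and with the results of \cite{BMT}. It should be kept in mind that conditions (b) and (c) alone do not suffice, the finiteness coming from the bounded projective presentation of $T^\bullet_A$ being what lets the induction close, which is why the equivalence with assertion $(4)$ is asserted only under that extra hypothesis.
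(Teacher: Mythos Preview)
Your treatment of $(1)\Leftrightarrow(2)\Leftrightarrow(3)\Rightarrow(4)$ is correct and coincides with the paper's argument: both hinge on the subcategory $\{N^\bullet:\lambda_{N^\bullet}\text{ iso}\}$ being thick and closed under coproducts, on the identification of $\text{Im}(F)$ with $\mathcal{T}=\text{Tria}_{\D(A)}(T^\bullet)$, and on reading self-compactness as coproduct-preservation of $G$ restricted to $\mathcal{T}$.

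For $(4)\Rightarrow(3)$ your Step~1 also matches the paper: both of you truncate with respect to $\mathcal{U}=\text{Susp}_{\D(A)}(T^\bullet)$, use condition~(c) to kill $\D(A)(T^\bullet,\coprod\tau^{\mathcal{U}^\perp}X_i^\bullet)$, and reduce to showing that $\D(A)(T^\bullet,?)$ preserves the coproduct $\coprod\tau_{\mathcal{U}}X_i^\bullet$.

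The gap is in your Step~2. Your proposed mechanism---``degreewise finiteness'' of $\Th_A(T^\bullet,?)$ together with ``propagating coproduct-preservation through the closure operations defining $\text{Susp}_{\D(A)}(T^\bullet)$'' via condition~(b)---does not close. Condition~(b) controls only coproducts of copies of $T^\bullet$ itself; but $\text{Susp}_{\D(A)}(T^\bullet)$ is built by alternating extensions \emph{and arbitrary coproducts}, and at each coproduct stage you would already need the very compactness you are trying to establish, so the induction is circular. The alternative framing through the heart being $\text{Mod}-B$ is a restatement of the goal, not an independent route to it. The paper does \emph{not} argue this step directly: it invokes \cite[Theorem~3 and its proof]{NS1}, which supplies the non-trivial fact that, for $T^\bullet$ bounded projective, the inclusion $\mathcal{T}\cap\D^-(A)\hookrightarrow\D^-(A)$ has a right adjoint and $T^\bullet$ is compact in $\mathcal{T}\cap\D^-(A)$. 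Since each $\tau_{\mathcal{U}}X_i^\bullet$ lies in $\mathcal{U}\subseteq\D^{\leq 0}(A)$ and their coproduct stays in $\mathcal{T}\cap\D^-(A)$, this external result gives exactly the isomorphism $\coprod\D(A)(T^\bullet,\tau_{\mathcal{U}}X_i^\bullet)\cong\D(A)(T^\bullet,\coprod\tau_{\mathcal{U}}X_i^\bullet)$ that your Step~2 was aiming at. You correctly identified where the difficulty lies; what is missing is either this citation or an honest reproduction of its proof.
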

\begin{proof}
Let $\lambda :1_{\D(B)}\longrightarrow\Rh_A(T^\bullet ,?)\circ
(?\Lt_BT^\bullet )$ be the unit of the adjoint pair
$(?\Lt_BT^\bullet ,\Rh_A(T^\bullet ,?))$. Assertion (1) is equivalent
to saying that $\lambda$ is a natural isomorphism.

$(1)\Longrightarrow (2)$ In particular,  $\lambda_B:B\longrightarrow
[\Rh_A(T^\bullet ,?)\circ (?\Lt_BT^\bullet )](B)=:\Rh_A(T^\bullet
,B\Lt_BT^\bullet )\cong\Rh_A(T^\bullet ,T^\bullet )$ is an
isomorphism. This implies that $T^\bullet$ is exceptional and the
$0$-homology map $B=H^0(B)\longrightarrow H^0(\Rh_A(T^\bullet
,T^\bullet ))=\D(A)(T^\bullet ,T^\bullet )$ is an isomorphism.  But
this latter map is easily identified with the canonical algebra
morphism $B\longrightarrow\text{End}_{\D(A)}(T^\bullet )$. That
$\Rh_A(T^\bullet ,?)\circ (?\Lt_BT^\bullet
):\D(B)\longrightarrow\D(B)$ preserves coproducts is automatic since
this functor is naturally isomorphic to the identity.

$(2)\Longrightarrow (1)$ The full subcategory $\D$ of $\D(B)$
consisting of the objects $X^\bullet$ such that
$\lambda_{X^\bullet}$ is an isomorphism is a triangulated
subcategory, closed under coproducts, which contains $B_B$. Then we
have $\D =\D(B)$.

$(1),(2)\Longrightarrow (3)$ The functor $?\Lt_BT^\bullet
:\D(B)\longrightarrow\D(A)$ induces a triangulated equivalence
$\D(B)\stackrel{\cong}{\longrightarrow}\text{Tria}_{\D(A)}(T^\bullet
)$. The exceptionality (in $\D(A)$ or in
$\text{Tria}_{\D(A)}(T^\bullet )$) and the compactness of
$T^\bullet$ in $\text{Tria}_{\D(A)}(T^\bullet )$ follow from the
exceptionality and compactness of $B$ in $\D(B)$. Moreover, we get
an algebra isomorphism:

\begin{center}
$B\cong\text{End}_{\D(B)}(B)\stackrel{\cong}{\longrightarrow}\text{End}_{\D(A)}(B\Lt_BT^\bullet)\cong\text{End}_{\D(A)}(T^\bullet)$.
\end{center}

$(3)\Longrightarrow (2)$ The functor $\Rh_A(T^\bullet ,?)\circ
(?\Lt_BT^\bullet ):\D(B)\longrightarrow\D(B)$ coincides with the
following composition:

\begin{center}
$\D(B)\stackrel{?\Lt_BT^\bullet
}{\longrightarrow}\text{Tria}_{\D(A)}(T^\bullet
)\stackrel{\Rh_A(T^\bullet ,?)}{\longrightarrow}\D(B)$.
\end{center}
The self-compactness of $T^\bullet$ implies that the second functor
in this composition preserves coproducts. It then follows that the
composition itself preserves coproducts since so does
$?\Lt_BT^\bullet$.

$(3)\Longrightarrow (4)$ Condition (4)(a) is in the hypthesis, and from
the self-compactness and the exceptionality of $T^\bullet$
conditions (4)(b) and (4)(c) follow immediately.

$(4)\Longrightarrow (3)$ Without loss of generality, we assume that
that $T^\bullet$ is a bounded complex of projective $A$-modules in
$\C^{\leq 0}(A)$. We just need to prove the self-compactness of
$T^\bullet$ in $\D(A)$. We put
$\mathcal{U}:=\text{Susp}_{\D(A)}(T^\bullet)$ and
$\mathcal{T}:=\text{Tria}_{\D(A)}(T^\bullet )$. Note that
$(\mathcal{U},\mathcal{U}^\perp [1])$ is a t-structure in $\D(A)$
and that $\mathcal{U}^\perp$ consists of the  $Y^\bullet\in\D(A)$
such that $\D(A)(T^\bullet [k],Y^\bullet )=0$, for all $k\geq 0$
(see proposition \ref{prop.triangulated subcategory is aisle}). Let
$(X^\bullet_i)_{i\in I}$ be any family of objects in $\mathcal{T}$.
We want to check that the canonical morphism $\cp\D(A)(T^\bullet
,X^\bullet_i)\longrightarrow\D(A)(T^\bullet ,\cp X^\bullet_i)$ is an
isomorphism, which is equivalent to proving that it is an
epimorphism. We consider  the triangle associated to the t-structure
$(\mathcal{U},\mathcal{U}^\perp [1])$

\begin{center}
$\cp\tau_\mathcal{U}( X_i)\longrightarrow\cp
X_i\longrightarrow\cp\tau^{\mathcal{U}^\perp}(
X_i)\stackrel{+}{\longrightarrow}$.
\end{center}
This triangle is in $\mathcal{T}$ because its central and left terms
are in $\mathcal{T}$. Note also that we have
$\mathcal{U}\subseteq\D^{\leq 0}(A)$. In particular,
$\tau_\mathcal{U}(X_i)$ is in $\D^{\leq 0}(A)$, for each $i\in I$.

From    \cite[Theorem 3 and its proof]{NS1}, we know that the
inclusion $\mathcal{T}\cap\D^{-}(A)\hookrightarrow\D^-(A)$ has a
right adjoint and that $T^\bullet$ is a compact object of
$\mathcal{T}\cap\D^{-}(A)$. That is, the functor $\D(A)(T^\bullet
,?)$ preserves coproducts of objects in $\mathcal{T}\cap\D^{-}(A)$
whenever the coproduct is in $\D^{-}(A)$. In our case, this implies
that the canonical morphism $\cp\D(A)(T^\bullet
,\tau_\mathcal{U}(X^\bullet_i))\longrightarrow\D(A)(T^\bullet
,\cp\tau_\mathcal{U}(X_i))$ is an isomorphism. On the other hand,
condition 4.c says that $\D(A)(T^\bullet
,\cp\tau^{\mathcal{U}^\perp}(X^\bullet_i))=0$. It follows that the
induced map
$\D(A)(T,\cp\tau_\mathcal{U}(X_i))\longrightarrow\D(A)(T,\cp X_i)$
is an epimorphism. We then get the following commutative square:

\[
\xymatrix{
\coprod_{i\in I}\D(A)(T^\bullet,\tau_{\mathcal{U}}X^\bullet_{i})\ar[d]^{\wr}\ar[r]^{\sim} & \coprod_{i\in I}\D(A)(T^\bullet,X^\bullet_{i})\ar[d] \\
\D(A)(T^\bullet,\coprod_{i\in
I}\tau_{\mathcal{U}}X^\bullet_{i})\ar@{->>}[r] &
\D(A)(T^\bullet,\coprod_{i\in I}X^\bullet_{i}) }
\]

Its upper horizontal and its left vertical arrows are isomorphism,
while the lower horizontal one is an epimorphism. It follows that
the right vertical arrow is an epimorphism.
\end{proof}

Recall that if $A$ and $B$ are dg algebras and $f:B\longrightarrow
A$ is morphism of dg algebras, then $f$ is called a
\emph{homological epimorphism} when the morphism
$(?\Lt_BA)(A)\longrightarrow A$ in $\D(A)$, defined by the
multiplication map $A\Tt_BA\longrightarrow A$, is an isomorphism.
This is also equivalent to saying that the left-right symmetric
version $(A\Lt_B?)(A)\longrightarrow A$ is an isomorphism
$\D(A^{op})$ (see \cite{P}). When $A$ and $B$ are ordinary algebras,
$f$ is a homological epimorphism if, and only if,
$\text{Tor}_i^B(A,A)=0$, for $i\neq 0$, and the multiplication map
$A\otimes_BA\longrightarrow A$ is an isomorphism.

\begin{rem}
Under the hypothesis that $T^\bullet_A$ is quasi-isomorphic to a
bounded complex of projective modules, we do not know if condition
(4)(c) is needed for the implication $(4)\Longrightarrow (3)$ to hold
(see the questions in the next subsection). On the other hand, if in
that same assertion one replaces
$\text{Susp}_{\D(A)}(T^\bullet)^\perp\cap\text{Tria}_{\D(A)}(T^\bullet)$
by just $\text{Susp}_{\D(A)}(T^\bullet)^\perp$, then the implication
$(3)\Longrightarrow (4)$ need not be true, as the following example
shows.
\end{rem}

\begin{exem} \label{exem.LtQ}
The functor
$?\Lt_\mathbb{Q}\mathbb{Q}:\D(\mathbb{Q})\longrightarrow\D(\mathbb{Z})$
is fully faithful, but $\text{Susp}_{\D(\mathbb{Z})}(Q)^\perp$ is
not closed under taking coproducts in $\D(\mathbb{Z})$.
\end{exem}
\begin{proof}
The fully faithful condition of $?\Lt_\mathbb{Q}\mathbb{Q}$  follows
from theorem \ref{teor.recollement D(B)=D(A)=D with Lt} below since
the inclusion $\mathbb{Z}\hookrightarrow\mathbb{Q}$ is a homological
epimorphism. On the other hand, if $I$ is the set of prime natural
numbers and we consider the family of stalk complexes
$(\mathbb{Z}_p[1])_{p\in I}$, we clearly have that
$\D(\mathbb{Z})(Q[k],\mathbb{Z}_p[1])=0$, for all $p\in I$ and
integers $k\geq 0$, so that
$\mathbb{Z}_p[1]\in\text{Susp}_{\D(\mathbb{Z})}(\mathbb{Q})^\perp$.

We claim that $\D(\mathbb{Z})(\mathbb{Q}, \coprod_{p\in
I}\mathbb{Z}_p[1])=\text{Ext}_\mathbb{Z}^1(\mathbb{Q},\coprod_{p\in
I}\mathbb{Z}_p)\neq 0$. Indeed, consider the map
$\epsilon:\mathbb{Q}/\mathbb{Z}\longrightarrow\mathbb{Q}/\mathbb{Z}$
 given by multiplication by the 'infinite product' of all
prime natural numbers. Concretely, if $\frac{a}{p_1^{m_1}\cdot
...\cdot p_t^{m_t}}$ is a fraction of integers, where $m_j>0$ and
$p_j\in I$, for all $j=1,...,t$,  then $\epsilon
(\frac{a}{b}+\mathbb{Z})=\frac{a}{p_1^{m_1-1}\cdot ...\cdot
p_t^{m_t-1}}+\mathbb{Z}$. It is clear that $\text{Ker}(\epsilon )$
is the subgroup of $\mathbb{Q}/\mathbb{Z}$ generated by the elements
 $\frac{1}{p}+\mathbb{Z}$, with $p\in I$, which is clearly
 isomorphic to $\coprod_{p\in I}\mathbb{Z}_p$. In particular $\text{Ext}_\mathbb{Z}^1(\mathbb{Q},\coprod_{p\in
 I}Z_p)$ is isomorphic to the cokernel of the induced map
 $\epsilon_*:\text{Hom}_\mathbb{Z}(Q,\mathbb{Q}/\mathbb{Z})\longrightarrow\text{Hom}_\mathbb{Z}(Q,\mathbb{Q}/\mathbb{Z})$,
 which takes $\alpha\rightsquigarrow\epsilon\circ\alpha$.
The reader is invited to check that the canonical projection
$\pi:\mathbb{Q}\longrightarrow\mathbb{Q}/\mathbb{Z}$ is not in the
image of $\epsilon_*$, thus proving our claim.
\end{proof}

We know pass to study the recollement situations in which one of the
fully faithful functors is $?\Lt_BT^\bullet$.

\begin{teor} \label{teor.recollement D(B)=D(A)=D with Lt}
Let $T^\bullet$ be a complex of $B-A-$bimodules. Consider the
following assertions:

\begin{enumerate}
\item There is a recollement $\D(B)\equiv\D(A)\equiv\D'$, for some
triangulated category $\D'$, where $i_*=?\Lt_BT^\bullet$;
\item There is a recollement $\D(B)\equiv\D(A)\equiv\D'$, for some
triangulated category $\D'$, where $i^!=\Rh_A(T^\bullet ,?)$;
\item $T^\bullet_A$ is exceptional, self-compact, the canonical algebra
morphism $B\longrightarrow\text{End}_{\D(A)}(T^\bullet)$ is an
isomorphism and the functor $?\Lt_BT^\bullet
:\D(B)\longrightarrow\D(A)$ preserves products;
\item $T^\bullet_A$ is exceptional, self-compact, the canonical algebra
morphism $B\longrightarrow\text{End}_{\D(A)}(T^\bullet)$ is an
isomorphism and $\text{Tria}_{\D(A)}(T^\bullet )$ is closed under
taking products in $\D(A)$.
\item $T^\bullet_A$ is exceptional, self-compact, the canonical algebra
morphism $B\longrightarrow\text{End}_{\D(A)}(T^\bullet)$ is an
isomorphism and ${}_BT^\bullet\in\text{per}(B^{op})$.
\item There is a dg algebra $\hat{A}$, a homological epimorphism of
dg algebras $f:A\longrightarrow\hat{A}$ and a classical tilting
object $\hat{T}^\bullet\in\D(\hat{A})$ such that the following
conditions hold:

\begin{enumerate}
\item $T^\bullet_A\cong f_*(\hat{T}^\bullet )$, where $f_*:\D(\hat{A})\longrightarrow\D(A)$ is the restriction of scalars functor;
\item the canonical algebra morphism
$B\longrightarrow\text{End}_{\D(A)}(T)\cong\text{End}_{\D(\hat{A})}(\hat{T})$
is an isomorphism.
\end{enumerate}

\end{enumerate}
Then the implications $(1)\Longleftrightarrow (2)\Longleftrightarrow
(3)\Longleftrightarrow (4)\Longleftrightarrow (5)\Longleftarrow (6)$
hold true and, when $A$ is $k$-flat, all assertions are equivalent.
\end{teor}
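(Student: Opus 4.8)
The plan is to treat $(1)$--$(5)$ as mild reformulations of the single statement ``$?\Lt_BT^\bullet$ is fully faithful and preserves products'', and to isolate in $(6)$ the extra input --- realizing a recollement by a restriction functor --- which is where the $k$-flatness of $A$ is used.

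\emph{The equivalences $(1)\Leftrightarrow(2)\Leftrightarrow(3)\Leftrightarrow(4)\Leftrightarrow(5)$.} I would first note $(1)\Leftrightarrow(2)$: in a recollement the fully faithful functor $i_*$ and its right adjoint $i^!$ determine one another, and by Corollary~\ref{cor.classical adjunctions of derived functors}(1) the pair $(?\Lt_BT^\bullet,\Rh_A(T^\bullet,?))$ is adjoint, so $i_*=?\Lt_BT^\bullet$ if and only if $i^!=\Rh_A(T^\bullet,?)$. By Proposition~\ref{prop.functor in a recollement}(1), $(1)$ holds if and only if $?\Lt_BT^\bullet$ is fully faithful and has both a left and a right adjoint. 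Its right adjoint always exists (Corollary~\ref{cor.classical adjunctions of derived functors}(1)); since $\D(B)$ is compactly generated by $\{B\}$, a left adjoint exists if and only if $?\Lt_BT^\bullet$ preserves products (Corollary~\ref{cor.adjoints with comp.generated domain}(2)); and $?\Lt_BT^\bullet$ is fully faithful if and only if $T^\bullet_A$ is exceptional, self-compact and $B\to\text{End}_{\D(A)}(T^\bullet)$ is an isomorphism (Proposition~\ref{prop.fully faithful derived tensor}, $(1)\Leftrightarrow(3)$ there). This is exactly $(3)$. Granting these common conditions, $?\Lt_BT^\bullet$ is an equivalence onto its essential image, and that image equals $\text{Tria}_{\D(A)}(T^\bullet)$: it is a triangulated subcategory closed under coproducts (left adjoints preserve them) containing $B\Lt_BT^\bullet\cong T^\bullet$, and it is contained in $\text{Tria}_{\D(A)}(T^\bullet)$ by Lemma~\ref{lem.triang.functors preserves susp, tria..} because $\D(B)=\text{Tria}_{\D(B)}(B)$. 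Hence ``$?\Lt_BT^\bullet$ preserves products'' is equivalent to ``$\text{Tria}_{\D(A)}(T^\bullet)$ is closed under products in $\D(A)$'' (a subcategory closed under products has ambient products, and equivalences preserve products), which gives $(3)\Leftrightarrow(4)$; and, without any extra hypothesis, Propositions~\ref{prop.tensor by compacts preserves products} and~\ref{prop.compact objects in D(A)} give ``$?\Lt_BT^\bullet$ preserves products'' $\Leftrightarrow$ ``${}_BT^\bullet\in\text{per}(B^{op})$'', i.e.\ $(3)\Leftrightarrow(5)$. No $k$-flatness is used here.

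\emph{$(6)\Rightarrow(5)$.} I would use that restriction of scalars $f_*:\D(\hat A)\to\D(A)$ along a homological epimorphism is fully faithful (see \cite{P}, \cite{NS2}) and preserves both coproducts and products. Full faithfulness gives $\D(A)(T^\bullet,T^\bullet[n])\cong\D(\hat A)(\hat T^\bullet,\hat T^\bullet[n])$ for all $n$, so the exceptionality of the classical tilting object $\hat T^\bullet$ forces $T^\bullet_A$ exceptional, and $\text{End}_{\D(A)}(T^\bullet)\cong\text{End}_{\D(\hat A)}(\hat T^\bullet)\cong B$ by (6)(b). Since $f_*$ preserves coproducts and is fully faithful, its essential image is $\text{Tria}_{\D(A)}(T^\bullet)$ and $f_*$ restricts to an equivalence $\D(\hat A)\stackrel{\cong}{\longrightarrow}\text{Tria}_{\D(A)}(T^\bullet)$ carrying the compact generator $\hat T^\bullet$ to $T^\bullet$; hence $T^\bullet$ is self-compact. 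Finally $f_*$ preserves products, so $\text{Tria}_{\D(A)}(T^\bullet)=\text{Im}(f_*)$ is closed under products in $\D(A)$, so $(4)$ holds, whence $(5)$ by the equivalence $(4)\Leftrightarrow(5)$.

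\emph{$(5)\Rightarrow(6)$ when $A$ is $k$-flat --- the main obstacle.} Here I would start from $(5)\Rightarrow(1)$ to get a recollement $\D(B)\equiv\D(A)\equiv\D'$ with $i_*=?\Lt_BT^\bullet$, whose left-hand term $\D(B)$ is compactly generated by $\{B\}$. The essential step is to replace this recollement by an equivalent one in which $i_*$ is a genuine restriction functor: by \cite[Theorem~4]{NS2} --- the very result cited in the final statement of Theorem~\ref{teor.recollement D=D(B)=D(A) with Rh}, and the point where $k$-flatness of $A$ enters --- there are a dg algebra $\hat A$, a homological epimorphism $f:A\to\hat A$, and an equivalence $\D(B)\stackrel{\cong}{\longrightarrow}\D(\hat A)$ under which $i_*=?\Lt_BT^\bullet$ is identified with $f_*:\D(\hat A)\to\D(A)$. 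Let $\hat T^\bullet\in\D(\hat A)$ be the image of ${}_BB$ under this equivalence; then $f_*(\hat T^\bullet)\cong i_*(B)=B\Lt_BT^\bullet\cong T^\bullet$, giving (6)(a). As ${}_BB$ is a classical tilting object of $\D(B)$ and this property passes along triangulated equivalences, $\hat T^\bullet$ is a classical tilting object of $\D(\hat A)$, and $\text{End}_{\D(\hat A)}(\hat T^\bullet)\cong\text{End}_{\D(B)}(B)=B\cong\text{End}_{\D(A)}(T^\bullet)$, giving (6)(b). (One could instead first use Theorem~\ref{teor.Keller-Rickard generalization} to present $T^\bullet$ as a dg $B$-$A$-bimodule inducing an equivalence $\text{Tria}_{\D(A)}(T^\bullet)\cong\D(B)$, but this becomes unnecessary once \cite[Theorem~4]{NS2} is in hand.) The substantive input is thus \cite[Theorem~4]{NS2}; the remaining care is purely bookkeeping --- matching essential images with $\text{Tria}$-subcategories and identifying the correct adjoints in each recollement.
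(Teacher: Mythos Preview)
Your proof is correct and follows essentially the same route as the paper's. The only cosmetic difference is in the passage to assertion~(6): you invoke \cite[Theorem~4]{NS2} directly from the recollement $(1)$, whereas the paper first unwinds the recollement into the TTF triple $({}^\perp\mathcal{T},\mathcal{T},\mathcal{T}^\perp)$ in $\D(A)$ (via Propositions~\ref{prop.TTF=recollement} and~\ref{prop.Keller-Vossieck}) before applying that same theorem to obtain $\mathcal{T}=\text{Im}(f_*)$---but since the recollement and the TTF triple carry the same data, the two arguments are interchangeable.
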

\begin{proof}
$(1)\Longleftrightarrow (2)$ is clear.

 Note that the recollement of assertions (1) or (2) exists if,
and only if, the functor $?\Lt_BT^\bullet$ is fully faithful and has
a left adjoint.

$(1)\Longleftrightarrow (3)$ is then a direct consequence of theorem
\ref{prop.fully faithful derived tensor} and corollary
\ref{cor.adjoints with comp.generated domain}.

$(1),(3)\Longrightarrow (4)$ The functor $?\Lt_BT^\bullet$ induces an
equivalence of triangulated categories
$\D(B)\stackrel{\cong}{\longrightarrow}\mathcal{T}:=\text{Tria}_{\D(A)}(T^\bullet
)$. The fact that $?\Lt_BT^\bullet$ has a left adjoint functor
implies that also the inclusion functor
$j:\mathcal{T}\hookrightarrow\D(A)$ has a left adjoint. But then $j$
preserves products, so that $\mathcal{T}$ is closed under taking
products in $\D(A)$.

$(4)\Longrightarrow (3)$ By proposition \ref{prop.fully faithful
derived tensor} and its proof, we know that $?\Lt_BT^\bullet$ is a
fully faithful functor which establishes an equivalence of
triangulated categories
$\D(B)\stackrel{\cong}{\longrightarrow}\mathcal{T}:=\text{Tria}_{\D(A)}(T^\bullet
)$. In particular, this equivalence of categories preserves
products. This, together with the fact that $\mathcal{T}$ is closed
under taking products in $\D(A)$, implies that $?\Lt_BT^\bullet$
preserves products.

$(3)\Longleftrightarrow (5)$ is a direct consequence of proposition
\ref{prop.tensor by compacts preserves products}.

$(6)\Longrightarrow (4)$ By the properties of homological epimorphisms
(see \cite[Section 4]{NS2}), the functor
$f_*:\D(\hat{A})\longrightarrow\D(A)$ is fully faithful (and
triangulated). It then follows that

\begin{center}
$\text{Hom}_{\D(A)}(T^\bullet ,T^\bullet
[p])=\text{Hom}_{\D(A)}(f_*(\hat{T}^\bullet ),f_*(\hat{T}^\bullet
)[p])\cong\text{Hom}_{\D(\hat{A})}(\hat{T}^\bullet ,\hat{T}^\bullet
[p] )=0$,
\end{center}
for all integers $p\neq 0$.  By condition (6)(b), it also follows that
the canonical algebra morphism
$B\longrightarrow\text{End}_{\D(A)}(T^\bullet )$ is an isomorphism.

On the other hand, $f_*$ defines an equivalence
$\D(\hat{A})\stackrel{\cong}{\longrightarrow}\text{Im}(f_*)$.
Moreover $\hat{T}^\bullet$ is a compact generator of $\D(\hat{A})$,
which implies that $T^\bullet _A$ is a compact generator of
$\text{Im}(f_*)$. Since the last one is a full triangulated
subcategory of $\D(A)$ closed under taking coproducts  we get that
$\text{Im}(f_*)=\text{Tria}_{\D(A)}(T^\bullet)$. But
$\text{Im}(f_*)$ is also closed under taking products in $\D(A)$
(see \cite[Section 4]{NS2}). Then $T^\bullet_A$ is self-compact and
$\text{Tria}_{\D(A)}(T^\bullet )$ is closed under taking products in
$\D(A)$.

$(1),(4)\Longrightarrow (6)$ (assuming that   $A$ is $k$-flat)  We have
seen above that the inclusion functor
$\mathcal{T}:=\text{Tria}_{\D(A)}(T^\bullet )\hookrightarrow\D(A)$
has a left adjoint.  By proposition \ref{prop.Keller-Vossieck}, its
dual and by proposition \ref{prop.triangulated subcategory is
aisle}, we get that
$({}^\perp\mathcal{T},\mathcal{T},\mathcal{T}^\perp )$ is a TTF
triple in $\D(A)$. By \cite[Theorem 4]{NS2}, there exists a dg
algebra $\hat{A}$ and a homological epimorphism of dg algebras
$f:A\longrightarrow\hat{A}$ such that $\mathcal{T}=\text{Im}(f_*)$.
Bearing in mind that $f_*$ is fully faithful, it induces an
equivalence of categories
$f_*:\D(\hat{A})\stackrel{\cong}{\longrightarrow}\mathcal{T}$. If
now $\hat{T}^\bullet\in\D(\hat{A})$ is an object such that
$f_*(\hat{T}^\bullet )\cong T^\bullet _A$, then $\hat{T}^\bullet$ is
a compact generator of $\D(\hat{A})$ such that
$\D(\hat{A})(\hat{T}^\bullet ,\hat{T}^\bullet
[p])\cong\D(A)(T^\bullet ,T^\bullet [p])$, for each
$p\in\mathbb{Z}$. Now all requirements for $\hat{T}^\bullet$ are
easily verified.

\end{proof}

The following result deeply extends \cite[Lemma 4.2]{CX2}.

\begin{cor} \label{cor.Chen-Xi}
The following assertions are equivalent for a complex $T^\bullet$ of
$B-A-$bimodules:

\begin{enumerate}
\item There is  a  recollement $\D'\equiv\D(A)\equiv\D(B)$ such that
$j_!=?\Lt_BT^\bullet$, for some triangulated category $\D'$ (which
is equivalent to $\D(C)$, where $C$ is a dg algebra);
\item There is a recollement  $\D'\equiv\D(A)\equiv\D(B)$ such that $j^!=j^*=\Rh_A(T^\bullet
,?)$, for some triangulated category $\D'$ (which is equivalent to
$\D(C)$, where $C$ is a dg algebra)
\item $T^\bullet_A$ is compact and exceptional in $\D(A)$ and the
canonical algebra morphism
$B\longrightarrow\text{End}_{\D(A)}(T^\bullet )$ is an isomorphism.
\end{enumerate}
In such case, if $A$ is $k$-flat, then $C$ can be chosen together
with a homological epimorphism $f:A\longrightarrow C$ such that
$i_*$ is the restriction of scalars $f_*:\D(C)\longrightarrow\D(A)$.
\end{cor}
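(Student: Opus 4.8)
The plan is to deduce the equivalences of assertions (1)--(3) from Propositions \ref{prop.functor in a recollement} and \ref{prop.fully faithful derived tensor} together with the adjunctions of Corollary \ref{cor.classical adjunctions of derived functors}, and then to read the last statement off the associated TTF triple, much as in the proof of Theorem \ref{teor.recollement D=D(B)=D(A) with Rh}.

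First, $(1)\Leftrightarrow(2)$ is formal: since $(?\Lt_BT^\bullet,\Rh_A(T^\bullet,?))$ is an adjoint pair (Corollary \ref{cor.classical adjunctions of derived functors}(1)), in any recollement $\D'\equiv\D(A)\equiv\D(B)$ one has $j_!=?\Lt_BT^\bullet$ if and only if its right adjoint satisfies $j^!=j^*=\Rh_A(T^\bullet,?)$, by uniqueness of adjoints. To characterize the existence of this recollement I would apply Proposition \ref{prop.functor in a recollement}(2) to $G=j^*=\Rh_A(T^\bullet,?):\D(A)\longrightarrow\D(B)$: the recollement exists if and only if $G$ has both a left and a right adjoint, one of which is fully faithful. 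The left adjoint of $G$ is always $?\Lt_BT^\bullet$; a right adjoint exists if and only if $G$ preserves coproducts (Corollary \ref{cor.adjoints with comp.generated domain}(1), as $\D(A)$ is compactly generated), and since $H^n(\Rh_A(T^\bullet,M^\bullet))\cong\D(A)(T^\bullet,M^\bullet[n])$ for all $M^\bullet$ and $n$, while the forgetful functor $\D(B)\longrightarrow\D(k)$ both reflects isomorphisms and preserves coproducts, this happens precisely when $T^\bullet_A$ is compact in $\D(A)$. Finally, that one of the two adjoints be fully faithful is equivalent, by Lemma \ref{lem.fully-faitful right iff left}, to $?\Lt_BT^\bullet$ being fully faithful, which by the equivalence $(1)\Leftrightarrow(3)$ of Proposition \ref{prop.fully faithful derived tensor} amounts to $T^\bullet_A$ being exceptional and self-compact in $\D(A)$ with $B\longrightarrow\text{End}_{\D(A)}(T^\bullet)$ an isomorphism. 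Since an object compact in $\D(A)$ is in particular compact in $\text{Tria}_{\D(A)}(T^\bullet)$, i.e.\ self-compact, the conjunction of these two conditions is exactly assertion (3); this yields $(1)\Leftrightarrow(2)\Leftrightarrow(3)$.

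For the last statement, set $\mathcal{T}:=\text{Tria}_{\D(A)}(T^\bullet)$. Since $j_!=?\Lt_BT^\bullet$ is fully faithful, preserves coproducts, and satisfies $j_!(B)\cong B\Lt_BT^\bullet\cong T^\bullet$, its essential image is a triangulated subcategory of $\D(A)$ closed under coproducts and generated by $T^\bullet$, i.e.\ $\text{Im}(j_!)=\mathcal{T}$ (using Lemma \ref{lem.triang.functors preserves susp, tria..} for the inclusion $\text{Im}(j_!)\subseteq\mathcal{T}$); hence, by Proposition \ref{prop.TTF=recollement}(1), the TTF triple attached to the recollement is $(\mathcal{T},\text{Im}(i_*),\text{Im}(j_*))$, with $\text{Im}(i_*)=\mathcal{T}^\perp$. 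By Proposition \ref{prop.triangulated subcategory is aisle}(2), $\mathcal{T}^\perp$ consists of the $Y^\bullet$ with $\D(A)(T^\bullet,Y^\bullet[n])=0$ for all $n\in\mathbb{Z}$, that is, $\mathcal{T}^\perp=\text{Ker}(\Rh_A(T^\bullet,?))$. The left adjoint $i^*$ of $i_*$ preserves coproducts and is essentially surjective (because $i^*i_*\cong 1_{\D'}$), and it carries the compact generator $A$ of $\D(A)$ to a compact object of $\D'$ (as $i_*=i_!$ is a left adjoint, hence preserves coproducts); by Lemma \ref{lem.triang.functors preserves susp, tria..} we get $\D'=\text{Tria}_{\D'}(i^*A)$, so $\D'$ has a compact generator and $\D'\cong\D(C)$ for a dg algebra $C$ by \cite[Theorem 4.3]{Ke}. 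When $A$ is moreover $k$-flat, $\mathcal{T}$ is the triangulated subcategory of $\D(A)$ generated by the single compact object $T^\bullet$ and closed under coproducts, so \cite[Theorem 4]{NS2} provides a dg algebra $C$ and a homological epimorphism $f:A\longrightarrow C$ with $\text{Im}(f_*)=\mathcal{T}^\perp=\text{Im}(i_*)$; composing with the equivalence $\D(C)\stackrel{\cong}{\longrightarrow}\D'$ induced by the two fully faithful functors with common image, we may take $i_*=f_*$.

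The equivalences of assertions (1)--(3) are essentially a matter of combining the cited results with the bookkeeping indicated above; the only part using machinery beyond this section is the homological-epimorphism refinement, which rests on \cite[Theorem 4]{NS2}. The point that calls for care is the identification of the relevant term of the TTF triple: it is the middle term $\text{Im}(i_*)=\mathcal{T}^\perp$ that gets realized as $\text{Im}(f_*)$, even though it is the outer term $\text{Im}(j_!)=\mathcal{T}$ that is generated by a compact object of $\D(A)$.
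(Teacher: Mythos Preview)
Your proof is correct and follows essentially the same approach as the paper. The paper's own argument is terse: it notes that $(1)\Leftrightarrow(2)$ is clear, reduces $(1)\Leftrightarrow(3)$ to the conjunction ``$?\Lt_BT^\bullet$ fully faithful and $\Rh_A(T^\bullet,?)$ has a right adjoint'' via Proposition~\ref{prop.functor in a recollement}, and then invokes Proposition~\ref{prop.fully faithful derived tensor} and Corollary~\ref{cor.adjoints with comp.generated domain} together with the observation that $\Rh_A(T^\bullet,?)$ preserves coproducts iff $T^\bullet_A$ is compact; for the final statement it simply defers to the end of the proof of Theorem~\ref{teor.recollement D=D(B)=D(A) with Rh}. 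Your version spells out each of these steps explicitly, including the compact generator $i^*A$ of $\D'$ and the identification of the TTF triple needed to invoke \cite[Theorem~4]{NS2}, which is exactly what that deferred reference amounts to.
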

\begin{proof}
$(1)\Longleftrightarrow (2)$ is clear.

$(1)\Longleftrightarrow (3)$ The mentioned recollement exists if, and
only if, $?\Lt_BT$ is fully faithful and $\Rh_A(T^\bullet ,?)$ has a right
adjoint. Using proposition \ref{prop.fully faithful derived tensor}
and corollary \ref{cor.adjoints with comp.generated domain}, the
existence of such recollement is equivalent to assertion (3). This is
because that $\Rh_A(T^\bullet ,?)$ preserves coproducts if, and only
if, $T^\bullet_A$ is compact in $\D(A)$.

The statements about the dg algebra $C$ follow as at the end of the
proof of theorem \ref{teor.recollement D=D(B)=D(A) with Rh}.
\end{proof}

We then get the following consequence (compare with \cite[Theorem
2]{Ha}).

\begin{cor} \label{cor.right recollement implies left recollement}
Let $T^\bullet$ be a complex of $B-A-$bimodules, where $A$ and $B$
are ordinary algebras. If there is a recollement
$\D'\equiv\D(A)\equiv\D(B)$, with $j_!=?\Lt_BT^\bullet$, for some
triangulated category $\D'$, then there is a recollement
$\D''\equiv\D(A^{op})\equiv\D(B^{op})$, with
$j^{!}=j^*=T^\bullet\Lt_A?$, for a triangulated category $\D''$.
When $A$ is $k$-projective, the converse is also true.
\end{cor}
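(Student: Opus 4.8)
The statement to prove is Corollary~\ref{cor.right recollement implies left recollement}: for ordinary algebras $A,B$ and a complex ${}_BT^\bullet_A$, the existence of a recollement $\D'\equiv\D(A)\equiv\D(B)$ with $j_!=?\Lt_BT^\bullet$ implies the existence of a recollement $\D''\equiv\D(A^{op})\equiv\D(B^{op})$ with $j^!=j^*=T^\bullet\Lt_A?$, and the converse holds when $A$ is $k$-projective.

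\medskip

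\textbf{Plan of proof.} The idea is to translate everything through Corollary~\ref{cor.Chen-Xi}, which characterizes the first recollement by the intrinsic condition: (3) $T^\bullet_A$ is compact and exceptional in $\D(A)$ and the canonical map $B\longrightarrow\End_{\D(A)}(T^\bullet )$ is an isomorphism. So the first step is to apply Corollary~\ref{cor.Chen-Xi} to the first recollement and obtain condition (3). The second step is to produce, from (3), the analogous intrinsic data on the other side needed to run Corollary~\ref{cor.Chen-Xi} with the roles of $A,B$ replaced by $A^{op},B^{op}$ and $T^\bullet$ replaced by $T^\bullet$ viewed now so that $T^\bullet\Lt_A?$ plays the role of the derived tensor functor. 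Concretely, I would set $S^\bullet:=T^{\bullet *}=\Rh_{A}(T^\bullet ,A)$ (the dual with respect to $A$, in the sense of Definition~\ref{def.dual wrt B}), so that $S^\bullet$ is a complex of $A-B-$bimodules, and note that Corollary~\ref{cor.Chen-Xi}, applied with the pair $(B,A^{op})$ and the bimodule ${}_AS^\bullet_B$ reinterpreted appropriately, will give the second recollement provided $S^\bullet$ is compact and exceptional in $\D(A^{op})$ and $B\longrightarrow\End_{\D(A^{op})}(S^\bullet )$ is an isomorphism. The key technical input here is Corollary~\ref{cor.duality between perfect complexes} together with Proposition~\ref{prop.all nat.transformations are isos}: when $A$ is $k$-projective, the functor $(?)^*$ restricts to a triangulated duality $\mathcal{L}_{A,A}\leftrightarrow\mathcal{R}_{A,A}$, so $T^\bullet\in\mathcal{R}_{A,A}$ (i.e. $T^\bullet_A$ compact) is sent to $S^\bullet\in\mathcal{L}_{A,A}$ (i.e. ${}_AS^\bullet$ compact in $\D(A^{op})$), and $\sigma_T:T^\bullet\stackrel{\cong}{\longrightarrow}T^{\bullet **}=S^{\bullet *}$. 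A duality sends the exceptionality of $T^\bullet_A$ to the exceptionality of ${}_AS^\bullet$ and induces an algebra anti-isomorphism $\End_{\D(A)}(T^\bullet )\cong\End_{\D(A^{op})}(S^\bullet )^{op}$; combined with $B\cong\End_{\D(A)}(T^\bullet )$ this gives the required isomorphism $B\cong\End_{\D(A^{op})}(S^\bullet )$ after passing to opposite algebras, i.e. the canonical map $B\longrightarrow\End_{\D(A^{op})}(S^\bullet )$ is an isomorphism.

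\medskip

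\textbf{Identifying the functor.} Having verified condition (3) on the $A^{op}$-side, Corollary~\ref{cor.Chen-Xi} produces a recollement $\D''\equiv\D(A^{op})\equiv\D(B^{op})$ with $j_!=?\Lt_BS^\bullet$ on the left-module side, equivalently with $j^!=j^*$ the corresponding $\RH$ functor. It remains to identify this functor with $T^\bullet\Lt_A?$. This is where Proposition~\ref{prop.all nat.transformations are isos}(3)(b) does the work: since $A$ is $k$-projective (hence also $k$-flat, which we may invoke, or we note $k$ is flat over itself so the $C=k$ instance applies) and ${}_AS^\bullet=T^{\bullet *}$ with $S^{\bullet *}\cong T^\bullet$, we get a natural isomorphism $?\Lt_AT^\bullet\cong\Rh_{A}(S^\bullet ,?)$ — or, in the symmetric form, $T^\bullet\Lt_A?\cong\Rh_{A^{op}}(S^\bullet ,?)$ after taking the appropriate left-right symmetric version of the proposition. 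Tracking which central arrow of the recollement this is: in Corollary~\ref{cor.Chen-Xi} the fully faithful $j_!$ is $?\Lt_BS^\bullet$ and $j^!=j^*$ is the adjoint $\RH_A(S^\bullet ,?)$; the symmetric version produces $T^\bullet\Lt_A?$ as $j^!=j^*$ exactly as claimed. This finishes the forward implication. For the converse (also under $k$-projectivity of $A$), the argument is symmetric: a recollement with $j^!=j^*=T^\bullet\Lt_A?$ gives, via Corollary~\ref{cor.Chen-Xi} on the $A^{op}$-side, that ${}_AT^\bullet$ is compact and exceptional in $\D(A^{op})$ with $B\cong\End_{\D(A^{op})}(T^\bullet )^{op}$ (note the op since $T^\bullet\Lt_A?$ sits as $j^*$ rather than $j_!$); dualizing back by $(?)^*$ recovers condition (3) of Corollary~\ref{cor.Chen-Xi} for $T^\bullet_A$, hence the recollement with $j_!=?\Lt_BT^\bullet$.

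\medskip

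\textbf{Main obstacle.} The routine-but-delicate point is bookkeeping: Corollary~\ref{cor.Chen-Xi} attaches a specific bimodule to each recollement and a specific position ($j_!$ versus $j^*$) to each functor, and the dualization $(?)^*$ swaps left/right module structures and introduces an opposite-algebra on the endomorphism ring. One must check that after all these swaps the canonical algebra morphism lands in the stated direction and that the functor one obtains is literally $T^\bullet\Lt_A?$ (not, say, $\Rh_{A^{op}}(T^\bullet ,?)$ or the version with $T^{\bullet *}$ in a different slot). The conceptual content — namely that $(?)^*$ is a duality on perfect bimodule complexes and that $?\Lt$ and $\RH$ swap under it (Corollary~\ref{cor.duality between perfect complexes}, Proposition~\ref{prop.all nat.transformations are isos}) — is already in hand, so the only real work is this careful identification, which I would lay out by applying Corollary~\ref{cor.Chen-Xi} verbatim to the triple $(B^{op},A^{op},S^\bullet)$ and then invoking Proposition~\ref{prop.all nat.transformations are isos}(3) to rewrite $\Rh$ as $\Lt$.
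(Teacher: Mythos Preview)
Your argument has a real gap in the forward direction. The corollary asserts that the first recollement implies the second \emph{without any $k$-projectivity hypothesis}, but your route through the dual $S^\bullet=\Rh_A(T^\bullet ,A)$ invokes Corollary~\ref{cor.duality between perfect complexes} and Proposition~\ref{prop.all nat.transformations are isos}, both of which carry a $k$-projectivity assumption. (Moreover, since you are dualizing over $A$, the relevant instance of Corollary~\ref{cor.duality between perfect complexes}---with the roles of $A$ and $B$ swapped---requires $B$ to be $k$-projective, not $A$; so there is also a bookkeeping slip here.) You then need a second application of Proposition~\ref{prop.all nat.transformations are isos} to identify $\Rh_{A^{op}}(S^\bullet ,?)$ with $T^\bullet\Lt_A?$, which again costs $k$-projectivity. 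So your forward implication is only proved under an extra hypothesis that the statement does not allow.

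The paper's route avoids all of this. Corollary~\ref{cor.Chen-Xi} translates the first recollement into the condition ``$T^\bullet_A$ is compact and exceptional in $\D(A)$ and $B\cong\End_{\D(A)}(T^\bullet )$''. But this is \emph{literally} condition~(4) in the left-right symmetric version of Theorem~\ref{teor.recollement D=D(B)=D(A) with Rh} (apply that theorem with $T^\bullet$ viewed as an $A^{op}$-$B^{op}$-bimodule), and the implication $(4)\Longrightarrow(2)$ there holds with no flatness or projectivity hypothesis---in its proof, Proposition~\ref{prop.all nat.transformations are isos} is only applied after forgetting the action of the second algebra, so the $k$-projectivity slot is occupied by $k$ itself. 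This gives the second recollement directly with $j^!=j^*=T^\bullet\Lt_A?$, no detour through $S^\bullet$ and no subsequent functor identification needed. For the converse one runs the same two results in the other direction; now the hard implication of the symmetric Theorem~\ref{teor.recollement D=D(B)=D(A) with Rh} is what supplies the $k$-projectivity requirement.
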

\begin{proof}
It is a direct consequence of   corollary \ref{cor.Chen-Xi} and the
left-right symmetric version of theorem \ref{teor.recollement
D=D(B)=D(A) with Rh}.
\end{proof}

The following is a rather general result on the existence of
recollements.

\begin{prop} \label{prop.general recollement}
Let $\Lambda$ be any dg algebra and $U^\bullet$, $V^\bullet$ be
exceptional objects of $\D(\Lambda )$. Put $A=\text{End}_{\D(\Lambda
)}(U^\bullet )$ and $B=\text{End}_{\D(\Lambda )}(V^\bullet )$. If
$V^\bullet\in\text{thick}_{\D(\Lambda )}(U^\bullet )$, then there is
a recollement $\D'\equiv\D(A)\equiv\D(B)$, for some triangulated
category $\D'$ (which is equivalent to $\D(C)$, for some dg algebra
$C$).
\end{prop}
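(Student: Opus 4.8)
The plan is to reduce the statement to Corollary~\ref{cor.Chen-Xi} (used in the dg-algebra form alluded to in Remark~\ref{rem.Bazzoni-Pavarin}) by producing, in a derived category equivalent to $\D(A)$, a compact and exceptional object whose endomorphism algebra is $B$, endowed with a compatible bimodule structure. To this end I would first replace $U^\bullet$ and $V^\bullet$ by homotopically injective resolutions in $\K(\Lambda)$ and set $\tilde A:=\End_\Lambda^\bullet(U^\bullet)=\Th_\Lambda(U^\bullet,U^\bullet)$ and $\tilde B:=\End_\Lambda^\bullet(V^\bullet)$, which are dg algebras. Since $U^\bullet$ and $V^\bullet$ are exceptional, $H^p(\tilde A)\cong\D(\Lambda)(U^\bullet,U^\bullet[p])$ and $H^p(\tilde B)\cong\D(\Lambda)(V^\bullet,V^\bullet[p])$ vanish for $p\neq 0$, while $H^0(\tilde A)\cong A$ and $H^0(\tilde B)\cong B$; the canonical truncations at $0$ then yield dg-algebra quasi-isomorphisms $A\longleftarrow\tau^{\leq 0}\tilde A\hookrightarrow\tilde A$ and $B\longleftarrow\tau^{\leq 0}\tilde B\hookrightarrow\tilde B$, exactly as in the proof of Theorem~\ref{teor.Keller-Rickard generalization}, hence triangle equivalences $\D(A)\cong\D(\tilde A)$ and $\D(B)\cong\D(\tilde B)$ by restriction of scalars. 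Now $U^\bullet$ is a dg $\tilde A$-$\Lambda$-bimodule and $V^\bullet$ a dg $\tilde B$-$\Lambda$-bimodule, so $T^\bullet:=\Rh_\Lambda(U^\bullet,V^\bullet)$ inherits a natural dg $\tilde B$-$\tilde A$-bimodule structure; I regard it as an object of $\D(\tilde A)$.

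The next step is to verify that $T^\bullet$ is compact and exceptional in $\D(\tilde A)$ with $\End_{\D(\tilde A)}(T^\bullet)\cong B$. Compactness is soft: $\Rh_\Lambda(U^\bullet,?)\colon\D(\Lambda)\longrightarrow\D(\tilde A)$ is triangulated and sends $U^\bullet$ to an object isomorphic to $\tilde A_{\tilde A}$, so by Lemma~\ref{lem.triang.functors preserves susp, tria..} it carries $\text{thick}_{\D(\Lambda)}(U^\bullet)$ into $\text{thick}_{\D(\tilde A)}(\tilde A)=\text{per}(\tilde A)$, i.e.\ into the full subcategory of compact objects of $\D(\tilde A)$ (Proposition~\ref{prop.compact objects in D(A)}); since $V^\bullet\in\text{thick}_{\D(\Lambda)}(U^\bullet)$ by hypothesis, $T^\bullet$ is compact. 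For exceptionality I would use the counit $\delta\colon(?\Lt_{\tilde A}U^\bullet)\circ\Rh_\Lambda(U^\bullet,?)\longrightarrow 1_{\D(\Lambda)}$ of the adjoint pair of Corollary~\ref{cor.classical adjunctions of derived functors}: the triangle identities show that $\delta_{U^\bullet}$ is the identity of $U^\bullet$ once $\tilde A\Lt_{\tilde A}U^\bullet$ is identified with $U^\bullet$, whence the full subcategory of $\D(\Lambda)$ on which $\delta$ is invertible is thick and contains $U^\bullet$, so it contains $\text{thick}_{\D(\Lambda)}(U^\bullet)$ and in particular $V^\bullet$. Consequently $\Rh_\Lambda(U^\bullet,?)$ is ``fully faithful at $V^\bullet$'', i.e.\ it induces isomorphisms $\D(\Lambda)(V^\bullet,V^\bullet[p])\xrightarrow{\ \sim\ }\D(\tilde A)(T^\bullet,T^\bullet[p])$ for every $p\in\mathbb Z$, compatibly with composition; as $V^\bullet$ is exceptional with $\End_{\D(\Lambda)}(V^\bullet)=B$, the object $T^\bullet$ is exceptional in $\D(\tilde A)$ and $\End_{\D(\tilde A)}(T^\bullet)\cong B$.

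Now I would apply Corollary~\ref{cor.Chen-Xi}, implication $(3)\Rightarrow(1)$, to the dg $\tilde B$-$\tilde A$-bimodule $T^\bullet$: it delivers a recollement $\D'\equiv\D(\tilde A)\equiv\D(\tilde B)$ with $j_!=?\Lt_{\tilde B}T^\bullet$ and $j^*=\Rh_{\tilde A}(T^\bullet,?)$, and with $\D'$ equivalent to $\D(C)$ for some dg algebra $C$. (Concretely, $?\Lt_{\tilde B}T^\bullet$ is fully faithful by Proposition~\ref{prop.fully faithful derived tensor}, compactness of $T^\bullet$ forcing self-compactness, while $\Rh_{\tilde A}(T^\bullet,?)$ preserves coproducts because $T^\bullet$ is compact in $\D(\tilde A)$ and hence admits a right adjoint by Proposition~\ref{prop.Brown representability theorem}, so Proposition~\ref{prop.functor in a recollement}(2) applies with $j^*=\Rh_{\tilde A}(T^\bullet,?)$; that $\D'$ is $\D(C)$ for a dg algebra follows as at the end of the proof of Theorem~\ref{teor.recollement D=D(B)=D(A) with Rh}, via \cite[Theorem~4.3]{Ke}.) Transporting through the equivalences $\D(\tilde A)\cong\D(A)$ and $\D(\tilde B)\cong\D(B)$ finally yields the asserted recollement $\D'\equiv\D(A)\equiv\D(B)$.

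The step I expect to be the main obstacle is making the bimodule machinery legitimate in the absence of a $k$-flatness hypothesis: Theorem~\ref{teor.Keller-Rickard generalization} and the dg version of Corollary~\ref{cor.Chen-Xi} require the first algebra to be $k$-flat, whereas $\tilde A=\End_\Lambda^\bullet(U^\bullet)$ (and the ordinary algebra $A$) need not be. I would remedy this by choosing a $k$-flat --- e.g.\ cofibrant --- dg algebra $A'$ together with a quasi-isomorphism $A'\xrightarrow{\ \sim\ }\tilde A$, transporting $T^\bullet$ to $\D(A')\cong\D(\tilde A)$ where it stays compact and exceptional with endomorphism algebra $B$, and only then invoking Theorem~\ref{teor.Keller-Rickard generalization} to realize $T^\bullet$ as a dg $B$-$A'$-bimodule and Corollary~\ref{cor.Chen-Xi} over $A'$ to obtain the recollement $\D'\equiv\D(A')\equiv\D(B)$; since $\D(A')\cong\D(A)$, this is again the one wanted. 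Alternatively, one checks that the only implication used, $(3)\Rightarrow(1)$ of Corollary~\ref{cor.Chen-Xi}, is a pure construction from compactness that never passes through the open implication $(2)\Rightarrow(1)$ of Proposition~\ref{prop.tensor by compacts preserves products}, so that it goes through for arbitrary dg algebras along the lines of \cite{NS}.
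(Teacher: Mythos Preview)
Your approach is essentially the same as the paper's: pass to the dg endomorphism algebras $\hat A=\End_\Lambda^\bullet(U^\bullet)$ and $\hat B=\End_\Lambda^\bullet(V^\bullet)$, take $T^\bullet=\Th_\Lambda(U^\bullet,V^\bullet)$ as a dg $\hat B$-$\hat A$-bimodule, observe that $T^\bullet_{\hat A}$ is compact because $\Rh_\Lambda(U^\bullet,?)$ sends $\text{thick}_{\D(\Lambda)}(U^\bullet)$ into $\text{per}(\hat A)$, and then produce the recollement $\D'\equiv\D(\hat A)\equiv\D(\hat B)$ with $j_!=?\Lt_{\hat B}T^\bullet$.

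There are two minor differences worth noting. First, the paper takes $U^\bullet$ and $V^\bullet$ homotopically \emph{projective} rather than injective; this is convenient because then $\Th_\Lambda(U^\bullet,?)$ already computes $\Rh_\Lambda(U^\bullet,?)$ and $T^\bullet$ is literally $\Th_\Lambda(U^\bullet,V^\bullet)$ as a dg bimodule without further resolution. Second, and more to the point of your final paragraph: the paper does \emph{not} cite Corollary~\ref{cor.Chen-Xi} (which is stated for ordinary algebras) but instead re-proves the needed implication directly in the dg setting. It simply observes, via Proposition~\ref{prop.functor in a recollement}, that the recollement exists iff $?\Lt_{\hat B}T^\bullet$ is fully faithful and $\Rh_{\hat A}(T^\bullet,?)$ has a right adjoint; the latter follows from Corollary~\ref{cor.adjoints with comp.generated domain} since $T^\bullet_{\hat A}$ is compact, and the former is checked by showing the unit $\lambda_{\hat B}:\hat B\to\Rh_{\hat A}(T^\bullet,T^\bullet)$ is an isomorphism via the homology computation $H^p(\Rh_{\hat A}(T^\bullet,T^\bullet))\cong\D(\Lambda)(V^\bullet,V^\bullet[p])$. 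None of these ingredients requires $k$-flatness, so your worry in the last paragraph is unnecessary: by arguing directly rather than through Corollary~\ref{cor.Chen-Xi}, the flatness issue simply does not arise.
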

\begin{proof}
We choose $U^\bullet$ and $V^\bullet$ to be homotopically
projective. We then put $\hat{A}=\text{End}_\Lambda^\bullet
(U^\bullet )$ and $\hat{B}=\text{End}_\Lambda^\bullet (V^\bullet )$,
which are dg algebras whose associated homology algebras are
concentrated in degree $0$ and satisfy that $H^0(\hat{A})\cong A$
and $H^0(\hat{B})\cong B$. Then we know that $\D(\hat{A})\cong\D(A)$
and $\D(\hat{B})\cong\D(B)$.

We next put $T^\bullet =\Th_\Lambda (U^\bullet ,V^\bullet )$ which
is a dg $\hat{B}-\hat{A}-$bimodule. It is easy to see that
$\Th_\Lambda (U^\bullet ,?)=\Rh_\Lambda (U^\bullet ,?):\D(\Lambda
)\longrightarrow\D(\hat{A})$ induces an equivalence of of
triangulated categories $\text{thick}_{\D(\Lambda )}(U^\bullet
)\stackrel{\cong}{\longrightarrow}\text{thick}_{\D(\hat{A})}(\hat{A})=\text{per}(\hat{A})$.
In particular, we get that $T^\bullet_{\hat{A}}$ is compact in
$\D(\hat{A})$. We claim now that there is a recollement
$\D'\equiv\D(\hat{A})\equiv\D(\hat{B})$, where
$j_!=?\Lt_{\hat{B}}T^\bullet$, an this will end the proof.

The desired recollement exists if, and only if,
$?\Lt_{\hat{B}}T^\bullet$ is fully faithful and
$\Rh_{\hat{A}}(T^\bullet ,?):\D(\hat{A})\longrightarrow \D(\hat{B})$
has a right adjoint. This second condition holds due to corollary
\ref{cor.adjoints with comp.generated domain}. As for the first
condition, we just need to check that the unit $\lambda
:1_{\D(\hat{B})}\longrightarrow\Rh_{\hat{A}}(T^\bullet ,?)\circ
(?\Lt_{\hat{B}}T^\bullet )$ is a natural isomorphism. For that, we
take the full subcategory $\mathcal{X}$ of $\D(\hat{B})$ consisting
of the objects $X^\bullet$ such that $\lambda_{X^\bullet}$ is an
isomorphism. It is clearly a triangulated subcategory closed under
taking coproducts. But if we apply to
$\lambda_{\hat{B}}:\hat{B}\longrightarrow [\Rh_{\hat{A}}(T^\bullet
,?)\circ (?\Lt_{\hat{B}}T^\bullet
)](\hat{B})\cong\Rh_{\hat{A}}(T^\bullet ,T^\bullet )$  the homology
functor, we obtain a map

\begin{center}
$H^p(\lambda_{\hat{B}}):0=H^p(\hat{B})\longrightarrow
H^p(\Rh_{\hat{A}}(T^\bullet ,T^\bullet ))\cong\D(\hat{A})(T^\bullet,
T^\bullet [p])\cong\D(\Lambda )(V^\bullet ,V^\bullet [p])=0$,
\end{center}
 so that $H^p(\lambda_B)$ is an isomorphism, for each
 $p\in\mathbb{Z}\setminus\{0\}$. As for $p=0$, we have

\begin{center}
$H^0(\lambda_{\hat{B}}):B\cong H^0(\hat{B})\longrightarrow
H^0(\Rh_{\hat{A}}(T^\bullet ,T^\bullet ))\cong\D(\hat{A})(T^\bullet,
T^\bullet )\cong\D(\Lambda )(V^\bullet ,V^\bullet)\cong B$,
\end{center}
which also an isomorphism. This proves $\lambda_{\hat{B}}$ is an
isomorphism in $\D(\hat{B})$, so that $\hat{B}\in\mathcal{X}$. It
follows that $\mathcal{X}=\D(\hat{B})$.
\end{proof}

The following is a particular case of last proposition. Compare with
\cite[Theorem 1.3]{CX2}
\begin{exem} \label{exem.Chen-Xi}
Let $\Lambda$ be an algebra, let $V$ be an injective cogenerator of
$\text{Mod}-\Lambda$ and let $B=\text{End}_\Lambda (V)$ be its
endomorphism algebra. Suppose that $U$ is a $\Lambda$-module
satisfying the following two conditions:

\begin{enumerate}
\item  $\text{Ext}_\Lambda^p(U,U)=0$, for all $p>0$;
\item There is an exact sequence $0\rightarrow U^{-n}\longrightarrow ...\longrightarrow U^{-1}\longrightarrow U^0\longrightarrow V\rightarrow
0$, where $U^k\in\text{add}_{\text{Mod}-\Lambda}(U)$, for each
$k=-n,...,-1,0$.
\end{enumerate}
Then there exists a recollement $\D'\equiv\D(A)\equiv\D(B)$, for
some triangulated category $\D'$, where $A=\text{End}_\Lambda (U)$.
Moreover, a slight modification of the proof of last proposition
shows that the recollement can be chosen in such a way that
$j_!=?\Lt_BT$, where $T=\text{Hom}_\Lambda (U,V)$, which is a
$B-A-$bimodule.

Note that when $\Lambda$ is an algebra with Morita duality (e.g. an
Artin algebra), the algebras $\Lambda$ and $A$ are Morita
equivalent, and so $\D(A)\cong\D(\Lambda )$.
\end{exem}

We will end the section by studying an interesting case of fully
faithfulness of $\Rh_A(T^\bullet ,?)$, where two recollement
situations come at once.

\begin{teor} \label{teor.main one}
Let $T^\bullet$ be a complex of $B-A-$bimodules such that
$T^\bullet_A$ is exceptional in $\D(A)$ and the algebra morphism
$B\longrightarrow\text{End}_{\D(A)}(T^\bullet )$ is an isomorphism.
The following assertion are equivalent:

\begin{enumerate}
\item ${}_BT^\bullet$ is compact and exceptional in $\D(B^{op})$ and the canonical algebra morphism $A\longrightarrow\text{End}_{\D(B^{op})}(T^\bullet )^{op}$
 is an isomorphism;
\item[$(1')$]  There is a recollement $\D'\equiv\D(B^{op})\equiv\D(A^{op})$,
with $j_!=T^\bullet\Lt_A?$, for some triangulated category $\D'$
(which is equivalent to $\D(C)$, for some dg algebra $C$);
\item $\Rh_A(T^\bullet ,?):\D(A)\longrightarrow\D(B)$ is fully
faithful and  preserves compact objects;
\item  $A_A$ is in the thick subcategory of $\D(A)$ generated by $T^\bullet_A$;
\item  $?\Lt_BT^\bullet :\D(B)\longrightarrow\D(A)$
has a fully faithful left adjoint;
\item There is a recollement $\D'\equiv\D(B)\equiv\D(A)$, with
 $j_*=\Rh_A(T^\bullet ,?)$, for  triangulated category $\D'$ (which is equivalent to $\D(C)$, for some dg algebra $C$).
\end{enumerate}

When $B$ is $k$-flat, the dg algebra in conditions (1') and (5) can be
chosen together with a homological epimorphism of dg algebras
$f:B\longrightarrow C$ such that $i_*$ is the restriction of scalars
$f_*:\D(C)\longrightarrow\D(B)$.
\end{teor}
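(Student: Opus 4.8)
The strategy is to establish the cycle of equivalences $(1)\Leftrightarrow(1')\Leftrightarrow(2)\Leftrightarrow(3)\Leftrightarrow(4)\Leftrightarrow(5)$ by routing through the recollement criteria already proved in the section, and then to read off the final assertion about the homological epimorphism from the same input that supplied the dg algebra. First I would note that $(1)\Leftrightarrow(1')$ is exactly the left-right symmetric version of the equivalence $(1)\Leftrightarrow(4)$ in Theorem~\ref{teor.recollement D=D(B)=D(A) with Rh} (applied with roles of $A$, $B$ interchanged and $A^{op}$, $B^{op}$ in place of $A$, $B$), together with Corollary~\ref{cor.Chen-Xi}, which identifies the compactness-plus-exceptionality condition with the existence of a recollement having $j_!=T^\bullet\Lt_A?$; moreover this already produces $\D'\cong\D(C)$ for a dg algebra $C$. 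Next, $(1')\Leftrightarrow(5)$: a recollement $\D'\equiv\D(B)\equiv\D(A)$ with $j_*=\Rh_A(T^\bullet,?)$ is precisely what Theorem~\ref{teor.recollement D=D(B)=D(A) with Rh} calls assertion~(1), and under our standing hypothesis ($T^\bullet_A$ exceptional, $B\xrightarrow{\sim}\End_{\D(A)}(T^\bullet)$) the missing implication $(1)\Rightarrow(4)$ there requires $A$ to be $k$-projective — which we do \emph{not} want to assume — so instead I would bypass it and go directly: condition~(1) here is literally condition~(4) of that theorem, whose implication $(4)\Rightarrow(1)$ holds with no flatness hypothesis, giving $(1)\Rightarrow(5)$; conversely $(5)$ gives that $\Rh_A(T^\bullet,?)$ is fully faithful, hence by Proposition~\ref{prop.functor in a recollement}(1) and Lemma~\ref{lem.fully-faitful right iff left} its left adjoint $?\Lt_BT^\bullet$ has a fully faithful left adjoint — this is $(4)$ — and then one checks $(4)\Rightarrow(1)$ using Corollary~\ref{cor.Chen-Xi} applied to that left adjoint.

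For the core ring-theoretic equivalences I would argue as follows. $(4)\Leftrightarrow(5)$: having $j_*=\Rh_A(T^\bullet,?)$ in a recollement $\D'\equiv\D(B)\equiv\D(A)$ means (Definition~\ref{def.recollement}) that $j^*=j^!=?\Lt_BT^\bullet$ is the functor one step to the left, hence $j_!$ is its fully faithful left adjoint; conversely if $?\Lt_BT^\bullet$ has a fully faithful left adjoint $j_!$, then by Proposition~\ref{prop.functor in a recollement}(2) applied to $j^*:=?\Lt_BT^\bullet$ (whose left adjoint is fully faithful and whose right adjoint $\Rh_A(T^\bullet,?)$ exists by Corollary~\ref{cor.adjoints with comp.generated domain}) we get a recollement with $j^*=?\Lt_BT^\bullet$, i.e.\ $j_*=\Rh_A(T^\bullet,?)$. $(2)\Leftrightarrow(3)$: if $\Rh_A(T^\bullet,?)$ is fully faithful and preserves compacts, then by Proposition~\ref{prop.fully faithful RHom} the counit $\delta:(?\Lt_BT^\bullet)\circ\Rh_A(T^\bullet,?)\to 1_{\D(A)}$ is a natural isomorphism, so $A\cong(?\Lt_BT^\bullet)(\Rh_A(T^\bullet,A))$ lies in the image of $\Rh_A(T^\bullet,?)$ composed with $?\Lt_BT^\bullet$; since $\Rh_A(T^\bullet,?)$ sends the compact generator $B\in\D(B)$ into $\text{per}(B)$ and $?\Lt_BT^\bullet(\text{per}(B))\subseteq\text{thick}_{\D(A)}(T^\bullet_A)$ (Lemma~\ref{lem.triang.functors preserves susp, tria..}), we get $A\in\text{thick}_{\D(A)}(T^\bullet_A)$, which is $(3)$. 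Conversely, if $A\in\text{thick}_{\D(A)}(T^\bullet_A)$ then $T^\bullet_A$ is self-compact (it is compact in $\text{Tria}_{\D(A)}(T^\bullet_A)=\text{Tria}_{\D(A)}(A)=\D(A)$, the last equality since $A\in\text{thick}\subseteq\text{Tria}$ forces $\text{Tria}_{\D(A)}(T^\bullet_A)=\D(A)$), so Proposition~\ref{prop.fully faithful derived tensor} $(3)\Rightarrow(1)$ gives that $?\Lt_BT^\bullet$ is fully faithful, and by Lemma~\ref{lem.fully-faitful right iff left} so is $\Rh_A(T^\bullet,?)$; and $\Rh_A(T^\bullet,?)$ preserves compacts because it has a coproduct-preserving right adjoint? — no, rather: $\Rh_A(T^\bullet,?)$ is a left adjoint so it preserves coproducts, and it sends $A$ into $\text{per}(B)$ precisely because $T^\bullet_A\in\text{per}(A)$ would be needed — this is where $(3)$ forces $T^\bullet_A$ compact, since $A\in\text{thick}_{\D(A)}(T^\bullet_A)$ together with self-compactness of $T^\bullet_A$ gives $T^\bullet_A\in\text{thick}_{\D(A)}(A)=\text{per}(A)$. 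Then $\Rh_A(T^\bullet,?)$ preserves compacts since it is a left adjoint with a right adjoint that preserves coproducts (Corollary~\ref{cor.adjoints with comp.generated domain}, using compactness of $T^\bullet_A$). Finally $(3)\Leftrightarrow(1)$ chains through Corollary~\ref{cor.Chen-Xi}: $(3)$ says $T^\bullet_A$ is compact and exceptional with $B\xrightarrow{\sim}\End_{\D(A)}(T^\bullet)$, which is exactly Corollary~\ref{cor.Chen-Xi}(3), and the left-right symmetric reading gives $(1)$.

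For the final statement about the homological epimorphism: when $B$ is $k$-flat, in establishing $(1')$ via Theorem~\ref{teor.recollement D=D(B)=D(A) with Rh} (in its left-right symmetric form with $B^{op}$, $A^{op}$), the recollement is constructed so that $\D'\cong\D(C)$ with a homological epimorphism $f:B\to C$ realizing $i_*$ as restriction of scalars $f_*$; the same $C$ and $f$ serve for $(5)$ because the equivalences $(1')\Leftrightarrow(5)$ above are compatible with the identification of the outer term $\D'$ and of the functor $i_*$ (the passage from the recollement in $(1')$ to the one in $(5)$ is just the general bijection between recollements and TTF triples of Proposition~\ref{prop.TTF=recollement}, which fixes $\text{Im}(i_*)$). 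The main obstacle I expect is bookkeeping: keeping straight which of the two "sides" (modules vs.\ opposite modules, $\Rh$ vs.\ $\Lt$) each cited theorem lives on, and in particular verifying that none of the implications I use actually needs the $k$-projectivity or $k$-flatness hypothesis except at the very last step where it is explicitly allowed; the cleanest route is to make $(1)\Leftrightarrow(3)$ via Corollary~\ref{cor.Chen-Xi} the hinge, since that corollary is stated with no flatness assumption on the side we need.
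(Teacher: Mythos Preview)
Your proposal contains a genuine error that breaks the implications $(3)\Rightarrow(2)$ and $(3)\Leftrightarrow(1)$. You repeatedly assert that assertion~(3), i.e.\ $A_A\in\text{thick}_{\D(A)}(T^\bullet)$, forces $T^\bullet_A$ to be compact in $\D(A)$. It does not. Your argument is: $A\in\text{thick}(T^\bullet)$ gives $\text{Tria}_{\D(A)}(T^\bullet)=\D(A)$, and then $T^\bullet$ is compact there. The first step is fine, but nothing in it makes $T^\bullet$ compact in $\D(A)$; you have simply asserted the conclusion. Your follow-up, that ``$A\in\text{thick}(T^\bullet)$ together with self-compactness gives $T^\bullet\in\text{thick}(A)$'', is also a non-sequitur: thick subcategories are not symmetric in this way. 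A concrete counterexample is any good tilting module that is not classical tilting, e.g.\ $T=\mathbb{Q}\oplus\mathbb{Q}/\mathbb{Z}$ over $A=\mathbb{Z}$: the exact sequence $0\to\mathbb{Z}\to\mathbb{Q}\to\mathbb{Q}/\mathbb{Z}\to 0$ puts $\mathbb{Z}\in\text{thick}_{\D(\mathbb{Z})}(T)$, yet $T$ is not finitely generated, hence not compact. This same error underlies your ``$(3)\Leftrightarrow(1)$ via Corollary~\ref{cor.Chen-Xi}'': Corollary~\ref{cor.Chen-Xi}(3) requires $T^\bullet_A$ compact, which is not what assertion~(3) says.

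The paper's route for $(1)\Leftrightarrow(3)$ avoids this entirely by working with the \emph{contravariant} functors $\Rh_A(?,T^\bullet)$ and $\Rh_{B^{op}}(?,T^\bullet)$ and Lemmas~\ref{lem.units are isomorphisms} and~\ref{lem.units are isomorphisms2}. For $(3)\Rightarrow(1)$: the standing hypothesis makes $\lambda_B$ an isomorphism, hence $\tau_{T^\bullet}$ is an isomorphism; since $A\in\text{thick}_{\D(A)}(T^\bullet)$ and the class of objects on which $\tau$ is an isomorphism is thick, $\tau_A$ is an isomorphism; by Lemma~\ref{lem.units are isomorphisms2} this gives $\rho_A$ an isomorphism, i.e.\ ${}_BT^\bullet$ is exceptional and $A\cong\text{End}_{\D(B^{op})}(T^\bullet)^{op}$. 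Compactness of ${}_BT^\bullet$ comes from applying the triangulated functor $\Rh_A(?,T^\bullet)$ to the containment $A\in\text{thick}_{\D(A)}(T^\bullet)$, yielding ${}_BT^\bullet\cong\Rh_A(A,T^\bullet)\in\text{thick}_{\D(B^{op})}(\Rh_A(T^\bullet,T^\bullet))=\text{thick}_{\D(B^{op})}(B)=\text{per}(B^{op})$. Note that it is ${}_BT^\bullet$, not $T^\bullet_A$, that turns out to be compact. Once you have $(1)\Leftrightarrow(3)$ this way, the remaining implications $(1)\Leftrightarrow(1')$, $(1')\Rightarrow(5)$, $(4)\Leftrightarrow(5)$, $(4)\Rightarrow(3)$, $(2)\Rightarrow(3)$, and $(5),(3)\Rightarrow(2)$ go roughly as you outlined (though for $(5),(3)\Rightarrow(2)$ the preservation of compacts again uses only that $\Rh_A(T^\bullet,A)\in\text{thick}_{\D(B)}(\Rh_A(T^\bullet,T^\bullet))=\text{per}(B)$, not compactness of $T^\bullet_A$).
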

\begin{proof}
 Note that the exceptionality of $T^\bullet_A$ plus the fact that
the algebra morphism $B\longrightarrow\text{End}_{\D(A)}(T^\bullet
)$ is an isomorphism is equivalent to saying that the unit map
$\lambda_B:B\longrightarrow[\Rh_A(T^\bullet ,?)\circ
(?\Lt_BT^\bullet )](B)=\Rh_A(T^\bullet ,T^\bullet )$ is an
isomorphism in $\D(B)$.

$(1)\Longleftrightarrow (1')$ follows from the left-right symmetric
version of corollary \ref{cor.Chen-Xi}.

$(1)\Longrightarrow (3)$ The hypothesis implies that the unit map
$\rho_A:A\longrightarrow [\Rh_{B^{op}}(T^\bullet ,?)\circ
(T^\bullet\Lt_A?)](A)$ is an isomorphism in $\D(A^{op})$. Then,
using lemma \ref{lem.units are isomorphisms2} and its terminology,
we know that $\tau_A$ is an isomorphism in $\D(A)$, which implies
that $A_A\cong\Rh_{B^{op}}(?,T^\bullet )(T^\bullet )$. The fact that
${}_BT^\bullet\in\text{per}(B^{op})=\text{thick}_{\D(B^{op})}(B)$
implies then that
$A_A\in\text{thick}_{\D(A)}([\Rh_{B^{op}}(?,T^\bullet)](B))=$ \newline
$\text{thick}_{\D(A)}(T^\bullet
)$.

$(3)\Longrightarrow (1)$  Since $\lambda_B$ is an isomorphism, using
lemma \ref{lem.units are isomorphisms} and its terminology, we get
that
$\tau_{T^\bullet}:T^\bullet\longrightarrow[\Rh_{B^{op}}(\Rh_A(?,T^\bullet),T^\bullet
)](T^\bullet)$ is an isomorphism in $\D(A)$. Since
$A_A\in\text{thick}_{\D(A)}(T^\bullet )$ we get that $\tau_A$ is an
isomorphism in $\D(A)$ which, by lemma \ref{lem.units are
isomorphisms2}, implies that $\rho_A$ is an isomorphism. That is,
${}_BT^\bullet$ is exceptional in $\D(B^{op})$ and the algebra
morphism $A\longrightarrow\text{End}_{\D(B^{op})}(T^\bullet )^{op}$
is an isomorphism.

On the other hand, the fact that
$A_A\in\text{thick}_{\D(A)}(T^\bullet )$ implies that
${}_BT^\bullet\cong\Rh_A(?,T^\bullet )(A)$ is in
$\text{thick}_{\D(B^{op})}(\Rh_A(?,T^\bullet )(T^\bullet
))=\text{thick}_{\D(B^{op})}(B)=\text{per}(B^{op})$.

$(1')\Longrightarrow (5)$ is  the left-right symmetric version of
corollary \ref{cor.right recollement implies left recollement}.

$(4)\Longleftrightarrow (5)$ Apply proposition \ref{prop.functor in a
recollement} to the functor $G=:?\Lt_BT^\bullet
:\D(B)\longrightarrow\D(A)$.

$(4)\Longrightarrow (3)$ Let $L :\D(A)\longrightarrow\D(B)$ be a fully
faithful left adjoint of $?\Lt_BT^\bullet$. One easily sees that $L$
preserves compact objects. Moreover, the unit
$1_{\D(A)}\longrightarrow (?\Lt_BT^\bullet )\circ L$ of the
adjunction $(L,?\Lt_BT^\bullet )$ is a natural isomorphism. It
follows that $A\cong [(?\Lt_BT^\bullet )\circ L](A)=(?\Lt_BT^\bullet
)(L(A))\in (?\Lt_BT^\bullet )(\text{per}(B))=(?\Lt_BT^\bullet
)(\text{thick}_{\D(B)}(B))\subseteq\text{thick}_{\D(A)}((?\Lt_BT^\bullet
)(B))=\text{thick}_{\D(A)}(T^\bullet )$.

$(5),(3)\Longrightarrow (2)$ From assertion (5) we get that
$\Rh_A(T^\bullet ,?)$ is fully faithful. On the other hand, the fact
that  $A_A\in\text{thick}_{\D(A)}(T^\bullet )$ and that $\lambda_B$
is an isomorphism imply that $\Rh_A(T^\bullet
?)(A)\in\text{thick}_{\D(B)}(\Rh_A(T^\bullet
,?)(T^\bullet))=\text{thick}_{\D(B)}(B)=\text{per}(B)$. It follows
from this that $\Rh_A(T^\bullet ,?)$ takes objects of
$\text{per}(A)=\text{thick}_{\D(A)}(A)$ to objects of
$\text{per}(B)$, thus proving assertion (2).

$(2)\Longrightarrow (3)$
 Bearing in mind
that the counit $\delta :(?\Lt_BT^\bullet )\circ\Rh_A(T^\bullet
,?)\longrightarrow 1_{\D(A)}$ is a natural isomorphism and that
$\Rh_A(T^\bullet ,?)(A)$ is a compact object of $\D(B)$, we get:

\begin{center}
$A\cong [?\Lt_BT^\bullet )\circ\Rh_A(T^\bullet
,?)](A)=(?\Lt_BT^\bullet ) )(\Rh_A(T^\bullet ,A))\in(?\Lt_BT^\bullet
)(\text{per}(B))=(?\Lt_BT^\bullet
)(\text{thick}_{\D(B)}(B))\subseteq\text{thick}_{\D(A)}(B\Lt_BT^\bullet
)=\text{thick}_{\D(A)}(T^\bullet )$.
\end{center}

\end{proof}

\begin{rem}
The precursor of theorem \ref{teor.main one} is \cite[Theorem
2.2]{BMT}, where the authors prove that if $T_A$ is a good tilting
module (see definition \ref{def.tilting module}) and
$B=\text{End}(T_A)$, then condition (2) in our theorem holds. It is a
consequence of theorem \ref{teor.main one} (see corollary
\ref{cor.fully faithful Rh-compact for bimodules} below) that the
converse is also true when one assume that $T_A$ has finite
projective dimension and $\text{Ext}_A^p(T,T^{(\alpha )})=0$, for
all integers $p>0$ and all cardinals $\alpha$. Another consequence
(see corollary \ref{cor.fully faithful Rh-compact for bimodules} and
example \ref{exem.tilting-versus-ffaithful Rh}(1)) is that there are
right $A$-modules, other than the good tilting ones, for which the
equivalent conditions of the theorem holds. In the case of good
$1$-tilting modules, it was proved in \cite[Theorem 1.1]{CX1} that
the dg algebra $C$ can be chosen to be an ordinary algebra.

The corresponding of the implication $(1)\Longrightarrow (5)$ in our
theorem was proved in \cite[Theorem 1]{Y} for dg algebras over
field. This result and its converse is then covered by the extension
of theorem \ref{teor.main one} to the context of dg categories,
which is proved in \cite{NS}.
\end{rem}

\subsection{Some natural questions}

As usual, $T^\bullet$ is a complex of $B-A-$bimodules. After the
previous subsection, some natural questions arise, starting with the
questions \ref{ques.recollement situations} of the introduction. Our
next list of examples gives negative answers to all questions
\ref{ques.recollement situations}.

For question 1.2(1)(a), the following is a counterexample:

\begin{exem}
Let $T_A$ be a good tilting module (see definition \ref{def.tilting
module}) which is not finitely generated (e.g.
$\mathbb{Q}\oplus\mathbb{Q}/\mathbb{Z}$ as $\mathbb{Z}$-module) and let 
$B=\text{End}(T_A)$ be its endomorphism algebra. The functor
$\Rh_A(T,?):\D(A)\longrightarrow\D(B)$ is fully faithful, but there
is no recollement $\D(A)\equiv\D(B)\equiv\D'$, with
$i_*=\Rh_A(T,?)$,  for any triangulated category $\D'$.
\end{exem}
\begin{proof}
That $\Rh_A(T,?)$ is fully faithful follows from corollary
\ref{cor.fully faithful Rh-compact for bimodules} below.  On the
other hand, if the desired recollement $\D(A)\equiv\D(B)\equiv\D'$
existed, then, by corollary \ref{cor.recollement D(A)=D(B)=D by
RHom}, we would have that $T_A$ is compact in $\D(A)$, and this is
not the case.
\end{proof}

For question 1.2(1.b), the following is a counterexample.

\begin{exem} \label{exem.fully faithful Rh and no D'=D(B)=D(A)}
If $f:B\longrightarrow A$ is a homological epimorphism of algebras,
and we take $T={}_BA_A$, then
$\Rh_A(A,?)=f_*:\D(A)\longrightarrow\D(B)$ is fully faithful, but
there need not exist a recollement $\D'\equiv\D(B)\equiv\D(A)$, with
$j_*=\Rh_A(A,?)$, for any triangulated category $\D'$.
\end{exem}
\begin{proof}
That $\Rh_A(A,?)=f_*$ is fully faithful follows from the properties
of homological epimorphisms. If the mentioned recollement exists,
then the functor $?\Lt_BA:\D(B)\longrightarrow\D(A)$ preserves
products and, by proposition \ref{prop.tensor by compacts preserves
products}, ${}_BA$ is compact in $\D(B^{op})$. There are obvious
homological epimorphisms which do not satisfy this last property.
\end{proof}

For question 1.2(2)(a), the following is a counterexample, inspired
by  theorem \ref{teor.recollement D(B)=D(A)=D with Lt}:

\begin{exem} \label{exem.fully faithful Lt without recollement}
Let $A$ be an algebra and let $P$ be a finitely generated projective
right $A$-module such that $P$ is not finitely generated as a left
module over $B:=\text{End}_A(P)$. Then
$?\Lt_BP:\D(B)\longrightarrow\D(A)$ is fully faithful, but there is
no recollement $\D(B)\equiv\D(A)\equiv\D'$, with
$i_*=?\Lt_BP=?\otimes_BP$, for any triangulated category $\D'$.

Concretely, if $k$ is a field, $V$ is an infinite dimensional
$k$-vector space and $A=\text{End}_k(V)^{op}$, then
$?\Lt_kV=?\otimes_kV:\D(k)\longrightarrow\D(A)$ is fully faithful,
but does not define the mentioned recollement.
\end{exem}
\begin{proof}
The final statement follows directly from the first part since $V$
is a simple projective right $A$-module such that
$\text{End}_A(V)\cong k$. On the other hand,  we get from
proposition \ref{prop.fully faithful derived tensor} that
$?\Lt_BP:\D(B)\longrightarrow\D(A)$ is fully faithful and, since
${}_BP$ is not compact in $\D(B^{op})$, theorem
\ref{teor.recollement D(B)=D(A)=D with Lt} implies that the
recollement does not exist.

\end{proof}

As a counter example to question 1.2(2)(b), we have:

\begin{exem}
The functor
$?\Lt_\mathbb{Q}\mathbb{Q}=?\otimes_\mathbb{Q}Q:\D(\mathbb{Q})\longrightarrow\D(\mathbb{Z})$
is fully faithful (see example \ref{exem.LtQ}), but there is no
recollement $\D'\equiv\D(\mathbb{Z})\equiv\D(\mathbb{Q})$, with
$j_!=?\otimes_\mathbb{Q}Q$, for any triangulated category $\D'$.
\end{exem}
\begin{proof}
If the recollement existed, then, by corollary \ref{cor.Chen-Xi},
$\mathbb{Q}$ would be compact in $\D(\mathbb{Z})$, which is absurd.
\end{proof}

  But, apart from  questions \ref{ques.recollement situations}, there are some other natural questions whose answer we do
not know even in the case of a bimodule.

\begin{quess} \label{ques.fully-faithfulness of Lt}
\begin{enumerate}
\item (Motivated by proposition \ref{prop.fully faithful derived
tensor}) Suppose that $T^\bullet_A$ is isomorphic in $\D(A)$ to a
bounded complex of projective right $A$-modules,  that the canonical
morphism $\text{Hom}_{D(A)}(T^\bullet ,T^\bullet )^{(\alpha
)}\longrightarrow\text{Hom}_{\D(A)}(T^\bullet ,T^{\bullet (\alpha
)})$ is an isomoprhism and that $\text{Hom}_{\D(A)}(T^\bullet
,T^{\bullet (\alpha )}[p])=0$, for all cardinals  $\alpha$ and all
integers $p\neq 0$. Is $T^\bullet_A$ self-compact in $\D(A)$?.

\item (Motivated by proposition \ref{prop.fully faithful derived tensor}) Suppose that $?\otimes_BT^\bullet :\D(B)\longrightarrow\D(A)$ is
fully faithful. Is $H^p(T^\bullet )=0$ for $p>>0?$. Is $T^\bullet_A$
quasi-isomorphic to a bounded complex of projective right
$A$-modules?
\end{enumerate}
\end{quess}

\begin{rem}
The converse of question \ref{ques.fully-faithfulness of Lt}(2) has
a negative answer, even for a bounded complex of projective
$A$-modules. For instance, if $P$ is a projective generator of
$\text{Mod}-A$ which is not finitely generated, then
$\text{Tria}_{\D(A)}(P)=\D(A)$ and $P$ is not compact in this
category. It follows from proposition \ref{prop.fully faithful
derived tensor} that $?\Lt_BP:\D(B)\longrightarrow\D(A)$ is not
fully faithful, where $B=\text{End}(P_A)$.
\end{rem}

Our next question concerns the relationship between proposition
\ref{prop.fully faithful RHom} and theorem \ref{teor.main one}. That
$\Rh_A(T^\bullet ,?):\D(A)\longrightarrow\D(B)$ be fully faithful
does not imply that it preserves compact objects (see example
\ref{exem.fully faithful Rh and no D'=D(B)=D(A)}).  The correct
question to answer, for which we do not have an aswer, is the
following:

\begin{ques} \label{ques.Rh faithful preserves compacts}
Suppose that $T^\bullet_A$ is exceptional in $\D(A)$,   the
canonical algebra morphism
$B\longrightarrow\text{End}_{\D(A)}(T^\bullet )$ is an isomorphism
and that $\Rh_A(T^\bullet ,?):\D(A)\longrightarrow\D(B)$ is fully
faithful. Due to theorems \ref{teor.recollement D=D(B)=D(A) with Rh}
and \ref{teor.main one}, each of the following questions has an
affirmative answer if, and only if, so do the other ones, but we do
not know the answer:

\begin{enumerate}
\item Does $\Rh_A(T^\bullet ,?)$ preserve compact objects?;
\item Is $A_A$ in
$\text{thick}_{\D(A)}(T^\bullet )$?; \item Is ${}_BT^\bullet$
compact in $\D(B^{op})$?
\end{enumerate}
Note that, by proposition \ref{prop.faithful Rh for exceptional},
${}_BT^\bullet$ is isomorphic in $\D(B^{op})$ to an upper bounded
complex of finitely generated projective left $B$-modules with
bounded homology.
\end{ques}

In next section, we will show that, in case $T^\bullet$ is a
$B-A-$bimodule, the question has connections with Wakamatsu tilting
problem.

%%%%%%%%%%%%%%%%%%%%%%%%%  5  %%%%%%%%%%%%%%%%%%%%%%%%%%%%%%%%%%%%%%%%%%%%%%%%%%%%%%%%%%%%%%%%%%%%%%%%

\section{The case of a bimodule}

\subsection{Re-statement of the main results}
For the convenience of the reader, we make explicit  what some
results of the previous section say in the particular case when
$T^\bullet =T$ is just a $B-A-$bimodule. The statements show a close
connection with the theory of (not necessarily finitely generated)
tilting modules.

 Recall:

\begin{defi} \label{def.tilting module}
Consider the following conditions for an $A$-module $T$:

\begin{enumerate}
\item[a)]  $T$ has finite projective dimension;
\item[a')] $T$ admits a finite projective resolution with finitely
generated terms;
\item[b)] There is an exact sequence $0\rightarrow A\longrightarrow T^0\longrightarrow T^1\longrightarrow ....\longrightarrow T^m\rightarrow
0$ in $\text{Mod}-A$, with $T^i\in\text{Add}(T)$ for $i=0,1,...,m$;
\item[b')] There is an exact sequence $0\rightarrow A\longrightarrow T^0\longrightarrow T^1\longrightarrow ....\longrightarrow T^m\rightarrow
0$ in $\text{Mod}-A$, with $T^i\in\text{add}(T)$ for $i=0,1,...,m$;
\item[c)] $\text{Ext}_A^p(T,T^{(\alpha )})=0$, for all integers
$p>0$ and all cardinals $\alpha$.
\end{enumerate}
$T$ is called a \emph{$n$-tilting module} when conditions a), b) and
c) hold and $\text{pd}_A(T)=n$. Such a tilting module is
\emph{classical $n$-tilting} when it satisfies a'), b') and c) and
it is called a \emph{good $n$-tilting module} when it satisfies
conditions a), b') and c). We will simply say that $T$ is tilting
(resp.  classical tilting, resp. good tilting) when it is
$n$-tilting (resp. classical $n$-tilting, resp. good $n$-tilting),
for some $n\in\mathbf{N}$.
\end{defi}

\begin{rem}
Note that $T$ is a classical $n$-tilting module if, and only if, it
satisfies conditions a'), b') and  $\text{Ext}_A^p(T,T)=0$, for all
integers $p>0$.
\end{rem}

When $\Ext_A^p(T,T)=0$, for all $p>0$,  it is proved in \cite{NS}
that the condition that $A_A\in\text{thick}_{\D(A)}(T)$ is
equivalent condition b') in Definition \ref{def.tilting module}.

In the rest of the subsection, unless otherwise stated, $T$ will be
a  $B-A-$bimodule  and all statements  are given for it. The
following result is then a direct consequence of proposition
\ref{prop.fully faithful derived tensor}:

\begin{cor} \label{cor.fully faithful Lt for bimodule}
Consider the following assertions:

\begin{enumerate}
\item $\Lt_BT:\D(B)\longrightarrow\D(A)$ is fully faithful;
\item $\text{Ext}_A^p(T,T)=0$, for all $p>0$, the canonical algebra
morphism $B\longrightarrow\text{End}_A(T)$ is an isomorphism and
$T_A$ is a compact object of $\text{Tria}_{\D(A)}(T)$.

\item The following conditions hold:

\begin{enumerate}
\item the canonical map $B^{(\alpha
)}\longrightarrow\text{Hom}_A(T,T)^{(\alpha
)}\longrightarrow\text{Hom}_A(T,T^{(\alpha )})$ is an isomorphism,
for all cardinals $\alpha$
\item $\text{Ext}_A^p(T,T^{(\alpha )})=0$, for all cardinals $\alpha$ and integers $p>0$;
\item  for each family $(X_i^\bullet )_{i\in I}$ in $\text{Tria}_{\D(A)}(T)$ such that
$\D(A)(T[k],X^\bullet_i)=0$, for all $k\geq 0$ and all $i\in I$, one
has that $\D(A)(T,\coprod_{i\in I}X_i^\bullet )=0$.
\end{enumerate}
\end{enumerate}

The implications $(1)\Longleftrightarrow (2)\Longrightarrow (3)$ hold
true. When $T_A$ has finite projective dimension, all assertions are
equivalent.
\end{cor}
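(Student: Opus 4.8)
The plan is to derive Corollary \ref{cor.fully faithful Lt for bimodule} directly from Proposition \ref{prop.fully faithful derived tensor} by simply specializing $T^\bullet=T$ to a stalk complex concentrated in degree $0$ and translating the triangulated-category conditions there into module-theoretic ones. First I would observe that for a stalk complex $T$ one has $\D(A)(T,T[p])=\Ext_A^p(T,T)$ for all $p\in\mathbb{Z}$, so exceptionality of $T_A$ in $\D(A)$ becomes exactly the condition $\Ext_A^p(T,T)=0$ for $p\neq 0$, which (since $T$ is a module) is the same as $\Ext_A^p(T,T)=0$ for all $p>0$. Likewise the canonical algebra morphism $B\longrightarrow\End_{\D(A)}(T^\bullet)$ becomes the canonical morphism $B\longrightarrow\End_A(T)$. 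With these identifications, assertion (1) of the corollary is assertion (1) of Proposition \ref{prop.fully faithful derived tensor}, and assertion (2) of the corollary is assertion (3) of that proposition (self-compactness of $T^\bullet$ in $\D(A)$ is by definition compactness of $T_A$ in $\text{Tria}_{\D(A)}(T)$). Hence the equivalence $(1)\Longleftrightarrow(2)$ is immediate from the equivalence $(1)\Longleftrightarrow(3)$ there.

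Next I would handle the implication $(2)\Longrightarrow(3)$, equivalently $(3)\Longrightarrow(4)$ of the proposition, again by translation. Condition (4)(a) of the proposition is verbatim (3)(a) here. For (4)(b): the canonical morphism $\D(A)(T,T)^{(\alpha)}\longrightarrow\D(A)(T,T^{(\alpha)})$ is the canonical map $\End_A(T)^{(\alpha)}\longrightarrow\Hom_A(T,T^{(\alpha)})$, and, given (3)(a), precomposing with $B^{(\alpha)}\stackrel{\cong}{\longrightarrow}\End_A(T)^{(\alpha)}$ shows it is an isomorphism precisely when (3)(a) holds; and the vanishing $\D(A)(T,T^{(\alpha)}[p])=0$ for $p\neq 0$ is exactly $\Ext_A^p(T,T^{(\alpha)})=0$ for $p>0$ together with the automatically-true vanishing for $p<0$ (a module has no negative Ext into a module). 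This gives (3)(b) of the corollary from (4)(b) of the proposition. For (4)(c): by Proposition \ref{prop.triangulated subcategory is aisle}, $\text{Susp}_{\D(A)}(T)^\perp$ consists of the $Y^\bullet$ with $\D(A)(T[k],Y^\bullet)=0$ for all $k\geq 0$, so $\text{Susp}_{\D(A)}(T)^\perp\cap\text{Tria}_{\D(A)}(T)$ is closed under coproducts if and only if, for every family $(X_i^\bullet)$ in $\text{Tria}_{\D(A)}(T)$ with $\D(A)(T[k],X_i^\bullet)=0$ for all $k\geq 0$ and all $i$, the coproduct $\coprod_i X_i^\bullet$ again lies in that intersection, i.e. $\D(A)(T[k],\coprod_i X_i^\bullet)=0$ for all $k\geq 0$; since $\coprod_i X_i^\bullet\in\text{Tria}_{\D(A)}(T)$ automatically and $\text{Susp}_{\D(A)}(T)$ is shift-closed enough, this reduces to the $k=0$ case, which is exactly condition (3)(c) here. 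Thus $(2)\Longrightarrow(3)$ follows from $(3)\Longrightarrow(4)$ of the proposition.

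Finally, for the last sentence I would invoke the ``when $T^\bullet_A$ is quasi-isomorphic to a bounded complex of projective $A$-modules, all assertions are equivalent'' clause of Proposition \ref{prop.fully faithful derived tensor}: if $T_A$ is a module of finite projective dimension, it admits a finite projective resolution, hence is quasi-isomorphic to a bounded complex of projectives, so $(4)\Longrightarrow(3)$ of the proposition holds, which after translation gives $(3)\Longrightarrow(2)$ here, closing the cycle. I expect the only mildly delicate point — and the one I would state carefully rather than wave at — is the equivalence between condition (3)(c) and condition (4)(c) of the proposition, namely checking that intersecting with $\text{Tria}_{\D(A)}(T)$ lets one drop all shifts $T[k]$ with $k>0$ and keep only $T=T[0]$; this uses that $\text{Susp}_{\D(A)}(T)$ is suspended (so $T[k]\in\text{Susp}_{\D(A)}(T)$ for $k\geq 0$) together with Proposition \ref{prop.triangulated subcategory is aisle}, and is the same manipulation already used at the end of the proof of Proposition \ref{prop.fully faithful derived tensor}. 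Everything else is a routine unwinding of definitions.
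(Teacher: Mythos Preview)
Your proposal is correct and matches the paper's approach exactly: the paper simply declares the corollary ``a direct consequence of proposition \ref{prop.fully faithful derived tensor}'', and you have carried out the routine translation of that proposition's conditions (1), (3), (4) into the module-theoretic conditions (1), (2), (3) of the corollary.

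One small correction: your justification of the equivalence between (3)(c) here and (4)(c) of the proposition is a bit loose, and the reference to ``the same manipulation already used at the end of the proof of Proposition \ref{prop.fully faithful derived tensor}'' is not accurate --- that proof uses (4)(c) directly and does not perform any shift reduction. The clean argument for $(3)(c)\Rightarrow(4)(c)$ is the shift trick you are implicitly gesturing at: given a family $(X_i^\bullet)$ in $\text{Tria}_{\D(A)}(T)$ with $\D(A)(T[k],X_i^\bullet)=0$ for all $k\geq 0$, and given any fixed $k\geq 0$, the shifted family $(X_i^\bullet[-k])$ again lies in $\text{Tria}_{\D(A)}(T)$ and satisfies $\D(A)(T[j],X_i^\bullet[-k])=\D(A)(T[j+k],X_i^\bullet)=0$ for all $j\geq 0$; applying (3)(c) to this shifted family yields $\D(A)(T,\coprod_i X_i^\bullet[-k])=0$, i.e.\ $\D(A)(T[k],\coprod_i X_i^\bullet)=0$. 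State it this way and the point is airtight.
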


The next result is then a consequence of theorem
\ref{teor.recollement D(B)=D(A)=D with Lt}:

\begin{cor} \label{cor.recollement D(B)=D(A)=D with bimodule}
The following assertions are equivalent:

\begin{enumerate}

\item There is a recollement  $\D(B)\equiv\D(A)\equiv\D'$ with $i_*=?\Lt_BT$,
for some triangulated category $\D'$;

\item  $\text{Ext}_A^p(T,T)=0$, for all $p>0$, the canonical algebra
morphism $B\longrightarrow\text{End}_A(T)$ is an isomorphism,  $T$
is a compact object of $\text{Tria}_{\D(A)}(T)$ and this subcategory
is closed under taking products in $\D(A)$.
\item  $\text{Ext}_A^p(T,T)=0$, for all $p>0$, the canonical algebra
morphism $B\longrightarrow\text{End}_A(T)$ is an isomorphism,  $T$
is a compact object of $\text{Tria}_{\D(A)}(T)$ and $T$ admits a
finite projective resolution with finitely generated terms as a left
$B$-module.

When $A$ is $k$-flat, these conditions are also equivalent to:

\item  There is a dg algebra $\hat{A}$, a homological epimorphism of
dg algebras $f:A\longrightarrow\hat{A}$ and a classical tilting
object  $\hat{T}^\bullet\in\D(\hat{A})$ such that

\begin{enumerate}
\item $f_*(\hat{T}^\bullet )\cong T_A$, where $f_*:\D(\hat{A})\longrightarrow\D(A)$ is the restriction of scalars functor;
\item The canonical algebra morphism
$B\longrightarrow\text{End}_{A}(T)\cong\text{End}_{\D(\hat{A})}(\hat{T}^\bullet)$
is an isomorphism.
\end{enumerate}

\end{enumerate}
\end{cor}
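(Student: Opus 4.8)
The plan is to read off the corollary from Theorem \ref{teor.recollement D(B)=D(A)=D with Lt}, applied to the stalk complex $T^\bullet=T$ concentrated in degree $0$, by translating each hypothesis on $T^\bullet$ into module-theoretic terms. First I would record the dictionary. For a bimodule $T$ one has $\D(A)(T,T[p])\cong\text{Ext}_A^p(T,T)$ for $p>0$ and $\D(A)(T,T[p])=0$ for $p<0$, so $T$ is exceptional in $\D(A)$ if and only if $\text{Ext}_A^p(T,T)=0$ for all $p>0$; moreover $\text{End}_{\D(A)}(T)=\text{Hom}_A(T,T)$, so the canonical algebra map $B\longrightarrow\text{End}_{\D(A)}(T)$ is literally $B\longrightarrow\text{End}_A(T)$. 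Self-compactness of $T^\bullet$ and closedness of $\text{Tria}_{\D(A)}(T^\bullet)$ under products in $\D(A)$ need no translation. With these identifications, assertions (1), (4) and (5) of the theorem become verbatim assertions (1), (2) and (3) of the corollary, so the equivalence $(1)\Longleftrightarrow(2)\Longleftrightarrow(3)$ follows from $(1)\Longleftrightarrow(4)\Longleftrightarrow(5)$ in the theorem.

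The only clause that is not an immediate rephrasing is the passage from ``${}_BT^\bullet\in\text{per}(B^{op})$'' in condition (5) of the theorem to ``${}_BT$ admits a finite projective resolution with finitely generated terms as a left $B$-module'' in condition (3) of the corollary. By Proposition \ref{prop.compact objects in D(A)} the former means that ${}_BT$ is quasi-isomorphic in $\D(B^{op})$ to a bounded complex $P^\bullet$ of finitely generated projective left $B$-modules. Whenever the top nonzero term of such a complex sits in a positive degree $b$, the vanishing $H^b(P^\bullet)=0$ forces the differential into that term to be a split epimorphism, and deleting the resulting contractible direct summand yields a homotopy-equivalent bounded complex of finitely generated projectives with top degree $b-1$. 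Iterating, one reaches a complex concentrated in non-positive degrees, which is exactly a finite projective resolution of $T=H^0(P^\bullet)$ with finitely generated terms; the converse implication is trivial. I expect this homotopy reduction to be the only non-formal step in the whole argument.

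Finally, for the clause valid when $A$ is $k$-flat, I would specialize condition (6) of the theorem to $T^\bullet=T$: it provides a dg algebra $\hat{A}$, a homological epimorphism $f:A\longrightarrow\hat{A}$ and a classical tilting object $\hat{T}^\bullet\in\D(\hat{A})$ with $f_*(\hat{T}^\bullet)\cong T_A$ and $B\cong\text{End}_A(T)$, which is almost condition (4) of the corollary. The one extra ingredient, the isomorphism $\text{End}_A(T)\cong\text{End}_{\D(\hat{A})}(\hat{T}^\bullet)$, follows because $f_*$ is fully faithful by the properties of homological epimorphisms (see \cite[Section 4]{NS2}), whence $\text{End}_{\D(\hat{A})}(\hat{T}^\bullet)\cong\text{End}_{\D(A)}(f_*\hat{T}^\bullet)\cong\text{End}_{\D(A)}(T_A)=\text{End}_A(T)$; conversely a datum as in the corollary's (4) is already a datum as in the theorem's (6). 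Thus $(1)\Longleftrightarrow(2)\Longleftrightarrow(3)\Longleftrightarrow(4)$ under $k$-flatness drops out of the full equivalence $(1)\Longleftrightarrow(4)\Longleftrightarrow(5)\Longleftrightarrow(6)$ in the theorem.
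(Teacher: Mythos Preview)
Your proposal is correct and follows exactly the approach the paper intends: the corollary is stated as a direct consequence of Theorem \ref{teor.recollement D(B)=D(A)=D with Lt}, and you have carried out the specialization $T^\bullet=T$ with the appropriate module-theoretic dictionary, including the truncation argument that identifies ${}_BT\in\text{per}(B^{op})$ with the existence of a finite projective resolution by finitely generated terms. One small over-explanation: the isomorphism $\text{End}_A(T)\cong\text{End}_{\D(\hat{A})}(\hat{T}^\bullet)$ you single out as an ``extra ingredient'' is already part of the statement of condition (6)(b) in the theorem (and of condition (4)(b) here), so nothing additional is required for that clause---your justification via full faithfulness of $f_*$ is how the theorem's proof established it, not something new to supply at the corollary level.
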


The next result is a direct consequence of corollaries
\ref{cor.Chen-Xi} and \ref{cor.right recollement implies left
recollement}:

\begin{cor}
Consider the following assertions for the $B-A-$bimodule $T$:

\begin{enumerate}
\item $T_A$ admits a finite projective resolution with finitely
generated terms, $\text{Ext}_A^p(T,T)=0$, for all $p>0$, and the
algebra morphism $B\longrightarrow\text{End}(T_A)$ is an
isomorphism;
\item There is recollement $\D'\equiv D(A)\equiv\D(B)$, with $j_!=?\Lt_BT$,  for some
triangulated category $\D'$ (which is equivalent to $\D(C)$, for
some dg algebra $C$);
\item There is a recollement $\D'\equiv D(A^{op})\equiv\D(B^{op})$,
with $j^!=j^*=T\Lt_A?$, for some triangulated category $\D'$ (which
is equivalent to $\D(C^{op})$, for some dg algebra $C$).
\end{enumerate}
Then the implications $(1)\Longleftrightarrow (2)\Longrightarrow (3)$
hold true. When $A$ is $k$-projective, all assertions are
equivalent.
\end{cor}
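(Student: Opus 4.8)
The plan is to derive this corollary directly from the general machinery already established for complexes, specializing to the case $T^\bullet = T$ a bimodule. The key observation is that when $T$ is an ordinary bimodule, the compactness of $T_A$ in $\D(A)$ (i.e. $T^\bullet_A \in \operatorname{per}(A)$) is equivalent, by Proposition \ref{prop.compact objects in D(A)}, to $T_A$ being quasi-isomorphic to a bounded complex of finitely generated projective $A$-modules; and since $T$ is concentrated in degree $0$, this is in turn equivalent to $T_A$ admitting a finite projective resolution with finitely generated terms. Similarly, $T_A$ exceptional in $\D(A)$ means precisely $\operatorname{Ext}_A^p(T,T) = 0$ for all $p \neq 0$, which since $T$ is a module reduces to $p > 0$; and $\operatorname{End}_{\D(A)}(T^\bullet) = \operatorname{End}_A(T)$. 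Thus condition (1) of this corollary is literally a translation of condition (3) of Corollary \ref{cor.Chen-Xi} for $T^\bullet = T$.

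First I would invoke Corollary \ref{cor.Chen-Xi} with $T^\bullet = T$: its equivalence $(1)\Longleftrightarrow(2)\Longleftrightarrow(3)$ gives that condition (1) of the present corollary is equivalent to the existence of a recollement $\D'\equiv\D(A)\equiv\D(B)$ with $j_! = ?\Lt_B T$, which is precisely condition (2) here, and that $\D'$ can be taken of the form $\D(C)$ for a dg algebra $C$. This establishes $(1)\Longleftrightarrow(2)$. Next, for the implication $(2)\Longrightarrow(3)$, I would apply Corollary \ref{cor.right recollement implies left recollement}: since $A$ and $B$ are ordinary algebras, the existence of the recollement $\D'\equiv\D(A)\equiv\D(B)$ with $j_! = ?\Lt_B T$ yields a recollement $\D''\equiv\D(A^{op})\equiv\D(B^{op})$ with $j^! = j^* = T\Lt_A{?}$, and the triangulated category $\D''$ is again equivalent to $\D(C^{op})$ for the same (or a suitable) dg algebra, by the remarks at the end of the proof of Theorem \ref{teor.recollement D=D(B)=D(A) with Rh} on compact generators. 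This gives $(2)\Longrightarrow(3)$.

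Finally, for the converse implication $(3)\Longrightarrow(1)$ under the hypothesis that $A$ is $k$-projective, I would again appeal to Corollary \ref{cor.right recollement implies left recollement}, whose last sentence asserts exactly that when $A$ is $k$-projective the converse holds: the recollement $\D'\equiv\D(A^{op})\equiv\D(B^{op})$ with $j^!=j^*=T\Lt_A{?}$ implies the recollement $\D''\equiv\D(A)\equiv\D(B)$ with $j_!=?\Lt_B T$, which is condition (2), and then $(2)\Longrightarrow(1)$ by the already-established equivalence. The main (and essentially only) obstacle is bookkeeping: one must be careful that the left–right symmetric version of Corollary \ref{cor.Chen-Xi} is being applied correctly, that the $k$-projectivity hypothesis on $A$ is exactly what is needed to run the symmetric version of Proposition \ref{prop.all nat.transformations are isos} (which underlies Corollary \ref{cor.right recollement implies left recollement}), and that the passage from $T^\bullet$ to a module $T$ genuinely collapses "upper bounded complex of finitely generated projectives" to "finite projective resolution with finitely generated terms" — this last point uses that a module of finite flat dimension over $A$ with a degreewise finitely generated projective resolution has finite projective dimension, which is already contained in the proof of Proposition \ref{prop.tensor by compacts preserves products}. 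No new ideas are required beyond invoking these results and translating the hypotheses.
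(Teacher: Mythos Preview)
Your proposal is correct and follows exactly the same route as the paper: the paper's proof consists of the single sentence ``It is a direct consequence of corollaries \ref{cor.Chen-Xi} and \ref{cor.right recollement implies left recollement},'' and you have simply unpacked how those two results are applied, including the translation of compactness and exceptionality for a stalk module. No substantive difference.
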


The next result is a direct consequence of theorem \ref{teor.main
one} and the definition of good tilting module.

\begin{cor} \label{cor.fully faithful Rh-compact for bimodules}
Let $T$ be a right $A$-module such that $\text{Ext}_A^p(T,T)=0$, for
all $p>0$, and let $B=\text{End}_A(T)$. The following assertions are
equivalent:

\begin{enumerate}
\item $\text{Ext}_{B^{op}}^p(T,T)=0$, for all $p>0$, the canonical algebra
morphism $A\longrightarrow\text{End}_{B^{op}}(T)^{op}$ is an
isomorphism and $T$ admits a finite projective resolution with
finitely generated terms as a left $B$-module;
\item $\Rh_A(T,?):\D(A)\longrightarrow\D(B)$ is fully faithful and
preserves compact objects;
\item There exists an exact sequence $0\rightarrow A\longrightarrow
T^0\longrightarrow T^1 ...\longrightarrow T^n\rightarrow 0$ in
$\text{Mod}-A$, with $T^k\in\text{add}(T)$ for each $k=0,1,...,n$;

\item $?\Lt_B:\D(B)\longrightarrow\D(A)$ has a fully faithful left
adjoint.
\end{enumerate}
When in addition $T_A$ has finite projective dimension and
$\text{Ext}_A^p(T,T^{(\alpha )})=0$, for all cardinals $\alpha$ and
all integers $p>0$, the above conditions are also equivalent to:

\begin{enumerate}
\item[(5)] $T$ is a good tilting right $A$-module.
\end{enumerate}
\end{cor}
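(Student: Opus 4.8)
The plan is to deduce Corollary \ref{cor.fully faithful Rh-compact for bimodules} almost entirely from Theorem \ref{teor.main one} applied to the $B$-$A$-bimodule $T=T_A$, using the standing hypotheses $\Ext_A^p(T,T)=0$ for all $p>0$ and $B=\End_A(T)$. First I would observe that these hypotheses are exactly the two conditions ``$T^\bullet_A$ is exceptional in $\D(A)$ and the canonical morphism $B\longrightarrow\End_{\D(A)}(T^\bullet)$ is an isomorphism'' that open Theorem \ref{teor.main one}: for an honest module $T$ one has $\D(A)(T,T[p])=\Ext_A^p(T,T)$, which vanishes for $p>0$ and equals $\End_A(T)=B$ for $p=0$, and the algebra map in the theorem is the canonical one. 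Hence all six equivalent conditions of Theorem \ref{teor.main one} are available. I would then match them to the statements of the corollary: condition (1) of the corollary is condition (1) of the theorem, since for a module $\D(B^{op})(T,T[p])=\Ext_{B^{op}}^p(T,T)$ and $\End_{\D(B^{op})}(T)^{op}=\End_{B^{op}}(T)^{op}$, and ``${}_BT$ compact in $\D(B^{op})$'' means, by Proposition \ref{prop.compact objects in D(A)}, that ${}_BT$ is quasi-isomorphic to a bounded complex of finitely generated projectives, i.e. (as $T$ is a module concentrated in one degree) that it admits a finite projective resolution with finitely generated terms. Condition (2) of the corollary is verbatim condition (3) of the theorem; condition (4) of the corollary is verbatim condition (5) of the theorem.

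Next I would handle condition (3) of the corollary, the existence of the exact sequence $0\to A\to T^0\to\cdots\to T^n\to 0$ with $T^k\in\add(T)$. The bridge here is condition (4) of Theorem \ref{teor.main one}, namely $A_A\in\thick_{\D(A)}(T_A)$. I would invoke the result attributed to \cite{NS} and recalled in the excerpt just before Corollary \ref{cor.fully faithful Lt for bimodule}: when $\Ext_A^p(T,T)=0$ for all $p>0$, the condition $A_A\in\thick_{\D(A)}(T)$ is equivalent to the existence of a finite coresolution $0\to A\to T^0\to\cdots\to T^m\to 0$ with all $T^i\in\add(T)$ (condition b$'$ of Definition \ref{def.tilting module}). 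Thus corollary-(3) $\Longleftrightarrow$ theorem-(4), and the cycle $(1)\Longleftrightarrow(2)\Longleftrightarrow(3)\Longleftrightarrow(4)$ of the corollary is established by threading through the theorem's equivalences $(1)\Leftrightarrow(2)\Leftrightarrow(3)\Leftrightarrow(4)\Leftrightarrow(5)$.

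For the final clause — that under the extra hypotheses ``$T_A$ has finite projective dimension and $\Ext_A^p(T,T^{(\alpha)})=0$ for all cardinals $\alpha$ and all $p>0$'' the four conditions are further equivalent to (5) ``$T$ is a good tilting right $A$-module'' — I would argue directly from the definition. A good tilting module (Definition \ref{def.tilting module}, conditions a), b$'$), c)) is one with finite projective dimension, a finite $\add(T)$-coresolution of $A$, and $\Ext_A^p(T,T^{(\alpha)})=0$ for all $p>0$ and all $\alpha$. Under the stated extra hypotheses, conditions a) and c) are assumed outright, so ``$T$ is good tilting'' is equivalent to condition b$'$, which is exactly condition (3) of the corollary. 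Hence (5) $\Longleftrightarrow$ (3) under those hypotheses, closing the circle. I would also remark that the extra hypothesis c) makes available the part of condition b) of Definition \ref{def.tilting module} needed to invoke the \cite{NS} equivalence cleanly, though that equivalence is already stated only under $\Ext_A^p(T,T)=0$, $p>0$.

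The main obstacle I anticipate is bookkeeping rather than a genuine difficulty: one must check carefully that the ``finite projective resolution with finitely generated terms as a left $B$-module'' appearing in corollary-(1) really is the module-theoretic shadow of ``${}_BT\in\per(B^{op})$'' — i.e. that a left $B$-module which, viewed as a stalk complex, is a perfect complex over $B$ must already have a \emph{finite} resolution by finitely generated projectives (this follows from Proposition \ref{prop.compact objects in D(A)}, since a stalk complex quasi-isomorphic to a bounded complex of finitely generated projectives has, by truncating, such a resolution). A secondary point of care is that Theorem \ref{teor.main one}'s condition (4) is phrased for $\D$ of right modules over $A$ and one must make sure the \cite{NS} characterization of $A_A\in\thick_{\D(A)}(T_A)$ via condition b$'$ is being applied on the correct side; since everything here is stated for the right module $T_A$ with $B=\End_A(T_A)$, this is automatic. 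No new calculation beyond these identifications is required.
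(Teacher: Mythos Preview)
Your proposal is correct and follows essentially the same approach as the paper, which simply states that the corollary is ``a direct consequence of theorem \ref{teor.main one} and the definition of good tilting module''; you have merely spelled out the dictionary between the conditions and invoked the \cite{NS} characterization of $A_A\in\text{thick}_{\D(A)}(T)$ recalled before Corollary \ref{cor.fully faithful Lt for bimodule}. One small bookkeeping slip: your numbering of the theorem's conditions is off by one (you appear to be counting (1$'$) as a separate item), so e.g.\ what you call ``condition (3) of the theorem'' is the paper's condition (2), and your ``condition (4)'' is the paper's (3); the content you attribute to each is correct, only the labels need adjusting.
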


The last results show that the fully faithful condition of the
classical derived functors associated to an exceptional module is
closely related  to  tilting theory. However, this relationship
tends to be tricky, as the following examples show. They are
explained in detail in the final part of \cite{NS}.

\begin{exems} \label{exem.tilting-versus-ffaithful Rh}
\begin{enumerate}
\item If $A$ is a non-Noetherian hereditary algebra and $I$ is an
injective cogenerator of $\text{Mod}-A$ containing an isomorphic
copy of each cyclic module, then $T=E(A)\oplus\frac{E(A)}{A}\oplus
I$ satisfies the conditions (1)-(5) of corollary \ref{cor.fully faithful
Rh-compact for bimodules}, but $\text{Ext}_A^1(T,T^{(\aleph_0)})\neq
0$. Hence $T$ is not a tilting $A$-module.
\item Let  $A$ be a right Noetherian right hereditary algebra such
that $\text{Hom}_A(E(A/A,E(A)))=0$ and $E(A)/A$ contains an
indecomposable summand  with infinite multiplicity. If $I$ is the
direct sum of one isomorphic copy of each indecomposable summand of
$E(A)/A$, then  $T=E(A)\oplus I$ is a $1$-tilting module such that
$\Rh_A(T,?):\D(A)\longrightarrow\D(B)$ is not fully faithful. The
Weyl algebra $A_1(k)=k<x,y>/(xy-yx-1)$ over the field $k$ is an
example where the situation occurs.
\item If $A$ is a hereditary Artin algebra,  $T$ is a finitely generated projective
right $A$-module which is not a generator and $B=\text{End}_A(T)$,
then $T$ admits a finite projective resolution with finitely
generated terms as a left $B$-module, but
$\Rh_A(T,?):\D(A)\longrightarrow\D(B)$ is not fully faithful. Indeed
$\Rh_A(T,?)$ preserves compact objects, but  condition (3) of last
corollary does not hold.
\end{enumerate}
\end{exems}

\subsection{Connection with Wakamatsu tilting problem}

In this subsection we show a connection of question \ref{ques.Rh
faithful preserves compacts} with a classical problem in
Representation Theory.

\begin{defi}

Let $T_A$ be a module and $B=\text{End}(T_A)$. Consider the
following conditions:

\begin{enumerate}
\item $T_A$ admits a projective resolution with finitely generated terms;
\item $Ext_A^p(T,T)=0$, for all $p>0$;
\item There exists an exact sequence $0\rightarrow A\rightarrow T^0\rightarrow T^1\rightarrow
...\rightarrow T^n\rightarrow ...$ such that
\begin{enumerate}
\item $T^i\in add(T_A)$, for all $i\geq 0$;
\item The functor $Hom_A(?,T)$ leaves the sequence exact.
\end{enumerate}
\item There exists an exact sequence $0\rightarrow A\rightarrow T^0\rightarrow ...\rightarrow T^n\rightarrow 0$, with $T^i\in add(T_A)$, for all $i\geq 0$.
\end{enumerate}

We shall say that $T_A$ is

\begin{enumerate}
 \item[a)] \emph{Wakamatsu tilting} when (1), (2) and (3) hold;
 \item[b)] \emph{semi-tilting} when (1), (2) and (4) hold;
 \item[c)] \emph{generalized Wakamatsu tilting} when (2) and (3) hold;
 \item[d)] \emph{generalized semi-tilting} when (2) and (4) hold.
 \end{enumerate}
 \end{defi}

\begin{rem}
 Each classical tilting module is (generalized) semi-tilting
and each (generalized) semi-tilting module is (generalized)
Wakamatsu tilting.
\end{rem}

\begin{prop} \label{prop.Wakamatsu problem}
Let $T$ be a Wakamatsu tilting right $A$-module. The following
assertions are equivalent:

\begin{enumerate}
\item $T$ is classical tilting;
\item $T$ is semi-tilting of finite projective dimension;
\item $T$ has finite projective dimension, both as a right $A$-module and as left module over $B=\text{End}(T_A)$.
\end{enumerate}
\end{prop}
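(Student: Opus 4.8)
The plan is to prove the cycle of implications $(1)\Rightarrow(2)\Rightarrow(3)\Rightarrow(1)$, exploiting the fact that a Wakamatsu tilting module always satisfies $\Ext_A^p(T,T)=0$ for $p>0$ and admits a projective resolution with finitely generated terms, so that $T_A$ is a compact (indeed exceptional) object of $\D(A)$ and $B=\End_A(T)\cong\End_{\D(A)}(T)$. The implications $(1)\Rightarrow(2)$ is trivial from the definitions (a classical tilting module has finite projective dimension and is in particular semi-tilting), and $(2)\Rightarrow(3)$ requires only showing that a semi-tilting module of finite projective dimension also has finite projective dimension as a left $B$-module; for this I would invoke Corollary~\ref{cor.fully faithful Rh-compact for bimodules}: under the standing hypotheses, condition (3) of that corollary (the existence of the coresolution $0\to A\to T^0\to\cdots\to T^n\to 0$ with $T^k\in\add(T)$, which is exactly the semi-tilting condition~(4)) implies condition~(1), whose last clause says precisely that $T$ admits a finite projective resolution with finitely generated terms as a left $B$-module.

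The substance is the implication $(3)\Rightarrow(1)$. Assume $T_A$ has finite projective dimension and ${}_BT$ has finite projective dimension. Since $T$ is Wakamatsu tilting, $T_A$ is exceptional in $\D(A)$ with $B\stackrel{\cong}{\to}\End_{\D(A)}(T)$, and ${}_BT$ is quasi-isomorphic to a bounded complex of finitely generated projective left $B$-modules — hence ${}_BT$ is compact in $\D(B^{op})=\D(\mathrm{Mod}\text{-}B^{op})$. I claim moreover that ${}_BT$ is \emph{exceptional} in $\D(B^{op})$ and that the canonical algebra map $A\to\End_{\D(B^{op})}(T)^{op}$ is an isomorphism. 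Granting this claim, condition~(1) of Theorem~\ref{teor.main one} holds (with the roles adjusted appropriately: $T$ is exceptional on the $A$-side and compact-exceptional on the $B^{op}$-side), so by the equivalence $(1)\Leftrightarrow(3)$ there, $\Rh_A(T,?)$ is fully faithful and preserves compact objects, and in particular $A_A\in\thick_{\D(A)}(T)$. Then by Corollary~\ref{cor.fully faithful Rh-compact for bimodules}, condition~(2) there gives condition~(3): there is an exact sequence $0\to A\to T^0\to\cdots\to T^n\to 0$ with $T^k\in\add(T_A)$, i.e. $T$ is semi-tilting. Combined with $\Ext_A^p(T,T)=0$ for $p>0$, the projective-resolution-with-finitely-generated-terms hypothesis, and $\mathrm{pd}_A(T)<\infty$, this is exactly the definition of a classical tilting module (using the Remark after Definition~\ref{def.tilting module}).

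The main obstacle is establishing the claim: that finite projective dimension of ${}_BT$ forces ${}_BT$ to be exceptional in $\D(B^{op})$ with $A\cong\End_{\D(B^{op})}(T)^{op}$. The key tool is the duality of Corollary~\ref{cor.duality between perfect complexes} together with Proposition~\ref{prop.all nat.transformations are isos}: since $T_A$ is compact in $\D(A)$, applying $(?)^*=\Rh_A(?,A)$ produces a complex $T^*$ with $T_B^*$ compact in $\D(B)$ (here identifying via the iso $B\cong\End_{\D(A)}(T)$), the biduality map $T\to T^{**}$ is an isomorphism in $\D(B^{op}\otimes A)$, and $\Rh_{B^{op}}(?,T)\cong T^*\Lt_A?$ on the relevant subcategories. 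One then computes $\Rh_{B^{op}}(T,T[p])$: using the isomorphism ${}_BT\cong \Rh_A(?,T)(A)$ and $\thick$-arguments over $\thick_{\D(A^{op})}(A)$, one reduces the vanishing for $p\neq 0$ and the identification for $p=0$ to the exceptionality of $T_A$ and $B\cong\End_{\D(A)}(T)$. I would carry this out via Lemmas~\ref{lem.units are isomorphisms} and~\ref{lem.units are isomorphisms2}: the isomorphy of $\lambda_B$ (equivalent to exceptionality of $T_A$ plus $B\cong\End_{\D(A)}(T)$) yields isomorphy of $\tau_{T}$, and finite projective dimension of ${}_BT$ places $B_B$ in the thick subcategory of $\D(B)$ generated by $T_B^*$, whence $\tau_A$ is an isomorphism, which by Lemma~\ref{lem.units are isomorphisms2} is exactly the statement that ${}_BT$ is exceptional in $\D(B^{op})$ with $A\cong\End_{\D(B^{op})}(T)^{op}$. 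This is the technical heart; the rest is bookkeeping with the standing Wakamatsu hypotheses.
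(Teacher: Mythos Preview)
Your $(1)\Rightarrow(2)$ is fine. For $(2)\Rightarrow(3)$ your appeal to Corollary~\ref{cor.fully faithful Rh-compact for bimodules} is correct but heavy-handed: the paper simply applies $\Hom_A(?,T)$ to the finite $\text{add}(T)$-coresolution $0\to A\to T^0\to\cdots\to T^n\to 0$; since $\Ext_A^p(T^i,T)=0$ for $p>0$, this yields an exact sequence $0\to\Hom_A(T^n,T)\to\cdots\to\Hom_A(T^0,T)\to{}_BT\to 0$ with each term in $\text{add}({}_BB)$, giving the finite projective resolution of ${}_BT$ directly.

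The real issue is your $(3)\Rightarrow(1)$. Your strategy---establish condition~(1) of Theorem~\ref{teor.main one} and then read off $A_A\in\text{thick}_{\D(A)}(T)$---would work, but the justification you give for the claim that ${}_BT$ is exceptional with $A\cong\End_{\D(B^{op})}(T)^{op}$ is broken. The assertion ``finite projective dimension of ${}_BT$ places $B_B$ in the thick subcategory of $\D(B)$ generated by $T_B^*$'' is unjustified: compactness of ${}_BT$ gives ${}_BT\in\text{thick}_{\D(B^{op})}(B)$, and dualizing gives $T^*_B\in\text{thick}_{\D(B)}(B)$, not the reverse inclusion you need. Even granting that inclusion, it is unclear how a statement about $\text{thick}_{\D(B)}(T^*_B)$ would force $\tau_A$ to be an isomorphism, since $\tau$ is a natural transformation on $\D(A)$ and you only know $\tau_T$ is an isomorphism---to propagate to $\tau_A$ via thick subcategories you would need $A\in\text{thick}_{\D(A)}(T)$, which is precisely what you are trying to prove.

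The claim itself is nonetheless true, but for a different reason: it is the classical left--right symmetry of Wakamatsu tilting modules. Applying $\Hom_A(?,T)$ to the (possibly infinite) Wakamatsu coresolution $0\to A\to T^0\to T^1\to\cdots$ produces a projective resolution $\cdots\to\Hom_A(T^1,T)\to\Hom_A(T^0,T)\to{}_BT\to 0$ with finitely generated terms; applying $\Hom_{B^{op}}(?,T)$ back and using $\Hom_{B^{op}}(\Hom_A(T^i,T),T)\cong T^i$ recovers the original coresolution, whence $\Ext_{B^{op}}^p(T,T)=0$ for $p>0$ and $\End_{B^{op}}(T)\cong A$. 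This is essentially the content of \cite[Section~4]{MR}, which is what the paper cites for $(3)\Rightarrow(1)$ rather than arguing via the derived machinery.
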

\begin{proof}
$(1)\Longrightarrow (2)$ is clear.

$(2)\Longrightarrow (3)$ By hypothesis, we have that
$\text{pd}(T_A)<\propto$. On the other hand, a finite projective
resolution for ${}_BT$ is obtained by applying the functor
$\text{Hom}_A(?,T)$ to the exact sequence $0\rightarrow A\rightarrow
T^0\rightarrow ...\rightarrow T^n\rightarrow 0$, with $T^i\in
add(T_A)$ given in the definition of semi-tilting module.

$(3)\Longrightarrow (1)$  This is known (see \cite[Section 4]{MR}).
\end{proof}

\begin{ques}
\begin{enumerate}
\item[1.]  Is statement (1) of last proposition
true, for all Wakamatsu tilting modules?.
\item[2.] We can ask an intermediate question, namely: is each Wakamatsu
tilting module a semi-tilting one?.
\end{enumerate}
\end{ques}

\begin{rem}
The answer to question 1 is negative in general (see \cite[Example
3.1]{Wa}). However it is still an open question, known as
\emph{Wakamatsu tilting problem}, whether each Wakamatsu tilting
module of finite projective dimension is classical tilting. Note
that, by proposition \ref{prop.Wakamatsu problem}, an affirmative
answer to question 2 above implies an affirmative answer to
Wakamatsu problem and, conversely.
\end{rem}

It turns out that question 2 is related to question \ref{ques.Rh
faithful preserves compacts}, as the following result shows:

\begin{prop}
Let us assume that $\text{Ext}_A^p(T,T)=0$, for all $p>0$, and that
$\text{RHom}_A(T,?):\mathcal{D}(A)\longrightarrow\mathcal{D}(B)$ is
fully faithful, where $B=\text{End}(T_A)$. Consider the following
assertions:

\begin{enumerate}
\item $\text{RHom}_A(T,?)$ preserves compact objects;
\item $T_A$ is a generalized semi-tilting module;
\item $T_A$ is a generalized Wakamatsu tilting module;
\item The structural algebra homomorphism $A\longrightarrow\text{End}_{B^{op}}(T)^{op}$ is an isomorphism
and $\text{Ext}_{B^{op}}^p(T,T)=0$, for all $p>0$.
\end{enumerate}
Then the implications $(1)\Longleftrightarrow (2)\Longrightarrow
(3)\Longleftrightarrow (4)$ hold true.
\end{prop}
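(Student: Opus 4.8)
I would prove the equivalences in the order $(1)\Longleftrightarrow(2)$, $(2)\Longrightarrow(3)$, $(3)\Longrightarrow(4)$, $(4)\Longrightarrow(3)$, combining the structural results of the previous section with the self-duality of the (generalized) Wakamatsu tilting condition. For $(1)\Longleftrightarrow(2)$ the standing hypotheses ($\text{Ext}_A^p(T,T)=0$ for $p>0$ and $B=\text{End}(T_A)$) are precisely those of Corollary \ref{cor.fully faithful Rh-compact for bimodules}. There condition (2) is ``$\Rh_A(T,?)$ is fully faithful and preserves compact objects'', which under our extra assumption that $\Rh_A(T,?)$ is fully faithful is exactly our assertion (1); and condition (3) there is the existence of an exact sequence $0\to A\to T^0\to\cdots\to T^n\to 0$ in $\text{Mod}-A$ with $T^k\in\text{add}(T)$, which — since $\text{Ext}_A^{>0}(T,T)=0$ is assumed — is precisely the statement that $T_A$ is generalized semi-tilting, i.e. our (2). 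As that corollary gives $(2)\Longleftrightarrow(3)$, we obtain $(1)\Longleftrightarrow(2)$. (Alternatively one may run this through Theorem \ref{teor.main one}, identifying (1) with $A_A\in\text{thick}_{\D(A)}(T)$ and then quoting the result of \cite{NS} cited before the definition of Wakamatsu tilting modules, that under $\text{Ext}_A^{>0}(T,T)=0$ the latter amounts to the existence of a finite $\text{add}(T)$-coresolution of $A$.)

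For $(2)\Longrightarrow(3)$ I start from an exact sequence $0\to A\to T^0\to\cdots\to T^n\to 0$ with $T^i\in\text{add}(T)$, split it into short exact sequences $0\to Z^i\to T^i\to Z^{i+1}\to 0$ with $Z^0=A$, and dimension-shift: since $\text{Ext}_A^{>0}(T^i,T)=0$ and $Z^n=T^n\in\text{add}(T)$, a descending induction gives $\text{Ext}_A^{>0}(Z^i,T)=0$ for all $i$. In particular $\text{Ext}_A^1(Z^{i+1},T)=0$, so $\text{Hom}_A(-,T)$ keeps each short exact sequence exact; splicing, it keeps the whole finite complex exact, and padding with zeros on the right exhibits an infinite exact sequence $0\to A\to T^0\to\cdots\to T^n\to 0\to 0\to\cdots$ of the required type. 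Together with the standing hypothesis $\text{Ext}_A^{>0}(T,T)=0$, this says $T_A$ is generalized Wakamatsu tilting.

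For $(3)\Longleftrightarrow(4)$ I use biduality with respect to the bimodule ${}_BT_A$. Given (3), applying $\text{Hom}_A(-,T)$ to a Wakamatsu coresolution $0\to A\to T^0\to T^1\to\cdots$ and using its defining $\text{Hom}_A(-,T)$-exactness produces an exact complex $\cdots\to P^1\to P^0\to{}_BT\to 0$ with $P^i=\text{Hom}_A(T^i,T)\in\text{add}({}_BB)$, i.e. a projective resolution of ${}_BT$ in $\text{Mod}-B^{op}$ by finitely generated projectives. Applying $\text{Hom}_{B^{op}}(-,T)$ back: the evaluation maps $T^i\to\text{Hom}_{B^{op}}(\text{Hom}_A(T^i,T),T)$ are natural isomorphisms of right $A$-modules on $\text{add}(T_A)$ (they are so for $T$ itself, since $\text{Hom}_A(T,T)\cong{}_BB$ and $\text{Hom}_{B^{op}}({}_BB,T)\cong T_A$, hence for direct summands of finite powers), so $\text{Hom}_{B^{op}}(P^\bullet,T)$ is isomorphic to the complex $0\to T^0\to T^1\to\cdots$; its cohomology computes $\text{Ext}_{B^{op}}^{\ast}(T,T)$, and since $0\to A\to T^0\to T^1\to\cdots$ is exact we get $\text{Ext}_{B^{op}}^p(T,T)=0$ for $p>0$ and $\text{Ext}_{B^{op}}^0(T,T)=\text{End}_{B^{op}}(T)\cong A$, the last isomorphism being the structural map after a diagram chase of the evaluations; this is (4). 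For $(4)\Longrightarrow(3)$ I argue symmetrically: by Proposition \ref{prop.faithful Rh for exceptional}(2), the fully faithfulness of $\Rh_A(T,?)$ ensures ${}_BT$ has a projective resolution $\cdots\to Q^1\to Q^0\to{}_BT\to 0$ with each $Q^i$ finitely generated, hence $Q^i\in\text{add}({}_BB)$; applying $\text{Hom}_{B^{op}}(-,T)$ and using $\text{Ext}_{B^{op}}^{>0}(T,T)=0$ and $\text{Ext}_{B^{op}}^0(T,T)\cong A$ yields an exact $0\to A\to U^0\to U^1\to\cdots$ with $U^i=\text{Hom}_{B^{op}}(Q^i,T)\in\text{add}(T_A)$; applying $\text{Hom}_A(-,T)$ to it and using the evaluation isomorphism $\text{Hom}_A(\text{Hom}_{B^{op}}(Q,T),T)\cong Q$ for $Q\in\text{add}({}_BB)$ (valid because $B=\text{End}(T_A)$) recovers the exact complex $\cdots\to Q^1\to Q^0\to{}_BT\to 0$, so $\text{Hom}_A(-,T)$ leaves $0\to A\to U^0\to U^1\to\cdots$ exact; with the standing $\text{Ext}_A^{>0}(T,T)=0$ this gives (3).

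The main obstacle is the bookkeeping in $(3)\Longleftrightarrow(4)$: one must set up the two evaluation natural transformations, check that they are isomorphisms on $\text{add}(T_A)$ and on $\text{add}({}_BB)$, verify their compatibility with the differentials, and confirm that the induced identification of $\text{Ext}_{B^{op}}^0(T,T)$ with $A$ is the structural homomorphism and not merely an abstract isomorphism — this is what makes condition (4) exactly the $\text{Mod}-B^{op}$-mirror of the data ``$B=\text{End}(T_A)$ and $\text{Ext}_A^{>0}(T,T)=0$''. None of this is deep: it is essentially the classical self-duality of the Wakamatsu tilting condition, compatible with the argument referenced in the proof of Proposition \ref{prop.Wakamatsu problem}, and could in principle be cited; by contrast $(1)\Longleftrightarrow(2)$ is immediate from the earlier results and $(2)\Longrightarrow(3)$ is a routine dimension shift.
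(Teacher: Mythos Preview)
Your proof is correct and follows essentially the same approach as the paper's: $(1)\Longleftrightarrow(2)$ via Corollary~\ref{cor.fully faithful Rh-compact for bimodules}, $(2)\Longrightarrow(3)$ as a routine observation, and $(3)\Longleftrightarrow(4)$ by the biduality argument through the evaluation maps on $\text{add}(T_A)$ and $\text{add}({}_BB)$, using Proposition~\ref{prop.faithful Rh for exceptional}(2) for the finitely generated projective resolution of ${}_BT$ in the direction $(4)\Longrightarrow(3)$. You spell out the dimension-shift for $(2)\Longrightarrow(3)$ and the evaluation bookkeeping in more detail than the paper does, but the strategy is identical.
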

\begin{proof}
$(1)\Longleftrightarrow (2)$ is a direct consequence of corollary
\ref{cor.fully faithful Rh-compact for bimodules} and the definition
of generalized semi-tllting module.

$(2)\Longrightarrow (3)$ is clear.

$(3)\Longrightarrow (4)$ Let us fix an exact sequence $0\rightarrow
A\longrightarrow
T^0\stackrel{d^0}{\longrightarrow}T^1\stackrel{d^1}{\longrightarrow}
...\stackrel{d^{n-1}}{\longrightarrow}
T^n\stackrel{d^n}{\longrightarrow} ...$ (*), with
$T^i\in\text{add}(T)$, for all $i\geq 0$. As shown in the proof of
proposition \ref{prop.Wakamatsu problem}, when we apply to it the
functor $\text{Hom}_A(?,T):\text{Mod}-A\longrightarrow
B-\text{Mod}$, we obtain a projective resolution of
${}_BT\cong\text{Hom}_A(A,T)$. Bearing in mind that the canonical
natural transformation $\sigma
:1_{\text{Mod}-A}\longrightarrow\text{Hom}_{B^{op}}(\text{Hom}_A(?,T),T)$
is an isomorphism, when evaluated at a module $T'\in\text{add}(T)$,
when we apply $\text{Hom}_{B^{op}}(?,T)$ to that projective
resolution of ${}_BT$, we obtain a sequence

\begin{center}
$0\rightarrow\text{Hom}_{B^{op}}(T,T)\longrightarrow
T^0\stackrel{d^0}{\longrightarrow}
T^1\stackrel{d^1}{\longrightarrow}
...\stackrel{d^{n-1}}{\longrightarrow}
T^n\stackrel{d^n}{\longrightarrow} ...$.
\end{center}
This sequence is exact due to the left exactness of
$\text{Hom}_{B^{op}}(?,T)$ and to the exactness of the sequence (*)
and, hence, both sequences are isomorphic. Then assertion (4) holds.

$(4)\Longrightarrow (3)$ By proposition \ref{prop.faithful Rh for
exceptional}, we know that ${}_BT$ admits a projective resolution
with finitely generated terms, say

\begin{center}
$...P^{-n}\longrightarrow ...\longrightarrow P^{-1}\longrightarrow
P^0\longrightarrow {}_BT\rightarrow 0$. \hspace*{1cm}(**)
\end{center}
The hypotheses imply that, when we apply to it the functor
$\text{Hom}_{B^{op}}(?,T)$, we obtain an exact sequence in
$\text{Mod}-A$

\begin{center}
$0\rightarrow
A\longrightarrow\text{Hom}_{B^{op}}(P^0,T)\longrightarrow\text{Hom}_{B^{op}}(P^1,T)\longrightarrow
...\longrightarrow\text{Hom}_{B^{op}}(P^{-n},T)\longrightarrow ...$.
\end{center}
Note that $\text{Hom}_{B^{op}}(P^{-i},T)$ is a direct summand of
$\text{Hom}_{B^{op}}(B^{(r)},T)\cong T_A^{(r)}$, for some
$r\in\mathbb{N}$, so that $\text{Hom}_{B^{op}}(P^{-i},T)=:T^i$ is in
$\text{add}(T_A)$, for each $i\geq 0$. Note also that the canonical
natural transformation $\sigma
:1_{B-\text{Mod}}\longrightarrow\text{Hom}_{A}(\text{Hom}_{B^{op}}(?,T),T)$
is an isomorphism when evaluated at any finitely generated
projective left $B$-module, because $\text{Hom}_{A}(T,T)\cong
{}_BB$. It follows from this and the fact that
$\text{Ext}_A^p(T,T)=0$, for all $p>0$, that when we apply
$\text{Hom}_A(?,T)$ to the last exact sequence we obtain, up to
isomorphism,  the initial projective resolution (**). Then the exact
sequence

\begin{center}
$0\rightarrow A_A\longrightarrow T^0\longrightarrow
T^1\longrightarrow ...\longrightarrow T^n\longrightarrow ...$
\end{center}
is kept exact when applying $\text{Hom}_{A}(?,T)$. Therefore $T_A$
is a generalized Wakamatsu tilting module.
\end{proof}

As an immediate consequence, we get:

\begin{cor}
Each of the following statements is true if, and only if, so is the
other:

\begin{enumerate}
\item If $T_A$ is a generalized Wakamatsu tilting module such that
$\Rh_A(T,?):\D(A)\longrightarrow\D(B)$ is fully faithful, where
$B=\text{End}_A(T)$,  then $T_A$ is generalized semi-tilting.
\item Let ${}_BT_A$ be a bimodule such that
$\text{Ext}_{A}^p(T,T)=0=\text{Ext}_{B^{op}}^p(T,T)$, for all $p>0$
and  the algebra morphisms $B\longrightarrow\text{End}_{A}(T)$ and
$A\longrightarrow\text{End}_{B^{op}}(T)^{op}$ are isomorphisms. If
the functor $\Rh_A(T,?):\D(A)\longrightarrow\D(B)$ is fully
faithful, then it preserves compact objects.
\end{enumerate}
\end{cor}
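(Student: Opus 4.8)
The plan is to deduce this biconditional directly from the preceding proposition, by rewriting each of the two displayed statements as a single implication among that proposition's four numbered assertions. Throughout, write $(1_P)$--$(4_P)$ for those four assertions, and recall that the standing hypotheses of that proposition are exactly: $\Ext_A^p(T,T)=0$ for all $p>0$; the functor $\Rh_A(T,?):\D(A)\longrightarrow\D(B)$ is fully faithful; and $B=\End(T_A)$.

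First I would check that the hypotheses of statement (1) of the corollary force those standing hypotheses: being generalized Wakamatsu tilting includes $\Ext_A^p(T,T)=0$ for all $p>0$, the identity $B=\End_A(T)$ is part of the setup (so the $B$-$A$-bimodule structure on $T$ is the tautological one), and fully faithfulness of $\Rh_A(T,?)$ is assumed outright. Under these hypotheses, ``$T_A$ is generalized Wakamatsu tilting'' is precisely $(3_P)$ and ``$T_A$ is generalized semi-tilting'' is precisely $(2_P)$, so statement (1) of the corollary asserts nothing more than the implication $(3_P)\Rightarrow(2_P)$. Similarly, after identifying $B$ with $\End_A(T)$ via the isomorphism provided in statement (2), its hypotheses again supply the standing hypotheses, the two extra conditions ($A\stackrel{\cong}{\to}\End_{B^{op}}(T)^{op}$ and $\Ext_{B^{op}}^p(T,T)=0$ for all $p>0$) are exactly assertion $(4_P)$, and the conclusion ``$\Rh_A(T,?)$ preserves compact objects'' is $(1_P)$; hence statement (2) of the corollary is precisely the implication $(4_P)\Rightarrow(1_P)$.

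It then remains to observe that $(3_P)\Rightarrow(2_P)$ and $(4_P)\Rightarrow(1_P)$ are logically equivalent as meta-statements (quantified over all admissible data), and this is immediate from the equivalences $(1_P)\Leftrightarrow(2_P)$ and $(3_P)\Leftrightarrow(4_P)$ already furnished by the proposition: substituting these into either implication turns it into the other. Concretely, for the direction $(\Rightarrow)$ of the corollary, given a bimodule as in statement (2) one uses $(4_P)\Leftrightarrow(3_P)$ to conclude $T_A$ is generalized Wakamatsu tilting, applies statement (1) to get $(2_P)$, and then uses $(2_P)\Leftrightarrow(1_P)$ to conclude $\Rh_A(T,?)$ preserves compacts; for $(\Leftarrow)$ one runs the same chain in reverse, using $(3_P)\Leftrightarrow(4_P)$ to produce the extra hypotheses required to invoke statement (2).

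There is essentially no hard step; the only thing demanding care is the bookkeeping of hypotheses — in particular verifying that the isomorphism $B\cong\End_A(T)$ in statement (2) lets one apply the proposition verbatim with $B=\End(T_A)$, and that ``generalized Wakamatsu tilting'' already carries the vanishing $\Ext_A^p(T,T)=0$ needed as a standing hypothesis. Once these identifications are made, the corollary is a purely formal consequence of the proposition.
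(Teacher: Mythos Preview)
Your proposal is correct and is precisely the intended argument: the paper states this corollary as ``an immediate consequence'' of the preceding proposition without further proof, and your unpacking---identifying statement (1) with the implication $(3_P)\Rightarrow(2_P)$, statement (2) with $(4_P)\Rightarrow(1_P)$, and then invoking the established equivalences $(1_P)\Leftrightarrow(2_P)$ and $(3_P)\Leftrightarrow(4_P)$---is exactly the formal manipulation that justifies this.
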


\vspace*{0.5cm}

\emph{ACKNOWLEGMENTS:} The preparation of this paper started in a
visit of the second named author to the University of Shizuoka and
his subsequent participation in the 46th Japan Symposium on Ring
Theory and Representation Theory, held in Tokyo on October 12-14,
2014. Saor\'in thanks Hideto Asashiba and all the organizers of the
Symposium for their invitation and for their extraordinary
hospitality.

We also thank Jeremy Rickard for his help in the proof of
proposition \ref{prop.tensor by compacts preserves products}.

Both authors are supported by research projects from the Spanish
Ministry of Education (MTM2010-20940-C02-02) and from the
Fundaci\'on 'S\'eneca' of Murcia (04555/GERM/06), with a part of
FEDER funds. We thank these institutions for their help.

\ifx\undefined\bysame 
\newcommand{\bysame}{\leavevmode\hbox to3em{\hrulefill}\,} 
\fi


\begin{thebibliography}{1} 
%%\bibitem�̊��ʓ��̃��x���͑��̕��Ƃ̃R���t���N�g���h�����߂�
%%{�u���Җ�:No.�ʂ��ԍ�} �̌`���ŏ����ĉ������B
%%

\bibitem{AJS} 
L. Alonso, A. Jerem\'ias, M. Saor\'in, 
{\em Compactly generated t-structures on the derived category of a Noetherian ring},  
J. Algebra {bf 324}(4) (2010), 313--346.

\bibitem{AJSo1} 
L. Alonso, A. Jerem\'ias, M.J. Souto, 
{\em Localization in categories of complexes and unbounded resolutions}, 
Canad. J. Math. {\bf 52}(2) (2000), 225--247.

\bibitem{AJSo} 
L. Alonso, A. Jerem\'ias, M.J. Souto, 
{\em Constructions of t-structures and equivalences of derived categories}, 
Trans. Amer. Math. Soc. {\bf 355}(6) (2003), 2523--2543.

\bibitem{ARS} 
M. Auslander, I. Reiten, S.O. Smal$\emptyset$,
Representation theory of Artin algebras,  
Cambridge Stud. Adv. Maths {\bf 36}, Cambridge Univ. Press (1995).

\bibitem{BMT} 
S. Bazzoni, F. Mantese, A. Tonolo, 
{\em Derived equivalences induced by infinitely generated n-tilting modules}, 
Proc. Amer. Math. Soc. {\bf 139}(12) (2011), 4225--4234.

\bibitem{BP} 
S. Bazzoni, A. Pavarin, 
{\em Recollements from partial tilting complexes},  
J. Algebra {\bf 388} (2013), 338--363.

\bibitem{BBD} A.A. Belinson, J. Bernstein, P. Deligne,
Faisceaux pervers, Ast\'erisque {\bf 100} (1982).

\bibitem{B} 
T. B\"uhler, 
{\em Exact categories}, 
Expo. Math. {\bf 28}(1) (2010), 1--69.

\bibitem{C} 
S.U. Chase, 
{\em Direct products of modules}, 
Trans. Amer. Math. Soc. {\bf 97} (1960), 457--473.

\bibitem{CX1} 
H. Chen, C. Xi, 
{\em Good tilting modules and recollements  of derived module categories}, 
Proc. London Math. Soc. \textbf{104}(5) (2012), 959--996.

\bibitem{CX2} 
H. Chen, C. Xi, 
{\em Ringel modules and homological subcategories}, 
(2012). Preprint available at arxiv.org/abs/1206.0522

\bibitem{CM} 
R. Colpi, C. Menini 
{\em On the structure of $\star$-modules}, 
J. Algebra {\bf 158} (1993), 400--419.

\bibitem{GZ} 
P. Gabriel, M. Zisman, 
Calculus of fractions and Homotopy Theory, Springer-Verlag New York (1967).

\bibitem{Ha} 
Y. Hang, 
{\em Recollements and Hochschild theory},
J. Algebra {\bf 397} (2014), 535--547.

\bibitem{H1} 
D. Happel, 
{\em On the derived category of a finite dimensional algebra}, 
Comment. Math. Helvet. {\bf 62}(1) (1987), 339--389.

\bibitem{H} 
D. Happel, 
Triangulated categories in the representation theory of finite dimensional algebras, 
London Math. Soc. Lect Note Ser. \textbf{119}. Cambridge Univ. Press (1988).

\bibitem{HS} 
P.J. Hilton, U. Stambach, 
A course in Homological Algebra, Springer-Verlag (1971).

\bibitem{Ke1} 
B. Keller, 
{\em Chain complexes and stable categories}, 
Manuscr. Math. {\bf 67} (1990), 379--417.

\bibitem{Ke} 
B. Keller, 
{\em Deriving DG categories}, 
Ann. Sci. E. Norm Sup {\bf 27}(1) (1994), 63--102.

\bibitem{Ke2} 
B. Keller, 
{\em On differential graded categories}, 
Int. Congress of Mathematics (Madrid 2006), Vol II, 151--190. 
Eur. Math. Soc. Zurich (2006).

\bibitem{Ke3} B. Keller, 
Derived categories and their uses, Chapter of \emph{Handbook of Algebra}, volume 1. Elsevier
(1996). Editor M. Hazewinkel.

\bibitem{KN} 
B. Keller, P. Nicol\'as, 
{\em Weight structures and simple dg modules for positive dg algebras}, 
Int. Math. Res. Notices (2012) doi: 10.1093/imrn/ms009.IMRNsite

\bibitem{KV} 
B. Keller, D. Vossieck, 
{\em Aisles in derived categories}, 
Bull. Soc. Math. Belg. Ser. A {\bf 40}(2) (1988), 239--253.

\bibitem{Kr} 
H. Krause, 
{\em A Brown representability theorem via coherent functors}, 
Topology \textbf{41} (2002), 853--861.

\bibitem{Kr1} 
H. Krause, 
{\em Localization theory for triangulated categories}, 
pp 161-235. In 'Triangulated categories', by T. Holm,
P. Jorgensen and R. Rouquier (edts). London Math. Soc. Lect. Not.
Ser. {\bf 375}. Cambridge Univ. Press (2010).

\bibitem{L} 
D. Lazard, 
{\em Autour de la platitude}, 
Bull. Soc. Math. France {\bf 97} (1969), 81--128.

\bibitem{MR} 
F. Mantese, I. Reiten, 
{\em Wakamatsu tilting modules}, 
J. Algebra \textbf{278} (2004), 532--552.

\bibitem{M} 
J. Miyachi, 
{\em Localization of triangulated categories and derived categories}, 
J. Algebra \textbf{141}(2) (1991), 463--483.

\bibitem{N} 
A. Neeman, 
Triangulated categories, Annals of Math. Stud. {\bf 148}. 
Princeton Univ. Press (2001).

\bibitem{NS1} 
P. Nicol\'as, M. Saor\'in, 
{\em Lifting and restricting recollement data}, 
Appl. Categ. Struct. {\bf 19}(3) (2011).

\bibitem{NS2} 
P. Nicol\'as, M. Saor\'in, 
{\em Parametrizing recollement data for triangulated categories},
J. Algebra {\bf 322} (2009), 1220--1250.

\bibitem{NS} 
P. Nicol\'as, M. Saor\'in, 
{\em Generalized tilting theory}, 
Preprint available at arXiv:1208.2803

\bibitem{P} 
D. Pauksztello, 
{\em Homological epimorphisms of differential graded algebras}, 
Comm. Algebra {\bf 37}(7) (2009), 2337--2350.

\bibitem{R} 
J. Rickard, 
{\em Morita theory for derived categories}, 
J. London Math. Soc. {\bf 39} (1989), 436--456.

\bibitem{R2} 
J. Rickard, 
{\em Derived equivalences as derived functors}, 
J. London Math. Soc. {\bf 43}(2) (1991), 37--48.

\bibitem{So} 
M.J. Souto Salorio, 
{\em On the cogeneration of t-structures}, 
Arch. Math. {\bf 83} (2004), 113--122.

\bibitem{S} 
N. Spaltenstein, 
{\em Resolutions of unbounded complexes}, 
Comp. Math. {\bf 65}(2) (1988), 121--154.

\bibitem{V} 
J.L. Verdier, 
Des cat\'egories d\'eriv\'ees des cat\'egories
ab\'eliennes, Asterisque {\bf 239} (1996).

\bibitem{Wa} 
T. Wakamatsu, 
{\em Stable equivalence for self-injective algebras and a generalization of tilting modules},
J. Algebra {\bf 134} (1990), 298--325.

\bibitem{W} 
C.A. Weibel, 
An introduction to Homological Algebra, 
Cambridge Stud. Adv. Maths {\bf 38}. Cambridge Univ. Press (1994).

\bibitem{Y} 
D. Yang, 
{\em Recollements from generalized tilting}, 
Proc. Amer. Math. Soc. {\bf 140}(1) (2012), 83--91.

\end{thebibliography}
\end{document}